\documentclass{article}
\usepackage{amsmath, amssymb, amsthm, stackrel, enumerate}
\usepackage{titlesec, tikz,subcaption,pgf}
\usepackage{tikz-cd}
\usetikzlibrary{arrows}

\makeatletter
\newcommand\xleftrightarrow[2][]{%
	\ext@arrow 9999{\longleftrightarrowfill@}{#1}{#2}}
\newcommand\longleftrightarrowfill@{%
	\arrowfill@\leftarrow\relbar\rightarrow}
\makeatother

\newcommand{\rar}{\rightarrow}

\newcommand{\C}{\mathbb{C}}

\newcommand{\cat}[1]{\mathcal{#1}}

\newcommand{\Z}{\mathbb{Z}}

\newcommand{\Aut}{\textnormal{Aut}}

\newcommand{\id}{\textnormal{id}}

\newcommand{\End}{\textnormal{End}}

\newcommand{\Id}{\textnormal{Id}}

\newcommand{\Vect}{\mathbf{Vect}}

\newcommand{\tn}[1]{\textnormal{#1}}

\newcommand{\svec}{\mathbf{sVect}}

\newcommand{\cattens}[1]{\mathop{\boxtimes}\limits_{#1}}

\newcommand{\lincat}{\mathbf{LinCat}}

\newcommand{\Rep}{\mathbf{Rep}}
\newcommand{\lcat}[1]{\cat{#1}\lincat}
\newcommand{\bxc}[1]{\cattens{\cat{#1}}}

\newcommand{\raru}[1]{\mathop{\rar}\limits^{#1}}

\newcommand{\bx}{\boxtimes}

\newcommand{\dcentcat}[1]{\cat{Z}(\cat{#1})}
\newcommand{\symt}{\mathop{\otimes}\limits_{s}}
\newcommand{\cont}{\mathop{\otimes}\limits_{c}}

\newcommand{\ZAXBT}{\dcentcat{A}\tn{-}\mathbf{XBF}}
\newcommand{\BFCA}{\mathbf{BFC}/\cat{A}}
\newcommand{\GXBT}{G\tn{-}\mathbf{XBF}}
\newcommand{\sGXBT}{(G,\omega)\tn{-}\mathbf{XBF}}

\newcommand{\Lin}{\mathbf{DE}}
\newcommand{\DE}{\mathbf{DE}}

\usepackage{xr}
\externaldocument{DC2Foldmonv2}
\externaldocument{ZXBT}
\externaldocument{Symtensv2}

\newtheorem{thm}{Theorem}
\newtheorem{cor}[thm]{Corollary}
\newtheorem{lem}[thm]{Lemma}
\newtheorem{prop}[thm]{Proposition}
\theoremstyle{definition}
\newtheorem{ex}[thm]{Example}
\newtheorem{df}[thm]{Definition}
\newtheorem{rmk}[thm]{Remark}
\newtheorem{defprp}[thm]{Definition-Propostion}

\newtheorem{conj}[thm]{Conjecture}
\newtheorem{notation}[thm]{Notation}

\title{Drinfeld Centre-Crossed Braided Categories}
\author{Thomas A. Wasserman}

\begin{document}
	\maketitle
	\begin{abstract}
		We introduce, for a symmetric fusion category $\cat{A}$ with Drinfeld centre $\dcentcat{A}$, the notion of $\dcentcat{A}$-crossed braided tensor category. These are categories that are enriched over $\dcentcat{A}$ equipped with a symmetric tensor product, while being braided monoidal with respect to the usual tensor product on $\dcentcat{A}$. 
		
		In the Tannakian case where $\cat{A}=\Rep(G)$ for a finite group $G$, the 2-category of $\dcentcat{A}$-crossed braided categories is shown to be equivalent to the 2-category of $G$-crossed braided tensor categories. A similar result is established for the super-Tannakian case where $\cat{A}$ is the representation category of a finite super-group.
	\end{abstract}
	\tableofcontents

\section{Introduction}

In this article, we introduce the novel notion of Drinfeld centre crossed braided tensor categories. In a previous paper \cite{Wasserman2017a}, the author has shown that the Drinfeld centre $\dcentcat{A}$ of a symmetric fusion category $\cat{A}$ is lax 2-fold monoidal for the convolution and symmetric tensor products. In this paper, we will use the extra structure this 2-fold product gives to categories enriched over $(\dcentcat{A},\otimes_s)$ to define a novel notion of monoidal structure for such categories. This monoidal structure factors, on hom-objects, through the convolution tensor product of such categories. We will refer to this as a $\dcentcat{A}$-crossed monoidal structure (Definition \ref{ZXzacrossedtensordef}). Additionally, we spell out what it means for such a monoidal structure to be braided (Definition \ref{ZXcrossedbraidingdef}). Such categories form a 2-category $\ZAXBT$, and we will show that the $(\dcentcat{A},\otimes_s)$-enriched cartesian product $\cattens{s}$ is a symmetric monoidal structure on this 2-category.

In his book on Homotopy Quantum Field Theory \cite[Chapter VI]{Turaev2010}, Turaev defined the notion of $G$-crossed braided fusion category (see Definition \ref{ZXgcrossedbraided} here). By results of Deligne \cite{Deligne1990,Deligne2002}, any symmetric fusion category is braided tensor equivalent to the representation category of either a finite group $G$ or a finite super-group $(G',\omega)$, referred to as the Tannakian and super-Tannakian cases respectively. We will show that, for $\cat{A}\cong \Rep(G)$, a $\dcentcat{A}$-crossed braided category gives rise to such a $G$-crossed braided category. Furthermore, for $\cat{A}$ super-Tannakian, we define the notion of a super $(G',\omega)$-crossed braided category (Definition \ref{ZXsgcrossedbraided}), and show that, similarly, a $\dcentcat{A}$-crossed braided category gives rise to a super $(G',\omega)$-crossed braided category. These two results constitute Proposition \ref{ZXbarbariscrossedbraided}. We show that this construction, denoted by $\overline{ \overline{(-)}}$, has an inverse $\mathbf{Fix}$, this is the main Theorem \ref{ZXzaxbtgxbteqv} of this paper. Furthermore, the 2-category $\GXBT$ (or $\sGXBT$) of (super) $G$-crossed braided categories carries a symmetric monoidal structure given by the degreewise product $\cattens{G}$, and we show that these mutually inverse constructions are symmetric monoidal 2-functors.

This relationship to $G$-crossed braided tensor categories tells us that $\dcentcat{A}$-braided tensor categories are a coordinate invariant repackaging of the notion of $G$-crossed braided category, that additionally allows us to treat the Tannakian and super-Tannakian cases on equal footing. This is useful in related work \cite{Wassermanc}, were we take a braided fusion category containing $\cat{A}$ and produce from this a braided object enriched over $\dcentcat{A}$. It turns out that this braided object is a $\dcentcat{A}$-crossed braided tensor category. This construction, which we will denote by $\underleftarrow{\underline{(-)}}$ is then used to define the so-called reduced tensor product. This reduced tensor product is a symmetric monoidal structure $\cattens{\tn{red}}$ on the 2-category $\BFCA$ of braided fusion categories containing $\cat{A}$. 

The construction that takes a braided fusion category containing $\cat{A}$ to a $\dcentcat{A}$-crossed braided fusion category and its inverse, together with the functors from the main theorem of this paper, factor the well-known mutually inverse constructions of equivariantisation $\mathbf{Eq}$ and de-equivariantisation $\mathbf{De-Eq}$ \cite{Drinfeld2009} into two steps. Critically, the step corresponding to the main Theorem of this paper requires Tannaka duality, while the other does not. The situation is summarised by the following commutative triangle of mutually inverse symmetric monoidal equivalences:
\begin{center}
	\begin{tikzcd}
	&			(\ZAXBT,\cattens{s}) \arrow[ddl,shift left,"\mathbf{DeEnrich}"]	\arrow[ddr,shift left,"\overline{ \overline{(-)}}"]		&\\
	& & \\
	(\BFCA,\cattens{\tn{red}}) \arrow[ruu,shift left,"\underleftarrow{\underline{(-)}}"] \arrow[rr,shift left,"\mathbf{De-Eq}"] & & (\GXBT,\cattens{G})\arrow[ll,shift left, "\mathbf{Eq}"]\arrow[luu,shift left,"\mathbf{Fix}"],
	\end{tikzcd}
\end{center}
in the case that $\cat{A}=\Rep(G)$, and $\mathbf{DeEnrich}$ denotes the inverse to $\underleftarrow{\underline{(-)}}$. A similar diagram exists for the super-Tannakian case. The leftmost edge of this diagram makes no reference to Tannaka duality. This article concerns itself with the rightmost edge of this triangle.

The structure of the present paper is as follows. We will first, in Section \ref{ZXdccbcsect}, develop the theory of $\dcentcat{A}$-crossed braided categories. Section \ref{ZXdcxbsupertannasect} is devoted to stating the definition of a (super) $G$-crossed braided category, and proving Theorem \ref{ZXbarbariscrossedbraided}, which tells us $\dcentcat{A}$-crossed braided categories give rise to (super)-$G$-crossed braided categories. After this, in Section \ref{ZXgxtozax}, we construct an inverse to this construction, and prove in Section \ref{ZXequivgxzx} our main Theorem \ref{ZXzaxbtgxbteqv}.

\subsubsection*{Acknowledgements}
The author would like to thank Chris Douglas and Andr\'e Henriques for their guidance during the project. An earlier version of the manuscript benefited greatly from Michael M\"uger's and Ulrike Tillmann's comments. 

The research presented here was made possible through financial support from the Engineering and Physical Sciences Research Council for the United Kingdom, the Prins Bernhard Cultuurfonds, the Hendrik Mullerfonds, the Foundation Vrijvrouwe van Renswoude and the Vreedefonds. The author is supported by the by the Centre for
Symmetry and Deformation at the University of Copenhagen (CPH-SYM-DNRF92) and Nathalie Wahl's European Research Council Consolidator Grant (772960).

\section{$\dcentcat{A}$-crossed braided categories}\label{ZXdccbcsect}
We now set up the theory of $\dcentcat{A}$-crossed braided categories.

\begin{notation}\label{ZXnotation2fold}
We will write $\dcentcat{A}_s$ for the Drinfeld centre of a symmetric ribbon fusion category $\cat{A}$ equipped with the symmetric tensor product from  \cite{Wasserman2017}.
In \cite{Wasserman2017a} it was shown that $(\dcentcat{A},\otimes_c,\otimes_s)$ is bilax 2-fold tensor. Following the notation there, we will denote the compatibility morphisms by $(\eta,u_0,u_1,u_2)$ for the lax direction and $(\zeta, v_0,v_1,v_2)$ for the oplax direction. That is, we have morphisms:
\begin{align}
\eta_{c,c',d,d'}\colon (c\otimes_c c')\otimes_s (d\otimes_c d') &\leftrightarrows (c\otimes_s d)\otimes_c(c'\otimes_s d'):\!\zeta_{c,d,c',d'}\label{ZXetazeta}\\
u_0\colon  \mathbb{I}_s &\leftrightarrows \mathbb{I}_c :\! v_0\label{ZXu0v0def}\\
u_1\colon \mathbb{I}_c \otimes_s \mathbb{I}_c & \leftrightarrows \mathbb{I}_c:\!v_2\label{ZXu1v2def}\\
u_2\colon \mathbb{I}_s & \leftrightarrows \mathbb{I}_s \otimes_c \mathbb{I}_s:\!v_1,\label{ZXv1u2def}
\end{align}
where $\mathbb{I}_c$ and $\mathbb{I}_s$ denote the monoidal units for $\otimes_c$ and $\otimes_s$, respectively. These morphisms satisfy the additional equations:
$$
\eta\circ\zeta = \tn{id},\quad u_0\circ v_0=\tn{id}, \quad u_1\circ v_2=\tn{id},\quad v_1\circ u_2=\tn{id},
$$ 
and the morphisms $u_1$ and $v_2$ are inverse isomorphisms. These additional relations make $\dcentcat{A}$ into what is called a strongly inclusive bilax 2-fold monoidal tensor in \cite{Wasserman2017a}. The morphisms above are also compatible with the braiding $\beta$ for $\otimes_c$ and the symmetry $s$ for $\otimes_s$ in the sense that
\begin{equation}\label{ZXbraidingcompatible}
\eta_{c',c,d',d}\circ(\beta_{c,c'}\otimes_s\beta_{d,d'})=\beta_{c\otimes_s d,c'\otimes_s d'}\circ\eta_{c,c',d,d'},
\end{equation}
and similar conditions are satisfied. This is spelled out further in \cite[Definition \ref{DCbraideddef}]{Wasserman2017a}.

\end{notation}

\subsection{$\dcentcat{A}_s$-enriched categories}
In this section we will develop the theory of $\dcentcat{A}_s$-enriched and tensored categories. On top of some basic lemmas, we will need some notions the theory of abelian categories, such as idempotent completeness, simple objects and semi-simplicity. We will start by recalling the relevant notions for $\Vect$-enriched categories, and then generalise these to $\dcentcat{A}$-enriched categories.

\subsubsection{Linear categories}
If a category is enriched in and tensored over the category of vector spaces, it is an additive category. We will call such a category \emph{linear} if it is additionally abelian for this additive structure. An abelian category is called \emph{semi-simple} if every object is isomorphic to a direct sum of finitely many simple objects. We additionally need the notion of idempotent completeness. Recall that an endomorphism $f$ of an object $d$ in a category $\cat{D}$ is called \emph{idempotent} if $f^2=f$. 

\begin{df}
	A category $\cat{D}$ is called \emph{idempotent complete} if for every idempotent $f\in \End_{\cat{D}}(d)$ there is an object $d_f$ (a \emph{subobject associated to $f$}) together with a monic $i_f:d_f\hookrightarrow d$ and an epi $p_f:d\twoheadrightarrow d_f$ such that $i_f\circ p_f=f$.
\end{df}

\subsubsection{The associated $\Vect$-category}
To be able to use notions from the theory of abelian categories, it is useful to observe that associated to any $\dcentcat{A}_s$-enriched category, there is a $\Vect$-enriched category.

\begin{df}\label{ZXlin}
	Let $\cat{C}$ be a $\dcentcat{A}_s$-enriched and tensored category, then its associated $\Vect$-enriched and tensored category $\Lin(\cat{C})$ is the category obtained by changing basis (Proposition \ref{ZXchangebasisobjects}) along the functor
	$$
	\dcentcat{A}_s(\mathbb{I}_s,-):\dcentcat{A}\rar \Vect.
	$$
\end{df}

\begin{df}\label{ZXZAlindef}
	A $\dcentcat{A}_s$-enriched and tensored category is called $\dcentcat{A}$-linear if its associated $\Vect$-enriched category $\Lin(\cat{C})$ is linear, and semi-simple if $\Lin(\cat{C})$ is semi-simple.
\end{df}

\begin{rmk}
	In our context of enriched category theory, where the enriching category is abelian, care has to be taken in interpreting the equation $f^2=f$. The appropriate interpretation is as an equality between \emph{members} (for details on these see \cite[Page 204]{MacLane}).
	
	One can show that idempotent completeness of $\Lin(\cat{C})$ and of $\cat{C}$ are equivalent for any category $\cat{C}$ enriched in $\dcentcat{A}_s$, or more generally any category enriched in and tensored over a monoidal category.
\end{rmk}

In the second half of this article, we will mainly be concerned with the following $\dcentcat{A}_s$-enriched categories:

\begin{df}
	A \emph{FD $\dcentcat{A}_s$-category} is an idempotent complete semi-simple $\dcentcat{A}_s$-linear category with a finite number of isomorphism classes of simple objects.
\end{df}

\subsubsection{$\dcentcat{A}$-tensoring}
We will need the following fact about the interaction between the other tensor product $\otimes_c$ on $\dcentcat{A}$ and the $\dcentcat{A}_s$-enriched and tensored structure.

\begin{prop}\label{ZXatensequivcond}
	Let $\cat{K}$ be an $\dcentcat{A}_s$-enriched and tensored category, and denote its $\dcentcat{A}_s$-tensoring by $\cdot$. Then we have for $a\in\cat{A}\subset\dcentcat{A}$:
	\begin{equation}\label{ZXtensequivisos}
	a\otimes_c \cat{K}(-,k)\mathop{\Rightarrow}\limits^{\cong} \cat{K}(-,(a\otimes_c \mathbb{I}_s)\cdot k).
	\end{equation}
\end{prop}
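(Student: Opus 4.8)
The plan is to obtain the claimed natural isomorphism by composing two separate isomorphisms: one that is purely about the $\dcentcat{A}_s$-tensoring, and one that records how convolution by an object of $\cat{A}$ interacts with the symmetric product. Throughout, I regard both $a\otimes_c\cat{K}(-,k)$ and $\cat{K}(-,(a\otimes_c\mathbb{I}_s)\cdot k)$ as $\dcentcat{A}_s$-valued (contravariant) presheaves on $\cat{K}$, with $k$ and $a$ fixed, and produce an isomorphism natural in the remaining variable.

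First I would record the covariant \emph{projection formula} for the tensoring, namely a natural isomorphism
\begin{equation*}
\cat{K}(x,c\cdot k)\cong c\otimes_s\cat{K}(x,k)
\end{equation*}
for all $c\in\dcentcat{A}$ and $x,k\in\cat{K}$. This follows from the defining adjunction of the $\dcentcat{A}_s$-tensoring together with the rigidity of $\dcentcat{A}_s$: in the FD setting the tensoring $c\cdot(-)$ agrees with the cotensoring by the dual $c^{\vee}$, and $\ehom_s(c^{\vee},\cat{K}(x,k))\cong c\otimes_s\cat{K}(x,k)$. I would treat this as one of the basic lemmas on tensored categories developed earlier in this section.

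The second, and genuinely harder, ingredient is the structural isomorphism
\begin{equation*}
a\otimes_c d\cong(a\otimes_c\mathbb{I}_s)\otimes_s d,
\end{equation*}
natural in $d\in\dcentcat{A}$, for $a\in\cat{A}\subset\dcentcat{A}$. A candidate comparison map is assembled from the bilax compatibility morphism $\eta$ of \eqref{ZXetazeta} together with the unit comparison $v_0\colon\mathbb{I}_c\to\mathbb{I}_s$ of \eqref{ZXu0v0def}, as the composite
\begin{equation*}
(a\otimes_c\mathbb{I}_s)\otimes_s d\;\to\;(a\otimes_c\mathbb{I}_s)\otimes_s(\mathbb{I}_s\otimes_c d)\;\to\;(a\otimes_s\mathbb{I}_s)\otimes_c(\mathbb{I}_s\otimes_s d)\;\cong\;a\otimes_c d,
\end{equation*}
where the first arrow is $\id\otimes_s(v_0\otimes_c\id_d)$, the second is $\eta_{a,\mathbb{I}_s,\mathbb{I}_s,d}$, and the last uses the unit isomorphisms for $\otimes_s$. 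A candidate inverse runs through $\zeta$ followed by $\id\otimes_s(u_0\otimes_c\id_d)$. I expect this step to be the main obstacle: since the strongly inclusive relations of Notation \ref{ZXnotation2fold} only give $\eta\circ\zeta=\id$ (so $\zeta$ is merely a section of $\eta$, and $\eta,\zeta$ are not invertible in general), showing that these two composites are mutually inverse is not formal. Proving invertibility must genuinely use that $a$ lies in the image of $\cat{A}$, where the half-braiding is the symmetry and interacts compatibly with the construction of $\otimes_s$; I anticipate invoking the relevant property of the embedding $\cat{A}\hookrightarrow\dcentcat{A}$ from \cite{Wasserman2017,Wasserman2017a} to pin this down.

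Finally I would substitute $d=\cat{K}(x,k)$ and chain the two ingredients, reading
\begin{equation*}
a\otimes_c\cat{K}(x,k)\;\cong\;(a\otimes_c\mathbb{I}_s)\otimes_s\cat{K}(x,k)\;\cong\;\cat{K}\bigl(x,(a\otimes_c\mathbb{I}_s)\cdot k\bigr),
\end{equation*}
the first isomorphism being the structural one and the second the projection formula. It then remains to verify that this composite is natural in $x$, i.e.\ compatible with the $\dcentcat{A}_s$-presheaf structure: on the right-hand side this is the representable structure, while on the left it is built by moving $\otimes_c a$ past composition $\cat{K}(x',x)\otimes_s(-)$ using $\eta$. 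Checking that the chain of isomorphisms intertwines these amounts to a diagram chase reducing to the associativity/coherence of the bilax structure and the braiding compatibility \eqref{ZXbraidingcompatible} of $\eta$ with $\beta$ and $s$.
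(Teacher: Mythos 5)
Your proposal is correct and follows essentially the same route as the paper: the paper's proof is exactly your two-step composite, citing the structural isomorphism $a\otimes_c z\cong(a\otimes_c\mathbb{I}_s)\otimes_s z$ for $a\in\cat{A}$ as \cite[Lemma \ref{STconvvssymunit}]{Wasserman2017} (so the non-formal invertibility you rightly flag is precisely the content of that cited lemma and need not be rebuilt from $\eta$, $\zeta$, $u_0$, $v_0$), and then applying the tensoring isomorphism $c\otimes_s\cat{K}(k,k')\cong\cat{K}(k,c\cdot k')$, which is the paper's Lemma \ref{ZXaintohom} --- your projection formula, established the same way via duals in the enriching category (no FD hypothesis needed). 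Substituting $d=\cat{K}(k,k')$ and recording naturality in all variables then finishes the proof exactly as in the paper.
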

\begin{proof}
	By \cite[Lemma \ref{STconvvssymunit}]{Wasserman2017}, for $z\in \dcentcat{A}$ and $a \in \cat{A}$, we have:
	$$
	a \otimes_c z \cong	(a\otimes_c \mathbb{I}_s) \otimes_s z.
	$$
	By Lemma \ref{ZXaintohom}, this means that we have for all $k,k'\in\cat{K}$:
	$$
	a \otimes_c \cat{K}(k,k')\cong \cat{K}(k, (a \otimes_c \mathbb{I}_s)\cdot k'),
	$$
	and this isomorphism is natural in $a$, $k$ and $k'$.
\end{proof}

\subsubsection{The symmetric monoidal bicategory of $\dcentcat{A}$-enriched and tensored categories}
\begin{df}\label{ZXzalincat}
	The \emph{2-category $\lcat{Z(A)}$ of $\dcentcat{A}_s$-linear idempotent complete categories} has morphisms $\dcentcat{A}_s$-enriched functors $F\colon \cat{K}\rar \cat{K}'$ which respect the $\dcentcat{A}_s$-tensoring, and 2-morphisms $\dcentcat{A}_s$-enriched natural tranformations $\eta$ satisfying $\eta_{ak}=\id_a \eta_k$.

\end{df}

As this is a 2-category of categories enriched over a symmetric category, it comes equipped with a symmetric monoidal structure, see Definition \ref{ZXaproddef}.

\begin{df}\label{ZXcattenssdef}
	We will denote by $\cattens{s}$ the \emph{Cauchy completion of the enriched Cartesian product of $\dcentcat{A}_s$-enriched and tensored categories}. (The notion of Cauchy completion is  defined in Definition \ref{ZXcauchycompl}.)
\end{df}

In analogy with the situation for $\lincat$, see \cite{Bartlett2015a}, we expect:

\begin{conj}
	The FD $\dcentcat{A}$-categories are the fully dualisable objects in $\lcat{Z(A)}$.
\end{conj}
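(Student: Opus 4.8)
The plan is to prove the two inclusions separately, constructing the dualizability data by hand for the forward direction and deducing the converse from the $\Vect$-enriched case of \cite{Bartlett2015a} via the change-of-basis 2-functor $\Lin$. First I would exhibit, for an FD $\dcentcat{A}_s$-category $\cat{C}$, its dualizability data in the symmetric monoidal bicategory $(\lcat{Z(A)},\cattens{s})$. The candidate dual is the opposite category $\cat{C}^{\op}$, enriched by precomposing the hom-objects with the symmetry $s$ of $\otimes_s$; since $\cat{C}^{\op}$ is again an FD $\dcentcat{A}_s$-category it lies in $\lcat{Z(A)}$. The evaluation $\cat{C}^{\op}\cattens{s}\cat{C}\rar\mathbf{1}$ is built from the enriched Hom-functor $\cat{C}(-,-)$, which after Cauchy completion lands in the unit $\dcentcat{A}_s$-category $\mathbf{1}$, and the coevaluation $\mathbf{1}\rar\cat{C}\cattens{s}\cat{C}^{\op}$ picks out the object $\bigoplus_i X_i\bx X_i^{\op}$, the sum over a set of representatives $\{X_i\}$ of the finitely many isomorphism classes of simple objects. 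The zigzag identities then reduce to the $\dcentcat{A}_s$-linear incarnation of the decomposition of the identity endofunctor as $\bigoplus_i X_i\otimes\cat{C}(X_i,-)$; this is where semi-simplicity and finiteness of the set of simples are used, and it is a direct computation once the hom-objects between simples are understood.

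Next I would produce adjoints to the evaluation. The key point is that any $\dcentcat{A}_s$-enriched functor between FD $\dcentcat{A}_s$-categories is, after decomposing source and target into simples, a finite ``matrix'' of objects of $\dcentcat{A}$, and its left and right adjoints are obtained by transposing this matrix and replacing each entry by its $\otimes_c$-dual. Rigidity of the fusion category $\dcentcat{A}$ supplies the required dual objects for $\otimes_c$, while Proposition \ref{ZXatensequivcond}, which converts the $\otimes_c$-action into the $\dcentcat{A}_s$-tensoring, is precisely what lets these $\otimes_c$-duals act on the $\otimes_s$-enriched hom-objects to yield genuine enriched adjoint functors respecting the tensoring. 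Having exhibited both a left and a right adjoint of the evaluation inside $(\lcat{Z(A)},\cattens{s})$, full dualizability of $\cat{C}$ follows from the standard criterion, as used in \cite{Bartlett2015a}, that in a symmetric monoidal bicategory a dualizable object whose evaluation admits both adjoints is fully dualizable.

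For the converse I would verify that the change-of-basis functor $\Lin=\DE\colon\lcat{Z(A)}\rar\lincat$ of Definition \ref{ZXlin} is a symmetric monoidal 2-functor: change of basis along the strong symmetric monoidal functor $\dcentcat{A}_s(\mathbb{I}_s,-)\colon(\dcentcat{A},\otimes_s)\rar(\Vect,\otimes)$ carries $\cattens{s}$ to the corresponding product on $\lincat$ and preserves units. Since symmetric monoidal 2-functors preserve all dualizability data, a fully dualizable $\cat{C}$ has $\Lin(\cat{C})$ fully dualizable in $\lincat$, hence finite semi-simple by \cite{Bartlett2015a}. By Definition \ref{ZXZAlindef} this is exactly the statement that $\cat{C}$ is semi-simple $\dcentcat{A}$-linear, and finiteness of the number of simple objects transfers across $\Lin$ because $\dcentcat{A}$ itself has finitely many simples, so each simple of $\cat{C}$ contributes only finitely many simples to $\Lin(\cat{C})$ and conversely. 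Together with the idempotent completeness already built into $\lcat{Z(A)}$, this shows that $\cat{C}$ is an FD $\dcentcat{A}_s$-category.

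I expect the main obstacle to be the construction of the adjoints to the evaluation, and specifically the interplay of the two tensor products: rigidity lives on the convolution product $\otimes_c$, whereas the enrichment and the monoidal structure $\cattens{s}$ live on the symmetric product $\otimes_s$. Consequently the bilax $2$-fold compatibility morphisms $(\eta,\zeta)$ of Notation \ref{ZXnotation2fold}, together with Proposition \ref{ZXatensequivcond}, must be deployed with care to check that the transposed-and-dualised matrices really assemble into $\dcentcat{A}_s$-enriched functors that respect the tensoring and satisfy the triangle identities of the adjunction at the enriched level.
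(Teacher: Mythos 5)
First, a point of order: the paper does not prove this statement at all --- it is stated explicitly as a conjecture (``In analogy with the situation for $\lincat$ \dots we expect''), so there is no proof of record to compare yours against. Your proposal is a reasonable plan in the style of \cite{Bartlett2015a}, but as written it contains two genuine gaps, one in each direction, and in any case it remains a sketch: the zigzag identities and the triangle identities for the adjoints --- which you yourself identify as the crux --- are deferred rather than verified, so the conjecture would not be settled even if the gaps below were repaired.

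The gap in your converse is fatal as stated: $\Lin=\dcentcat{A}_s(\mathbb{I}_s,-)$ is \emph{not} strong symmetric monoidal, only lax. In the model $\dcentcat{A}\simeq\Vect_G[G]$ with $\otimes_s$ the fibrewise product, $\dcentcat{A}_s(\mathbb{I}_s,V)$ is the space of invariant global sections, and invariant sections of $V\otimes_s W$ do not factor as the product of invariant sections (take $G$ abelian and $V=W$ a line over $g\neq e$ carrying a character $\chi$ with $\chi^2=1$: then $\dcentcat{A}_s(\mathbb{I}_s,V)=0$ but $\dcentcat{A}_s(\mathbb{I}_s,V\otimes_s V)\neq 0$). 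More blatantly, the unit of $(\lcat{Z(A)},\cattens{s})$ is $\dcentcat{A}_s$ enriched over itself (Proposition \ref{ZXunitenrcart}), and $\Lin$ of that is $\dcentcat{A}$, not $\Vect$, so $\Lin$ does not even preserve units. Since lax monoidal 2-functors do not preserve dualizability data, the deduction ``$\cat{C}$ fully dualizable $\Rightarrow\Lin(\cat{C})$ fully dualizable in $\lincat$'' collapses. A repair would have to argue directly, e.g.\ that the coevaluation object of a dual pair in the Cauchy completion $\cat{C}\cattens{s}\cat{C}^{\op}$ involves finitely many objects and the zigzag exhibits every object of $\cat{C}$ as a retract of a finite sum, yielding finiteness and semi-simplicity without passing through a monoidal comparison functor.

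In the forward direction you have the wrong duality. The enrichment, the tensoring, and the composition of your ``matrix'' functors all live over $\otimes_s$, so the adjoint of a functor with matrix entries $M_{ij}\in\dcentcat{A}$ is the transpose with entries the $\otimes_s$-duals (fibrewise duals), not the $\otimes_c$-duals: the required isomorphism is of the shape $\cat{D}(M\cdot Y,Y')\cong M^{\vee_s}\otimes_s\cat{D}(Y,Y')$, coming from the internal hom for $\otimes_s$ as in Lemma \ref{ZXaintohom}. Moreover Proposition \ref{ZXatensequivcond}, which you invoke to convert $\otimes_c$-duals into the tensoring, only applies to objects $a\in\cat{A}\subset\dcentcat{A}$, whereas your matrix entries are arbitrary objects of $\dcentcat{A}$; it cannot do the work you assign it. The fix is conceptually simpler than what you propose --- rigidity of the fibrewise product makes the bilax morphisms $(\eta,\zeta)$ unnecessary for constructing the adjoints --- but it does require establishing (or citing from \cite{Wasserman2017}) that $(\dcentcat{A},\otimes_s)$ is rigid with its non-simple unit, a point the present paper nowhere asserts. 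Your appeal to the criterion that a dualizable object whose evaluation admits both adjoints is fully dualizable is fine, but everything feeding into it needs the corrections above.
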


\subsubsection{The associated $\cat{A}$-enriched category}
As the reader might expect, $\dcentcat{A}_s$-enriched categories are intimately related to $\cat{A}$-enriched categories. We spell out how to produce an $\cat{A}$-enriched category from a $\dcentcat{A}_s$-enriched category in this section. This construction will also be an important ingredient in the second half of the paper.

\begin{df}\label{ZXatensorforzaenr}
	Let $\cat{K}$ be a $\dcentcat{A}_s$-enriched category. Then the \emph{associated $\cat{A}$-enriched category} $ \overline{\cat{K}}$ for $\cat{K}$ is the category obtained by applying the forgetful functor $\mathbf{Forget}:\dcentcat{A}_s\rar \cat{A}$.
\end{df}

\begin{lem}
	$ \overline{\cat{K}}$ is indeed an $\cat{A}$-enriched category.
\end{lem}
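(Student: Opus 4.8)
The plan is to treat this as an instance of change of enrichment base, exactly as for $\Lin(\cat{C})$ in Definition \ref{ZXlin}: applying a lax monoidal functor $F\colon\cat{V}\to\cat{W}$ to the hom-objects of a $\cat{V}$-enriched category produces a $\cat{W}$-enriched category (this is the mechanism behind Proposition \ref{ZXchangebasisobjects}). I would invoke this with $\cat{V}=(\dcentcat{A},\otimes_s)$, $\cat{W}=(\cat{A},\otimes_{\cat{A}})$ and $F=\mathbf{Forget}$. Under this reduction the entire statement comes down to equipping $\mathbf{Forget}\colon\dcentcat{A}_s\to\cat{A}$ with a lax monoidal structure relative to $\otimes_s$ and the tensor product $\otimes_{\cat{A}}$ of $\cat{A}$; everything else is then formal.

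First I would pin down this lax structure, and here the key subtlety is that, although $\mathbf{Forget}$ is \emph{strong} monoidal for the convolution product $\otimes_c$, it is only \emph{lax} (not strong) for $\otimes_s$. In the model $\cat{A}=\Rep(G)$, where $\dcentcat{A}$ is realised as conjugation-equivariant sheaves on $G$ and $\otimes_s$ is the fibrewise product, the underlying object $\mathbf{Forget}(z\otimes_s z')=\bigoplus_g z_g\otimes z'_g$ is a proper summand of $\mathbf{Forget}(z)\otimes_{\cat{A}}\mathbf{Forget}(z')=\bigoplus_{g,h} z_g\otimes z'_h$. I would take the laxity constraint $\mathbf{Forget}(z)\otimes_{\cat{A}}\mathbf{Forget}(z')\to\mathbf{Forget}(z\otimes_s z')$ to be the associated diagonal projection (which is $G$-equivariant because conjugation preserves the diagonal), and the unit constraint $\mathbf{1}_{\cat{A}}\to\mathbf{Forget}(\mathbb{I}_s)$ to be the inclusion of the constants into the forgotten unit (the invariant vector in the regular representation in the $\Rep(G)$ picture). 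Invariantly, both maps are produced by the construction of $\otimes_s$ in \cite{Wasserman2017}, so the cleanest route is to cite that $\mathbf{Forget}$ is lax (symmetric) monoidal there rather than to build the constraints by hand; note that one cannot shortcut this through $\otimes_c$, since the diagonal projection is only natural after the grading has been forgotten and does not lift to a morphism $z\otimes_c z'\to z\otimes_s z'$ in $\dcentcat{A}$.

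With the lax structure in hand, $\overline{\cat{K}}$ acquires hom-objects $\mathbf{Forget}(\cat{K}(x,y))$, composition given by the laxity constraint followed by $\mathbf{Forget}$ of the composition of $\cat{K}$, and units given by the unit constraint followed by $\mathbf{Forget}$ of the identities; the $\cat{A}$-enriched associativity and unit axioms then follow formally from those of $\cat{K}$ together with the coherence of the lax constraints. I expect the only genuine work to be the verification of that coherence (the hexagon/triangle-type identities for the lax structure of $\mathbf{Forget}$), which I would reduce to the monoidal coherence of $\otimes_s$ established in \cite{Wasserman2017}. Once that is granted, the lemma is an immediate consequence of the change-of-base principle of Proposition \ref{ZXchangebasisobjects}.
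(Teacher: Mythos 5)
Your proposal is correct and is exactly the paper's argument: the proof there consists of citing \cite{Wasserman2017} for the fact that $\mathbf{Forget}\colon\dcentcat{A}_s\rar\cat{A}$ is lax monoidal and then invoking the change-of-base principle of Proposition \ref{ZXchangebasisobjects}. Your additional unpacking of the lax constraint as the diagonal projection in the $\Vect_G[G]$ model is consistent with how $\otimes_s$ is constructed there (via the idempotent $\Pi_{c,c'}$ on $c\otimes_c c'$), but it is outsourced to \cite{Wasserman2017} in the paper just as you ultimately suggest.
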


\begin{proof}
	By \cite[Proposition \ref{STforgetislaxmon}]{Wasserman2017}, the forgetful functor is lax monoidal, so this is a direct consequence of Proposition \ref{ZXchangebasisobjects}.
\end{proof}

Analogous to Definition \ref{ZXlin}, we define:
\begin{df}
	Let $\cat{C}$ be a $\cat{A}$-enriched and tensored category, then its \emph{associated $\Vect$-enriched and tensored category} $\Lin(\cat{C})$ is the category obtained by change of basis along the functor $\cat{A}(\mathbb{I}_\cat{A},-)\colon \cat{A}\rar \Vect$.
\end{df}

The abuse of notation in this definition is justified by the following:

\begin{lem}\label{ZXdeenrichequivmeth}
	The functors $\dcentcat{A}(\mathbb{I}_s,-)$ and $\cat{A}(\mathbb{I}_\cat{A},-)\circ\mathbf{Forget}$ from $\dcentcat{A}$ to $\Vect$ are canonically isomorphic.
\end{lem}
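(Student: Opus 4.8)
The plan is to produce the natural isomorphism from the lax monoidal structure of $\mathbf{Forget}$ and then to recognise it as an instance of an induction--restriction adjunction. First I would use that $\mathbf{Forget}\colon\dcentcat{A}_s\rar\cat{A}$ is lax monoidal (\cite[Proposition \ref{STforgetislaxmon}]{Wasserman2017}), so in particular it carries a unit comparison morphism $e\colon\mathbb{I}_\cat{A}\rar\mathbf{Forget}(\mathbb{I}_s)$. Using this I would define, for each $z\in\dcentcat{A}$, a linear map
$$\Phi_z\colon\dcentcat{A}(\mathbb{I}_s,z)\rar\cat{A}(\mathbb{I}_\cat{A},\mathbf{Forget}(z)),\qquad f\mapsto \mathbf{Forget}(f)\circ e.$$
Naturality of $\Phi$ in $z$ is immediate from functoriality of $\mathbf{Forget}$ and needs no computation, so the content of the lemma lies entirely in showing that each $\Phi_z$ is a bijection.

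For that, the key input is an identification of the symmetric unit $\mathbb{I}_s$ with the free, or induced, central object on the tensor unit of $\cat{A}$. Concretely, $\mathbf{Forget}$ admits a left adjoint $I\colon\cat{A}\rar\dcentcat{A}$, the induction functor, and I would use that the construction of $\otimes_s$ in \cite{Wasserman2017} exhibits a canonical isomorphism $\mathbb{I}_s\cong I(\mathbb{I}_\cat{A})$ under which the lax unit comparison $e$ becomes the unit of the adjunction $I\dashv\mathbf{Forget}$ at $\mathbb{I}_\cat{A}$. Granting this, the adjunction supplies a natural isomorphism $\dcentcat{A}(I(\mathbb{I}_\cat{A}),z)\cong\cat{A}(\mathbb{I}_\cat{A},\mathbf{Forget}(z))$ whose description $g\mapsto\mathbf{Forget}(g)\circ(\text{unit})$ coincides with $\Phi_z$; hence $\Phi_z$ is an isomorphism for every $z$, natural in $z$, proving the claim.

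The main obstacle is exactly this identification step: verifying that $\mathbb{I}_s\cong I(\mathbb{I}_\cat{A})$ and that the lax unit morphism $e$ agrees with the adjunction unit, both of which must be extracted from the explicit definition of the symmetric tensor product in \cite{Wasserman2017} rather than from formal nonsense. If a clean match of $e$ with the adjunction unit proves awkward, an alternative is to avoid naming the adjunction and to check bijectivity of $\Phi_z$ directly: both source and target are additive and exact in $z$ on the semisimple category $\dcentcat{A}$, so it suffices to compare them on a simple $z$, where the left side computes the multiplicity of $z$ in $\mathbb{I}_s$ and the right side the multiplicity of $\mathbb{I}_\cat{A}$ in $\mathbf{Forget}(z)$; these agree by Frobenius reciprocity for $I$, equivalently from the description of $\mathbf{Forget}(\mathbb{I}_s)$ as the regular object $\bigoplus_i X_i\otimes X_i^{*}$. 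Either way the formal skeleton is the lax-unit construction of $\Phi$, and only the surjectivity and injectivity rely on the structure of $\mathbb{I}_s$.
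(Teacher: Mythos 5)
Your main route is correct, but it is genuinely different from the paper's. The paper argues directly from the definition of hom-spaces in $\dcentcat{A}$: a morphism $\mathbb{I}_s\rar(a,\beta)$ is an intertwiner of half-braidings, and after transposing along duality, $\cat{A}(\mathbf{Forget}(\mathbb{I}_s),a)\cong\cat{A}(\mathbf{Forget}(\mathbb{I}_s)\otimes a^*,\mathbb{I}_\cat{A})$, the intertwining condition picks out precisely the morphisms factoring through the maximal subobject of $\mathbf{Forget}(\mathbb{I}_s)\otimes a^*$ on which the two braidings agree; by the key structural result of \cite{Wasserman2017} this subobject is the object underlying $\mathbb{I}_s\otimes_s a^*\cong a^*$, whence $\dcentcat{A}(\mathbb{I}_s,(a,\beta))\cong\cat{A}(a^*,\mathbb{I}_\cat{A})\cong\cat{A}(\mathbb{I}_\cat{A},a)$. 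You instead package everything into the induction--restriction adjunction $I\dashv\mathbf{Forget}$ together with the identification $\mathbb{I}_s\cong I(\mathbb{I}_\cat{A})$, which is true (in the Tannakian model $\mathbb{I}_s=\C\times G$ is exactly the induced/regular object) and is essentially how the unit for $\otimes_s$ is built in \cite{Wasserman2017}. Your route is more conceptual and shorter once that identification is granted, and it makes the canonicity of the isomorphism transparent; note that for the lemma as stated you do not even need your lax-unit morphism $e$ to match the adjunction unit, since the composite $\dcentcat{A}(\mathbb{I}_s,-)\cong\dcentcat{A}(I(\mathbb{I}_\cat{A}),-)\cong\cat{A}(\mathbb{I}_\cat{A},\mathbf{Forget}(-))$ is already a canonical natural isomorphism. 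The paper's computation buys something else: it uses only facts already in play elsewhere in the paper (the description of $\otimes_s$ as the subobject where half-braidings agree, and unitality of $\mathbb{I}_s$) and never introduces the induction functor, so the verification burden you flag as your ``main obstacle'' is exactly the content the paper's proof discharges by hand.

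One caveat on your fallback argument: comparing dimensions on simple $z$ via Frobenius reciprocity shows the two hom-spaces are abstractly isomorphic, but it does not show that your specific map $\Phi_z(f)=\mathbf{Forget}(f)\circ e$ is bijective --- a natural map between spaces of equal dimension can still have a kernel, and nothing in the dimension count rules out $\mathbf{Forget}(f)\circ e=0$ for $f\neq 0$. Injectivity of $\Phi_z$ is in fact equivalent to the adjunction-unit identification you were trying to avoid, so the fallback does not actually bypass the obstacle; only your primary route closes the argument.
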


\begin{proof}
	Let $c=(a,\beta)\in\dcentcat{A}$ and denote by $s$ the symmetry in $\cat{A}$. By definition of $\dcentcat{A}$, the hom space $\dcentcat{A}(\mathbb{I}_s,(a,\beta))$ is the subspace of $\cat{A}(\mathbf{Forget}(\mathbb{I}_s),a)$ of those morphisms that intertwine the half-braiding $\gamma$ on $\mathbb{I}_s$ and $\beta$. Using the adjunction $\cat{A}(\mathbf{Forget}(\mathbb{I}_s),a)\cong\cat{A}(\mathbf{Forget}(\mathbb{I}_s)a^*,\mathbb{I}_\cat{A})$, this intertwining condition translates to picking out those morphisms on $\mathbf{Forget}(\mathbb{I}_s)a^*$ for which pre-composing with $s\otimes\beta$ is the same as pre-composing with $\gamma \otimes s$. In other words, we are picking out those morphisms which factor through the maximal subobject of $\mathbf{Forget}(\mathbb{I}_s)a^*$ on which $s\otimes \beta$ and $\gamma \otimes s$ agree. But by \cite[Lemma \ref{STkeyprop}]{Wasserman2017}, this is the object of $\cat{A}$ underlying $\mathbb{I}_s\otimes_sa^*$, which, as $\mathbb{I}_s$ is the unit for $\otimes_s$, is canonically isomorphic to $a^*$. So we have established:
	$$
	\dcentcat{A}(\mathbb{I}_s,(a,\beta))\cong \cat{A}(a^*,\mathbb{I}_\cat{A})\cong\cat{A}(\mathbb{I}_\cat{A},a),
	$$
	canonically.
\end{proof}

By Proposition \ref{ZXlaxnattrafotofunctor}, we therefore have:

\begin{cor}\label{ZXlinislinbar}
	Let $\cat{K}$ be a $\dcentcat{A}$-enriched and tensored category. Then
	$$
	\Lin(\cat{K})\cong\Lin(\overline{ \cat{K}}).
	$$
\end{cor}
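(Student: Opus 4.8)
The plan is to recognise both sides of the asserted isomorphism as change-of-basis constructions (Proposition~\ref{ZXchangebasisobjects}) applied to the \emph{single} enriched category $\cat{K}$, and then to reduce the statement to a comparison of the underlying lax monoidal functors $\dcentcat{A}\rar\Vect$ along which those changes of basis are performed. In other words, the content of the corollary should be nothing more than the functoriality of change of basis together with the canonical identification already established in Lemma~\ref{ZXdeenrichequivmeth}.

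First I would unfold the right-hand side. By Definition~\ref{ZXatensorforzaenr}, $\overline{\cat{K}}$ is the change of basis of $\cat{K}$ along $\mathbf{Forget}\colon\dcentcat{A}_s\rar\cat{A}$, and $\Lin(\overline{\cat{K}})$ is in turn the change of basis of $\overline{\cat{K}}$ along $\cat{A}(\mathbb{I}_\cat{A},-)\colon\cat{A}\rar\Vect$. Since change of basis is functorial in the lax monoidal functor used --- changing basis along a composite being canonically identified with composing the two individual change-of-basis operations --- the iterated construction $\Lin(\overline{\cat{K}})$ is canonically isomorphic to the change of basis of $\cat{K}$ along the single composite lax monoidal functor $\cat{A}(\mathbb{I}_\cat{A},-)\circ\mathbf{Forget}\colon\dcentcat{A}\rar\Vect$. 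On the other side, $\Lin(\cat{K})$ is by Definition~\ref{ZXlin} the change of basis of $\cat{K}$ along $\dcentcat{A}(\mathbb{I}_s,-)$.

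It therefore suffices to compare these two enriching functors. By Lemma~\ref{ZXdeenrichequivmeth} the functors $\cat{A}(\mathbb{I}_\cat{A},-)\circ\mathbf{Forget}$ and $\dcentcat{A}(\mathbb{I}_s,-)$ are canonically isomorphic; granting that this isomorphism is compatible with the lax monoidal structures, I would feed it into Proposition~\ref{ZXlaxnattrafotofunctor}, which promotes a canonical monoidal natural isomorphism of lax monoidal functors to a canonical isomorphism of the associated change-of-basis categories. Chaining the three identifications --- the functoriality collapse of the two-step construction, the functor comparison of Lemma~\ref{ZXdeenrichequivmeth}, and the output of Proposition~\ref{ZXlaxnattrafotofunctor} --- then yields $\Lin(\overline{\cat{K}})\cong\Lin(\cat{K})$, as required.

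The step I expect to require the most care is verifying that the canonical isomorphism produced in Lemma~\ref{ZXdeenrichequivmeth} is genuinely monoidal, that is, compatible with the lax structure maps of the two functors, since Proposition~\ref{ZXlaxnattrafotofunctor} applies only to monoidal natural isomorphisms; once that is in hand, the matching of hom-objects, identities and composition follows formally. Because the statement concerns only the associated $\Vect$-categories $\Lin(-)$, whose data lives entirely on hom-objects, the enriched-functor comparison is enough and the $\dcentcat{A}_s$-tensoring needs no separate treatment.
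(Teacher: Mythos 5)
Your proposal is correct and takes essentially the same route as the paper, which deduces the corollary in one line from Lemma \ref{ZXdeenrichequivmeth} together with Proposition \ref{ZXlaxnattrafotofunctor}, leaving the collapse of the iterated change of basis implicit. The monoidality check you flag is precisely the point the paper elides, so your version is, if anything, slightly more careful.
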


The assignment of taking $\dcentcat{A}$-categories to $\cat{A}$-categories extends to a 2-functor:
$$
\overline{(-)}\colon \lcat{Z(A)}\rar \lcat{A},
$$
where $\lcat{A}$ was defined in Definition \ref{ZXalincatdef}.

The 2-functor $\overline{(-)}$ will in fact be bilax symmetric monoidal, as the following Lemma indicates. Let $c=(a,\beta)$ and $c'=(a',\beta')$ be objects of $\dcentcat{A}$. Recall from \cite{Wasserman2017} that the symmetric tensor product $c\otimes_s c'$ is defined in terms of the image of  a subobject $c\otimes_\Pi c'$, associated to an idempotent $\Pi_{c,c'}$ on $c\otimes_c c'$, under the forgetful map. This subobject of $c\otimes_cc'$ comes with an inclusion and a projection. We will denote the images of these morphisms under the forgetful functor by 
$$
i_{c,c'}\colon \mathbf{Forget}(c\otimes_\Pi c') \rar a a' 
$$
and  
$$
p_{c,c'}\colon a a'\rar  \mathbf{Forget}(c\otimes_\Pi c') ,
$$
respectively. We can use these to examine how the assignment $\cat{K}\mapsto\overline{ \cat{K}}$ interacts with the enriched Cartesian products:

\begin{lem}\label{ZXforgetonsymprod}
	For $\cat{K}$ and $\cat{L}$ be $\dcentcat{A}_s$-enriched categories, we have that the map
	$$
	\overline{P}\colon  \overline{\cat{K}}\cattens{\cat{A}}\overline{\cat{L}} \rar \overline{\cat{K}\cattens{s}\cat{L}},
	$$
	which acts the identity on objects and as $p_{-,-}$ on hom-objects, is an $\cat{A}$-enriched functor. It has a right-sided inverse:
	$$
	\overline{I}\colon\overline{\cat{K}\cattens{s}\cat{L}} \rar \overline{\cat{K}}\cattens{\cat{A}}\overline{\cat{L}},
	$$
	which is also the identity on objects, and acts as $i_{-,-}$ on hom-objects.
\end{lem}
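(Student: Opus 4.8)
The plan is to check the two enriched-functor axioms --- compatibility with units and with composition --- for $\overline{P}$ and $\overline{I}$ directly on hom-objects, and then to read the right-inverse relation off the idempotent-splitting identities. Write $|c|:=\mathbf{Forget}(c)$, so that the hom-objects of $\overline{\cat{K}}\cattens{\cat{A}}\overline{\cat{L}}$ are $|c|\otimes|d|$ and those of $\overline{\cat{K}\cattens{s}\cat{L}}$ are $|c\otimes_s d|$, where $c=\cat{K}(k,k')$, $d=\cat{L}(l,l')$. Because $\mathbf{Forget}$ is strong monoidal for $\otimes_c$ (the underlying object of $c\otimes_c d$ is $|c|\otimes|d|$) and $c\otimes_s d$ is the retract of $c\otimes_c d$ split by $\Pi_{c,d}$, the maps $p_{c,d}\colon |c|\otimes|d|\to|c\otimes_s d|$ and $i_{c,d}\colon|c\otimes_s d|\to|c|\otimes|d|$ are exactly the lax and oplax comparison morphisms of $\mathbf{Forget}\colon(\dcentcat{A},\otimes_s)\to(\cat{A},\otimes)$; they satisfy $p_{c,d}\circ i_{c,d}=\id$ and $i_{c,d}\circ p_{c,d}=|\Pi_{c,d}|$. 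I would first define $\overline{P}$ and $\overline{I}$ on the enriched Cartesian products (identity on objects, $p_{-,-}$ resp.\ $i_{-,-}$ on homs), verify there that they are enriched functors, and then extend them to the Cauchy completions by the universal property; this uses that $\overline{\cat{K}\cattens{s}\cat{L}}$ is idempotent complete, which transfers from Cauchy completeness of $\cat{K}\cattens{s}\cat{L}$ through the identification $\Lin(\cat{K}\cattens{s}\cat{L})\cong\Lin(\overline{\cat{K}\cattens{s}\cat{L}})$ of Corollary \ref{ZXlinislinbar}.

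The right inverse and the unit axiom are immediate. Since $\overline{P}\circ\overline{I}$ is the identity on objects and acts on homs by $p_{c,d}\circ i_{c,d}=\id$, we get $\overline{P}\circ\overline{I}=\id_{\overline{\cat{K}\cattens{s}\cat{L}}}$, so $\overline{I}$ is a right-sided inverse. Compatibility with units amounts to the statement that $p_{-,-}$ (resp.\ $i_{-,-}$) sends the unit of $\overline{\cat{K}}\cattens{\cat{A}}\overline{\cat{L}}$ at $(k,l)$ to that of $\overline{\cat{K}\cattens{s}\cat{L}}$, which is the unit coherence for the lax (resp.\ oplax) structure of $\mathbf{Forget}$ together with naturality of the comparisons.

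The substantive point is compatibility with composition. Composition in $\cat{K}\cattens{s}\cat{L}$ is assembled from the symmetry $s_s$ of $\otimes_s$ (interchanging the two inner hom-objects) followed by the separate compositions $\circ_{\cat{K}}$ and $\circ_{\cat{L}}$, while composition in $\overline{\cat{K}}\cattens{\cat{A}}\overline{\cat{L}}$ uses the symmetry $s_{\cat{A}}$ of $\cat{A}$ in the same slot. After applying $\mathbf{Forget}$ and inserting the relevant comparison morphisms, the required square decomposes into three ingredients: naturality of $p_{-,-}$ (resp.\ $i_{-,-}$) with respect to $\circ_{\cat{K}}$ and $\circ_{\cat{L}}$; the associativity coherence of the lax (resp.\ oplax) structure, which rewrites $p_{c\otimes_s d,\,c'\otimes_s d'}$ (resp.\ $i_{c\otimes_s d,\,c'\otimes_s d'}$) in terms of the iterated comparisons on $|c|,|d|,|c'|,|d'|$; and the identity $i_{d,c}\circ|s_s|=s_{\cat{A}}\circ i_{c,d}$ (together with its dual $p_{d,c}\circ s_{\cat{A}}=|s_s|\circ p_{c,d}$). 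This last identity says precisely that $\mathbf{Forget}$ is a \emph{symmetric} (op)lax monoidal functor; it is the forgotten form of the braiding compatibility \eqref{ZXbraidingcompatible}, and I would quote it from \cite{Wasserman2017}.

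The main obstacle is exactly this composition square: one must check that the idempotents $\Pi$ produced when the $\otimes_s$-operations are unwound do not obstruct the match with the $s_{\cat{A}}$-composition. Conceptually the whole lemma is an instance of the general fact that a symmetric lax (resp.\ oplax) monoidal functor induces a functor on enriched Cartesian products, with structure $1$-morphism given on hom-objects by the lax (resp.\ oplax) comparison; the only non-formal inputs are that the lax and oplax structures of $\mathbf{Forget}$ are $p_{-,-}$ and $i_{-,-}$, that these split $\Pi$, and that both are symmetric --- all of which are available from \cite{Wasserman2017} and \cite{Wasserman2017a}.
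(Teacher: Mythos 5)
Your proposal is correct and takes essentially the same route as the paper: the paper's own proof simply cites Proposition \ref{ZXchangebasisproduct} (a symmetric lax monoidal functor --- here $\mathbf{Forget}$, with lax structure $p_{-,-}$ and oplax structure $i_{-,-}$ --- induces functors on enriched Cartesian products) together with $p\circ i=\id$ and the $2$-functoriality of change of basis (Proposition \ref{ZXchangebasisispseudofunct}), and your hom-object verification (naturality, associativity coherence, and the symmetry square for the (op)lax structure, plus the extension over Cauchy completions) is precisely the proof of that cited proposition unpacked inline. The one small slip --- immaterial since you defer to \cite{Wasserman2017} for the actual fact --- is attributing the symmetry identity to Equation \eqref{ZXbraidingcompatible}, which concerns the interchange $\eta$ between $\otimes_c$ and $\otimes_s$ rather than the symmetric (op)lax monoidality of $\mathbf{Forget}$ with respect to $\otimes_s$.
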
 

\begin{proof}
	The first part is a direct consequence of Proposition \ref{ZXchangebasisproduct}. That $\overline{I}$ is a right-sided inverse follows from $p\circ i=\id$ and Proposition \ref{ZXchangebasisispseudofunct}.
\end{proof}

\subsubsection{The Neutral Subcategory}
In the section above, we considered the forgetful functor $\dcentcat{A}\rar \cat{A}$. As $\cat{A}$ is symmetric, we also have a braided monoidal (for either monoidal structure on $\dcentcat{A}$) functor $\cat{A}\hookrightarrow \dcentcat{A}$. This allows us to view any $\cat{A}$-enriched category as a $\dcentcat{A}_s$-enriched category. To test whether a $\dcentcat{A}_s$-enriched category $\cat{K}$ is in the image of this 2-functor, we introduce the \emph{neutral subcategory} $\cat{K}_\cat{A}$. The definition is such that $\cat{K}=\cat{K}_\cat{A}$ if and only if $\cat{K}$ comes from a $\cat{A}$-enriched category. We define $\cat{K}_\cat{A}$ by:

\begin{df}\label{ZXneutralsubcat}
	Let $\cat{K}$ be a $\dcentcat{A}_s$-enriched category. Then the full subcategory on objects $k\in \cat{K}$ for which the Yoneda embedding factors as:
	$$
	\cat{K}(-,k)\colon  \cat{K}^{\tn{op}} \rar \cat{A} \hookrightarrow \dcentcat{A},
	$$
	is called the \emph{neutral subcategory} of $\cat{K}$ and will be denoted by $\cat{K}_\cat{A}$.
\end{df}

There is another characterisation of the neutral subcategory:

\begin{lem}\label{ZXneutralsubcatendochar}
	Let $k$ be an object of a $\dcentcat{A}_s$-enriched category $\cat{K}$. This object is in $\cat{K}_\cat{A}$ if and only if the endomorphism object $\cat{K}(k,k)$ of $k$ is an object of $\cat{A}\subset \dcentcat{A}$.
\end{lem}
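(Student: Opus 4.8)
The forward implication is immediate: if $k\in\cat{K}_\cat{A}$ then by Definition \ref{ZXneutralsubcat} the functor $\cat{K}(-,k)$ takes values in $\cat{A}\subset\dcentcat{A}$, and specialising to $k'=k$ gives $\cat{K}(k,k)\in\cat{A}$. The whole content is the converse, so I assume $\cat{K}(k,k)\in\cat{A}$ and aim to show $\cat{K}(k',k)\in\cat{A}$ for every object $k'$; since $\cat{A}\subset\dcentcat{A}$ is the full subcategory of objects carrying the symmetric half-braiding, this amounts to controlling the half-braiding on each $\cat{K}(k',k)$.

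The plan is to exhibit $\cat{K}(k',k)$ as a retract, in $\dcentcat{A}$, of the object $\mathbb{I}_c\otimes_s\cat{K}(k',k)$, and then to use that the latter always lies in $\cat{A}$. The retraction will be produced from the enriched unit law applied to the identity $j_k\colon\mathbb{I}_s\rar\cat{K}(k,k)$ of $k$. First I would record that, because $\cat{K}(k,k)$ lies in $\cat{A}$, the map $j_k$ factors through the projection $u_0\colon\mathbb{I}_s\rar\mathbb{I}_c$: indeed, Lemma \ref{ZXdeenrichequivmeth} identifies $\dcentcat{A}(\mathbb{I}_s,\cat{K}(k,k))$ with $\cat{A}(\mathbb{I}_\cat{A},\mathbf{Forget}\,\cat{K}(k,k))\cong\dcentcat{A}(\mathbb{I}_c,\cat{K}(k,k))$, and under the splitting $u_0\circ v_0=\id_{\mathbb{I}_c}$ of Notation \ref{ZXnotation2fold} this identification is precomposition with $v_0$, so that $j_k=j_k'\circ u_0$ for a unique $j_k'\colon\mathbb{I}_c\rar\cat{K}(k,k)$. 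Writing $l$ for the left unit isomorphism and $M$ for composition, the enriched unit axiom reads $M\circ(j_k\otimes_s\id)=l$ on $\mathbb{I}_s\otimes_s\cat{K}(k',k)$; substituting $j_k=j_k'\circ u_0$ and rearranging exhibits
$$
\id_{\cat{K}(k',k)}=\big[M\circ(j_k'\otimes_s\id)\big]\circ\big[(u_0\otimes_s\id)\circ l^{-1}\big],
$$
which factors the identity of $\cat{K}(k',k)$ through $\mathbb{I}_c\otimes_s\cat{K}(k',k)$, as desired.

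It then remains to see that $\mathbb{I}_c\otimes_s z$ lies in $\cat{A}$ for every $z\in\dcentcat{A}$ --- this is the ``projection onto the neutral part'' effected by the convolution unit, and I expect it to follow from the analysis of $\otimes_s$ in \cite{Wasserman2017} (in the model $\cat{A}=\Rep(G)$, where $\dcentcat{A}$ is equivariant bundles over $G$ and $\otimes_s$ is fibrewise, $\mathbb{I}_c$ is the fibre at $e$, so $\mathbb{I}_c\otimes_s z$ is supported at $e$ and hence neutral). Granting this, $\mathbb{I}_c\otimes_s\cat{K}(k',k)\in\cat{A}$, and since $\cat{A}$ is a full subcategory of $\dcentcat{A}$ closed under direct summands, the retract $\cat{K}(k',k)$ also lies in $\cat{A}$. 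As $k'$ was arbitrary, $\cat{K}(-,k)$ factors through $\cat{A}$ and $k\in\cat{K}_\cat{A}$. The main obstacle is precisely this neutral-projection input $\mathbb{I}_c\otimes_s z\in\cat{A}$, together with pinning down that the isomorphism of Lemma \ref{ZXdeenrichequivmeth} is implemented by $v_0$ (equivalently, that $j_k$ factors through $u_0$); the remaining unit-law bookkeeping is routine coherence.
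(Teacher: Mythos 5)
Correct, and essentially the paper's proof: both arguments use the enriched unit law to factor $\id_{\cat{K}(k',k)}$ through a $\otimes_s$-product with an object of $\cat{A}$ and then conclude via \cite[Proposition \ref{STsymtenswitha}]{Wasserman2017} together with closure of the full subcategory $\cat{A}\subset\dcentcat{A}$ under retracts. Your detour through $u_0$ and $\mathbb{I}_c$ is superfluous, though harmless: the paper applies that proposition directly to $\cat{K}(k,k)\otimes_s\cat{K}(k',k)$, which lies in $\cat{A}$ as soon as $\cat{K}(k,k)$ does, so both your factorisation of $j_k$ through $u_0$ and your ``neutral-projection'' input $\mathbb{I}_c\otimes_s z\in\cat{A}$ (which is indeed true, and a special case of the same proposition) can be bypassed.
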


\begin{proof}
	The ``only if'' direction is obvious. For the other direction, observe that for any $k'\in \cat{K}$, we have an automorphism of $\cat{K}(k',k) $ given by the composite
	$$
	\cat{K}(k',k) \cong \mathbb{I}_s \otimes_s \cat{K}(k',k) \xrightarrow{\id_k} \cat{K}(k,k)\otimes_s \cat{K}(k',k) \xrightarrow{\circ}	\cat{K}(k',k).
	$$
	Assuming that $\cat{K}(k,k)\in \cat{A}\subset \dcentcat{A}$, we see, by \cite[Proposition \ref{STsymtenswitha}]{Wasserman2017}, that this automorphism factors through an object of $\cat{A}$, and hence that $\cat{K}(k',k)$ is an object of $\cat{A}$. 
\end{proof}

By \cite[Proposition \ref{STsymtenswitha}]{Wasserman2017}, the subcategory $\cat{A}\subset\dcentcat{A}_s$ annihilates its complement. This translates to the following for the product $\cattens{s}$ of $\dcentcat{A}_s$-enriched and tensored categories from Definition \ref{ZXcattenssdef}:

\begin{prop}\label{ZXtensofneutralpart}
	Let $\cat{K}$ and $\cat{L}$ be $\dcentcat{A}_s$-enriched and tensored categories. Then:
	$$
	(\cat{K}\cattens{s}\cat{L})_\cat{A}\cong\cat{K}\cattens{s}\cat{L}_\cat{A}\cong\cat{K}_\cat{A}\cattens{s}\cat{L}\cong\overline{\cat{K}_\cat{A}}\cattens{\cat{A}}\overline{\cat{L}_\cat{A}},
	$$
	where we view the $\cat{A}$-enriched and tensored category on the right as $\dcentcat{A}_s$-enriched and tensored category by using the symmetric strong monoidal inclusion functor $\cat{A}\hookrightarrow \dcentcat{A}$.
\end{prop}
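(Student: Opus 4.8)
The plan is to collapse all four categories onto the common ``doubly neutral'' core $\cat{K}_\cat{A}\cattens{s}\cat{L}_\cat{A}$. Because $\cattens{s}$ is symmetric, it is enough to produce enriched equivalences
\[
(\cat{K}\cattens{s}\cat{L})_\cat{A}\cong\cat{K}_\cat{A}\cattens{s}\cat{L}_\cat{A},\qquad \cat{K}_\cat{A}\cattens{s}\cat{L}\cong\cat{K}_\cat{A}\cattens{s}\cat{L}_\cat{A}\cong\overline{\cat{K}_\cat{A}}\cattens{\cat{A}}\overline{\cat{L}_\cat{A}};
\]
the mixed term $\cat{K}\cattens{s}\cat{L}_\cat{A}$ is then obtained from $\cat{K}_\cat{A}\cattens{s}\cat{L}$ by the $\cat{K}\leftrightarrow\cat{L}$ symmetry of $\cattens{s}$.

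The mechanism behind the first two equivalences is a splitting of every object into a neutral and a ``non-neutral'' summand. Since $\dcentcat{A}$ is fusion and $\cat{A}\subset\dcentcat{A}$ is a fusion subcategory, each $z\in\dcentcat{A}$ decomposes canonically as $z\cong z_\cat{A}\oplus z^\perp$ with $z_\cat{A}\in\cat{A}$ maximal and $z^\perp$ in the complementary subcategory $\cat{A}^\perp$, and the projection $\pi_z\colon z\to z$ onto $z_\cat{A}$ is a natural idempotent. Feeding this into a hom-object, $e_k:=\pi_{\cat{K}(k,k)}\circ\id_k\colon\mathbb{I}_s\to\cat{K}(k,k)$ is an idempotent member whose splitting in the idempotent complete category $\cat{K}$ gives $k\cong k_\cat{A}\oplus k^\perp$ with $k_\cat{A}\in\cat{K}_\cat{A}$ (by Lemma \ref{ZXneutralsubcatendochar}) and $\cat{K}(k^\perp,k^\perp)\in\cat{A}^\perp$. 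That $e_k$ is idempotent rests on the projection $\pi$ being monoidal for $\otimes_s$, i.e.\ $\pi_{x\otimes_s y}=\pi_x\otimes_s\pi_y$; this I extract from \cite[Proposition \ref{STsymtenswitha}]{Wasserman2017}, whose annihilation statement $a\otimes_s w=0$ for $a\in\cat{A}$, $w\in\cat{A}^\perp$ kills the cross terms, while a short argument with the decomposition of the unit $\mathbb{I}_s$ shows in addition that $\cat{A}^\perp$ is closed under $\otimes_s$, so that the neutral part of a tensor product is exactly the tensor product of the neutral parts.

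Granting this, the absorption is uniform. For an object $(k,l)$ of $\cat{K}_\cat{A}\cattens{s}\cat{L}$ I split the second factor; the summand $(k,l^\perp)$ has endomorphism object $\cat{K}_\cat{A}(k,k)\otimes_s\cat{L}(l^\perp,l^\perp)\in\cat{A}\otimes_s\cat{A}^\perp=0$, so $(k,l)\cong(k,l_\cat{A})$ and, since $\cat{L}_\cat{A}\hookrightarrow\cat{L}$ is fully faithful on neutral hom-objects, $\cat{K}_\cat{A}\cattens{s}\cat{L}\cong\cat{K}_\cat{A}\cattens{s}\cat{L}_\cat{A}$ after Cauchy completion. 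For the neutral subcategory, Lemma \ref{ZXneutralsubcatendochar} identifies its objects with those $(k,l)$ for which $\cat{K}(k,k)\otimes_s\cat{L}(l,l)\in\cat{A}$, equivalently $\cat{K}(k,k)^\perp\otimes_s\cat{L}(l,l)^\perp=0$; splitting both factors into four summands, the two mixed summands die by $\cat{A}\otimes_s\cat{A}^\perp=0$ and the remaining $(k^\perp,l^\perp)$ dies by exactly this neutrality condition, leaving $(\cat{K}\cattens{s}\cat{L})_\cat{A}\cong\cat{K}_\cat{A}\cattens{s}\cat{L}_\cat{A}$.

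The last equivalence is a direct application of Lemma \ref{ZXforgetonsymprod}, which supplies $\overline{P}$ and a right inverse $\overline{I}$ between $\overline{\cat{K}_\cat{A}}\cattens{\cat{A}}\overline{\cat{L}_\cat{A}}$ and $\overline{\cat{K}_\cat{A}\cattens{s}\cat{L}_\cat{A}}$: on hom-objects already lying in $\cat{A}$ the defining idempotent $\Pi$ of $\otimes_s\subset\otimes_c$ is the identity, since $\otimes_c$ and $\otimes_s$ restrict to the same tensor product on $\cat{A}$, so $\overline{P}$ and $\overline{I}$ are mutually inverse there; re-including along $\cat{A}\hookrightarrow\dcentcat{A}$ identifies $\overline{\cat{K}_\cat{A}}$ with $\cat{K}_\cat{A}$ and closes the chain. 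The one genuine obstacle is the monoidality of the neutral projection $\pi$ for $\otimes_s$: this is what makes the members $e_k$ idempotent and licenses discarding the non-neutral summands, and it is where \cite[Proposition \ref{STsymtenswitha}]{Wasserman2017} does the real work; the rest is bookkeeping with idempotent splittings in the Cauchy completion.
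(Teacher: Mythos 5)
Your proof is correct, and it runs on the same engine as the paper's --- \cite[Proposition \ref{STsymtenswitha}]{Wasserman2017}, i.e.\ that $\cat{A}\subset\dcentcat{A}_s$ annihilates its complement and that the neutral part of $z\otimes_s w$ is $z_\cat{A}\otimes_s w_\cat{A}$ --- but the packaging genuinely differs. The paper builds the comparison functor $\overline{\cat{K}_\cat{A}}\cattens{\cat{A}}\overline{\cat{L}_\cat{A}}\rar\cat{K}\cattens{s}\cat{L}$ directly, using the inclusion $\cat{A}\hookrightarrow\dcentcat{A}$ on hom-objects, and proves essential surjectivity onto $(\cat{K}\cattens{s}\cat{L})_\cat{A}$ by showing that $(i_k)_*\otimes_s(i_l)_*$ is an isomorphism of Yoneda embeddings $\cat{K}(-,k_\cat{A})\otimes_s\cat{L}(-,l_\cat{A})\cong\cat{K}(-,k)\otimes_s\cat{L}(-,l)$, obtaining the middle equivalences by truncating only one factor; you instead collapse all four categories onto the common core $\cat{K}_\cat{A}\cattens{s}\cat{L}_\cat{A}$ and kill the complementary summands by showing their endomorphism objects vanish, and you route the final equivalence through Lemma \ref{ZXforgetonsymprod} together with the triviality of the idempotent $\Pi$ on $\cat{A}\subset\dcentcat{A}$, which the paper's proof never invokes. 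These are two faces of the same computation --- the paper's Yoneda isomorphism is precisely the statement that the perp summand contributes nothing to any hom-object into a neutral object --- so the proofs are interchangeable, but your version buys one real gain in rigour: the paper simply ``denotes the summands contained in $\cat{K}_\cat{A}$ and $\cat{L}_\cat{A}$'' without justifying their existence, whereas your idempotent $e_k=\pi_{\cat{K}(k,k)}\circ\id_k$, together with the check that composition intertwines the neutral projections (morphisms between $\cat{A}$ and its complement vanish, and the mixed $\otimes_s$-summands die), actually produces the splitting $k\cong k_\cat{A}\oplus k^\perp$ with $\cat{K}(k^\perp,k^\perp)\in\cat{A}^\perp$. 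Two small remarks: the splitting requires idempotent completeness of $\cat{K}$ (or can be performed inside the Cauchy completion $\cat{K}\cattens{s}\cat{L}$, which suffices for every equivalence in the chain) --- an assumption the paper makes just as implicitly; and closure of $\cat{A}^\perp$ under $\otimes_s$ needs no separate argument with the decomposition of the unit $\mathbb{I}_s$, since it is immediate from the cited proposition: the $\cat{A}$-part of $z\otimes_s w$ is $z_\cat{A}\otimes_s w_\cat{A}$, which vanishes when both $\cat{A}$-parts do.
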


\begin{proof}
	We prove the equivalence between the left- and rightmost categories first. The category on the left hand side is a full subcategory of $\cat{K}\cattens{s}\cat{L}$, we define a functor $\overline{\cat{K}_\cat{A}}\cattens{\cat{A}}\overline{\cat{L}_\cat{A}}\rar \cat{K}\cattens{s}\cat{L}$ by using the inclusion $\cat{A}\hookrightarrow \dcentcat{A}$ on hom-objects and claim its essential image is $(\cat{K}\cattens{s}\cat{L})_\cat{A}$.
	
	To show this functor is essentially surjective onto $(\cat{K}\cattens{s}\cat{L})_\cat{A}$, let $k\boxtimes l$ be an object of $(\cat{K}\cattens{s}\cat{L})_\cat{A}$. For this $k$ and $l$, denote the summands contained in $\cat{K}_\cat{A}$ and $\cat{L}_\cat{A}$ by $k_\cat{A}$ and $l_\cat{A}$, respectively. We claim that:
	$$
	k_\cat{A}\boxtimes l_\cat{A}\cong k\boxtimes l.
	$$
	We will show this by examining their Yoneda embeddings. The object on the left hand side has Yoneda embedding $\cat{K}(-,k_\cat{A})\otimes_s \cat{L}(-,l_\cat{A})$ whereas the right hand side has $\cat{K}(-,k)\otimes_s \cat{L}(-,l)$, and we claim that the image of the inclusions $i_k:k_\cat{A}\hookrightarrow k$ and $i_l:l_\cat{A}\hookrightarrow l$ is a natural isomorphism between these functors. Let $k'\boxtimes l'$ be an object of $\cat{K}\cattens{s}\cat{L}$, then we want to show that
	$$
	\cat{K}(k',k_\cat{A}) \otimes_s \cat{L}(l',l_\cat{A}) \xrightarrow{(i_k)_*\otimes_s (i_l)_*} \cat{K}(k',k) \otimes_s \cat{L}(l',l)
	$$
	is an isomorphism. By \cite[Proposition \ref{STsymtenswitha}]{Wasserman2017}, the symmetric tensor product of two objects in $\dcentcat{A}$ is a non-zero object of $\cat{A}$ if and only if the $\cat{A}$-summands of these objects are non-zero, and the part that lies in $\cat{A}$ is the product of these summands. The objects $\cat{K}(k',k_\cat{A})$ and  $\cat{L}(l',l_\cat{A})$ are the $\cat{A}$-summands of $\cat{K}(k',k_\cat{A})$ and $\cat{L}(l',l_\cat{A})$ respectively, so the claim follows. The same argument also establishes the functor is fully faithful.
	
	To see the other equivalences, note that the argument above also works if we only take the neutral summand of $k$ or $l$.
\end{proof}

In other words, the essential image of the symmetric monoidal 2-functor $\cat{A}\lincat\rar \dcentcat{A}_s\lincat$ is a tensor ideal for $\cattens{s}$. 

The inclusion $\cat{A}\rar \dcentcat{A}$ has a (two-sided) adjoint defined by taking simples in $\cat{A}$ to simples in $\cat{A}$ and simples in the complement of $\cat{A}$ in $\dcentcat{A}$ to $0$. This induces an adjoint 2-functor $\dcentcat{A}\lincat\rar \cat{A}\lincat$, which exposes $\cat{A}\lincat$ as a reflective subcategory of $\dcentcat{A}_s\lincat$. In particular, taking a $\dcentcat{A}_s$-linear category to its neutral subcategory is 2-functorial, as this is just the composition of this adjoint with inclusion.

\subsection{$\dcentcat{A}$-crossed categories}
Up to this point, we have concerned ourselves with spelling out elements of enriched category theory for the case where the enriching category is $\dcentcat{A}_s$ or a category related to it by a (lax) monoidal functor. We will now use the bilax 2-fold monoidal structure on $\dcentcat{A}$ from \cite{Wasserman2017a} to introduce the novel notion of a crossed tensor structure on a $\dcentcat{A}_s$-enriched category. Where normally an enriched monoidal category is defined as an enriched category with a monoidal structure that factors through the enriched cartesian product, we will here introduce the \emph{convolution product}\footnote{Named after the convolution product on $\Vect_{G}[G]$, which corresponds to $\otimes_c$ on $\dcentcat{A}$ under the Tannaka duality $\cat{A}\cong \Rep(G)$, see \cite{Wasserman2017}.} of $\dcentcat{A}_s$-enriched categories and define a crossed tensor product on these to be a monoidal structure that factors through this convolution product.

\subsubsection{Convolution Product of $\dcentcat{A}_s$-Enriched Categories}
\begin{df}\label{ZXconvprod}
	Let $\cat{K},\cat{L}$ be categories enriched over $\dcentcat{A}_s$. Then the \emph{convolution product $\cat{K}\cattens{c}\cat{L}$ of $\cat{K}$ and $\cat{L}$} is the Cauchy completion of the $(\dcentcat{A},\symt)$-enriched category with objects symbols $k\boxtimes l$ for $k\in\cat{K}$ and $l\in\cat{L}$, and hom-objects
	$$
	{\cat{K}\cattens{c}\cat{L}}(k\boxtimes l,k'\boxtimes l'):=\cat{K}(k,k')\cont\cat{L}(l,l').
	$$
	The composition is defined by the composite of the projection $\eta$ from Equation \eqref{ZXetazeta} and the compositions in $\cat{K}$ and $\cat{L}$. The identity morphisms are given by the composite
	\begin{equation}\label{ZXconvprodunit}
	\tn{I}_{k\boxtimes l}\colon \mathbb{I}_s\xrightarrow{u_2}  \mathbb{I}_s \cont \mathbb{I}_s \xrightarrow{\tn{I}_k\cont \tn{I}_l} \cat{K}(k,k)\cont\cat{L}(l,l),
	\end{equation}
	where $\tn{I}_k$ and $\tn{I}_l$ correspond to the identity morphisms on $k$ and $l$, respectively. The morphism $u_2$ was introduced in Equation \ref{ZXv1u2def}.
\end{df}

This is equivalenty the Cauchy completion of the category obtained by change of basis along the functor $\otimes_c\colon \dcentcat{A}_s\boxtimes\dcentcat{A}_s\rar \dcentcat{A}_s$ for the Deligne tensor product $\cat{K}\boxtimes\cat{L}$, and hence it is $\dcentcat{A}_s$-enriched by Proposition \ref{ZXchangebasisobjects}. Similarly to the situation for linear categories, the Cauchy completion ensures that, if the input categories are, then the resulting category is semi-simple with finitely many simples.

We also have:
\begin{cor}
	If $\cat{K}$ and $\cat{L}$ are $\dcentcat{A}_s$-enriched and tensored, then $\cat{K}\cattens{c}\cat{L}$ is $\dcentcat{A}_s$-tensored, with tensoring
	$$
	a (k\boxtimes l):= (ak\boxtimes l) \cong (k\boxtimes al).
	$$
\end{cor}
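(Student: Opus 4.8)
The plan is to separate the statement into two parts: first, that $\cat{K}\cattens{c}\cat{L}$ admits a $\dcentcat{A}_s$-tensoring at all, and second, that this tensoring is computed on the generating objects $k\boxtimes l$ by the displayed formula. For the existence I would lean on the fact, recorded just after Definition \ref{ZXconvprod}, that $\cat{K}\cattens{c}\cat{L}$ is constructed as a Cauchy completion and is therefore a Cauchy-complete, semisimple $\dcentcat{A}_s$-linear category with finitely many simples. Since $\dcentcat{A}_s$ is rigid, every object is dualisable, so the copower (tensoring) $z\odot(k\boxtimes l)$ by an object $z\in\dcentcat{A}$ is an absolute (Cauchy) colimit and hence already exists in any Cauchy-complete $\dcentcat{A}_s$-enriched category. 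This produces the $\dcentcat{A}_s$-tensoring abstractly and reduces the problem to identifying it explicitly on representing objects.

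For the explicit description I read the symbol $ak$ as the object $(a\otimes_c\mathbb{I}_s)\cdot k$ of $\cat{K}$ appearing in Proposition \ref{ZXatensequivcond}, which there represents $a\otimes_c\cat{K}(-,k)$. The computation I would carry out is on hom-objects. On the one hand,
$$(\cat{K}\cattens{c}\cat{L})(k'\boxtimes l', ak\boxtimes l)=\cat{K}(k', ak)\otimes_c\cat{L}(l',l)\cong\left(a\otimes_c\cat{K}(k',k)\right)\otimes_c\cat{L}(l',l),$$
using Proposition \ref{ZXatensequivcond} (equivalently Lemma \ref{ZXaintohom}) in the second step, and this is isomorphic to $a\otimes_c\left(\cat{K}(k',k)\otimes_c\cat{L}(l',l)\right)$ by associativity of $\otimes_c$. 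Running the same computation for $k\boxtimes al$ yields $\cat{K}(k',k)\otimes_c\left(a\otimes_c\cat{L}(l',l)\right)$, and I would then commute the factor $a$ to the front using the braiding $\beta$ of $\otimes_c$ together with associativity, again arriving at $a\otimes_c\left(\cat{K}(k',k)\otimes_c\cat{L}(l',l)\right)$. Finally, the relation $a\otimes_c w\cong(a\otimes_c\mathbb{I}_s)\otimes_s w$ from \cite{Wasserman2017} identifies this common hom-object with $(a\otimes_c\mathbb{I}_s)\otimes_s\left(\cat{K}(k',k)\otimes_c\cat{L}(l',l)\right)$, which is exactly the hom-object of the copower of $k\boxtimes l$ by $a\otimes_c\mathbb{I}_s$. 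Checking that these isomorphisms are natural in $k'\boxtimes l'$ and compatible with composition then shows that both $ak\boxtimes l$ and $k\boxtimes al$ represent this copower, giving the two claimed isomorphisms and the formula for the tensoring.

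The main obstacle I anticipate is that the tensoring cannot be defined on generators by the naive rule of inserting $z$ into one tensor factor: for a general $z\in\dcentcat{A}$ one has $(z\otimes_s X)\otimes_c Y\not\cong z\otimes_s(X\otimes_c Y)$, because $\otimes_s$ and $\otimes_c$ are genuinely distinct products related only by the bilax $2$-fold structure of \cite{Wasserman2017a} and not by any module compatibility. This is precisely why the existence of the general tensoring is best handled abstractly through Cauchy-completeness, while the factorwise formula becomes available exactly for objects of the form $a\otimes_c\mathbb{I}_s$ with $a\in\cat{A}$, where associativity of $\otimes_c$ carries the argument. The one genuinely substantive point in the explicit verification is the independence of the construction from the chosen factor, i.e. the isomorphism $ak\boxtimes l\cong k\boxtimes al$; this is where the braiding $\beta$ of $\otimes_c$ enters, and I would take care that the resulting isomorphism is coherent (compatible with the associators of $\otimes_c$ and with composition in $\cat{K}\cattens{c}\cat{L}$) rather than merely objectwise.
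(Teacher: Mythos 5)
Your argument is correct in substance, but it takes a genuinely different route from the paper, which disposes of this Corollary in one line by citing Proposition \ref{ZXcarttensored}. That proposition lives in the enriched cartesian setting, where the tensor on hom-objects and the enriching tensor are the \emph{same} monoidal structure, so its proof can shuttle $a$ through the hom-object via $\underline{\cat{A}}(a,X\otimes Y)\cong X\otimes\underline{\cat{A}}(a,Y)$; transplanted to the convolution product this step would require $(a^{*}\otimes_s X)\otimes_c Y\cong a^{*}\otimes_s(X\otimes_c Y)$, which, as you correctly flag, fails for general $a\in\dcentcat{A}$ (in $\Vect_G[G]$ take all three objects simple and supported at $g\neq e$: the left-hand side is supported at $g^{2}$, the right-hand side is $0$). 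Your restriction of the factorwise formula to objects $a\otimes_c\mathbb{I}_s$ with $a\in\cat{A}$, proved from Proposition \ref{ZXatensequivcond}, associativity of $\otimes_c$ and the braiding, is therefore not over-caution: read for arbitrary $a\in\dcentcat{A}$ the displayed formula is actually false (for $G=\mathbb{Z}/2$ and $\cat{K}=\cat{L}=\dcentcat{A}_s$, the copower of $\mathbb{I}_s\boxtimes\mathbb{I}_s$ by a simple $z$ supported at $g$ works out to a sum of the form $\delta_e\boxtimes\delta_g\oplus\delta_g\boxtimes\delta_e$, not $(z\mathbb{I}_s)\boxtimes\mathbb{I}_s$), and your reading of $ak$ as $(a\otimes_c\mathbb{I}_s)\cdot k$ is exactly the one the paper uses downstream, in Lemma \ref{ZXunitconvprod} and the remark on unitors. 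On the identification step your write-up is thus more careful than the paper's own proof.

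The one step that needs repair is the abstract existence argument. ``Copowers by dualisable objects are absolute colimits, hence exist in any Cauchy-complete category'' is true for the enriched-categorical Cauchy completion, meaning closure under \emph{all} absolute colimits; but Definition \ref{ZXcauchycompl}, which is what Definition \ref{ZXconvprod} actually applies, only adjoins finite direct sums and splits idempotents. For a general $\dcentcat{A}_s$-enriched category this smaller completion does \emph{not} contain copowers: in the completion of the one-object category with endomorphism object $\mathbb{I}_s$, every hom-object is a retract of a finite sum of copies of $\mathbb{I}_s$, so the copower by a simple $z$ not occurring in $\mathbb{I}_s$ --- whose hom-object out to the generator would have to be $z^{*}$ --- cannot exist there. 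For $\cat{K}\cattens{c}\cat{L}$ existence does hold, but only because the generating objects are closed under the factorwise tensorings by arbitrary objects of $\dcentcat{A}$: you should verify that the copower of $k\boxtimes l$ by a simple $z$ is a retract of a finite direct sum $\bigoplus_i(m_i k)\boxtimes(n_i l)$ with $m_i,n_i\in\dcentcat{A}$. In the $\Vect_G[G]$ model this follows by semisimplicity: the copower presheaf is controlled by the pullback $\mu^{*}z^{*}$ along the multiplication $\mu\colon G\times G\rar G$ with its diagonal equivariance, this is a retract of the restriction of an induced $G\times G$-equivariant bundle, and the latter decomposes into external products $m\boxtimes n$. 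With that supplement your proof is complete, and it makes explicit a point that the paper's appeal to Proposition \ref{ZXcarttensored} silently glosses over.
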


\begin{proof}
	This follows from Proposition \ref{ZXcarttensored}. 
\end{proof}

As long as we restrict our attention to categories that are $\dcentcat{A}_s$-tensored, the unit for the convolution product is $\cat{A}$ enriched over $\cat{A}\subset\dcentcat{A}_s$, denoted by $\cat{A}_\cat{Z}$, see Definition \ref{ZXaselfenriched}. If we drop the tensoring, the unit would become the one object category with endomorphism object for this single object $\mathbb{I}_c\in \dcentcat{A}$. This category is not $\dcentcat{A}_s$-tensored, and taking the free $\dcentcat{A}_s$-enriched and $\dcentcat{A}_s$-tensored category on this gives $\cat{A}_\cat{Z}$.

\begin{lem}\label{ZXunitconvprod}
	The convolution product of $\cat{A}_\cat{Z}$ with any $\dcentcat{A}_s$-enriched and tensored category $\cat{K}$ is equivalent to $\cat{K}$.
\end{lem}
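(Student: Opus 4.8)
The plan is to exhibit an explicit $\dcentcat{A}_s$-enriched, tensoring-respecting functor $F\colon\cat{K}\rar\cat{A}_\cat{Z}\cattens{c}\cat{K}$ and to verify it is an equivalence in $\lcat{Z(A)}$. On objects I set $F(k):=\mathbb{I}_\cat{A}\boxtimes k$, where $\mathbb{I}_\cat{A}$ is the monoidal unit of $\cat{A}$, viewed as the object of $\cat{A}_\cat{Z}$ whose endomorphism object is $\cat{A}_\cat{Z}(\mathbb{I}_\cat{A},\mathbb{I}_\cat{A})=\mathbb{I}_\cat{A}=\mathbb{I}_c$. On hom-objects $F$ is the isomorphism
$$\cat{K}(k,k')\xrightarrow{\cong}\mathbb{I}_c\cont\cat{K}(k,k')=\cat{A}_\cat{Z}(\mathbb{I}_\cat{A},\mathbb{I}_\cat{A})\cont\cat{K}(k,k')=(\cat{A}_\cat{Z}\cattens{c}\cat{K})(F(k),F(k'))$$
given by the inverse of the left unit constraint for $\otimes_c$. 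That $F$ is enriched amounts to compatibility with composition and with the identities of Definition \ref{ZXconvprod}: for the latter one checks that post-composing the identity $\mathbb{I}_s\xrightarrow{u_2}\mathbb{I}_s\cont\mathbb{I}_s\xrightarrow{\tn{I}_{\mathbb{I}_\cat{A}}\cont\tn{I}_k}\mathbb{I}_c\cont\cat{K}(k,k)$ of $\mathbb{I}_\cat{A}\boxtimes k$ with the unitor recovers $\tn{I}_k$, a coherence computation using the strongly inclusive relations of Notation \ref{ZXnotation2fold}. Compatibility with the tensoring is immediate from the tensoring isomorphism $z\cdot(\mathbb{I}_\cat{A}\boxtimes k)\cong\mathbb{I}_\cat{A}\boxtimes(z\cdot k)$ on the convolution product established above.

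Since $F$ is by construction an isomorphism on hom-objects, it is fully faithful, so the only substantive point is essential surjectivity onto $\cat{A}_\cat{Z}\cattens{c}\cat{K}$. I would establish this by an enriched Yoneda argument, claiming for $a\in\cat{A}_\cat{Z}$ and $k\in\cat{K}$ that
$$a\boxtimes k\cong\mathbb{I}_\cat{A}\boxtimes\big((a\otimes_c\mathbb{I}_s)\cdot k\big)=F\big((a\otimes_c\mathbb{I}_s)\cdot k\big).$$
Evaluating the representable functors on a test object $a'\boxtimes k'$, the right-hand side gives $\cat{A}_\cat{Z}(a',\mathbb{I}_\cat{A})\cont\cat{K}(k',(a\otimes_c\mathbb{I}_s)\cdot k)$, and Proposition \ref{ZXatensequivcond} rewrites the second factor as $a\cont\cat{K}(k',k)$. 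Using associativity of $\otimes_c$, the internal-hom identity $\cat{A}_\cat{Z}(a',\mathbb{I}_\cat{A})=(a')^*$, and the fact that $\otimes_c$ restricts to the tensor product of $\cat{A}$ on $\cat{A}\subset\dcentcat{A}$, this becomes $((a')^*\otimes_\cat{A}a)\cont\cat{K}(k',k)=\cat{A}_\cat{Z}(a',a)\cont\cat{K}(k',k)$, which is the representable for $a\boxtimes k$. Naturality of the isomorphism of Proposition \ref{ZXatensequivcond} makes this natural in $a'\boxtimes k'$, so the claimed isomorphism follows from the enriched Yoneda lemma.

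Finally, the generating objects $a\boxtimes k$ all lie in the essential image of $F$, and $\cat{A}_\cat{Z}\cattens{c}\cat{K}$ is the Cauchy completion (Definition \ref{ZXcauchycompl}) of the category they span; since Cauchy completion is idempotent, $F$ exhibits $\cat{A}_\cat{Z}\cattens{c}\cat{K}$ as the Cauchy completion of $\cat{K}$, which is $\cat{K}$ itself for the idempotent complete, direct-sum complete categories of $\lcat{Z(A)}$ that we work with. \textbf{Main obstacle.} The delicate step is the essential-surjectivity isomorphism: it is where the two tensor products genuinely interact, and it hinges on correctly identifying the $\dcentcat{A}_s$-tensoring of the unit object $\mathbb{I}_\cat{A}\in\cat{A}_\cat{Z}$ through Proposition \ref{ZXatensequivcond}, together with the compatibility of the restriction of $\otimes_c$ with the internal homs of $\cat{A}_\cat{Z}$; keeping the Cauchy-completion bookkeeping honest, so that ``equivalent to $\cat{K}$'' is read up to completion, is the secondary subtlety.
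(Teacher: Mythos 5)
Your proof is correct, but it runs the equivalence in the opposite direction from the paper. The paper constructs the functor $\cat{A}_\cat{Z}\cattens{c}\cat{K}\rar\cat{K}$, $a\boxtimes k\mapsto(a\otimes_c\mathbb{I}_s)\cdot k$, directly via Proposition \ref{ZXatensequivcond}; there the computational weight sits in full faithfulness, established by the four-step chain $\cat{A}_\cat{Z}(a,a')\otimes_c\cat{K}(k,k')\cong a^*\otimes_c\cat{A}_\cat{Z}(\mathbb{I}_\cat{A},a')\otimes_c\cat{K}(k,k')\cong\cdots\cong\cat{K}((a\otimes_c\mathbb{I}_s)\cdot k,(a'\otimes_c\mathbb{I}_s)\cdot k')$, while essential surjectivity is free by setting $a=\mathbb{I}_c$. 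You instead build the unit-inclusion $F\colon\cat{K}\rar\cat{A}_\cat{Z}\cattens{c}\cat{K}$, $k\mapsto\mathbb{I}_\cat{A}\boxtimes k$, for which full faithfulness is trivial (the left unitor for $\otimes_c$), and you shift the entire computation into essential surjectivity via the enriched Yoneda lemma: your chain $\cat{A}_\cat{Z}(a',\mathbb{I}_\cat{A})\cont\cat{K}(k',(a\otimes_c\mathbb{I}_s)\cdot k)\cong(a')^*\cont a\cont\cat{K}(k',k)\cong\cat{A}_\cat{Z}(a',a)\cont\cat{K}(k',k)$ is precisely the mirror image of the paper's full-faithfulness chain, using the same ingredients (Proposition \ref{ZXatensequivcond}, $\underline{\cat{A}}(a',\mathbb{I}_\cat{A})\cong(a')^*$, and the restriction of $\otimes_c$ to $\cat{A}$). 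What your route buys is that the quasi-inverse is visible from the start and the object-level formula is simpler; what it costs is the extra verification that $F$ is an enriched functor (your coherence check on identities via $u_2$ and the strongly inclusive relations, which you correctly flag but only sketch, at roughly the paper's own level of rigour) and the invocation of the enriched Yoneda lemma, whose applicability rests on the naturality in $a$, $k$, $k'$ asserted in Proposition \ref{ZXatensequivcond}. Your explicit handling of the Cauchy-completion bookkeeping --- reading ``equivalent to $\cat{K}$'' against idempotent complete $\cat{K}$ --- is a point the paper's proof leaves implicit, since the convolution product is Cauchy complete by definition while the lemma as stated allows arbitrary enriched and tensored $\cat{K}$; making that hypothesis explicit is a small improvement rather than a discrepancy.
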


\begin{proof}
	From Proposition \ref{ZXatensequivcond}, we get a functor
	\begin{align*}
	\cat{A}_\cat{Z}\cattens{c}\cat{K} &\rar \cat{K}\\
	a \boxtimes k &\mapsto (a\otimes_c\mathbb{I}_s)\cdot k,
	\end{align*}
	which on morphisms is the composite (using Equation \ref{ZXtensequivisos}, \cite[Lemma \ref{STconvvssymunit}]{Wasserman2017}, Lemma \ref{ZXaintohom} and \ref{ZXunitinternalhom}, and the adjunction for duals in $\cat{A}$):
	\begin{align*}
	\cat{A}_\cat{Z}(a,a')\otimes_c\cat{K}(k,k')	&\cong a^*\otimes_c\cat{A}_\cat{Z}(\mathbb{I}_\cat{A},a')\otimes_c\cat{K}(k,k')\\
	&\cong \cat{K}(k,(a^* \otimes_c a'\otimes_c\mathbb{I}_s)\cdot k')\\
	&\cong \cat{K}(k,(a^* \otimes_c \mathbb{I}_s)\cdot (a'\otimes_c\mathbb{I}_s)\cdot k')\\
	&\cong \cat{K}((a\otimes_c \mathbb{I}_s)\cdot k,(a'\otimes_c\mathbb{I}_s)\cdot k').
	\end{align*}
	This is fully faithful (induces isomorphisms on hom-objects) by construction, and seen to be essentially surjective by taking $a=\mathbb{I}_c$, hence an equivalence.
\end{proof}

\begin{df}\label{ZXbraidingfunctor}
	Let $\cat{K},\cat{L}$ be categories enriched over $\dcentcat{A}_s$. Then the \emph{braiding functor} 
	$$
	B\colon  \cat{K}\cattens{c}\cat{L}\rar \cat{L}\cattens{c}\cat{K}
	$$
	is given by $k\boxtimes l \mapsto l \boxtimes k$ on objects and by the braiding in $(\dcentcat{A},\otimes_c)$ on Hom-objects.
\end{df}

\begin{lem}
	$B$ indeed defines a $\dcentcat{A}_s$-enriched functor. Furthermore, this functor is an equivalence.
\end{lem}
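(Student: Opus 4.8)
The plan is to treat the two assertions in turn. That $B$ is a $\dcentcat{A}_s$-enriched functor amounts to checking compatibility with composition and with units, the object assignment $k\boxtimes l\mapsto l\boxtimes k$ being an evident bijection and each hom-object map being the braiding $\beta$ in $(\dcentcat{A},\otimes_c)$. Writing $a_i$ for the relevant hom-objects of $\cat{K}$, $b_i$ for those of $\cat{L}$, and $m_\cat{K},m_\cat{L}$ for the respective composition morphisms, the composition in the convolution product is $(m_\cat{K}\otimes_c m_\cat{L})\circ\eta$ by Definition \ref{ZXconvprod}. Compatibility with composition is then the commutativity of the square comparing $\beta\circ(m_\cat{K}\otimes_c m_\cat{L})\circ\eta$ (compose, then apply $B$) with $(m_\cat{L}\otimes_c m_\cat{K})\circ\eta\circ(\beta\otimes_s\beta)$ (apply $B\otimes_s B$, then compose).

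To prove this square commutes I would first use naturality of the braiding $\beta$ for $\otimes_c$ to slide it past $m_\cat{K}\otimes_c m_\cat{L}$, giving
$$\beta_{\cat{K}(k,k''),\cat{L}(l,l'')}\circ(m_\cat{K}\otimes_c m_\cat{L}) = (m_\cat{L}\otimes_c m_\cat{K})\circ\beta_{a_2\otimes_s a_1,\,b_2\otimes_s b_1},$$
and then invoke the braiding compatibility of Equation \eqref{ZXbraidingcompatible} with $(c,c',d,d')=(a_2,b_2,a_1,b_1)$ to rewrite
$$\beta_{a_2\otimes_s a_1,\,b_2\otimes_s b_1}\circ\eta_{a_2,b_2,a_1,b_1} = \eta_{b_2,a_2,b_1,a_1}\circ(\beta_{a_2,b_2}\otimes_s\beta_{a_1,b_1}).$$
Substituting shows the composition side of the square equals $(m_\cat{L}\otimes_c m_\cat{K})\circ\eta_{b_2,a_2,b_1,a_1}\circ(\beta_{a_2,b_2}\otimes_s\beta_{a_1,b_1})$, which is exactly the $B\otimes_s B$ side. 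For units, Equation \eqref{ZXconvprodunit} presents the identity of $k\boxtimes l$ as $(\tn{I}_k\otimes_c\tn{I}_l)\circ u_2$; naturality of $\beta$ moves it past $\tn{I}_k\otimes_c\tn{I}_l$, so the claim reduces to $\beta_{\mathbb{I}_s,\mathbb{I}_s}\circ u_2=u_2$. This is the unit instance of the compatibility of the $2$-fold structure maps with the braiding among the ``similar conditions'' following Equation \eqref{ZXbraidingcompatible} (part of \cite[Definition \ref{DCbraideddef}]{Wasserman2017a}).

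For the second assertion I would show $B$ is fully faithful and essentially surjective. Full faithfulness is immediate: each structure map $B_{k\boxtimes l,k'\boxtimes l'}$ is the braiding $\beta_{\cat{K}(k,k'),\cat{L}(l,l')}$, which is invertible. Essential surjectivity holds already on generators, since $k\boxtimes l\mapsto l\boxtimes k$ hits every generating object $l\boxtimes k$ of $\cat{L}\cattens{c}\cat{K}$; as both sides are Cauchy completions and $B$ is fully faithful, it extends to a fully faithful functor of the completions whose essential image contains all generators, and hence, being closed under the retracts and finite sums adjoined in the completion, is everything. A fully faithful, essentially surjective $\dcentcat{A}_s$-enriched functor is an equivalence.

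The main obstacle is the composition square, specifically the bookkeeping needed to match the indices in Equation \eqref{ZXbraidingcompatible} so that the braiding on the $\otimes_s$-product of the outer hom-objects becomes the $\otimes_s$-product of the braidings on the individual factors. Everything else is formal, modulo the cited unit-compatibility of $u_2$ with $\beta$.
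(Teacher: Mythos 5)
Your proof is correct, but it takes a genuinely different route from the paper's. The paper argues conceptually: it views $\cat{K}\cattens{c}\cat{L}$ and $\cat{L}\cattens{c}\cat{K}$ as changes of basis of $\cat{K}\boxtimes\cat{L}$ along the lax monoidal functors $\otimes_c$ and $\otimes_c$ precomposed with the switch of $\dcentcat{A}_s\boxtimes\dcentcat{A}_s$, observes that the braiding $\beta$ is a natural isomorphism between these two functors whose lax monoidality is exactly Equation \eqref{ZXbraidingcompatible} (via the main theorem of \cite{Wasserman2017a}), and then invokes Proposition \ref{ZXlaxnattrafotofunctor} to produce $B$ and its invertibility in one stroke --- the inverse braiding induces an inverse functor since the assignment $\sigma\mapsto\Sigma$ preserves composition, and Cauchy completion is handled automatically by the 2-functoriality of change of basis (Proposition \ref{ZXchangebasisispseudofunct}). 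Your direct verification of the composition square is, in effect, the proof of the relevant instance of Proposition \ref{ZXlaxnattrafotofunctor} unfolded, and your index bookkeeping in \eqref{ZXbraidingcompatible} matches the paper's conventions exactly; your fully-faithful-plus-essentially-surjective treatment of the completion is more laborious than needed (on the pre-completion $B$ is strictly invertible, with inverse induced by $\beta^{-1}$, so it is an isomorphism, not merely an equivalence), but it is sound. What your approach buys is self-containedness and an explicit record of where each axiom is used; in particular you correctly isolate the unit compatibility $\beta_{\mathbb{I}_s,\mathbb{I}_s}\circ u_2=u_2$ among the ``similar conditions'' following \eqref{ZXbraidingcompatible} --- a point the paper's route also needs but leaves implicit, since its appendix definition of lax monoidal natural transformation states only the tensor square and the induced $\Sigma$ must still preserve the enriched identities \eqref{ZXconvprodunit}.
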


\begin{proof}
	Viewing $\cat{K}\cattens{c}\cat{L}$ as coming from a change of basis (Proposition \ref{ZXchangebasisispseudofunct}) on $\cat{K}\boxtimes\cat{L}$ along $\otimes_c$ from $\dcentcat{A}_s\boxtimes \dcentcat{A}_s$ to $\dcentcat{A}_s$, we notice we can get $ \cat{L}\cattens{c}\cat{K}$ from a change of basis on $\cat{K}\boxtimes\cat{L}$ along $\otimes_c$ precomposed with the $\lincat$-switch functor in $\dcentcat{A}_s\boxtimes \dcentcat{A}_s\rar \dcentcat{A}_s\boxtimes \dcentcat{A}_s$. By definition, the braiding in $\dcentcat{A}_c$ gives a natural isomorphism between these two functors. Hence, by Proposition \ref{ZXlaxnattrafotofunctor}, if the braiding is lax monoidal with respect to $\otimes_s$, the braiding will induce the functor $B$, and this will be an equivalence. But the lax monoidality of the braiding is exactly what Equation \ref{ZXbraidingcompatible} entails, so by the Main Theorem from \cite{Wasserman2017a} we are done.
\end{proof}

The lax and oplax compatibility morphisms for the 2-fold monoidal structure (Equation \ref{ZXetazeta}) on $\dcentcat{A}$ give functors relating the convolution product and the enriched cartesian product of $\dcentcat{A}_s$-enriched categories. In proving this it will be convenient to introduce:

\begin{notation}
	For readability we will use the shorthand $\cat{K}_{ij}:=\cat{K}(k_i,k_j)$ with $i,j=0,1,2,3$, and the obvious version of this for $\cat{L}$. We will further sometimes suppress $\otimes_s$ from the notation and write $\cdot$ for $\cont$. 
\end{notation}

\begin{prop}\label{ZXcomparisonfunctors}
	The assignments
	\begin{align*}
	Z\colon (\cat{K}\cattens{s}\cat{L})\cattens{c}(\cat{K}'\cattens{s}\cat{L}')& \leftrightarrow (\cat{K}\cattens{c}\cat{K}')\cattens{s}(\cat{L}\cattens{c}\cat{L}'):H\\
	k\boxtimes l \boxtimes k'\boxtimes l' &\leftrightarrow k\boxtimes k' \boxtimes l \boxtimes l'\\
	(\cat{K}_{01}\cat{L}_{01})\cdot(\cat{K}_{01}'\cat{L}_{01}') &\xleftrightarrow[\eta]{\zeta} (\cat{K}_{01}\cdot\cat{K}_{01}')(\cat{L}_{01}\cdot\cat{L}_{01}'),
	\end{align*}
	are $\dcentcat{A}_s$-enriched functors. 
\end{prop}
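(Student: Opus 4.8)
The plan is to realise $Z$ and $H$ as the enriched functors induced by a change of basis, in exact parallel with the argument (following Definition~\ref{ZXbraidingfunctor}) that the braiding functor $B$ is $\dcentcat{A}_s$-enriched.

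First I would exhibit both sides of the statement as change-of-basis categories built from the single Deligne product $\cat{K}\boxtimes\cat{L}\boxtimes\cat{K}'\boxtimes\cat{L}'$, which is enriched over $\dcentcat{A}^{\boxtimes 4}$ with its componentwise symmetric product. The source $(\cat{K}\cattens{s}\cat{L})\cattens{c}(\cat{K}'\cattens{s}\cat{L}')$ is the change of basis along the functor $G_1\colon(c,d,c',d')\mapsto(c\otimes_s d)\otimes_c(c'\otimes_s d')$, while the target $(\cat{K}\cattens{c}\cat{K}')\cattens{s}(\cat{L}\cattens{c}\cat{L}')$ is the change of basis along $G_2\circ\sigma$, where $G_2\colon(c,c',d,d')\mapsto(c\otimes_c c')\otimes_s(d\otimes_c d')$ and $\sigma$ is the Deligne switch reordering the two middle factors so as to match the object labelling $k\boxtimes l\boxtimes k'\boxtimes l'$. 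The key point is that $\otimes_c$ is lax monoidal for $\otimes_s$ with structure map $\eta$ and oplax with structure map $\zeta$ (these are exactly the maps of Equation~\eqref{ZXetazeta}), while $\otimes_s$ is symmetric monoidal for itself and $\sigma$ is assembled from its symmetry; hence $G_1$ and $G_2\circ\sigma$ are both (op)lax monoidal functors from $\dcentcat{A}^{\boxtimes 4}$, with its componentwise symmetric product, to $(\dcentcat{A}_s,\otimes_s)$.

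The defining formulas then show that $\eta$ is a natural transformation $G_2\circ\sigma\Rightarrow G_1$ and $\zeta$ its section $G_1\Rightarrow G_2\circ\sigma$. By Proposition~\ref{ZXlaxnattrafotofunctor} (applied in the lax direction for $\eta$, and in the dual oplax direction for $\zeta$, which is available since $\dcentcat{A}$ is \emph{bilax} $2$-fold monoidal) such a natural transformation induces a $\dcentcat{A}_s$-enriched functor between the changed-basis categories, in the same direction---so $\eta$ yields $H$ and $\zeta$ yields $Z$---\emph{provided} the transformation is monoidal with respect to $\otimes_s$. Concretely this means $\eta$ (resp.\ $\zeta$) must commute with the multiplication maps of $G_1$ and of $G_2\circ\sigma$, and with their units.

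The hard part, and the only real content beyond bookkeeping, is verifying this monoidality. Unwinding the compatibility with the multiplication maps, one obtains a large coherence diagram in which $\eta$ (resp.\ $\zeta$) appears several times together with the symmetry $s$ of $\otimes_s$ and the braiding $\beta$ of $\otimes_c$; this is precisely the interchange-associativity coherence of the bilax $2$-fold monoidal structure, combined with the braiding--symmetry compatibility of Equation~\eqref{ZXbraidingcompatible} that governs how $\eta$ passes through the switch $\sigma$. The compatibility with units reduces to the unit coherences relating $\eta,\zeta$ to $u_0,u_1,u_2$ and $v_0,v_1,v_2$, and matches the convolution-product unit built from $u_2$ in Equation~\eqref{ZXconvprodunit}. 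All of these are exactly the axioms of a strongly inclusive bilax $2$-fold monoidal tensor, so once the diagrams are put into standard form the verification is discharged by citing the Main Theorem of \cite{Wasserman2017a}; the principal obstacle is simply tracking the reordering $\sigma$ consistently through every instance, for which Equation~\eqref{ZXbraidingcompatible} is the essential input.
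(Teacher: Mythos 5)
Your treatment of $H$ is sound and, once unwound, is the paper's own argument in different packaging: the paper verifies functoriality of $H$ directly, writing out the two composition laws and reducing, via naturality of $\eta$ and its compatibility with the associators (Equation \eqref{ZXetaass}), to the single coherence $\eta\circ(\eta\otimes_s\eta)\circ(\id\otimes_s s\otimes_s\id)=(s\otimes_c s)\circ\eta\circ(\eta\otimes_s\eta)$, which is exactly the monoidality-of-$\eta$ square your change-of-basis framing isolates; and the device of inducing the functor from a lax monoidal natural transformation via Proposition \ref{ZXlaxnattrafotofunctor} is precisely what the paper does for the braiding functor $B$. So for $H$ your proposal is correct and essentially equivalent to the paper's proof, with the structural framing buying a cleaner statement of what must be checked.

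For $Z$, however, there is a genuine gap. There is no ``dual oplax direction'' of Proposition \ref{ZXlaxnattrafotofunctor} applicable here: change of basis along an oplax monoidal functor does not produce enriched categories (it would yield co-composition), and, more to the point, \emph{both} categories in the statement carry compositions built from the lax structure maps --- each $\cattens{c}$-factor is composed using $\eta$ and $u_2$ (Definition \ref{ZXconvprod}). Hence functoriality of $Z$ requires that $\zeta$ be a lax monoidal natural transformation with respect to the $\eta$-built multiplications $\mu_{G_1}$ and $\mu_{G_2\circ\sigma}$, i.e.\ $\zeta\circ\mu_{G_1}=\mu_{G_2\circ\sigma}\circ(\zeta\otimes_s\zeta)$, a mixed $\eta$--$\zeta$ compatibility. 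This does not follow formally from the monoidality of $\eta$ together with $\eta\circ\zeta=\id$: composing the $\eta$-square with $\zeta\otimes_s\zeta$ gives only $\zeta\circ\mu_{G_1}=\zeta\circ\eta\circ\mu_{G_2\circ\sigma}\circ(\zeta\otimes_s\zeta)$, and since $\zeta\circ\eta\neq\id$ in general (the inverse is one-sided) one cannot cancel. The paper closes exactly this point not by duality but by rerunning the diagram chase with $\zeta$'s own compatibility with the symmetry and the associators (part of the strongly inclusive bilax braided structure of \cite{Wasserman2017a}); your write-up needs that explicit verification in place of the appeal to an oplax dual.
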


\begin{proof}
	Composition in the category on the left hand side is given by:
	\begin{align*}
	((\cat{K}_{12}\cat{L}_{12})&\cdot(\cat{K}_{12}'\cat{L}_{12}'))((\cat{K}_{01}\cat{L}_{01})\cdot(\cat{K}_{01}'\cat{L}_{01}'))\xrightarrow{\eta} (\cat{K}_{12}\cat{L}_{12}\cat{K}_{01}\cat{L}_{01})\cdot(\cat{K}_{12}'\cat{L}_{12}'\cat{K}_{01}'\cat{L}_{01}')\\
	\xrightarrow{s\cdot s}& (\cat{K}_{12}\cat{K}_{01}\cat{L}_{12}\cat{L}_{01})\cdot(\cat{K}_{12}'\cat{K}_{01}'\cat{L}_{12}'\cat{L}_{01}')\xrightarrow{(\circ\circ) \cdot (\circ\circ)}(\cat{K}_{02}\cat{L}_{02})\cdot(\cat{K}_{02}'\cat{L}_{02}').
	\end{align*}
	On the right hand side, it is the composite:
	\begin{align*}
	(\cat{K}_{12}\cdot\cat{K}_{12}')&(\cat{L}_{12}\cdot\cat{L}_{12}')(\cat{K}_{01}\cdot\cat{K}_{01}')(\cat{L}_{01}\cdot\cat{L}_{01}')\\
	\xrightarrow{s}& (\cat{K}_{12}\cdot\cat{K}_{12}')(\cat{K}_{01}\cdot\cat{K}_{01}')(\cat{L}_{12}\cdot\cat{L}_{12}')(\cat{L}_{01}\cdot\cat{L}_{01}')\\
	\xrightarrow{\eta\eta}&((\cat{K}_{12}\cat{K}_{01})\cdot(\cat{K}_{12}'\cat{K}_{01}'))((\cat{L}_{12}\cat{L}_{01})\cdot(\cat{L}_{12}'\cat{L}_{01}'))\\
	\xrightarrow{(\circ\cdot\circ)(\circ\cdot\circ)}&(\cat{K}_{02}\cdot\cat{K}_{02}')(\cat{L}_{02}\cdot\cat{L}_{02}').
	\end{align*}
	Now, $\eta\otimes_s\eta$ gives a morphism from the first term in the second chain to the first term in the first chain, while $\eta$ gives a morphism between the last terms. Functoriality of $H$ is equivalent to the diagram formed in this way commuting. Comparing the penultimate terms in the sequences, we see that $\eta$ gives a map between these, and the square this forms with the composition morphisms and the final $\eta$ commutes by naturality of $\eta$. We are therefore left with showing that the rectangle formed by the first three terms in the sequences commutes. Schematically, this is the equation:
	$$
	\eta\circ(\eta\eta)\circ (\id s \id)= (s \cdot s)\circ\eta \circ (\eta \eta).
	$$
	We would like to use the fact that $\eta$ is compatible with the symmetry $s$ \cite[Definition \ref{DCbraideddef} and Theorem \ref{DCdc2foldmon}]{Wasserman2017a}, in the sense that: 
	$$
	\eta \circ s = (s\cdot s)\circ \eta
	$$
	However, for this we are using the symmetry on exactly the wrong factors. Fortunately, because $\eta$ respects the associators, we can replace 
	\begin{equation}
	\eta\circ(\eta\eta)= \eta \circ (\id  \eta) \circ (\id  \eta \id), \label{ZXetaass}
	\end{equation}
	to see
	\begin{align*}
	\eta\circ(\eta\eta)\circ s&=\eta \circ (\id  \eta) \circ (\id  \eta \id)\circ(\id s \id)=\eta \circ (\id  \eta) \circ (\id s\cdot s \id) \circ (\id  \eta \id)\\
	&=\eta \circ (\id s\cdot s)\circ (\id \eta)\circ (\id\eta\id)=(s\cdot s)\circ\eta \circ (\eta \eta),
	\end{align*}
	where the second equality is the compatibility of $\eta$ with the symmetry, and the final two equalities use the naturality of $\eta$, combined with Equation \eqref{ZXetaass}.
	Similarly, $\zeta$ is compatible with the symmetry and the associators, and so $Z$ is a functor.
\end{proof}

\begin{rmk}
	Because $\eta \circ \zeta=\id$, the functor $Z$ is in fact a right sided inverse to $H$.
\end{rmk}

We remind the reader that the forgetful functor $\mathbf{Forget}\colon\dcentcat{A}\rar \cat{A}$ takes $\otimes_c$ to $\otimes_{\cat{A}}$. This translates to the assignment $\cat{K}\mapsto\overline{ \cat{K}}$ taking the convolution product to the $\cat{A}$-enriched cartesian product.

\begin{lem}\label{ZXforgetconvprod}
	If $\cat{K}$ and $\cat{L}$ are $\dcentcat{A}_s$-enriched and tensored, then $\overline{\cat{K}\cattens{c}\cat{L}}= \overline{\cat{K}}\cattens{\cat{A}} \overline{\cat{L}}$.
\end{lem}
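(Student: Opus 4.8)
The plan is to realise both sides as a single change of basis of the Deligne product $\cat{K}\boxtimes\cat{L}$ and to check that the two change-of-basis functors involved literally agree. Recall that, up to Cauchy completion, the convolution product $\cat{K}\cattens{c}\cat{L}$ is the change of basis of $\cat{K}\boxtimes\cat{L}$ along $\otimes_c\colon\dcentcat{A}_s\boxtimes\dcentcat{A}_s\rar\dcentcat{A}_s$, regarded as a lax monoidal functor for the $\otimes_s$-structures with lax structure morphism $\eta$ and unit comparison $u_2$; that $\overline{(-)}$ is the change of basis along $\mathbf{Forget}\colon\dcentcat{A}_s\rar\cat{A}$; and that the $\cat{A}$-enriched cartesian product is the change of basis along $\otimes_{\cat{A}}$, using the symmetry of $\cat{A}$.

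First I would invoke pseudofunctoriality of change of basis (Proposition \ref{ZXchangebasisispseudofunct}) to rewrite the left-hand side $\overline{\cat{K}\cattens{c}\cat{L}}$ as the change of basis of $\cat{K}\boxtimes\cat{L}$ along the composite $\mathbf{Forget}\circ\otimes_c$. Dually, combining Proposition \ref{ZXchangebasisispseudofunct} with Proposition \ref{ZXchangebasisproduct}, the right-hand side $\overline{\cat{K}}\cattens{\cat{A}}\overline{\cat{L}}$ is the change of basis of $\cat{K}\boxtimes\cat{L}$ along $\otimes_{\cat{A}}\circ(\mathbf{Forget}\boxtimes\mathbf{Forget})$. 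Both are lax monoidal functors $\dcentcat{A}_s\boxtimes\dcentcat{A}_s\rar\cat{A}$, so it is enough to identify them as such.

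On objects the two functors agree on the nose: the reminder that $\mathbf{Forget}$ carries $\otimes_c$ to $\otimes_{\cat{A}}$ says precisely that $\mathbf{Forget}(c\otimes_c c')=\mathbf{Forget}(c)\otimes_{\cat{A}}\mathbf{Forget}(c')$. Hence the two enriched categories share the objects $k\boxtimes l$ and have equal hom-objects, since $\mathbf{Forget}(\cat{K}(k,k')\otimes_c\cat{L}(l,l'))=\overline{\cat{K}}(k,k')\otimes_{\cat{A}}\overline{\cat{L}}(l,l')$.

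The step I expect to be the main obstacle is matching the lax monoidal structures, which is what makes the compositions and units agree. Concretely one must show that $\mathbf{Forget}(\eta)$, pre-composed with the lax comparison of $\mathbf{Forget}$ for $\otimes_s$ (which is the projection onto the $\otimes_s$-subobject, i.e.\ the morphism $p$ from before Lemma \ref{ZXforgetonsymprod}), equals the symmetry-induced interchange of $\cat{A}$ followed by the two projections; and similarly that $\mathbf{Forget}(u_2)$ reproduces the unit of the cartesian product. This is exactly the assertion that $\mathbf{Forget}$ is a morphism of bilax 2-fold monoidal categories, and I would extract it from the compatibility of $\eta$ and $u_2$ with the symmetry, together with the explicit description of $\otimes_s$ as the image of the idempotent $\Pi$, from \cite{Wasserman2017,Wasserman2017a}. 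Once the two change-of-basis functors are identified the defining categories coincide, whence so do their Cauchy completions, giving the asserted equality. Note the contrast with Lemma \ref{ZXforgetonsymprod}: there $\mathbf{Forget}$ is only lax for $\otimes_s$, so one obtains a functor with a one-sided inverse mediated by $p\circ i$, whereas here strong monoidality of $\mathbf{Forget}$ for $\otimes_c$ upgrades this to an equality.
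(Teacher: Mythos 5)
Your proposal is correct and is essentially the paper's own argument: the paper's proof likewise observes that $\mathbf{Forget}\circ\otimes_c$ and $\otimes_\cat{A}\circ(\mathbf{Forget}\boxtimes\mathbf{Forget})$ form a commuting square because $\mathbf{Forget}$ is strictly monoidal for $\otimes_c$, and concludes that both sides are changes of basis along equal functors. Your additional explicit verification that the lax structures match (that $\mathbf{Forget}(\eta)$ composed with the lax comparison $p$ agrees with the symmetry-induced interchange followed by $p\otimes_\cat{A} p$, and similarly for $u_2$) is exactly the content the paper compresses into the phrase ``equal functors'', so it is a welcome refinement rather than a different route.
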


\begin{proof}
	To prove the claim, observe that the following diagram commutes:
	\begin{center}
		\begin{tikzcd}
		\dcentcat{A}\boxtimes\dcentcat{A} 	\arrow[r,"\otimes_c"] \arrow[d,"\mathbf{Forget}\boxtimes\mathbf{Forget}"']	&	\dcentcat{A} \arrow[d,"\mathbf{Forget}"]\\
		\cat{A}								\arrow[r,"\otimes_\cat{A}"] 										& 	\cat{A},
		\end{tikzcd},
	\end{center}
	because $\mathbf{Forget}$ is a strictly monoidal functor with respect to $\otimes_c$. This means =$\overline{\cat{K}\cattens{c}\cat{L}}$ and $ \overline{\cat{K}}\cattens{\cat{A}} \overline{\cat{L}}$ are the result of a change of basis along equal functors, and hence the same category.
\end{proof}

With respect to taking neutral subcategories, the convolution product behaves as follows:

\begin{lem}\label{ZXconvandneut}
	For $\cat{K},\cat{L}\in\dcentcat{A}\lincat$, we have
	$$
	\cat{K}_\cat{A}\cattens{c}\cat{L}_\cat{A}\subset\left(\cat{K}\cattens{c}\cat{L}\right)_\cat{A}.
	$$
\end{lem}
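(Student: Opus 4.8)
The plan is to reduce the containment to a statement about endomorphism objects, via the characterisation of the neutral subcategory in Lemma \ref{ZXneutralsubcatendochar}: I will show that every object of $\cat{K}_\cat{A}\cattens{c}\cat{L}_\cat{A}$ has an endomorphism object lying in $\cat{A}\subset\dcentcat{A}$, which by that lemma places it in $\left(\cat{K}\cattens{c}\cat{L}\right)_\cat{A}$.

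First I would treat the objects of the form $k\boxtimes l$ with $k\in\cat{K}_\cat{A}$ and $l\in\cat{L}_\cat{A}$, before passing to the Cauchy completion. By Definition \ref{ZXconvprod}, the endomorphism object of such an object in $\cat{K}\cattens{c}\cat{L}$ is
$$
\left(\cat{K}\cattens{c}\cat{L}\right)(k\boxtimes l,k\boxtimes l)=\cat{K}(k,k)\cont\cat{L}(l,l).
$$
By Lemma \ref{ZXneutralsubcatendochar}, the hypotheses $k\in\cat{K}_\cat{A}$ and $l\in\cat{L}_\cat{A}$ mean exactly that $\cat{K}(k,k)$ and $\cat{L}(l,l)$ are objects of $\cat{A}\subset\dcentcat{A}$. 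Since the inclusion $\cat{A}\hookrightarrow\dcentcat{A}$ is monoidal for $\otimes_c$, the subcategory $\cat{A}$ is closed under $\otimes_c$, so $\cat{K}(k,k)\cont\cat{L}(l,l)\in\cat{A}$. Applying Lemma \ref{ZXneutralsubcatendochar} in the other direction gives $k\boxtimes l\in\left(\cat{K}\cattens{c}\cat{L}\right)_\cat{A}$.

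It then remains to account for the Cauchy completions appearing in both $\cat{K}_\cat{A}\cattens{c}\cat{L}_\cat{A}$ and $\cat{K}\cattens{c}\cat{L}$. A general object of the former is a direct summand of some $k\boxtimes l$ as above; its endomorphism object is then a direct summand of $\cat{K}(k,k)\cont\cat{L}(l,l)\in\cat{A}$. As $\cat{A}$ is closed under direct summands inside $\dcentcat{A}$, this endomorphism object again lies in $\cat{A}$, so Lemma \ref{ZXneutralsubcatendochar} places the summand in $\left(\cat{K}\cattens{c}\cat{L}\right)_\cat{A}$. Together with the previous paragraph, this establishes the claimed inclusion.

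The argument is essentially a one-line computation of the endomorphism object together with the closure of $\cat{A}$ under $\otimes_c$; the only genuine care needed is the bookkeeping around the Cauchy completions and closure under summands. I do not anticipate a serious obstacle. It is worth noting that the reason one gets an inclusion rather than an equality is precisely that $\otimes_c$ can send a pair of \emph{non}-neutral objects into $\cat{A}$ (mirroring how the convolution of objects graded by $g$ and $g^{-1}$ lands in the neutral component), so there is no converse implication to exploit and no more than the stated facts are required.
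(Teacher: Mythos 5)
Your proposal is correct and follows essentially the same route as the paper: both reduce the claim to the endomorphism-object criterion of Lemma \ref{ZXneutralsubcatendochar} and use that $\otimes_c$ restricts to the tensor product of $\cat{A}$ on the subcategory $\cat{A}\subset\dcentcat{A}$ (the paper additionally notes that \emph{all} hom-objects between such pairs land in $\cat{A}$, while leaving the Cauchy completion implicit). Your explicit treatment of the completion is sound --- a general object is a summand of $\bigl(\oplus_i k_i\bigr)\boxtimes\bigl(\oplus_j l_j\bigr)$ with the neutral subcategories closed under direct sums, and $\cat{A}$ is closed under summands in $\dcentcat{A}$ --- so it supplies a detail the paper's proof glosses over rather than diverging from it.
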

\begin{proof}
Recall that $\otimes_c$ restricts to the tensor product of $\cat{A}$ on the full monoidal subcategory $\cat{A}\subset\dcentcat{A}$. Observe that for any object $k\boxtimes l\in \cat{K}_\cat{A}\cattens{c}\cat{L}_\cat{A}$, the endomorphisms-object of $k\boxtimes l$ is the tensor product (in $\cat{A}\subset\dcentcat{A}$, see Lemma \ref{ZXneutralsubcatendochar}) of the endomorphisms of $k$ and $l$, hence an object in $\cat{A}$. Therefore $k\boxtimes l$ is an object of $(\cat{K}\cattens{c}\cat{L})_\cat{A}$. Similarly, the hom-object between any two pairs $k\boxtimes l$ and $k'\boxtimes l'$ is the tensor product of the respective hom-objects. By the definition of the neutral subcategory (Definition \ref{ZXneutralsubcat}), this is an object in $\cat{A}\subset\dcentcat{A}$, so lies in $(\cat{K}\cattens{c}\cat{L})_\cat{A}$.
\end{proof}

\subsubsection{$\dcentcat{A}$-crossed monoidal categories}
We can now define the notions of $\dcentcat{A}$-crossed tensor and braided categories. The basic idea is to define a notion of monoidal structure that factors through the crossed product $\cattens{c}$, instead of the enriched cartesian product $\cattens{s}$ one would normally use when defining monoidal objects in a category of enriched categories. A braiding for this crossed monoidal structure is a natural isomorphism between the monoidal structure and the monoidal structure precomposed with the appropriate swap map, just as one would do for ordinary braided monoidal categories. The difference here is that the swap map $B$ is not induced by a symmetric braiding, but rather by the non-degenerate braiding for $\otimes_c$ on $\dcentcat{A}$.
\begin{df}\label{ZXzacrossedtensordef}
	Let $\cat{K}$ be a $\dcentcat{A}_s$-enriched and tensored category. A \emph{$\dcentcat{A}$-crossed tensor structure on $\cat{K}$} is a functor:
	$$
	\otimes_\cat{K}\colon  \cat{K}\cattens{c}\cat{K}\rar \cat{K},
	$$
	together with a functor
	$$
	\mathbb{I}_\cat{K}\colon \cat{A}_\cat{Z}\rar \cat{K},
	$$
	and associators and unitors that satisfy the usual coherence conditions.
\end{df}

\begin{rmk}
It is worthwhile spelling out what kind of objects the unitors are in this context.  As the equivalence $\cat{A}_\cat{Z}\cattens{c}\cat{K}\cong \cat{K}$ is induced by the $\dcentcat{A}_s$-tensoring $-\cdot-$, the left unitor $\lambda$ is a natural isomorphism with components, for $a\in \cat{A}_\cat{Z}$ and $k\in \cat{K}$,
$$
\lambda_{a,k}:a\otimes_\cat{K} k \xrightarrow{\cong} (a\otimes_c \mathbb{I}_s)\cdot k,
$$
between the functors $\otimes_{\cat{K}}\circ(\mathbb{I} \cattens{c}\Id_\cat{K})$ and $\cat{A}_\cat{Z}\cattens{c}\cat{K}\cong \cat{K}$.
\end{rmk}

A little care has to be taken when considering functors and their natural transformations between such categories. Just like for ordinary monoidal categories, the appropriate notion of monoidal functor also entails the structure natural isomorphisms expressing the monoidality. We remark that, as being tensored is a property of an enriched category asking for the existence of an adjoint, any $\dcentcat{A}_s$-enriched functor will automatically preserve the tensoring up to a canonical isomorphism, by uniqueness of adjoints.

\begin{df}\label{ZXtensorfuncts}
	Let $(\cat{K},\otimes_\cat{K},\mathbb{I}_\cat{K})$ and $(\cat{L},\otimes_\cat{L},\mathbb{I}_\cat{L})$ be $\dcentcat{A}$-crossed tensor categories. A \emph{$\dcentcat{A}$-crossed tensor functor} between $\cat{K}$ and $\cat{L}$ is a triple $(F,\mu_0,\mu_1)$ consisting of a $\dcentcat{A}_s$-enriched functor $\cat{K}\rar \cat{L}$, and $\dcentcat{A}_s$-enriched natural isomorphisms
	$$
	\mu_0: F\circ\mathbb{I}_\cat{K}\Rightarrow \mathbb{I}_\cat{L},
	$$
	and 
	$$
	\mu_1: F(-\otimes_{\cat{K}}-)\Rightarrow F(-)\otimes_{\cat{L}}F(-).
	$$
	This data should satisfy the usual compatibility with the associators and the unitors. 
	
	A \emph{monoidal natural transformation} between two such functors $(F,\mu_0,\mu_1)$ and $(G,\nu_0,\nu_1)$ between $\cat{K}$ and $\cat{L}$ is a $\dcentcat{A}_s$-enriched natural transformation $\eta$ that makes the following diagrams commute:
	\begin{center}
		\begin{tikzcd}
		F(-\otimes_{\cat{K}}-) \arrow[r,"\mu_1",Rightarrow]\arrow[d,"\eta\circ(-\otimes -)",Rightarrow]	& F(-)\otimes_{\cat{L}}F(-)\arrow[d,"\eta\otimes \eta",Rightarrow]\\
		G(-\otimes_\cat{K} -)	\arrow[r,"\nu_{1}",Rightarrow]								& G(-)\otimes G(-),
		\end{tikzcd}
	\end{center}
	and
	\begin{center}
		\begin{tikzcd}
		F\circ \mathbb{I}_\cat{K}	\arrow[dd,"\eta\circ\mathbb{I}_\cat{K}",Rightarrow] \arrow[rd,"\mu_0",Rightarrow]	&						\\
																											&\mathbb{I}_\cat{L}\\
		G\circ \mathbb{I}_\cat{K}	\arrow[ru,Rightarrow,"\nu_0"]
		\end{tikzcd},
	\end{center}
	where the composite of $\eta$ with a functor is to be understood as whiskering.
\end{df}

\begin{rmk}
	The second diagram in our definition of a monoidal natural transformation is not present when dealing with ordinary monoidal categories.
\end{rmk}

\begin{ex}
	 The $\dcentcat{A}_s$-enriched and tensored category $\cat{A}_\cat{Z}$ is itself $\dcentcat{A}$-crossed tensor, and coherence ensures that $\mathbb{I}_\cat{K}$ is a $\dcentcat{A}_s$-crossed tensor functor for any $\dcentcat{A}$-crossed tensor category $\cat{K}$. Futhermore, if $(F,\mu_0,\mu_1)$ is a $\dcentcat{A}$-crossed monoidal functor from $\cat{K}$ to $\cat{L}$, the natural isomorphism $\mu_0$ is a monoidal natural transformation between $F\circ \mathbb{I}_\cat{K}$ and $\mathbb{I}_\cat{L}$.
\end{ex}

The bilax compatibility between $\otimes_c$ and $\otimes_s$ ensures that the enriched cartesian product of two $\dcentcat{A}$-crossed tensor categories is again a $\dcentcat{A}$-crossed tensor category.

\begin{prop}
	Let $\cat{K}$ and $\cat{L}$ be $\dcentcat{A}$-crossed tensor categories. Then $\cat{K}\cattens{s}\cat{L}$ is $\dcentcat{A}$-crossed tensor, with monoidal structure given by the componentwise tensor product.
\end{prop}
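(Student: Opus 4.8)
The plan is to build the crossed tensor structure on $\cat{K}\cattens{s}\cat{L}$ by transporting the structures on $\cat{K}$ and $\cat{L}$ through the comparison functor $Z$ of Proposition \ref{ZXcomparisonfunctors}. First I would define the tensor functor as the composite
$$
\otimes:=(\otimes_\cat{K}\cattens{s}\otimes_\cat{L})\circ Z\colon (\cat{K}\cattens{s}\cat{L})\cattens{c}(\cat{K}\cattens{s}\cat{L})\xrightarrow{Z}(\cat{K}\cattens{c}\cat{K})\cattens{s}(\cat{L}\cattens{c}\cat{L})\xrightarrow{\otimes_\cat{K}\cattens{s}\otimes_\cat{L}}\cat{K}\cattens{s}\cat{L},
$$
where $\otimes_\cat{K}\cattens{s}\otimes_\cat{L}$ comes from the functoriality of $\cattens{s}$. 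Since $Z$ sends $k\boxtimes l\boxtimes k'\boxtimes l'$ to $k\boxtimes k'\boxtimes l\boxtimes l'$ on objects, this composite is $(k\boxtimes l)\otimes(k'\boxtimes l')=(k\otimes_\cat{K}k')\boxtimes(l\otimes_\cat{L}l')$, the claimed componentwise product. For the unit I would take
$$
\mathbb{I}_{\cat{K}\cattens{s}\cat{L}}\colon\cat{A}_\cat{Z}\xrightarrow{\cong}\cat{A}_\cat{Z}\cattens{s}\cat{A}_\cat{Z}\xrightarrow{\mathbb{I}_\cat{K}\cattens{s}\mathbb{I}_\cat{L}}\cat{K}\cattens{s}\cat{L},
$$
using that $\cat{A}_\cat{Z}$ is neutral, so that $\cat{A}_\cat{Z}\cattens{s}\cat{A}_\cat{Z}\cong\overline{\cat{A}_\cat{Z}}\cattens{\cat{A}}\overline{\cat{A}_\cat{Z}}\cong\cat{A}_\cat{Z}$ by Proposition \ref{ZXtensofneutralpart}. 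All of $Z$, $\otimes_\cat{K}$, $\otimes_\cat{L}$, $\mathbb{I}_\cat{K}$, $\mathbb{I}_\cat{L}$ and $\cattens{s}$ are morphisms in $\lcat{Z(A)}$, hence respect the $\dcentcat{A}_s$-tensoring, and so do their composites.

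Next I would produce the associator and unitors. Both bracketings $(-\otimes-)\otimes-$ and $-\otimes(-\otimes-)$ of the triple convolution product factor, via iterated applications of $Z$, through the single category $(\cat{K}\cattens{c}\cat{K}\cattens{c}\cat{K})\cattens{s}(\cat{L}\cattens{c}\cat{L}\cattens{c}\cat{L})$ followed by the triple tensor functors on $\cat{K}$ and $\cat{L}$. That the two iterated comparison functors agree up to canonical isomorphism is precisely the statement that $\eta$ (equivalently $Z$) respects the associators, the compatibility already invoked in the proof of Proposition \ref{ZXcomparisonfunctors} and recorded in \cite[Definition \ref{DCbraideddef}, Theorem \ref{DCdc2foldmon}]{Wasserman2017a}. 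The associator for $\otimes$ is then $a_\cat{K}\cattens{s}a_\cat{L}$ transported along this identification. The unitors are obtained in the same way from those of $\cat{K}$ and $\cat{L}$, using in addition the compatibility of $Z$ with the convolution-unit morphism $u_2$ and with the equivalence $\cat{A}_\cat{Z}\cattens{c}(-)\cong(-)$ of Lemma \ref{ZXunitconvprod}; concretely the left unitor takes the componentwise form $\lambda_{a,k\boxtimes l}=\lambda^\cat{K}_{a,k}\boxtimes\lambda^\cat{L}_{a,l}$ in the sense of the Remark following Definition \ref{ZXzacrossedtensordef}.

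Finally I would verify the coherence conditions. After transporting through $Z$, the pentagon for $\otimes$ becomes the $\cattens{s}$-product of the pentagons for $\otimes_\cat{K}$ and $\otimes_\cat{L}$ together with the coherence of the comparison isomorphisms; since $\cattens{s}$ is a symmetric monoidal $2$-functor in each variable, the product of two commuting pentagons commutes, and the comparison coherence is supplied by the associativity axioms of the strongly inclusive bilax $2$-fold structure. The triangle identities reduce similarly to those of $\cat{K}$ and $\cat{L}$ via the unit compatibilities. The main obstacle is exactly this last bookkeeping: one must check that the two ways of inserting $Z$ into a triple (and, for coherence, a quadruple) convolution product are related by the canonical isomorphism coming from the $2$-fold monoidal associativity of $\eta$, and that these canonical isomorphisms themselves satisfy a coherence pentagon. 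This is where the relations $\eta\circ\zeta=\id$ and the compatibility of $\eta$ with associators and symmetry from Notation \ref{ZXnotation2fold} do the real work; because $Z$ is only a right-sided inverse to $H$, it is essential to phrase everything consistently in the lax direction $\eta$ (hence $Z$) and never to appeal to a two-sided inverse.
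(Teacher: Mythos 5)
Your proposal is correct and takes essentially the same approach as the paper: the tensor functor is defined as the composite of $Z$ from Proposition \ref{ZXcomparisonfunctors} with the $\cattens{s}$-product of $\otimes_\cat{K}$ and $\otimes_\cat{L}$ (obtained by change of basis along $\otimes_s$), and associativity and unitality are deduced from the compatibility of $Z$ (equivalently $\eta$) with the associators and unitors supplied by the bilax $2$-fold monoidal structure. The paper's proof is terser---it leaves the unit functor and the coherence bookkeeping implicit---so your explicit treatment of these points, in particular the identification $\cat{A}_\cat{Z}\cattens{s}\cat{A}_\cat{Z}\cong\cat{A}_\cat{Z}$ via Proposition \ref{ZXtensofneutralpart}, fills in details fully consistent with the paper's argument.
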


\begin{proof}
	The componentwise tensor product is given by the composite of $Z$ from Proposition \ref{ZXcomparisonfunctors} with the image under the change of basis along $\otimes_s$ of the $\dcentcat{A}_s\boxtimes \dcentcat{A}_s$-enriched functor
	$$
	\cat{K}\cattens{c}\cat{K}\boxtimes \cat{L}\cattens{c}\cat{L}\rar \cat{K}\boxtimes \cat{L}.
	$$
	As these are $\dcentcat{A}$-enriched functors, so is the componentwise tensor product. To establish associativity and unitality, we observe that $Z$ is compatible with the associators and unitors for $\dcentcat{A}$ and hence will preserve the componentwise associators.
\end{proof}

If $\cat{K}$ and $\cat{L}$ are $\dcentcat{A}$-crossed tensor, then the swap map $S$ for $\cattens{s}$ is a $\dcentcat{A}$-crossed tensor functor between $\cat{K}\cattens{s}\cat{L}$ and $\cat{L}\cattens{s}\cat{K}$, as we prove below. Together with the Proposition above, this implies that $\cattens{s}$ makes the subcategory of $\dcentcat{A}\lincat$ of $\dcentcat{A}$-crossed tensor categories, $\dcentcat{A}$-crossed tensor functors and monoidal natural transformations into a symmetric monoidal 2-category.

\begin{lem}\label{ZXsismonoidal}
	Let $\cat{K}$ and $\cat{L}$ be $\dcentcat{A}$-crossed tensor categories. Then the switch map $S\colon \cat{K}\cattens{s}\cat{L}\rar \cat{L}\cattens{s}\cat{K}$, that uses the symmetry in $\dcentcat{A}_s$ on hom-objects, is a $\dcentcat{A}$-tensor functor. In particular, the diagram
	\begin{center}
		\begin{tikzcd}
			\cat{K}\cattens{s}\cat{L}\cattens{c}\cat{K}\cattens{s}\cat{L}\arrow[r,"\otimes"] \arrow[d,"S\cattens{c}S"] &\cat{K}\cattens{s}\cat{L}\arrow[d,"S"]\\
			(\cat{L}\cattens{s}\cat{K})\cattens{c}(\cat{L}\cattens{s}\cat{K}) \arrow[r,"\otimes"]&\cat{L}\cattens{s}\cat{K}
		\end{tikzcd}
	\end{center}
	commutes strictly.
\end{lem}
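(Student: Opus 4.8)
The plan is to verify the displayed square directly, checking it separately on objects and on hom-objects, and then to observe that its strict commutativity is precisely the coherence datum $\mu_1=\id$ needed to promote $S$ to a (strict) $\dcentcat{A}$-crossed tensor functor. Recall from the preceding Proposition that the tensor product on $\cat{K}\cattens{s}\cat{L}$ is the composite of $Z$ from Proposition \ref{ZXcomparisonfunctors} (which acts by $\zeta$ on hom-objects) with the componentwise application of $\otimes_\cat{K}$ and $\otimes_\cat{L}$, and likewise for $\cat{L}\cattens{s}\cat{K}$; meanwhile $S$ acts by the symmetry $s$ of $\dcentcat{A}_s$ on hom-objects, and $S\cattens{c}S$ acts as $s\otimes_c s$.

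On objects both composites send $(k_1\boxtimes l_1)\boxtimes(k_2\boxtimes l_2)$ to $(l_1\otimes_\cat{L}l_2)\boxtimes(k_1\otimes_\cat{K}k_2)$: going right-then-down first forms $(k_1\otimes_\cat{K}k_2)\boxtimes(l_1\otimes_\cat{L}l_2)$ and then swaps, while going down-then-right first swaps each factor and then tensors in $\cat{L}\cattens{s}\cat{K}$. It remains to compare the two composites on hom-objects. Writing $m_\cat{K}$, $m_\cat{L}$ for the action of $\otimes_\cat{K}$, $\otimes_\cat{L}$ on hom-objects, the upper path is $s\circ(m_\cat{K}\otimes_s m_\cat{L})\circ\zeta$ and the lower path is $(m_\cat{L}\otimes_s m_\cat{K})\circ\zeta\circ(s\otimes_c s)$, so I must establish
\begin{equation*}
s\circ(m_\cat{K}\otimes_s m_\cat{L})\circ\zeta = (m_\cat{L}\otimes_s m_\cat{K})\circ\zeta\circ(s\otimes_c s).
\end{equation*}

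This follows from two ingredients. First, naturality of the symmetry $s$ with respect to the pair $(m_\cat{K},m_\cat{L})$ lets me rewrite $s\circ(m_\cat{K}\otimes_s m_\cat{L})=(m_\cat{L}\otimes_s m_\cat{K})\circ s$, moving the symmetry past the componentwise tensor onto the inner objects. Second, the compatibility of $\zeta$ with the symmetry $s$ — one of the ``similar conditions'' accompanying Equation \eqref{ZXbraidingcompatible}, part of the braided strongly inclusive bilax 2-fold structure of \cite[Definition \ref{DCbraideddef} and Theorem \ref{DCdc2foldmon}]{Wasserman2017a} — gives $s\circ\zeta=\zeta\circ(s\otimes_c s)$ with the indices of $\zeta$ suitably permuted. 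Composing these two identities carries the left-hand side to the right-hand side. The one point requiring care, and which I expect to be the main obstacle, is bookkeeping which symmetry is being invoked: $S$ uses the symmetry $s$ of $\otimes_s$ rather than the braiding $\beta$ of $\otimes_c$, so the relevant compatibility is the $s$-version of \eqref{ZXbraidingcompatible}, and one must track exactly which tensor factors are interchanged at each stage.

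Finally, strict commutativity of the square exhibits $\mu_1=\id$ as coherence data for $S$; since $S$ carries the unit $\mathbb{I}_\cat{K}\cattens{s}\mathbb{I}_\cat{L}$ to $\mathbb{I}_\cat{L}\cattens{s}\mathbb{I}_\cat{K}$ one takes $\mu_0=\id$ as well, and the required compatibility with the associators and unitors is inherited from the corresponding compatibilities of $Z$ and $\zeta$ already used in Proposition \ref{ZXcomparisonfunctors}. Hence $S$ is a (strict) $\dcentcat{A}$-crossed tensor functor.
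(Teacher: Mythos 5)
Your proof is correct and follows essentially the same route as the paper's: both factor $\otimes$ through $Z$ and verify the resulting two squares, using the compatibility of $\zeta$ with the symmetry $s$ (the ``similar condition'' to Equation \eqref{ZXbraidingcompatible} in Notation \ref{ZXnotation2fold}) for the $Z$-square, and naturality of $s$ against $(m_\cat{K},m_\cat{L})$ for the other, which is exactly the paper's observation that the symmetry is a natural isomorphism between the two change-of-basis functors. Your brief treatment of the unit and coherence data goes slightly beyond what the paper records, but is consistent with it.
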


\begin{proof}
	Unpacking the definition of the $\dcentcat{A}$-crossed monoidal structure on $\cat{K}\cattens{s}\cat{L}$, we get
	\begin{center}
		\begin{tikzcd}
			\cat{K}\cattens{s}\cat{L}\cattens{c}\cat{K}\cattens{s}\cat{L}\arrow[r,"Z"] \arrow[d,"S\cattens{c}S"] &\cat{K}\cattens{c}\cat{K}\cattens{s}\cat{L}\cattens{c}\cat{L} \arrow[r,"\otimes_{\cat{K}}\cattens{s}\otimes_\cat{L}"] \arrow[d,"S"]&\cat{K}\cattens{s}\cat{L}\arrow[d,"S"]\\
			(\cat{L}\cattens{s}\cat{K})\cattens{c}(\cat{L}\cattens{s}\cat{K}) \arrow[r,"Z"]&
			\cat{L}\cattens{c}\cat{L}\cattens{s}\cat{K}\cattens{c}\cat{K} \arrow[r,"\otimes_{\cat{K}}\cattens{s} \otimes_\cat{L}"]&\cat{L}\cattens{s}\cat{K}.
		\end{tikzcd}
	\end{center}
	The leftmost square commutes as a direct consequence of $\zeta$ being compatible with the symmetry, see Notation \ref{ZXnotation2fold}. The rightmost square commutes as the top route is change of basis along $\otimes_s$ for $\otimes_{\cat{K}}\boxtimes\otimes_\cat{L}$ composed with the switch functor, whereas the bottom is change of basis along $\otimes_s$ composed with the switch map on the same functor, and the symmetry is a natural isomorphism between these change of basis functors.
\end{proof}

Similarly to Proposition \ref{ZXmoncatpres}, we have:

\begin{lem}\label{ZXforgetmon}
	Let $\cat{K}$ be $\dcentcat{A}$-crossed tensor, then $ \overline{\cat{K}}$ is $\cat{A}$-tensor.
\end{lem}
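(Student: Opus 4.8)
The plan is to transport the entire $\dcentcat{A}$-crossed tensor structure on $\cat{K}$ through the 2-functor $\overline{(-)}\colon \dcentcat{A}\lincat\rar \cat{A}\lincat$, using that this 2-functor carries the convolution product to the $\cat{A}$-enriched cartesian product. Conceptually, a $\dcentcat{A}$-crossed tensor category is precisely a pseudomonoid in the monoidal 2-category $(\dcentcat{A}\lincat,\cattens{c})$ with unit $\cat{A}_\cat{Z}$ (Definition \ref{ZXzacrossedtensordef}), while an $\cat{A}$-tensor category is a pseudomonoid in $(\cat{A}\lincat,\cattens{\cat{A}})$ with unit $\overline{\cat{A}_\cat{Z}}$. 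Since a monoidal 2-functor sends pseudomonoids to pseudomonoids, it suffices to note that $\overline{(-)}$ is strictly monoidal for these products, which is exactly the content of Lemma \ref{ZXforgetconvprod}, and then read off the transported structure. This mirrors the argument of Proposition \ref{ZXmoncatpres}.

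First I would apply $\overline{(-)}$ to the multiplication $\otimes_\cat{K}\colon \cat{K}\cattens{c}\cat{K}\rar \cat{K}$. By Lemma \ref{ZXforgetconvprod} the source is strictly identified with $\overline{\cat{K}}\cattens{\cat{A}}\overline{\cat{K}}$, so $\overline{\otimes_\cat{K}}$ becomes an $\cat{A}$-enriched functor $\overline{\cat{K}}\cattens{\cat{A}}\overline{\cat{K}}\rar \overline{\cat{K}}$ — that is, a tensor product on $\overline{\cat{K}}$ factoring through the $\cat{A}$-enriched cartesian product, which is exactly what an $\cat{A}$-tensor structure requires. Second, I would apply $\overline{(-)}$ to the unit functor $\mathbb{I}_\cat{K}\colon \cat{A}_\cat{Z}\rar \cat{K}$ and identify $\overline{\cat{A}_\cat{Z}}$ with $\cat{A}$ self-enriched over $\cat{A}$, the monoidal unit for $\cattens{\cat{A}}$: as $\cat{A}_\cat{Z}$ is self-enriched over $\cat{A}\subset\dcentcat{A}$ (Definition \ref{ZXaselfenriched}) and $\mathbf{Forget}$ restricts to the identity on $\cat{A}\subset\dcentcat{A}$, the forgetful functor leaves its hom-objects unchanged.

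Finally, the associators and unitors of $\cat{K}$ are $\dcentcat{A}_s$-enriched natural isomorphisms; applying the 2-functor $\overline{(-)}$ produces $\cat{A}$-enriched natural isomorphisms between the transported composites, rewriting each iterated convolution product as the corresponding iterated $\cattens{\cat{A}}$-product via Lemma \ref{ZXforgetconvprod}. Because $\overline{(-)}$ is a 2-functor, the pentagon and triangle identities — equalities of pasting diagrams of 2-morphisms — are preserved verbatim, giving the coherence conditions for $\overline{\cat{K}}$.

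I expect the main obstacle to be the bookkeeping for the iterated products: matching the two bracketings $\overline{\cat{K}\cattens{c}(\cat{K}\cattens{c}\cat{K})}$ and $\overline{(\cat{K}\cattens{c}\cat{K})\cattens{c}\cat{K}}$ with the corresponding bracketed $\cattens{\cat{A}}$-products, and checking that the comparison isomorphisms supplied by Lemma \ref{ZXforgetconvprod} are themselves coherent so that the transported coherence equations are not obstructed by correction terms. This should cause no genuine trouble, because $\mathbf{Forget}$ is strictly monoidal for $\otimes_c$, so each such comparison arises from change of basis along strictly equal functors (by the pseudofunctoriality of Proposition \ref{ZXchangebasisispseudofunct}); the identifications are therefore strict and automatically coherent, and the pseudomonoid axioms descend directly.
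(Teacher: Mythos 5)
Your proposal is correct and follows essentially the same route as the paper's proof: apply change of basis along $\mathbf{Forget}$ to the multiplication and unit, use Lemma \ref{ZXforgetconvprod} to strictly identify $\overline{\cat{K}\cattens{c}\cat{K}}$ with $\overline{\cat{K}}\cattens{\cat{A}}\overline{\cat{K}}$ and $\overline{\cat{A}_\cat{Z}}$ with $\underline{\cat{A}}$, and invoke the 2-functoriality of Proposition \ref{ZXchangebasisispseudofunct} to transport the associators, unitors and their coherence. Your extra remarks on pseudomonoids and the strictness of the iterated-product identifications are a more explicit packaging of the same argument, not a different one.
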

\begin{proof}
	We have already established that $\overline{\cat{K}\cattens{c}\cat{K}}= \overline{\cat{K}}\cattens{\cat{A}} \overline{\cat{K}}$ in Lemma \ref{ZXforgetconvprod}, this means that the image under change of basis along the forgetful functor of the $\dcentcat{A}$-crossed monoidal structure is a functor $ \overline{\cat{K}}\cattens{\cat{A}} \overline{\cat{K}}\rar  \overline{\cat{K}}$. Furthermore, $\overline{\cat{A}_\cat{Z}}=\underline{\cat{A}}$, the category $\cat{A}$ enriched over itself, so the image of the unit for $\cat{K}$ is a functor $\underline{\cat{A}}\rar  \overline{\cat{K}}$, as required. By the 2-functoriality of the change of basis along the forgetful functor, the images of the unitor and associator will act as unitors and associators for $ \overline{\cat{K}}$.
\end{proof}

Recall that a monoidal category is called \emph{rigid} if every object has a dual, this notion translates verbosely to $\dcentcat{A}$-crossed tensor categories. In analogy with the notion of fusion category (\cite{Etingof2002}) in the setting of $\Vect$-enriched categories, we define:

\begin{df}\label{ZXZAfusiondef}
	If an FD $\dcentcat{A}_s$-category $\cat{K}$ is a $\dcentcat{A}$-crossed tensor category, rigid and is such that the unit $\mathbb{I}:\cat{A}_\cat{Z}\rar \cat{K}$ is a fully faithful functor, then $\cat{K}$ is called \emph{$\dcentcat{A}$-crossed fusion}.
\end{df}

\begin{rmk}
	In the definition of an ordinary fusion category, one usually asks that the unit object is a simple object. This implies the condition on the unit for a $\cat{Z}(\Vect)$-crossed fusion category. Conversely, as the unit object is simple in $\Vect$, full faithfulness implies that its image under the unit functor is a simple object.
\end{rmk}

\subsubsection{$\dcentcat{A}$-crossed braided categories}
\begin{df}\label{ZXcrossedbraidingdef}
	Let $\cat{K}$ be $\dcentcat{A}$-crossed tensor. Then a \emph{crossed braiding} for $\cat{K}$ is a natural isomorphism between $\otimes\colon  \cat{K}\cattens{c}\cat{K}\rar \cat{K}$ and
	$$
	\otimes\colon  \cat{K}\cattens{c}\cat{K}\xrightarrow{B}\cat{K}\cattens{c}\cat{K}\xrightarrow{\otimes} \cat{K},
	$$
	that satisfies the hexagon equations.
\end{df}

\begin{rmk}
	This definition implies that the unit functor $\mathbb{I}:\cat{A}_\cat{Z}\rar \cat{K}$ is a braided monoidal functor between crossed braided categories, where we equip $\cat{A}_\cat{Z}$ with its symmetry, by a modification of the usual argument that the hexagon equations imply compatibility between the braiding and the unitors, see for example \cite[Proposition 1]{Joyal1986}. In particular, in the case where $\cat{K}$ is $\dcentcat{A}$-crossed fusion as well as crossed braided, the unit functor is a braided monoidal embedding of $\cat{A}_\cat{Z}$ into $\cat{K}$.
\end{rmk}

The appropriate notion of a braided functor between such categories is the following:
\begin{df}\label{ZXbraidedfunct}
	Let $(F,\mu_0,\mu_1)$ be a $\dcentcat{A}$-crossed functor between crossed braided categories $(\cat{K},\otimes_K,\beta_K)$ and $(\cat{L},\otimes_L,\beta_L)$. Then $(F,\mu_0,\mu_1)$ is called \emph{braided} if the natural transformations
	$$
	\mu_1\circ F\circ\beta_\cat{K}\colon F(-\otimes_{\cat{K}}-)\Rightarrow F(-\otimes_{\cat{K}}-)B\Rightarrow (-\otimes_\cat{L}-)(F\cattens{c}F)B
	$$
	and
	$$
	\beta_\cat{L}\circ F\cattens{c}F\circ \mu_1\colon F(-\otimes_{\cat{K}}-)\Rightarrow (-\otimes_{\cat{L}}-)(F\cattens{c}F) \Rightarrow (-\otimes_\cat{L}-)B(F\cattens{c}F)
	$$
	agree. We remind the reader that as $B$ is induced by the braiding in $\dcentcat{A}$, and $F$ acts by morphisms in $\dcentcat{A}$ on hom-objects, we have that $(F\cattens{c}F)B=B(F\cattens{c}F)$, by naturality of the braiding.
\end{df}

\begin{prop}\label{ZXscrossedbraided}
	Let $\cat{K}$ and $\cat{L}$ be $\dcentcat{A}$-crossed braided categories. Then $\cat{K}\cattens{s}\cat{L}$ is $\dcentcat{A}$-crossed braided.
\end{prop}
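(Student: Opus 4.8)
The plan is to equip $\cat{K}\cattens{s}\cat{L}$ with the \emph{componentwise} crossed braiding. Recall that we have already seen $\cat{K}\cattens{s}\cat{L}$ is $\dcentcat{A}$-crossed tensor, with tensor product the composite $(\otimes_\cat{K}\cattens{s}\otimes_\cat{L})\circ Z$, where $Z$ is the comparison functor of Proposition \ref{ZXcomparisonfunctors}. Write $\beta_\cat{K}\colon \otimes_\cat{K}\Rightarrow \otimes_\cat{K}\circ B_\cat{K}$ and $\beta_\cat{L}\colon \otimes_\cat{L}\Rightarrow\otimes_\cat{L}\circ B_\cat{L}$ for the given crossed braidings, where $B_\cat{K}$ and $B_\cat{L}$ are the braiding functors for $\cattens{c}$ on $\cat{K}$ and $\cat{L}$ from Definition \ref{ZXbraidingfunctor}. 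I would define the candidate crossed braiding on $\cat{K}\cattens{s}\cat{L}$ to be the natural isomorphism $\beta_\cat{K}\cattens{s}\beta_\cat{L}$ whiskered with $Z$.

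The crux is to check that this whiskered transformation has the source and target required by Definition \ref{ZXcrossedbraidingdef}, i.e.\ that it is a natural isomorphism between $\otimes$ and $\otimes\circ B$, where $B$ is the braiding functor for $\cattens{c}$ on $\cat{K}\cattens{s}\cat{L}$. For this the key lemma is the identity
$$
Z\circ B=(B_\cat{K}\cattens{s}B_\cat{L})\circ Z
$$
of $\dcentcat{A}_s$-enriched functors $(\cat{K}\cattens{s}\cat{L})\cattens{c}(\cat{K}\cattens{s}\cat{L})\rar(\cat{K}\cattens{c}\cat{K})\cattens{s}(\cat{L}\cattens{c}\cat{L})$. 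On objects both sides send $k\boxtimes l\boxtimes k'\boxtimes l'$ to $k'\boxtimes k\boxtimes l'\boxtimes l$, and on hom-objects both are built from the braiding $\beta$ of $(\dcentcat{A},\otimes_c)$ together with the comparison morphism $\zeta$; their equality is precisely the statement that $\zeta$ is compatible with $\beta$, in the sense of the oplax analogue of Equation \eqref{ZXbraidingcompatible}, which holds by \cite[Definition \ref{DCbraideddef}]{Wasserman2017a}. Given this identity one computes
$$
\otimes\circ B=(\otimes_\cat{K}\cattens{s}\otimes_\cat{L})\circ Z\circ B=\bigl((\otimes_\cat{K}\circ B_\cat{K})\cattens{s}(\otimes_\cat{L}\circ B_\cat{L})\bigr)\circ Z,
$$
so that $\beta_\cat{K}\cattens{s}\beta_\cat{L}$, which is a natural isomorphism from $\otimes_\cat{K}\cattens{s}\otimes_\cat{L}$ to $(\otimes_\cat{K}\circ B_\cat{K})\cattens{s}(\otimes_\cat{L}\circ B_\cat{L})$, whiskers with $Z$ to exactly a natural isomorphism $\otimes\Rightarrow\otimes\circ B$, as desired.

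It then remains to verify the two hexagon equations. The strategy is to reduce them to the hexagons already satisfied by $\beta_\cat{K}$ and $\beta_\cat{L}$, one tensor factor at a time. Concretely, I would expand each hexagon for $\cat{K}\cattens{s}\cat{L}$ by unpacking $\otimes$ as $(\otimes_\cat{K}\cattens{s}\otimes_\cat{L})\circ Z$ and the associator as the componentwise associator. Since $Z$ is compatible with the associators and units of $\dcentcat{A}$ (as was used in establishing that $\cat{K}\cattens{s}\cat{L}$ is crossed tensor), every structural morphism appearing factors through $\cattens{s}$ as a pair of morphisms living purely in the $\cat{K}$- and $\cat{L}$-slots. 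The hexagon diagram for the product therefore decomposes, via the functoriality and coherence of $\cattens{s}$, into the $\cattens{s}$-product of the hexagon diagrams for $\beta_\cat{K}$ and for $\beta_\cat{L}$, each of which commutes by hypothesis.

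The main obstacle I anticipate is the bookkeeping in this last step: the braiding functor $B$ on the product reshuffles four factors simultaneously, and one must repeatedly invoke the $\zeta$--$\beta$ and $\zeta$--associator compatibilities to move all structural morphisms into separated componentwise form before the hexagons for the factors can be applied. This is the same kind of compatibility chase that already appears in the proof of Proposition \ref{ZXcomparisonfunctors} and in Lemma \ref{ZXsismonoidal}, and no new ingredient beyond the braided and symmetric compatibility of the bilax $2$-fold structure of \cite{Wasserman2017a} should be required.
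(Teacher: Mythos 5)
Your proposal is correct and takes essentially the same route as the paper: your key identity $Z\circ B=(B_\cat{K}\cattens{s}B_\cat{L})\circ Z$, derived from the compatibility of $\zeta$ with the braiding (the oplax analogue of Equation \eqref{ZXbraidingcompatible}), is precisely the paper's strictly commuting left square, and whiskering $\beta_\cat{K}\cattens{s}\beta_\cat{L}$ with $Z$ is precisely its right square commuting up to the componentwise braiding. You spell out the source/target bookkeeping and the hexagon reduction in more detail than the paper, which leaves these implicit, but no genuinely different idea is involved.
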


\begin{proof}
	We will show that the componentwise braiding is compatible with the componentwise $\dcentcat{A}$-crossed monoidal structure. That is, 
	\begin{center}
		\begin{tikzcd}
			\cat{K}\cattens{s}\cat{L}\cattens{c}\cat{K}\cattens{s}\cat{L} \arrow[d,"B"] \arrow[r,"Z"] & \cat{K}\cattens{c}\cat{K}\cattens{s}\cat{L}\cattens{c}\cat{L} \arrow[r,"\otimes_{\cat{K}}\cattens{s}\otimes_\cat{L}"] \arrow[d,"B\cattens{s}B"] & \cat{K}\cattens{s}\cat{L} \arrow[d,"="]\\
			\cat{K}\cattens{s}\cat{L}\cattens{c}\cat{K}\cattens{s}\cat{L}  \arrow[r,"Z"] & \cat{K}\cattens{c}\cat{K}\cattens{s}\cat{L}\cattens{c}\cat{L} \arrow[r,"\otimes_{\cat{K}}\cattens{s}\otimes_\cat{L}"] & \cat{K}\cattens{s}\cat{L}			
		\end{tikzcd}
	\end{center}
	commutes up to componentwise braiding. The leftmost square commutes as a consequence of $\zeta$ satisfying a condition similar to Equation \eqref{ZXbraidingcompatible}, the rightmost square commutes up to the $\cattens{s}$ product of the braidings for $\cat{K}$ and $\cat{L}$.
\end{proof}

\begin{lem}\label{ZXsbraided}
	The switch functor $S$ from Lemma \ref{ZXsismonoidal} is braided monoidal.
\end{lem}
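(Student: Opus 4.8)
The plan is to leverage Lemma \ref{ZXsismonoidal}, which shows that the relevant square commutes \emph{strictly}, so that $S$ is a strict $\dcentcat{A}$-crossed tensor functor: we may take its structure isomorphisms $\mu_0$ and $\mu_1$ to be identities. With $\mu_1=\id$, the two natural transformations in Definition \ref{ZXbraidedfunct} simplify, and the condition that $S$ be braided reduces to the single equation that $S$ intertwines the two componentwise braidings produced in Proposition \ref{ZXscrossedbraided}, namely
$$
S\bigl(\beta_{\cat{K}\cattens{s}\cat{L}}\bigr)=\beta_{\cat{L}\cattens{s}\cat{K}}\circ(S\cattens{c}S),
$$
where we have also used that $(S\cattens{c}S)B=B(S\cattens{c}S)$ by naturality of the braiding, as noted in Definition \ref{ZXbraidedfunct}.

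The next step is to unpack both sides on hom-objects. The functor $S$ acts by the symmetry $s$ of $\otimes_s$, whereas the braiding functor $B$ underlying each componentwise braiding acts by the braiding $\beta$ of $\otimes_c$; the componentwise braidings are themselves assembled from $\beta_\cat{K}$ and $\beta_\cat{L}$ through the comparison functor $Z$ and the compatibility morphism $\zeta$ from Proposition \ref{ZXcomparisonfunctors}. After passing through $Z$, the required identity becomes an equality of morphisms in $\dcentcat{A}$ built from $s$, $\beta$, $\beta_\cat{K}$, $\beta_\cat{L}$ and $\zeta$. The braidings $\beta_\cat{K}$ and $\beta_\cat{L}$ pass through by naturality, since $S$ merely swaps the two tensor factors.

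The crux is the compatibility of the symmetry $s$ for $\otimes_s$ with the braiding $\beta$ for $\otimes_c$, as recorded in Equation \eqref{ZXbraidingcompatible} together with its companion for $\zeta$ (see Notation \ref{ZXnotation2fold}). This is precisely what lets the outer switch $S$ be pulled past the inner braiding $B$ while converting the assembly $\beta_\cat{K}\cattens{s}\beta_\cat{L}$ into $\beta_\cat{L}\cattens{s}\beta_\cat{K}$. Concretely, one checks that applying $s$ and then $\zeta$ agrees with applying $\zeta$ and then the symmetry on the swapped factors, which is exactly the $\zeta$-analogue of \eqref{ZXbraidingcompatible}; drawing the resulting diagram so that $s$, $\beta$ and $\zeta$ occupy adjacent cells, the square then commutes by this stated compatibility.

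I expect the main obstacle to be bookkeeping rather than anything conceptual: one must track carefully how $S$ interacts \emph{simultaneously} with the comparison functor $Z$ and with the braiding functor $B$, since $S$ operates on the $\otimes_s$-structure while $B$ operates on the $\otimes_c$-structure, and these two structures are threaded together by $\eta$ and $\zeta$. Once the diagram is arranged so that the compatibility between $s$ and $\beta$ appears as a single cell, the $\zeta$-analogue of Equation \eqref{ZXbraidingcompatible} does all the work, exactly as $\zeta$'s compatibility with the symmetry was used in Lemma \ref{ZXsismonoidal} and Proposition \ref{ZXscrossedbraided}.
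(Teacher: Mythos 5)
Your proposal is correct and follows essentially the same route as the paper, whose proof is a one-line observation that the claim is ``immediate from the symmetry and the braiding commuting with each other in $\dcentcat{A}$'' --- precisely the compatibility (Equation \eqref{ZXbraidingcompatible} and its $\zeta$-companion) that you identify as the crux. Your version merely spells out the bookkeeping (strictness of $S$ from Lemma \ref{ZXsismonoidal}, reduction to intertwining the componentwise braidings, and passage through $Z$ and $\zeta$) that the paper leaves implicit.
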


\begin{proof}
	We have to check that $S$ takes the componentwise braiding to the componentwise braiding. This is immediate from the symmetry and the braiding commuting with each other in $\dcentcat{A}$.
\end{proof}

Note that Proposition \ref{ZXscrossedbraided} and Lemma \ref{ZXsbraided} together imply that $\cattens{s}$ is a symmetric monoidal structure on the 2-category of $\dcentcat{A}$-crossed braided categories.

The neutral subcategory $\cat{K}_\cat{A}$ plays an interesting role. First of all we have:

\begin{prop}
	Let $\cat{K}$ be a $\dcentcat{A}$-crossed braided category. Then its neutral subcategory $\cat{K}_\cat{A}$ is a $\dcentcat{A}$-crossed braided subcategory.
\end{prop}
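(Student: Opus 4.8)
The plan is to equip the neutral subcategory $\cat{K}_\cat{A}$ with the $\dcentcat{A}$-crossed braided structure restricted from $\cat{K}$, and to check that the (fully faithful) inclusion $\cat{K}_\cat{A}\hookrightarrow\cat{K}$ is a braided $\dcentcat{A}$-crossed tensor functor. Recall from the discussion preceding this subsection that $\cat{K}_\cat{A}$ is again an object of $\dcentcat{A}\lincat$, so it is already $\dcentcat{A}_s$-enriched and tensored; what remains is to produce the crossed tensor functor $\otimes_{\cat{K}_\cat{A}}$, the unit, and the associators, unitors and crossed braiding, and to verify their coherence. Since every piece of structure will be a literal restriction of the corresponding structure on $\cat{K}$, the pentagon, triangle and hexagon identities for $\cat{K}_\cat{A}$ will be inherited from those for $\cat{K}$, so the only real work is to show that the relevant functors and natural isomorphisms actually restrict to the neutral subcategories.

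First I would show the tensor product closes on $\cat{K}_\cat{A}$. By Lemma~\ref{ZXconvandneut} (with $\cat{L}=\cat{K}$) we have $\cat{K}_\cat{A}\cattens{c}\cat{K}_\cat{A}\subseteq(\cat{K}\cattens{c}\cat{K})_\cat{A}$. The crucial input is that, because taking the neutral subcategory is $2$-functorial (it is the composite of the reflector $\dcentcat{A}\lincat\rar\cat{A}\lincat$ with the inclusion), every $\dcentcat{A}_s$-enriched functor preserves neutral subcategories. Applying this to the crossed tensor functor $\otimes_\cat{K}\colon\cat{K}\cattens{c}\cat{K}\rar\cat{K}$ shows it carries $(\cat{K}\cattens{c}\cat{K})_\cat{A}$ into $\cat{K}_\cat{A}$; composing with the inclusion above gives the desired restriction $\otimes_{\cat{K}_\cat{A}}\colon\cat{K}_\cat{A}\cattens{c}\cat{K}_\cat{A}\rar\cat{K}_\cat{A}$. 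For the unit, note that $\cat{A}_\cat{Z}$ is enriched over $\cat{A}\subset\dcentcat{A}$, so $(\cat{A}_\cat{Z})_\cat{A}=\cat{A}_\cat{Z}$; preservation of neutrality then forces $\mathbb{I}_\cat{K}\colon\cat{A}_\cat{Z}\rar\cat{K}$ to factor through $\cat{K}_\cat{A}$, giving the unit of $\cat{K}_\cat{A}$.

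Next I would restrict the remaining structure. The braiding functor $B$ of Definition~\ref{ZXbraidingfunctor} is $\dcentcat{A}_s$-enriched and sends $k\boxtimes k'$ to $k'\boxtimes k$, so by the same preservation statement it restricts to $(\cat{K}\cattens{c}\cat{K})_\cat{A}$, and hence to $\cat{K}_\cat{A}\cattens{c}\cat{K}_\cat{A}$. The associator, unitors and the crossed braiding of $\cat{K}$ are $\dcentcat{A}_s$-enriched natural isomorphisms between $\dcentcat{A}_s$-enriched functors assembled from $\otimes_\cat{K}$, $\mathbb{I}_\cat{K}$, $B$ and the comparison functor $Z$ of Proposition~\ref{ZXcomparisonfunctors}; by $2$-functoriality each restricts to a natural isomorphism between the corresponding restricted functors on $\cat{K}_\cat{A}$. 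As these restricted isomorphisms are the components of the ambient ones on neutral objects, they continue to satisfy the coherence and hexagon equations, which hold identically in $\cat{K}$. This makes $\cat{K}_\cat{A}$ a $\dcentcat{A}$-crossed braided category with fully faithful inclusion into $\cat{K}$ whose monoidal comparison isomorphisms are identities, hence a braided $\dcentcat{A}$-crossed tensor functor.

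The step I expect to be the main obstacle is the claim that $\dcentcat{A}_s$-enriched functors preserve neutral objects --- equivalently, that $\otimes_\cat{K}$ genuinely lands in $\cat{K}_\cat{A}$ on neutral inputs. A direct endomorphism-object argument via Lemma~\ref{ZXneutralsubcatendochar} does not suffice: $\otimes_\cat{K}$ only supplies an identity-preserving morphism $\cat{K}(k,k)\otimes_c\cat{K}(k',k')\rar\cat{K}(k\otimes_\cat{K}k',k\otimes_\cat{K}k')$ out of an object of $\cat{A}$, and without knowing this morphism is epi one cannot conclude that the target lies in $\cat{A}$. The honest route is therefore to lean on the reflective-subcategory statement recorded before this subsection, which yields the restriction functorially and without computing any endomorphism objects; I would take care to state this preservation property precisely, as it is the one genuinely load-bearing ingredient of the argument.
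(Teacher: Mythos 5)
Your proposal is correct and takes essentially the same route as the paper: the paper's proof consists precisely of Lemma \ref{ZXconvandneut} together with the observation that $\otimes_\cat{K}$, the swap functor $B$ and the crossed braiding restrict, with the preservation of neutrality by enriched functors supplied (implicitly) by the 2-functoriality of $\cat{K}\mapsto\cat{K}_\cat{A}$ recorded just before this subsection. You make that implicit ingredient explicit, and you additionally treat the unit functor $\mathbb{I}_\cat{K}$, which the paper's three-line proof leaves unmentioned.

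One correction to your final paragraph, however: the direct endomorphism-object argument does suffice, provided you run it as in the proof of Lemma \ref{ZXneutralsubcatendochar} rather than trying to force $\cat{K}(k\otimes k',k\otimes k')$ into $\cat{A}$ by an epi-ness argument. For any $\dcentcat{A}_s$-enriched functor $F\colon\cat{M}\rar\cat{N}$ and any $x\in\cat{M}$ with $\cat{M}(x,x)\in\cat{A}$, enriched functors preserve identity elements, $\tn{I}_{Fx}=F_{x,x}\circ\tn{I}_x$, so for every $y\in\cat{N}$ the identity automorphism of $\cat{N}(y,Fx)$ factors as
$$
\cat{N}(y,Fx)\cong\mathbb{I}_s\otimes_s\cat{N}(y,Fx)\xrightarrow{\tn{I}_x\otimes_s\id}\cat{M}(x,x)\otimes_s\cat{N}(y,Fx)\xrightarrow{F_{x,x}\otimes_s\id}\cat{N}(Fx,Fx)\otimes_s\cat{N}(y,Fx)\xrightarrow{\circ}\cat{N}(y,Fx).
$$
By \cite[Proposition \ref{STsymtenswitha}]{Wasserman2017} the middle object $\cat{M}(x,x)\otimes_s\cat{N}(y,Fx)$ lies in $\cat{A}$, so $\cat{N}(y,Fx)$ is a retract of an object of $\cat{A}$ and hence itself lies in $\cat{A}$, as $\cat{A}\subset\dcentcat{A}$ is closed under direct summands; thus $Fx$ is neutral in the sense of Definition \ref{ZXneutralsubcat}. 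Applied to $F=\otimes_\cat{K}$ and $x=k\boxtimes k'$, whose endomorphism object $\cat{K}(k,k)\otimes_c\cat{K}(k',k')$ lies in $\cat{A}$ as in the proof of Lemma \ref{ZXconvandneut}, this yields closure of $\cat{K}_\cat{A}$ under the tensor product without invoking the reflective-subcategory remark; indeed this factorisation is the honest content behind that remark, so the step you singled out as the main obstacle is available by exactly the computation you discarded.
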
 
\begin{proof}
	By Lemma \ref{ZXconvandneut} we have that
	$$
	\cat{K}_\cat{A}\cattens{c}\cat{K}_\cat{A}\subset (\cat{K}\cattens{c}\cat{K})_\cat{A}.
	$$
	This implies that the restriction of the crossed monoidal structure on $\cat{K}$ to $\cat{K}_\cat{A}$ factors through $\cat{K}_\cat{A}$, showing that it restricts to a monoidal stucture on $\cat{K}_\cat{A}$. The swap functor $B$ clearly restricts to a functor from $\cat{K}_\cat{A}\cattens{c}\cat{K}_\cat{A}$ to itself, and we see that the crossed braiding on $\cat{K}$ restricts to a crossed braiding on $\cat{K}_\cat{A}$.
\end{proof}

Recall that the swap functor $B$ is given by the braiding on hom-objects. On the subcategory $\cat{A}\subset\dcentcat{A}$, the braiding is just the symmetry in $\cat{A}$. This implies that the restriction of $B$ to $\cat{K}_\cat{A}\cattens{c}\cat{K}_\cat{A}$ is sent to the $\cat{A}$-swap functor (induced by the symmetry in $\cat{A}$) under change of basis along the forgetful functor $\dcentcat{A}\rar\cat{A}$. We therefore have the following partial analogue to Lemma \ref{ZXforgetmon}:

\begin{prop}
	Let $\cat{K}$ be a $\dcentcat{A}$-crossed tensor category. Then the image under change of basis along the forgetful functor $\dcentcat{A}\rar\cat{A}$ of its neutral subcategory $\overline{ \cat{K}_\cat{A}}$ is a braided $\cat{A}$-tensor category.
\end{prop}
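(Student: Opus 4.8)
The plan is to obtain the braiding on $\overline{\cat{K}_\cat{A}}$ by transporting the crossed braiding of $\cat{K}$, restricted to the neutral subcategory, along the forgetful functor. First I would recall that by the preceding Proposition the neutral subcategory $\cat{K}_\cat{A}$ inherits the structure of a $\dcentcat{A}$-crossed braided subcategory of $\cat{K}$; in particular it is $\dcentcat{A}$-crossed tensor, so by Lemma \ref{ZXforgetmon} its image $\overline{\cat{K}_\cat{A}}$ under change of basis along $\mathbf{Forget}$ is already an $\cat{A}$-tensor category, with associators and unitors the images of those on $\cat{K}_\cat{A}$. What remains is to produce a braiding for this tensor structure and to verify the hexagons.

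For the braiding, I would use the crossed braiding $\beta$ of $\cat{K}_\cat{A}$, which is a natural isomorphism between $\otimes$ and $\otimes\circ B\colon\cat{K}_\cat{A}\cattens{c}\cat{K}_\cat{A}\rar\cat{K}_\cat{A}$. Applying the forgetful functor and invoking Lemma \ref{ZXforgetconvprod}, which identifies $\overline{\cat{K}_\cat{A}\cattens{c}\cat{K}_\cat{A}}$ with $\overline{\cat{K}_\cat{A}}\cattens{\cat{A}}\overline{\cat{K}_\cat{A}}$, the source and target functors $\otimes$ and $\otimes\circ B$ become the tensor product $\otimes_{\overline{\cat{K}_\cat{A}}}$ and its precomposition with the image of $B$. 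The crucial input, already observed in the paragraph above, is that on $\cat{A}\subset\dcentcat{A}$ the braiding for $\otimes_c$ coincides with the symmetry of $\cat{A}$; hence the image of $B$ under change of basis is precisely the $\cat{A}$-swap functor induced by the symmetry of $\cat{A}$. Consequently $\overline{\beta}$ is a natural isomorphism between $\otimes_{\overline{\cat{K}_\cat{A}}}$ and $\otimes_{\overline{\cat{K}_\cat{A}}}$ precomposed with the $\cat{A}$-swap, that is, a braiding in the $\cat{A}$-enriched sense.

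Finally I would check the hexagon axioms. Because $\mathbf{Forget}$ is strictly monoidal for $\otimes_c$ (so that, as in Lemma \ref{ZXforgetconvprod}, change of basis takes $\cattens{c}$ to $\cattens{\cat{A}}$) and because change of basis is $2$-functorial (Proposition \ref{ZXchangebasisispseudofunct}), the hexagon diagrams for $\beta$ on $\cat{K}_\cat{A}$ --- commuting diagrams of morphisms in $\dcentcat{A}$ assembled from $\beta$, the associators, and $\otimes$ --- are carried to the corresponding hexagon diagrams for $\overline{\beta}$ on $\overline{\cat{K}_\cat{A}}$, and commutativity is preserved since each constituent morphism has a well-defined image under $\mathbf{Forget}$. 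The associators and unitors appearing in these diagrams are exactly the images of those of $\cat{K}_\cat{A}$ by Lemma \ref{ZXforgetmon}, so all the data match up.

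The main obstacle I anticipate is bookkeeping around the fact that $\mathbf{Forget}$ is only \emph{lax} monoidal for $\otimes_s$, while being strict for $\otimes_c$. Since the crossed braiding, the swap $B$, and the entire convolution-product formalism are governed by $\otimes_c$ --- for which $\mathbf{Forget}$ is strict --- the braiding descends cleanly without any lax compatibility morphisms intervening; the $\otimes_s$-laxness enters only through the underlying $\cat{A}$-tensor structure supplied by Lemma \ref{ZXforgetmon}, where it has already been accounted for. Verifying that this separation is genuinely clean, and that no $\otimes_s$-coherence data sneaks into the hexagons, is the one point deserving careful attention.
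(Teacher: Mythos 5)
Your proof is correct and follows essentially the same route as the paper, which states this proposition without a separate proof precisely because it rests on the observations you spell out: $\mathbf{Forget}$ is strict monoidal for $\otimes_c$, so Lemmas \ref{ZXforgetconvprod} and \ref{ZXforgetmon} supply the $\cat{A}$-tensor structure, the restriction of $B$ to $\cat{K}_\cat{A}\cattens{c}\cat{K}_\cat{A}$ is sent to the $\cat{A}$-swap functor because the braiding for $\otimes_c$ restricts to the symmetry on $\cat{A}\subset\dcentcat{A}$, and the hexagons descend by 2-functoriality of change of basis (Proposition \ref{ZXchangebasisispseudofunct}). Note only that you have (correctly) read the hypothesis as ``$\dcentcat{A}$-crossed \emph{braided}'': as printed the statement says ``crossed tensor'', under which there would be no braiding data on $\cat{K}$ to transport, so your implicit strengthening matches the paper's evident intent, given that the preceding proposition (restriction of the crossed braiding to $\cat{K}_\cat{A}$) is an essential input.
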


\begin{rmk}
	Note that there is full analogue for Lemma \ref{ZXforgetmon}: $\dcentcat{A}$-crossed braided categories are not taken to braided $\cat{A}$-tensor categories by change of basis along the forgetful functor $\dcentcat{A}\rar \cat{A}$. This is because the forgetful functor is not a braided functor with respect to the braiding for $\otimes_c$, while restricted to the subcategory $\cat{A}\subset \dcentcat{A}$ it is. That no such analogue exists is a crucial point in future work \cite{Wassermanc}, where we show how to obtain $\dcentcat{A}$-crossed braided categories from braided categories containing $\cat{A}$, even in cases where the associated $\cat{A}$-enriched category fails to be braided.  
\end{rmk}

\subsubsection{A 2-category of $\dcentcat{A}$-crossed braided categories}
We can now formulate a 2-category of $\dcentcat{A}$-crossed braided categories:
\begin{df}\label{ZXzaxbtdef}
	The \emph{symmetric monoidal 2-category $\ZAXBT$ of $\dcentcat{A}$-crossed braided fusion categories} is the 2-category with
	\begin{itemize}
		\item objects $\dcentcat{A}$-crossed braided fusion categories,
		\item morphisms braided $\dcentcat{A}$-crossed tensor functors,
		\item 2-morphisms monoidal natural transformations,
		\item monoidal structure $\cattens{s}$, with swap map $S$.
	\end{itemize}
\end{df}

\section{$\dcentcat{A}$-Crossed Braided Categories and Tannaka Duality}\label{ZXdcxbsupertannasect}

This part of this paper is devoted to proving:

\begin{thm}\label{ZXzaxbtgxbteqv}
	Let $G$ (or $(G,\omega)$) be a finite (super) group. Then the functor $\overline{ \overline{(-)}}$ (see Definition \ref{ZXkbarbar} and Section \ref{ZXbarbarpsfunctor}) from $\ZAXBT$ (Definition \ref{ZXzaxbtdef}) to $\GXBT$ (or $\sGXBT$) (Definition \ref{ZXgcrossedbicat}) is a symmetric monoidal equivalence, with inverse given by $\mathbf{Fix}$ (see Definitions \ref{ZXfixobjdef}, \ref{ZXfix1morphs} and \ref{ZXfix2morphs}).
\end{thm}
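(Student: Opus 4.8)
The plan is to establish that $\overline{\overline{(-)}}$ and $\mathbf{Fix}$ are mutually inverse symmetric monoidal 2-functors. Both constructions are organised by Tannaka duality $\cat{A}\cong\Rep(G)$, which identifies $\dcentcat{A}$ with the representation category of the Drinfeld double, equivalently with the category $\Vect_G[G]$ of $G$-equivariant $G$-graded vector spaces; under this identification $\otimes_c$ becomes the convolution product (group multiplication on the grading) and $\otimes_s$ becomes the pointwise tensor product, exactly as recorded in the footnote to the convolution product. Granting that both $\overline{\overline{(-)}}$ (Proposition \ref{ZXbarbariscrossedbraided}) and $\mathbf{Fix}$ (Definitions \ref{ZXfixobjdef}, \ref{ZXfix1morphs}, \ref{ZXfix2morphs}) are genuine 2-functors, the substance of the theorem is to produce pseudonatural equivalences witnessing $\mathbf{Fix}\circ\overline{\overline{(-)}}\cong\Id_{\ZAXBT}$ and $\overline{\overline{(-)}}\circ\mathbf{Fix}\cong\Id_{\GXBT}$, and then to upgrade the resulting equivalence to a symmetric monoidal one.

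For the composite $\mathbf{Fix}\circ\overline{\overline{(-)}}$, I would start from a $\dcentcat{A}$-crossed braided category $\cat{K}$ and trace its hom-objects through both passes. The construction $\overline{\overline{(-)}}$ records, in each $G$-degree $g$, the data extracted from $\cat{K}(k,k')\in\dcentcat{A}$ at the corresponding point of $G$, while $\mathbf{Fix}$ reassembles a $\dcentcat{A}_s$-valued hom-object out of these graded pieces together with the $G$-action. The key identification is furnished by Lemma \ref{ZXdeenrichequivmeth} and Corollary \ref{ZXlinislinbar}, which pin down the underlying vector spaces, so that the objectwise comparison reduces to checking that the half-braiding on an object of $\dcentcat{A}$ is exactly what is reconstructed from its $G$-graded, $G$-equivariant shadow. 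I would verify naturality in $k,k'$ and compatibility with composition using the formula for composition in the convolution product (Definition \ref{ZXconvprod}), and then confirm that the reconstructed crossed tensor structure agrees with $\otimes_\cat{K}$ by comparing both through the factorisation functors $Z$ and $H$ of Proposition \ref{ZXcomparisonfunctors}.

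For the reverse composite $\overline{\overline{(-)}}\circ\mathbf{Fix}$, I would begin with a $G$-crossed braided category $\cat{C}=\bigoplus_{g\in G}\cat{C}_g$ and show that applying $\mathbf{Fix}$ and then re-reading off the $G$-grading returns the original homogeneous components; the essential input is that the fixed-point construction is inverse to Tannaka reconstruction on hom-objects, which is the enriched incarnation of the equivariantisation--de-equivariantisation duality. The braided structure must be tracked with care: the swap functor $B$ (Definition \ref{ZXbraidingfunctor}) is induced by the \emph{non-symmetric} braiding on $(\dcentcat{A},\otimes_c)$, and I must check that under the equivalence this reproduces precisely Turaev's $G$-crossed braiding, in which the braiding isomorphism involves the action of the grading group.

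The symmetric monoidal upgrade is the final step. Using Lemma \ref{ZXforgetonsymprod} together with the compatibility of the bar construction with the enriched Cartesian and convolution products (Lemmas \ref{ZXforgetconvprod} and \ref{ZXsbraided}), I would show that $\overline{\overline{(-)}}$ carries $\cattens{s}$ to the degreewise product $\cattens{G}$ and commutes with the symmetry $S$, and likewise that $\mathbf{Fix}$ respects these structures, supplying the required monoidal and symmetry constraints. The main obstacle throughout is coherence: ensuring that the associators, unitors, and especially the two commuting diagrams defining monoidal natural transformations (Definition \ref{ZXtensorfuncts}, whose second diagram has no classical analogue) are matched on both sides. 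The delicate point is the interaction of the bilax morphisms $\eta,\zeta$ with the symmetry and braiding, already exploited in Proposition \ref{ZXcomparisonfunctors} and Lemma \ref{ZXsismonoidal}; verifying that these coherences survive the passage through Tannaka duality, uniformly in the Tannakian and super-Tannakian cases, is where the bulk of the work lies.
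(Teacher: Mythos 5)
Your outline has the right architecture --- two pseudonatural equivalences plus a symmetric monoidal upgrade, with the monoidality handled via Lemmas \ref{ZXforgetonsymprod} and \ref{ZXforgetconvprod} much as the paper does --- but it omits the actual content of both equivalences, namely essential surjectivity. For the unit comparison $\cat{K}\rar\mathbf{Fix}(\overline{\overline{\cat{K}}})$, the functor $k\mapsto(\id_k,\{\id_k\}_{g\in G})$ is fully faithful essentially by construction (your hom-object tracing via Lemma \ref{ZXdeenrichequivmeth} recovers this much); but an arbitrary object of $\mathbf{Fix}(\overline{\overline{\cat{K}}})$ is a pair $(f,u)$ with $f$ a minimal idempotent in $\Phi\overline{\cat{K}}$ and $u$ a $G$-equivariance datum, and nothing in your plan explains why such a pair is isomorphic to one of the form $(\id_k,\{\id_k\}_{g\in G})$. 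This is where the paper's real work lies (Lemma \ref{ZXHexists}): one builds from $(f,u)$ a morphism into the induced object $\C[G]^*\cdot(\id_{k'},\{\id_{k'}\}_{g\in G})$ via $\tilde{f}|_{\{g\}\times G}=f^{g}u_g^{-1}$, shows it is monic by the naturality trick of Lemma \ref{ZXnonzerotrick}, and concludes using idempotent completeness of the essential image (Lemma \ref{ZXessimidemcomp}). Without an argument of this kind your comparison is only a fully faithful embedding; the phrase ``the fixed-point construction is inverse to Tannaka reconstruction on hom-objects'' restates the theorem rather than proving it.

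The same gap recurs on the other composite: for $\overline{\overline{\mathbf{Fix}(\cat{C})}}\simeq\cat{C}$ the functor $(c,u)\mapsto c$ is fully faithful because $\Phi\overline{(-)}$ merely forgets the $G$-action and half-braiding, but one must show it is \emph{dominant}, i.e.\ that every $c\in\cat{C}$ is a summand of an object in the image, since not every object of $\cat{C}$ carries a fixed-point structure. The paper's device (Lemma \ref{ZXPexists}) is that $\oplus_{g\in G}c^g$ admits the canonical structure $\{\id\}_{g\in G}$, hence lies in the image and contains $c$ as a summand; dominance then yields an equivalence after the idempotent completion built into Definition \ref{ZXkbarbar}. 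Your proposal never produces fixed-point objects out of non-invariant ones on either side. Relatedly, your closing claim that the bulk of the work is coherence is miscalibrated: coherence is largely absorbed by the 2-functoriality of change of basis (Proposition \ref{ZXchangebasisispseudofunct}), whereas the induction along $\C[G]$ and the averaged object $\oplus_{g\in G}c^g$ are precisely the steps the proof cannot do without.
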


We will start by introducing the relevant notions in Section \ref{ZXtannprelims}, in Section \ref{ZXfromzatog} we show how define the functor $\overline{ \overline{(-)}}$ and give an item by item proof that it lands in $G$-crossed braided categories. This the content of Theorem \ref{ZXbarbariscrossedbraided}. In Section \ref{ZXgxtozax} we show how to define $\mathbf{Fix}$ to produce from a $G$-crossed braided category a $\dcentcat{A}$-crossed tensor category.

\subsection{Preliminaries}\label{ZXtannprelims}

\subsubsection{The Drinfeld Centre as $G$-equivariant vector bundles}
By Tannaka duality \cite{Deligne1990, Deligne2002}, we can view any symmetric fusion category $\cat{A}$ as the representation category of some finite (super)-group. In this context, the appropriate notion of super-group is the following:
\begin{df}
	A \emph{super-group} $(G,\omega)$ is a group $G$ together with a choice of central element $\omega$ of order 2. A \emph{representation of a super-group} is a super-vector space $V\in \svec$ together with a homomorphism $G\rar \Aut (V)$, which takes $\omega$ to the grading involution of $V$. If $G$ is finite, these representations form a symmetric fusion category $\Rep(G,\omega)$, with symmetry inherited from $\svec$.
\end{df}

 The Drinfeld centre of the category of representations of a finite group $G$ is well-known \cite[Chapter 3.2]{Bakalov2001a} to be (braided monoidal) equivalent to the category of vector bundles over $G$, equivariant for the conjugation action of $G$ on itself. This result extends to the super-group case, the Drinfeld centre construction only uses the monoidal structure, not the braiding. Additionally, it was shown in \cite{Wasserman2017} that the symmetric tensor product on the Drinfeld centre agrees with the (graded) fibrewise tensor product.

\begin{df}
	A \emph{fibre functor} on a symmetric fusion category $\cat{A}$ is a braided monoidal functor
	$$
	\Phi\colon: \cat{A}\rar \svec.
	$$ 
	The case where the essential image of $\Phi$ is contained in $\Vect\subset\svec$ is referred to as \emph{Tannakian}, otherwise $\cat{A}$ is called \emph{super-Tannakian}.
\end{df}

Throughout the rest of this paper, we will fix a choice of fibre functor $\Phi$ on $\cat{A}$.

In what follows, we will make frequent use of the following facts. Viewing $\cat{A}$ as $\Rep(G)$ and $\dcentcat{A}$ as $\Vect_{G}[G]$, the forgetful functor $\dcentcat{A}\rar \cat{A}$ is given by direct sum over the fibres of the equivariant vector bundle over $G$. Applying the fibre functor $\Phi$ to a representation obtained in this way produces a vector space, which carries a $G$-grading by remembering over which elements the fibres sat. Additionally, this vector space carries a $G$-action which conjugates the grading.

\subsubsection{$G$-crossed braided categories}
\begin{df}[\cite{Turaev2010}]\label{ZXgcrossedbraided}
	A \emph{$G$-crossed braided fusion category} is a $\Vect$-enriched and tensored category $\cat{C}$, together with:
	\begin{enumerate}[(i)]
		\item\label{ZXdecompcross} for each $g\in G$ a $\Vect$-enriched and tensored semi-simple category $\cat{C}_g$ with finitely many simples, decomposing $\cat{C}$ as $\cat{C}=\oplus_{g\in G}\cat{C}_g$ (a \emph{$G$-graded linear category});
		\item\label{ZXgradedmon} a \emph{$G$-graded fusion structure}: a tensor structure $\otimes\colon \cat{C}\boxtimes\cat{C}\rar \cat{C}$, such that $\otimes\colon \cat{C}_g\boxtimes\cat{C}_h\rar \cat{C}_{gh}$, that is rigid with a simple unit;
		\item\label{ZXgaction} a homomorphism $G\rar \tn{Aut}(\cat{C})$. The image of $g\in G$ under this homomorphism will be denoted $(-)^g$. We require $(-)^g\colon \cat{C}_h\rar \cat{C}_{ghg^{-1}}$. (This is called a \emph{$G$-crossing}.)
		\item\label{ZXcrossedbraid} a \emph{crossed braiding}: for each $g\in G$ a natural isomorphism between $\otimes\colon \cat{C}_g\boxtimes \cat{C}\rar \cat{C}$ and
		$$
		\cat{C}_g\boxtimes \cat{C}\xrightarrow{S}\cat{C}\boxtimes\cat{C}_g \xrightarrow{(-)^g\boxtimes \tn{Id}}\cat{C}\boxtimes \cat{C}_g\xrightarrow{\otimes} \cat{C},
		$$
		satisfying the hexagon equations. Here $S$ is the swap functor for the Deligne product of linear categories.
	\end{enumerate}
\end{df}

\subsubsection{Super $(G,\omega)$-crossed braided categories}
Fix a supergroup $(G,\omega)$. We will now introduce the notion of a super $(G,\omega)$-crossed braided category, which the author believes to be new. Before we give the definition, we need to following.

\begin{df}\label{ZXgradinginvfunct}
	Let $\cat{C}$ be a $\svec$-enriched category. Then the \emph{grading involution functor} $\Pi$ on $\cat{C}$ is the autofunctor on $\cat{C}$ that acts as the identity on objects and even morphisms, and as $-\id$ on odd morphisms.
\end{df}

\begin{df}\label{ZXsgcrossedbraided}
	A \emph{super $(G,\omega)$-crossed braided category} is an $\svec$-enriched and tensored category $\cat{C}$, together with:
	\begin{enumerate}[(i)]
		\item\label{ZXsdecompcross} for each $g\in G$ an $\svec$-enriched and tensored category $\cat{C}_g$ that is semi-simple with finitely many simples, decomposing $\cat{C}$ as $\cat{C}=\oplus_{g\in G}\cat{C}_g$ (a \emph{$G$-graded super linear category});
		\item\label{ZXsgradedmon} a $G$-graded super fusion stucture: a super tensor structure $\otimes\colon \cat{C}\cattens{\svec}\cat{C}\rar \cat{C}$, such that $\otimes\colon \cat{C}_g\cattens{\svec}\cat{C}_h\rar \cat{C}_{gh}$, that is rigid and has a simple unit object;
		\item\label{ZXsgaction} a homomorphism $(G,\omega)\rar (\tn{Aut}(\cat{C}),\Pi)$ of pointed groups, where $\Pi$ denotes the grading involution functor. The image of $g \in G$ under this homomorphism will be denoted $(-)^g$. We require $(-)^g\colon \cat{C}_h\rar \cat{C}_{ghg^{-1}}$. (This is called a \emph{super $(G,\omega)$-crossing}.)
		\item\label{ZXscrossedbraid} a \emph{crossed braiding}, for each $g\in G$ a natural isomorphism between $\otimes\colon \cat{C}_g\cattens{\svec} \cat{C}\rar \cat{C}$ and
		$$
		\cat{C}_g\cattens{\svec} \cat{C}\xrightarrow{S}\cat{C}\cattens{\svec} \cat{C}_g \xrightarrow{(-)^g\boxtimes \tn{Id}}\cat{C}\cattens{\svec} \cat{C}_g\xrightarrow{\otimes} \cat{C},
		$$
		satisfying the hexagon equations. Here $S$ is the swap functor for $\cattens{\svec}$ which acts as the symmetry in $\svec$ on hom-objects.
	\end{enumerate}
\end{df}

For brevity, we will sometimes refer to a super $(G,\omega)$-crossed braided category as a super $G$-crossed braided category, leaving fixing a choice of $\omega$ implicit in the word super.

\subsubsection{The degreewise product of (super) $G$-crossed braided categories}
There is a natural notion of product of (super) $G$-crossed braided categories.
\begin{df}
	Let $\cat{C}$ and $\cat{D}$ be $G$-graded (super) linear categories, then the \emph{degreewise product of $\cat{C}$ and $\cat{D}$} is defined by:
	$$
	\cat{C}\cattens{G}\cat{D}=\bigoplus_{g\in G} \cat{C}_g\boxtimes\cat{D}_g,
	$$
	where in the super case we use $\cattens{\svec}$ instead of $\boxtimes$. Both these operations are a special case of Definition \ref{ZXaproddef}.
\end{df}

\begin{prop}
	The degreewise product $\cat{C}\cattens{G}\cat{D}$, of $G$-crossed braided categories $\cat{C}$ and $\cat{D}$, is $G$-crossed braided, for the componentwise tensor product, $G$-crossing and crossed braiding.
\end{prop}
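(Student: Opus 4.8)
The plan is to verify the four items of Definition~\ref{ZXgcrossedbraided} one at a time for $\cat{C}\cattens{G}\cat{D}=\bigoplus_{g\in G}\cat{C}_g\boxtimes\cat{D}_g$, exploiting that every piece of structure is defined componentwise, so that each coherence condition reduces to the corresponding condition for $\cat{C}$ and $\cat{D}$ separately. For item~\ref{ZXdecompcross}, the $g$-graded piece is $(\cat{C}\cattens{G}\cat{D})_g=\cat{C}_g\boxtimes\cat{D}_g$; since a Deligne product of $\Vect$-enriched, tensored, semi-simple categories with finitely many simples is again of this type (its simples are the products $c\boxtimes d$ of simples, of which there are finitely many), this provides the required $G$-graded linear structure, with total category the direct sum over $g$.

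For the $G$-graded fusion structure of item~\ref{ZXgradedmon}, I would define the componentwise tensor product on objects by $(c\boxtimes d)\otimes(c'\boxtimes d'):=(c\otimes_\cat{C}c')\boxtimes(d\otimes_\cat{D}d')$, which on graded pieces is the composite
$$
(\cat{C}_g\boxtimes\cat{D}_g)\boxtimes(\cat{C}_h\boxtimes\cat{D}_h)\xrightarrow{\ \cong\ }(\cat{C}_g\boxtimes\cat{C}_h)\boxtimes(\cat{D}_g\boxtimes\cat{D}_h)\xrightarrow{\otimes_\cat{C}\boxtimes\otimes_\cat{D}}\cat{C}_{gh}\boxtimes\cat{D}_{gh},
$$
the first arrow being the middle-swap isomorphism of Deligne products. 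This lands in $(\cat{C}\cattens{G}\cat{D})_{gh}$, as required. The associator is the product of the associators of $\cat{C}$ and $\cat{D}$ (conjugated by the swap), so the pentagon follows from the pentagons for $\cat{C}$ and $\cat{D}$; rigidity holds with dual $c^*\boxtimes d^*$ and componentwise evaluation and coevaluation; and the unit $\mathbb{I}_\cat{C}\boxtimes\mathbb{I}_\cat{D}$ is simple, being a Deligne product of the simple units.

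For the $G$-crossing of item~\ref{ZXgaction} I would set $(-)^g:=(-)^g_\cat{C}\boxtimes(-)^g_\cat{D}$; this is a homomorphism $G\rar\Aut(\cat{C}\cattens{G}\cat{D})$ because the component actions are, and it sends $\cat{C}_h\boxtimes\cat{D}_h$ to $\cat{C}_{ghg^{-1}}\boxtimes\cat{D}_{ghg^{-1}}$ by the corresponding property for $\cat{C}$ and $\cat{D}$. Finally, for the crossed braiding of item~\ref{ZXcrossedbraid} I would take the componentwise product $\beta_\cat{C}\boxtimes\beta_\cat{D}$ of the two crossed braidings; it is a natural isomorphism of the required shape because the crossing is componentwise, and each hexagon diagram for $\cat{C}\cattens{G}\cat{D}$ becomes, after inserting the swap isomorphisms, the Deligne product of the corresponding hexagon diagrams for $\cat{C}$ and $\cat{D}$, hence commutes. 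The super case is verbatim the same with $\svec$ and $\cattens{\svec}$ replacing $\Vect$ and $\boxtimes$.

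The only genuinely delicate point — and the step I expect to be the main obstacle — is the bookkeeping of the middle-swap isomorphism $(\cat{C}_g\boxtimes\cat{D}_g)\boxtimes(\cat{C}_h\boxtimes\cat{D}_h)\cong(\cat{C}_g\boxtimes\cat{C}_h)\boxtimes(\cat{D}_g\boxtimes\cat{D}_h)$ inserted before $\otimes_\cat{C}\boxtimes\otimes_\cat{D}$: one must confirm that it interacts coherently with the associators, the crossing and the braiding, so that the pentagon, the crossing-compatibility and the two hexagons really do reduce to the componentwise statements. This is precisely the coherence verified for $\cattens{s}$ in Proposition~\ref{ZXscrossedbraided}, and since the degreewise product is a special case of the product of Definition~\ref{ZXaproddef}, this coherence can be imported rather than reproved by hand.
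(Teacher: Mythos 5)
Your proof follows essentially the same route as the paper's: componentwise tensor product via the middle-swap $\Id\boxtimes S\boxtimes\Id$, componentwise $G$-crossing, and componentwise braiding, with each coherence condition reducing to the corresponding one for $\cat{C}$ and $\cat{D}$. You are in fact more explicit than the paper on the routine items (semi-simplicity of $\cat{C}_g\boxtimes\cat{D}_g$, rigidity, simplicity of the unit), and your identification of the middle-swap bookkeeping as the one delicate point is exactly right: the paper's proof consists precisely of the corresponding diagram chase, using naturality of the switch functor to move $(-)^g\boxtimes(-)^g$ past $\Id\boxtimes S\boxtimes\Id$, after which the top square commutes strictly and the bottom square commutes up to the product of the two crossed braidings.

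Two corrections, though. First, your proposed shortcut in the last paragraph does not work: Proposition \ref{ZXscrossedbraided} concerns $\dcentcat{A}$-crossed braided categories and the product $\cattens{s}$, and its proof runs through the bilax structure map $\zeta$ of $(\dcentcat{A},\otimes_c,\otimes_s)$, which has no counterpart in the purely $G$-graded setting; moreover, transferring it across the equivalence $\ZAXBT\simeq\GXBT$ would be circular, since the symmetric monoidal structure $\cattens{G}$ on $\GXBT$ --- whose well-definedness is the content of the present proposition --- is presupposed in stating that equivalence. The legitimate general input is only that $(\lcat{A},\bxc{A},S)$ is a strict symmetric monoidal 2-category (for $\cat{A}=\Vect$ or $\svec$); the crossing-compatibility square must still be checked by hand, as above. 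Second, in the super case ``verbatim the same'' glosses over the one genuinely new condition of Definition \ref{ZXsgcrossedbraided}\eqref{ZXsgaction}: the componentwise crossing sends $\omega$ to $\Pi_\cat{C}\cattens{\svec}\Pi_\cat{D}$, and one must observe that this equals the grading involution of $\cat{C}\cattens{G}\cat{D}$ (on a morphism of bidegree $(p,p')$ both act by $(-1)^{p+p'}$), so that one indeed gets a homomorphism of pointed groups $(G,\omega)\rar(\Aut(\cat{C}\cattens{G}\cat{D}),\Pi)$; the paper makes this check explicitly. Neither point is fatal --- your main text already sketches the direct verification --- but as written the ``import'' replaces the one substantive step with an inapplicable citation.
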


\begin{proof}
	The category $\cat{C}\cattens{G}\cat{D}$ is $G$-graded by construction. The componentwise tensor product is given by:
	$$
	\cat{C}_g\boxtimes\cat{D}_g\boxtimes \cat{C}_h\boxtimes\cat{D}_h\xrightarrow{\Id\boxtimes S\boxtimes \Id}\cat{C}_g\boxtimes \cat{C}_h\boxtimes\cat{D}_g\boxtimes\cat{D}_h\xrightarrow{\otimes\boxtimes\otimes}\cat{C}_{gh}\boxtimes\cat{D}_{gh},
	$$
	where we use $\cattens{\svec}$ and its switch map instead of $\boxtimes$ in the super case. This clearly respects the $G$-grading. 
	
	Any pair of automorphisms of $\cat{C}$ and $\cat{D}$ that conjugate the grading will induce an automorphism of $\cat{C}\cattens{G}\cat{D}$ that conjugates the grading. In the super case, we observe that the $\svec$-product of the $\Z_2$-grading involution on $\svec$-enriched categories is the grading involution on the product, so the $G$-crossings of $\cat{C}$ and $\cat{D}$ will give an homomorphism of pointed groups as desired by \eqref{ZXsgaction} in Definition \ref{ZXsgcrossedbraided}.
	
	To see that the componentwise braiding gives a crossed braiding, observe that we need for $g\in G$ a natural isomorphism between the componentwise tensor product and
	$$
	(\cat{C}\cattens{G}\cat{D})_g \boxtimes (\cat{C}\cattens{G}\cat{D}) \xrightarrow{S}(\cat{C}\cattens{G}\cat{D})\boxtimes (\cat{C}\cattens{G}\cat{D})_g\xrightarrow{(-)_g\otimes \Id} (\cat{C}\cattens{G}\cat{D})\boxtimes (\cat{C}\cattens{G}\cat{D})_g\xrightarrow{\otimes}(\cat{C}\cattens{G}\cat{D}),
	$$
	where in the super case we replace $\boxtimes$ with $\cattens{\svec}$. That is, for each $h\in G$, the following diagram should commute up to the braiding:
	\begin{center}
		\begin{tikzcd}
			\cat{C}_g\boxtimes\cat{D}_g\boxtimes \cat{C}_h\boxtimes\cat{D}_h \arrow[r,"S"] \arrow[d,"\Id\boxtimes S \boxtimes \Id"] &  \cat{C}_h\boxtimes\cat{D}_h\boxtimes\cat{C}_g\boxtimes\cat{D}_g\arrow[d,"(-)_g\boxtimes (-)_g\boxtimes\Id\boxtimes\Id"] \\
			\cat{C}_g\boxtimes\cat{C}_h\boxtimes\cat{D}_g\boxtimes \cat{D}_h \arrow[d,"\otimes\boxtimes\otimes"] & \cat{C}_{ghg^{-1}}\boxtimes\cat{D}_{ghg^{-1}}\boxtimes\cat{C}_g\boxtimes\cat{D}_g \arrow[d,"\Id \boxtimes S \boxtimes \Id"]\\
			\cat{C}_{gh}\boxtimes\cat{D}_{gh} & \cat{C}_{ghg^{-1}}\boxtimes\cat{C}_g\boxtimes\cat{D}_{ghg^{-1}}\boxtimes\cat{D}_g\arrow[l,"\otimes \boxtimes \otimes"].
		\end{tikzcd}
	\end{center}
	Using that the switch map is natural, we can exchange the maps along the right hand side to get:
	\begin{center}
		\begin{tikzcd}
			\cat{C}_g\boxtimes\cat{D}_g\boxtimes \cat{C}_h\boxtimes\cat{D}_h \arrow[r,"S"] \arrow[d,"\Id\boxtimes S \boxtimes \Id"] &  \cat{C}_h\boxtimes\cat{D}_h\boxtimes\cat{C}_g\boxtimes\cat{D}_g\arrow[d,"\Id \boxtimes S \boxtimes \Id"] \\
			\cat{C}_g\boxtimes\cat{C}_h\boxtimes\cat{D}_g\boxtimes \cat{D}_h \arrow[d,"\otimes\boxtimes\otimes"]\arrow[r,"S\boxtimes S"] &    \cat{C}_h\boxtimes\cat{C}_g\boxtimes\cat{D}_h\boxtimes\cat{D}_g\arrow[d,"(-)_g\boxtimes (-)_g\boxtimes\Id\boxtimes\Id"]\\
			\cat{C}_{gh}\boxtimes\cat{D}_{gh} & \cat{C}_{ghg^{-1}}\boxtimes\cat{D}_{ghg^{-1}}\boxtimes\cat{C}_g\boxtimes\cat{D}_g\arrow[l,"\otimes \boxtimes \otimes"].
		\end{tikzcd}
	\end{center}
	The top square commutes strictly, and the bottom square indeed commutes up to the product of the braidings.
\end{proof}

The swap maps for $\boxtimes$ and $\cattens{\svec}$ induce a swap map for the degreewise product.

\subsubsection{A 2-category of (super) $G$-crossed braided fusion categories}
The (super) $G$-crossed categories fit into a symmetric monoidal 2-category:
\begin{df}\label{ZXgcrossedbicat}
	Let $G$ (or $(G,\omega)$) be a (super) group. Then the symmetric monoidal 2-category $\GXBT$ (or $\sGXBT$) has objects $G$- (or $(G,\omega)$-)crossed braided fusion categories. The 1-morphisms are (super) linear braided monoidal functors $F\colon \cat{C}\rar \cat{C}'$, satisfying $F(\cat{C}_g)\subset \cat{C}_g$ and $F\circ (-)^g=(F(-))^g$ for all $g\in G$. The 2-morphisms are monoidal natural transformations $\kappa$ satisfying $(\kappa_c)^g=\kappa_{c^g}$. The symmetric monoidal structure is given by the degreewise tensor product, with switch map given by the degreewise switch map of (super)-linear categories.
\end{df}

\begin{rmk}
	The definitions of (super) $G$-crossed braided category and functors between them used here are strict. One can also consider $G$-actions that are 2-functors from the 2-category with one object and no non-trivial 2-morphisms $G$ to the 2-category with one object $\Aut(\cat{C})$, and allow functors to preserve the $G$-action up to natural isomorphism.
\end{rmk}

\subsection{From $\dcentcat{A}$-crossed to (super) $G$-crossed}\label{ZXfromzatog}
In this section we will explain how to produce from a $\dcentcat{A}$-crossed braided fusion category a (super) $G$-crossed braided fusion category:

\begin{prop}\label{ZXbarbariscrossedbraided}
	Let $\cat{A}=\Rep(G)$ (or $\Rep(G,\omega)$). For any $\cat{K}$ be a $\dcentcat{A}$-crossed braided fusion category, the (super) linear category $ \overline{ \overline{\cat{K}}}$ obtained from $\cat{K}$ (see Definition \ref{ZXkbarbar} below) is (super) $G$-crossed braided fusion (see Definitions \ref{ZXgcrossedbraided} and \ref{ZXsgcrossedbraided}). 
\end{prop}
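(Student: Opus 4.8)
The statement is verified item-by-item against Definition \ref{ZXgcrossedbraided} (resp.\ \ref{ZXsgcrossedbraided}), throughout exploiting the description of $\dcentcat{A}$ as $\Vect_G[G]$. Recall that $\overline{\overline{\cat{K}}}$ is built from the associated $\cat{A}$-enriched category $\overline{\cat{K}}$ by a second change of basis along the fibre functor $\Phi$ (Definition \ref{ZXkbarbar}); concretely its hom-objects are $\Phi(\mathbf{Forget}(\cat{K}(k,k')))$, which by the preliminaries carry a $G$-grading (by the base-point of the equivariant bundle) together with a $G$-action conjugating that grading. In the Tannakian case $\Phi$ lands in $\Vect$ and the two cases run in parallel with $\svec$ replaced by $\Vect$ and Koszul signs absent.

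\emph{The $G$-grading \textnormal{(i)}.} The plan is to extract the grading from the base-point grading that $\Phi\circ\mathbf{Forget}$ puts on hom-objects. Since the enrichment composition of $\cat{K}$ is controlled by $\otimes_s$, which by the preliminaries is the fibrewise tensor product, composition is \emph{diagonal} with respect to this grading: degree-$x$ morphisms compose only with degree-$x$ morphisms. Consequently the identity of each object, whose image under $\Phi\circ\mathbf{Forget}$ has a component in every degree (because $\Phi(\mathbf{Forget}(\mathbb{I}_s))$ is the group algebra $\bigoplus_{x\in G}\mathbb{C}$), decomposes as a sum of orthogonal idempotents, one in each degree. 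Idempotent completeness, part of the FD hypothesis, then splits every object into homogeneous pieces, yielding $\overline{\overline{\cat{K}}}=\bigoplus_{x\in G}\overline{\overline{\cat{K}}}_x$ with hom-spaces between distinct degrees vanishing. Semi-simplicity and finiteness of the set of simples are inherited from the FD hypothesis through $\Lin(\cat{K})\cong\Lin(\overline{\cat{K}})$ (Corollary \ref{ZXlinislinbar}).

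\emph{Fusion structure \textnormal{(ii)} and crossing \textnormal{(iii)}.} For the tensor product I would transport $\otimes_\cat{K}\colon\cat{K}\cattens{c}\cat{K}\rar\cat{K}$ along $\Phi\circ\mathbf{Forget}$: by Lemma \ref{ZXforgetconvprod} the forgetful functor takes $\cattens{c}$ to $\cattens{\cat{A}}$, so after applying $\Phi$ one obtains a tensor functor $\overline{\overline{\cat{K}}}\boxtimes\overline{\overline{\cat{K}}}\rar\overline{\overline{\cat{K}}}$ (using $\cattens{\svec}$ in the super case). Because $\otimes_c$ is the convolution product on $\Vect_G[G]$, it multiplies base-points, which gives exactly the behaviour $\overline{\overline{\cat{K}}}_g\otimes\overline{\overline{\cat{K}}}_h\rar\overline{\overline{\cat{K}}}_{gh}$ required by (ii); rigidity and the simple unit are inherited from the $\dcentcat{A}$-crossed fusion structure on $\cat{K}$ (via Lemma \ref{ZXforgetmon} and the unit condition in Definition \ref{ZXZAfusiondef}). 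The $G$-crossing of (iii) is supplied by the conjugating $G$-action on the fibres, whose source and target are $\overline{\overline{\cat{K}}}_h$ and $\overline{\overline{\cat{K}}}_{ghg^{-1}}$ precisely because this action conjugates the grading. In the super case the pointed-group homomorphism condition reduces to checking that $\omega$ acts as the grading involution $\Pi$, which holds because $\omega$ is sent to the grading involution of $\svec$ under the super fibre functor.

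\emph{Crossed braiding \textnormal{(iv)}, the main obstacle.} The crux is to see that the abstract crossed braiding of $\cat{K}$, a natural isomorphism intertwining $\otimes_\cat{K}$ and $\otimes_\cat{K}\circ B$ where $B$ is built from the braiding of $(\dcentcat{A},\otimes_c)$, reproduces Turaev's crossed braiding after applying $\Phi\circ\mathbf{Forget}$. The essential point, foreshadowed in the Remark observing that $\mathbf{Forget}$ fails to be braided for $\otimes_c$, is that the braiding of $\Vect_G[G]$ does not descend to a plain symmetry but instead, under $\Phi\circ\mathbf{Forget}$, factors as the swap $S$ followed by the crossing autofunctor $(-)^g$. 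I would verify this by unwinding the explicit braiding of $\Vect_G[G]$ on each graded piece and matching it termwise with the composite in Definition \ref{ZXgcrossedbraided}(iv) (resp.\ \ref{ZXsgcrossedbraided}(iv)), taking care in the super case that the Koszul signs of the symmetry of $\svec$ are absorbed correctly into $S$. The hexagon identities for the resulting $G$-crossed braiding then follow by pushing the hexagons of the $\dcentcat{A}$-crossed braiding of $\cat{K}$ through the monoidal functor $\Phi\circ\mathbf{Forget}$. I expect this identification of the $\otimes_c$-braiding with $S$ composed with the crossing to be the technical heart of the argument, since it is exactly where the passage from the coordinate-free $\dcentcat{A}$-language to the group-theoretic $G$-crossing takes place.
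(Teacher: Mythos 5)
Your proposal matches the paper's proof in both structure and substance: the paper likewise derives the $G$-grading from the diagonality of the fibrewise (super) tensor product (homogeneous minimal idempotents, Lemmas \ref{ZXhomidemp} and \ref{ZXkbbhomobjlem}), obtains the graded fusion structure by transporting $\otimes_\cat{K}$ along $\Phi\circ\mathbf{Forget}$ via Lemmas \ref{ZXforgetconvprod} and \ref{ZXforgetmon}, builds the crossing from the conjugating fibre action with $\omega\mapsto\Pi$, and---exactly as you predict is the technical heart---shows in Lemma \ref{ZXimageofB} that the image of $B$ is the switch followed by $(-)^g\boxtimes\Id$, with the Koszul sign $(-1)^{pp'}$ emerging from odd morphisms living over the fibre $\omega^p g$. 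The one slight imprecision is your appeal to Corollary \ref{ZXlinislinbar} for semi-simplicity: since $\overline{\overline{\cat{K}}}$ has strictly larger hom-spaces than $\Lin(\cat{K})$, the paper instead splits a monic in $\overline{\overline{\cat{K}}}$ by tracing it back through $\cat{K}$ to $\Lin(\cat{K})$, where the FD hypothesis applies, and then transporting the splitting forward.
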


After this, in Section \ref{ZXbarbarpsfunctor}, we will show how to extend this to a 2-functor from $\ZAXBT$ to $\GXBT$ (or $\sGXBT$).

\subsubsection{The induced map for the fibre functor}
Given a $\dcentcat{A}_s$-enriched category $\cat{K}$, we can produce a $\Vect$-enriched and tensored category out of $\overline{ \cat{K}}$ (Definition \ref{ZXatensorforzaenr}) by changing basis along the fibre functor $\Phi$ for $\cat{A}$. The resulting category will usually not be idempotent complete, even when the original category was. We set:
\begin{df}\label{ZXkbarbar}
	Let $\cat{K}$ be a $\dcentcat{A}_s$-enriched and tensored category. Then \emph{the linearisation $ \overline{ \overline{\cat{K}}}$ of $\cat{K}$} is the Cauchy completion of $\Phi \overline{\cat{K}}$.
\end{df}

We observe that, as the fibre functor is unique up to monoidal natural isomorphism \cite{Deligne1990,Deligne2002}, the category $ \overline{ \overline{\cat{K}}}$ is unique up to equivalence. As the fibre functor is monoidal, $\Phi \overline{\cat{K}}$ is $\Vect$-enriched and tensored as a direct consequence of Proposition \ref{ZXbasechtens}.

\subsubsection{$G$-grading}
Since, on $\Vect_{G}[G]$, the forgetful functor followed by the fibre functor takes objects to $G$-graded (super) vector spaces, the morphisms of $\Phi \overline{\cat{K}}$ are $G$-graded (super) vector spaces. This will induce a grading on the idempotents:

\begin{lem}\label{ZXhomidemp}
	For every $\dcentcat{A}_s$-enriched and tensored category $\cat{K}$, every minimal idempotent of $\Phi\overline{\cat{K}}$ is homogeneous for the $G$-grading on the hom-objects.
\end{lem}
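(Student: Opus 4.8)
The plan is to reduce the statement to an elementary fact about idempotents in a direct product of algebras, after pinning down the multiplicative structure that the $G$-grading induces on the endomorphism algebras of $\Phi\overline{\cat{K}}$.

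Fix an object $k$ and write $A:=\End_{\Phi\overline{\cat{K}}}(k)$, a $G$-graded (super) vector space with homogeneous pieces $A_g=\Phi(\cat{K}(k,k)_g)$, where $\cat{K}(k,k)_g$ is the fibre over $g$ of the equivariant bundle $\cat{K}(k,k)\in\dcentcat{A}\cong\Vect_G[G]$. The first and crucial step is to show that composition exhibits $A$ as the direct product $\prod_{g\in G}A_g$ of algebras, i.e.\ that $A_g\cdot A_h=0$ for $g\neq h$ while $A_g\cdot A_g\subseteq A_g$. To see this I would unwind the composition in $\Phi\overline{\cat{K}}$: it arises from the composition $c\colon\cat{K}(k,k)\otimes_s\cat{K}(k,k)\to\cat{K}(k,k)$ in $\cat{K}$ by first passing to $\overline{\cat{K}}$, which inserts the lax structure map of $\mathbf{Forget}$ for $\otimes_s$ — the projection $p_{-,-}$ of the discussion preceding Lemma \ref{ZXforgetonsymprod} — and then applying the strong monoidal $\Phi$. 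Since $\otimes_s$ is the (graded) fibrewise tensor product on $\Vect_G[G]$ \cite{Wasserman2017}, the underlying object of $\cat{K}(k,k)\otimes_s\cat{K}(k,k)$ retains only the diagonal fibre-pairs, so $p$ projects $\mathbf{Forget}(\cat{K}(k,k))\otimes_\cat{A}\mathbf{Forget}(\cat{K}(k,k))$ onto the summands $\cat{K}(k,k)_x\otimes\cat{K}(k,k)_x$ and annihilates the off-diagonal terms, while $c$ is itself fibrewise. Tracking a homogeneous pair $\alpha\in A_g$, $\beta\in A_h$ through $\Phi(\mathbf{Forget}(c))\circ\Phi(p)$ then yields $\alpha\circ\beta=0$ for $g\neq h$ and $\alpha\circ\beta\in A_g$ for $g=h$, which is the asserted direct-product structure; the unit $\id_k$ decomposes as $\sum_g 1_{A_g}$ because the $\otimes_s$-unit $\mathbb{I}_s$ has one-dimensional fibre over each group element.

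Granting this, the conclusion is purely algebraic and uniform over the Tannakian and super-Tannakian cases. Given an idempotent $e\in A$, write $e=\sum_g e_g$ with $e_g\in A_g$. As cross-degree products vanish, $e^2=\sum_g e_g^2$, and comparing $G$-homogeneous components of $e^2=e$ forces $e_g^2=e_g$ for every $g$, while $e_ge_h=0$ for $g\neq h$; thus $\{e_g\}$ is a family of mutually orthogonal idempotents. If $e$ is minimal it cannot be a sum of two nonzero orthogonal idempotents, so at most one $e_g$ is nonzero and $e=e_g$ is homogeneous. In the super case one observes additionally that the $G$-grading and the parity grading are independent, so each $e_g$ is even whenever $e$ is, and the argument runs unchanged.

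The main obstacle is the first step, namely verifying that off-diagonal composition genuinely vanishes. This is exactly where the fibrewise nature of $\otimes_s$ is indispensable, and the point to check is that the lax constraint $p$ of $\mathbf{Forget}$ discards the off-diagonal fibre-pairs rather than redistributing them among other degrees; once this is confirmed in the $\Vect_G[G]$ model, the remaining idempotent bookkeeping — including the super-signs coming from $\Phi$ taking values in $\svec$ — is insensitive to the $G$-degree, and homogeneity of minimal idempotents follows.
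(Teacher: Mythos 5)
Your proposal is correct and follows essentially the same route as the paper's proof: composition in $\Phi\overline{\cat{K}}$ factors through the fibrewise (super) tensor product on $\Vect_G[G]$, so the $G$-homogeneous components of an (even) idempotent are mutually orthogonal idempotents, and minimality forces all but one to vanish. One small caveat: your direct-product claim $A_g\cdot A_h=0$ for $g\neq h$ holds verbatim only on the even parts in the super case (odd elements compose with an $\omega$-shift between fibres), but since idempotents and their homogeneous components are even --- the paper makes the same observation before reducing the fibrewise super tensor product to the fibrewise one --- this does not affect your argument.
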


\begin{proof}
	Let $k$ be an object of $\Phi\overline{\cat{K}}$. Composition of endomorphisms of $k$ factors through the image of the symmetric tensor product of $\cat{K}(k,k)$ with itself. This image is the fibrewise (super) tensor product (\cite{Wasserman2017}). Observe that an idempotent is necessarily even. In the super-case, the fibrewise super tensor product reduces to the fibrewise tensor product for even objects. Decomposing an even endomorphism $\psi$ into homogeneous components $\psi_g$, the condition for $\psi$ to be an idempotent becomes:
	$$
	\psi \circ \psi = \sum_{g\in G} \psi_g \circ \psi_g = \sum_{g\in G} \psi_g=\psi,
	$$
	which is a condition for each $\psi_g$ separately. So $\psi$ is idempotent if and only if all its homogeneous components are. In particular, any minimal idempotent is homogeneous.
\end{proof}

This means that there is a function from the simple objects of the category $ \overline{ \overline{\cat{K}}}$ to $G$. We would like to extend this to a direct sum decomposition of our category, so we need to establish:
\begin{lem}\label{ZXkbbhomobjlem}
	Let $k$ and $k'$ be simple objects of $\overline{ \overline{\cat{K}}}$ of degrees $g$ and $g'$, respectively. Then $\overline{ \overline{\cat{K}}}(k,k')$ is zero unless $g=g'$. 
	
	Furthermore, assume that $k=f_k\in \Phi\overline{ \cat{K}}(\bar{k},\bar{k})$ and $k'=f_k'\in \Phi\overline{ \cat{K}}(\bar{k}',\bar{k}')$ for objects $\bar{k},\bar{k}'\in\Phi\overline{ \cat{K}}$ and idempotents $f_k,f_k'$. Then, denoting by $\Phi\overline{ \cat{K}}(\bar{k},\bar{k}')_{g,p}$ taking the even $(p=0)$ or odd $(p=1)$ part of the summand $\Phi\overline{ \cat{K}}(\bar{k},\bar{k}')_{g}$, any morphism of parity $p$ between $k$ and $k'$ arises from composing a morphism in $\Phi\overline{ \cat{K}}(\bar{k},\bar{k}')_{\omega^p g,p}$ with the idempotents.
\end{lem}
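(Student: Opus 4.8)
The plan is to reduce both assertions to a single bookkeeping statement about how composition in $\Phi\overline{\cat{K}}$ interacts with the two gradings carried by its hom-objects: the $G$-grading and the $\svec$-parity. Writing the two simple objects as $k=f_k$ and $k'=f_{k'}$ for minimal idempotents $f_k\in\Phi\overline{\cat{K}}(\bar k,\bar k)$ and $f_{k'}\in\Phi\overline{\cat{K}}(\bar k',\bar k')$ as in the statement, I would first record that by Lemma \ref{ZXhomidemp} these idempotents are homogeneous for the $G$-grading, of degrees $g$ and $g'$, and that, being idempotents, they are even. In the Cauchy completion $\overline{\overline{\cat{K}}}$ the hom-space from $k$ to $k'$ is $f_{k'}\circ\Phi\overline{\cat{K}}(\bar k,\bar k')\circ f_k$, so by linearity it suffices to analyse, for a homogeneous $\phi\in\Phi\overline{\cat{K}}(\bar k,\bar k')$ of $G$-degree $h$ and parity $p$, when the composite $f_{k'}\circ\phi\circ f_k$ is nonzero.

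The key input is the composition rule coming from the symmetric tensor product. By the proof of Lemma \ref{ZXhomidemp} and \cite{Wasserman2017}, composition in $\Phi\overline{\cat{K}}$ factors through the fibrewise (super) tensor product $\otimes_s$, that is, through the projection onto the symmetric part. I would establish that this composition is diagonal for the \emph{$\omega$-twisted} $G$-grading, under which a homogeneous morphism of $G$-degree $d$ and parity $q$ is assigned the twisted degree $\omega^q d$: composing two homogeneous morphisms is zero unless their twisted degrees agree, and the composite then carries that common twisted degree. For even morphisms ($q=0$) the twisted degree is the naive degree, and this recovers the observation already used in Lemma \ref{ZXhomidemp} that the super fibrewise product reduces to the ordinary fibrewise product on even objects; the new content is that each odd factor contributes exactly one factor of the central element $\omega$ to the degree bookkeeping, reflecting the super structure of $\cat{A}=\Rep(G,\omega)$ encoded by $\omega$.

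Granting this rule, the conclusion is arithmetic. Since $f_k$ and $f_{k'}$ are even, their twisted degrees are their naive degrees $g$ and $g'$. The composite $\phi\circ f_k$ is therefore nonzero only if $\omega^p h=g$, in which case it again has $G$-degree $h$ and parity $p$; and $f_{k'}\circ(\phi\circ f_k)$ is nonzero only if $g'=\omega^p h$. Combining these forces $g=g'$, which is the first assertion, and forces $h=\omega^p g$ (using $\omega^2=e$). For the second assertion, given any morphism $\theta\colon k\to k'$ of parity $p$, I would write $\theta=f_{k'}\circ\theta\circ f_k$ and decompose $\theta$ into its homogeneous $G$-components; by the previous sentence only the component lying in $\Phi\overline{\cat{K}}(\bar k,\bar k')_{\omega^p g,p}$ survives, exhibiting $\theta$ as the composite of a morphism from that summand with the two idempotents. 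The Tannakian case is the specialisation $\omega=e$, where every morphism is even and the twisted grading is the ordinary $G$-grading.

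The main obstacle is the middle step: making precise, from \cite{Wasserman2017}, the interaction between the parity and the $G$-grading under the super fibrewise tensor product, and verifying that the degree shift attached to an odd factor is exactly multiplication by the central element $\omega$ rather than some other parity-dependent datum. Once this twisted-diagonal composition rule is in hand, both parts follow from the homogeneity and evenness of idempotents in Lemma \ref{ZXhomidemp} together with the elementary grading arithmetic above.
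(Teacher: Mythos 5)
Your proposal is correct and takes essentially the same route as the paper: the paper likewise computes $\overline{\overline{\cat{K}}}(k,k')$ by pre- and post-composing with the homogeneous, even idempotents $f_{k'}$ and $f_k$, and reads off the constraints $h=\omega^p g$ (the ``furthermore'' part) and $g=g'$ from explicit fibre computations of the triple fibrewise super tensor product. The $\omega$-twisted-diagonal composition rule you flag as the main obstacle is precisely what those two displayed fibre formulas in the paper's proof establish --- each odd tensor factor shifts the degree of the paired fibre by $\omega$ --- so your twisted-degree bookkeeping is an equivalent, slightly more conceptual packaging of the same computation rather than a different argument.
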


\begin{proof}
	In an idempotent completion, the hom-object between two objects is computed by pre- and post-composing with the idempotents. Composition factors over the fibrewise (super) tensor product, so morphisms between $k$ and $k'$ are in the image of the composition map out of:
	$$
	\Phi\overline{ \cat{K}}(\bar{k}',\bar{k}')^{g'}_0\otimes_f^{\omega}\Phi\overline{ \cat{K}}(\bar{k},\bar{k}')\otimes_f^\omega\Phi\overline{ \cat{K}}(\bar{k},\bar{k}')_0^g,
	$$
	where $\otimes_f^\omega=\otimes_f$ in the non-super case, and $\Phi\overline{ \cat{K}}(\bar{k}',\bar{k}')^{g'}_0$ denotes the bundle supported by $\{g'\}$ with fibre $\Phi\overline{ \cat{K}}(\bar{k}',\bar{k}')_{g',0}$.  We have used similar notation for the rightmost factor. Computing the rightmost product, we see this is the bundle with fibres:
	$$
	(\Phi\overline{ \cat{K}}(\bar{k},\bar{k}')\otimes_f^\omega\Phi\overline{ \cat{K}}(\bar{k},\bar{k}')_0^g)_{h,p}=\Phi\overline{ \cat{K}}(\bar{k},\bar{k}')_{h,p}\otimes(\Phi\overline{ \cat{K}}(\bar{k},\bar{k}')^g_0)_{\omega^ph}.
	$$
	For this to be non-zero, we need $h=\omega^pg$, proving the ``furthermore'' part of the Lemma. Taking the fibrewise (super) tensor product of this with $\Phi\overline{ \cat{K}}(\bar{k}',\bar{k}')^{g'}$, we get:
	\begin{align*}
	(\Phi\overline{ \cat{K}}(\bar{k}',\bar{k}')^{g'}_0\otimes_f^{\omega}\Phi\overline{ \cat{K}}(\bar{k},\bar{k}')&\otimes_f^\omega\Phi\overline{ \cat{K}}(\bar{k},\bar{k}')_0^g)_{h',p'}=\\
	&(\Phi\overline{ \cat{K}}(\bar{k}',\bar{k}')^{g'}_0)_{\omega^{p'}h'}\otimes(\Phi\overline{ \cat{K}}(\bar{k},\bar{k}')\otimes_f^\omega\Phi\overline{ \cat{K}}(\bar{k},\bar{k}')_0^g)_{h',p'}.
	\end{align*}
	We immediately see that for this to be non-zero requires $\omega^{p'}h'=g'$, and from the above computation $\omega^{p'}h'=g$, so we need $g=g'$ for this to be non-zero.
\end{proof}

Combining this with Lemma \ref{ZXhomidemp}, we get:

\begin{cor}\label{ZXforgetdecomposes}
	The $\Vect$-enriched and tensored category $ \overline{ \overline{\cat{K}}}$ decomposes a direct sum
	$$
	\overline{ \overline{\cat{K}}}=\bigoplus_{g\in G}  \overline{ \overline{\cat{K}}}_g
	$$
	of (super-)linear categories, where $\overline{ \overline{\cat{K}}}_g$ is the additive subcategory of $\overline{ \overline{\cat{K}}}$ generated by the minimal idempotents of degree $g$.
\end{cor}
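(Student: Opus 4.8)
The plan is to use the two preceding lemmas to attach a well-defined degree in $G$ to each simple object of $\overline{\overline{\cat{K}}}$, and then to recognise the claimed decomposition as the standard splitting of a semisimple category into blocks between which there are no nonzero morphisms. First I would recall that, since $\cat{K}$ is an FD $\dcentcat{A}_s$-category and $\overline{\overline{\cat{K}}}$ is the Cauchy completion of $\Phi\overline{\cat{K}}$, the category $\overline{\overline{\cat{K}}}$ is (super-)semisimple with finitely many isomorphism classes of simple objects, its simples being exactly the images of the minimal idempotents of $\Phi\overline{\cat{K}}$. By Lemma \ref{ZXhomidemp} every such minimal idempotent is homogeneous for the $G$-grading on the hom-objects, and so carries a degree $g\in G$. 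To promote this to an invariant of the isomorphism class I would invoke Lemma \ref{ZXkbbhomobjlem}: any isomorphism between two simples is in particular a nonzero morphism, which forces their degrees to agree. This yields a well-defined function from isomorphism classes of simples to $G$, and I define $\overline{\overline{\cat{K}}}_g$ to be the full additive subcategory generated by the simples of degree $g$.

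Next I would assemble the direct sum. Semisimplicity gives that every object $X$ of $\overline{\overline{\cat{K}}}$ is isomorphic to a finite direct sum of simples; grouping these summands by their degree rewrites $X\cong\bigoplus_{g\in G}X_g$ with $X_g\in\overline{\overline{\cat{K}}}_g$. For the morphisms, Lemma \ref{ZXkbbhomobjlem} gives $\overline{\overline{\cat{K}}}(k,k')=0$ whenever $k,k'$ are simple of distinct degrees, so on arbitrary (semisimple) objects every hom splits as $\Hom(X,Y)\cong\bigoplus_{g}\Hom(X_g,Y_g)$; that is, all morphisms are block-diagonal with respect to the degree decomposition. These two facts — objects decompose into homogeneous pieces, and morphisms are block-diagonal — are precisely the content of the statement that $\overline{\overline{\cat{K}}}=\bigoplus_{g\in G}\overline{\overline{\cat{K}}}_g$ as (super-)linear categories. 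I would finish by noting that each block $\overline{\overline{\cat{K}}}_g$, being a full additive subcategory closed under the relevant direct sums and idempotents, inherits semisimplicity and finiteness of its simple spectrum from $\overline{\overline{\cat{K}}}$, so the summands are themselves (super-)linear categories of the type required to feed into Definition \ref{ZXgcrossedbraided}(\ref{ZXdecompcross}) and Definition \ref{ZXsgcrossedbraided}(\ref{ZXsdecompcross}).

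I expect the only genuine subtlety — the main obstacle — to be the super-case bookkeeping rather than a conceptual hurdle. One must keep clearly separate the $G$-grading, from which the degree is read off, and the $\Z_2$-parity: the homogeneity in Lemma \ref{ZXhomidemp} rests on the observation that idempotents are necessarily even and that the fibrewise super tensor product reduces to the ordinary fibrewise tensor product on even objects, so the degree is genuinely a $G$-degree. A secondary point deserving care is the appeal to semisimplicity of $\overline{\overline{\cat{K}}}$ itself, which is where the FD hypothesis on $\cat{K}$ is really used (via exactness and faithfulness of the fibre functor $\Phi$, together with the Cauchy completion); I would make explicit that this is what guarantees the decomposition of an arbitrary object into homogeneous simples, as opposed to merely decomposing the simples individually.
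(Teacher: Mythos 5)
Your argument is correct and follows the paper's route exactly: the paper derives this corollary immediately by combining Lemma \ref{ZXhomidemp} (minimal idempotents are homogeneous, giving each simple a $G$-degree) with Lemma \ref{ZXkbbhomobjlem} (hom-objects between simples of distinct degrees vanish), which is precisely your two-step assembly. Your only deviation is the explicit appeal to semisimplicity via the FD hypothesis, which is harmless but not strictly needed for the decomposition itself, since the proof of Lemma \ref{ZXhomidemp} already shows that an arbitrary idempotent splits as a sum of its homogeneous components.
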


We remind the reader that this corresponds to items \eqref{ZXdecompcross} from Definition \ref{ZXgcrossedbraided} and \eqref{ZXsdecompcross} from Definition \ref{ZXsgcrossedbraided}.

\subsubsection{FD to Fusion}
An integral part of Proposition \ref{ZXbarbariscrossedbraided} is the assertion that FD $\dcentcat{A}_s$-linear categories are sent to idempotent complete linear semi-simple categories.

\begin{prop}
	Let $\cat{K}$ be a FD $\dcentcat{A}_s$-linear category. Then $\overline{ \overline{\cat{K}}}$ is an idempotent complete linear semi-simple category.
\end{prop}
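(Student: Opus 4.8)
The plan is to treat the three properties separately, with semi-simplicity carrying all the weight. Idempotent completeness is immediate from Definition \ref{ZXkbarbar}: since $\overline{\overline{\cat{K}}}$ is by construction a Cauchy completion, every idempotent splits. For the other two properties it suffices, since the images of objects of $\cat{K}$ generate $\overline{\overline{\cat{K}}}$ under direct sums and splitting of idempotents, to understand the endomorphism algebras
$$
\End_{\overline{\overline{\cat{K}}}}(k)=\Phi\,\mathbf{Forget}\big(\cat{K}(k,k)\big).
$$
These are finite-dimensional, because $\cat{K}(k,k)$ is an object of the fusion category $\dcentcat{A}$ and the fibre functor $\Phi$ takes values in finite-dimensional (super) vector spaces. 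Once each such algebra is known to be semi-simple, the block-decomposition into minimal orthogonal idempotents (Lemma \ref{ZXhomidemp}) shows every object of $\overline{\overline{\cat{K}}}$ is a finite direct sum of simple objects with endomorphism algebra $\C$ (or, in the super case, a Clifford algebra). This gives both linearity, i.e. the existence of kernels and cokernels computed blockwise, and semi-simplicity in the required sense.

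So the crux is the semi-simplicity of $\Phi\,\mathbf{Forget}(\cat{K}(k,k))$. I would exploit that $\otimes_s$ is the fibrewise tensor product \cite{Wasserman2017} and that $\mathbf{Forget}$ is the sum over fibres (as recalled above): concretely $\mathbf{Forget}(\cat{K}(k,k))=\bigoplus_{g\in G}\cat{K}(k,k)_g$ becomes the direct product of the fibre algebras $\cat{K}(k,k)_g\in\Rep(C_G(g))$, so that $\Phi\,\mathbf{Forget}(\cat{K}(k,k))=\prod_{g}\Phi\big(\cat{K}(k,k)_g\big)$ and it is enough to prove each fibre algebra is semi-simple. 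To access the full fibre, not merely its invariants (which semi-simplicity of $\Lin(\cat{K})$ alone controls), I would use the $\dcentcat{A}_s$-tensoring: combining Proposition \ref{ZXatensequivcond}, Lemma \ref{ZXdeenrichequivmeth} and rigidity of $\cat{A}$ gives, for each simple $V$ of $\cat{A}$ with lift $v\in\dcentcat{A}$,
$$
\cat{A}\big(V,\mathbf{Forget}(\cat{K}(k,k))\big)\cong\Hom_{\Lin(\cat{K})}\big(k,(v^\ast\otimes_c\mathbb{I}_s)\cdot k\big),
$$
so that the underlying space and the composition of $\Phi\,\mathbf{Forget}(\cat{K}(k,k))\cong\bigoplus_V\Phi(V)\otimes\cat{A}(V,\mathbf{Forget}(\cat{K}(k,k)))$ are built entirely from hom-spaces of the semi-simple category $\Lin(\cat{K})\cong\Lin(\overline{\cat{K}})$ (Corollary \ref{ZXlinislinbar}). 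Since $\Lin(\cat{K})$ is semi-simple, every idempotent appearing on these objects is a sum of minimal orthogonal idempotents with scalar corners, which forces each fibre algebra to be semi-simple; equivalently, $\cat{K}(k,k)$ is a separable algebra object in $(\dcentcat{A},\otimes_s)$ and $\Phi$, being strong monoidal and faithful, carries separable algebras to separable ones.

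The main obstacle is exactly this last point: semi-simplicity is \emph{not} preserved by an arbitrary change of basis, since passing to invariants along $\cat{A}(\mathbb{I}_\cat{A},-)$ can turn a non-semi-simple algebra into a semi-simple one, so one genuinely must use that $\Phi$ is a fibre functor together with the tensored (module-category) structure rather than semi-simplicity of $\Lin(\cat{K})$ in isolation. Two technical wrinkles must be handled with care: first, $\mathbf{Forget}$ is only \emph{lax} monoidal for $\otimes_s$, so the identification of $\Phi\,\mathbf{Forget}(\cat{K}(k,k))$ as a \emph{product} of fibre algebras, and the matching of its multiplication with composition in $\Lin(\cat{K})$, must be read off fibrewise rather than transported formally; second, the separability of the fibre algebras ultimately rests on a Maschke-type averaging over the relevant centraliser $C_G(g)$, which is where finiteness of $G$ and characteristic zero enter. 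Finiteness of the set of isomorphism classes of simples, although not needed for the present statement, then follows since each of the finitely many simples of $\cat{K}$ contributes only finitely many constituents, one per minimal idempotent of the finite-dimensional algebra $\Phi\,\mathbf{Forget}(\cat{K}(k,k))$.
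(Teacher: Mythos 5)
Your route differs genuinely from the paper's: the paper never passes through endomorphism algebras at all, but instead takes an arbitrary monic $i\colon a\hookrightarrow b$ in $\overline{\overline{\cat{K}}}$, observes that up to composition with idempotents it comes from a monic in $\cat{K}$, hence corresponds to a monic in $\DE(\cat{K})=\Lin(\cat{K})$, splits it there using the FD hypothesis, and transports the splitting back. Your reduction of semi-simplicity to semi-simplicity of the fibre algebras of $\cat{K}(k,k)$ (via Lemma \ref{ZXhomidemp}, corners, and finite-dimensionality) is sound as architecture, and you correctly identify the crux. But precisely at that crux there is a genuine gap: the sentence ``Since $\Lin(\cat{K})$ is semi-simple, every idempotent appearing on these objects is a sum of minimal orthogonal idempotents with scalar corners, which forces each fibre algebra to be semi-simple'' is an assertion, not an argument, and neither of the mechanisms you name closes it.

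Concretely, what must be excluded is a nonzero Jacobson radical $J=\prod_g J_g$ of the bundle of fibre algebras. Since $G$ acts by algebra isomorphisms $A_g\rar A_{hgh^{-1}}$, this $J$ is a $G$-stable nilpotent ideal, i.e.\ a subobject of $\cat{K}(k,k)$ in $\dcentcat{A}$, and semi-simplicity of $\Lin(\cat{K})$ directly controls only its invariant part $\dcentcat{A}(\mathbb{I}_s,J)\subseteq\End_{\Lin(\cat{K})}(k)$, whose vanishing does not force $J=0$: compare $\C[x]/x^2$ with $\Z/2$ acting by $x\mapsto -x$, where the invariants are $\C$ (semi-simple) but the algebra is not -- exactly the failure mode you yourself flag, yet never actually rule out. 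Maschke-type averaging over $C_G(g)$ only produces invariants, so it runs into the same problem unless you already have a candidate splitting to average; and the claimed equivalence with separability of $\cat{K}(k,k)$ in $(\dcentcat{A},\otimes_s)$ is likewise unproven -- note moreover that the functor you apply to this algebra is $\Phi\circ\mathbf{Forget}$, which is only lax (and oplax) for $\otimes_s$ by \cite[Proposition \ref{STforgetislaxmon}]{Wasserman2017} and Lemma \ref{ZXforgetonsymprod}, so ``$\Phi$ is strong monoidal and carries separable algebras to separable ones'' does not apply as stated (you flag this wrinkle and then rely on the transport anyway). Your identification $\cat{A}(V,\mathbf{Forget}(\cat{K}(k,k)))\cong\Lin(\cat{K})(k,(v^*\otimes_c\mathbb{I}_s)\cdot k)$ via Proposition \ref{ZXatensequivcond} supplies the right raw material, but to finish you must use it at the algebra level: for instance, pass to $\hat{k}:=(\C[G]^*\otimes_c\mathbb{I}_s)\cdot k$ (the same object that drives the paper's Lemma \ref{ZXHexists}) and show that a nonzero $J$ would induce a nonzero nilpotent ideal in the honestly semi-simple algebra $\End_{\Lin(\cat{K})}(\hat{k})$, from which semi-simplicity descends to the fibre algebras in characteristic zero. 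Some argument of this kind -- or an explicit separability idempotent -- is the missing load-bearing step; alternatively, the paper's direct monic-splitting transport avoids the algebra-level statement altogether.
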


\begin{proof}
	The $\Vect$-enriched and tensored category $\overline{ \overline{\cat{K}}}$ is idempotent complete and additive by definition, so we only need to establish semi-simplicity. Given a monic morphism $i:a\hookrightarrow b$ in $\overline{ \overline{\cat{K}}}$, we want to show this monic splits. To do this, it is enough to provide an idempotent $f$ on $b$ such that $fi$ is an isomorphism between $a$ and the subobject associated to $f$. Tracing through the definition of $\overline{ \overline{\cat{K}}}$, we see that $i$, up to composition with some idempotents, comes from a monic in $\cat{K}$. This monic corresponds in turn to a monic in $\DE(\cat{K})$, where it splits by assumption. Tracing this splitting through the constructions gives the splitting in $\overline{ \overline{\cat{K}}}$.
\end{proof}

\subsubsection{Graded monoidal structure}

\begin{lem}\label{ZXforgettensor}
	$ \overline{ \overline{\cat{K}}}$ is (super)-fusion.
\end{lem}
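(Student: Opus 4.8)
The plan is to transport the $\dcentcat{A}$-crossed tensor structure on $\cat{K}$ through the linearisation $\overline{\overline{(-)}}$ (Definition \ref{ZXkbarbar}), which is the composite of the forgetful change of basis $\overline{(-)}$, change of basis along the fibre functor $\Phi$, and Cauchy completion. Concretely, from $\otimes_\cat{K}$ and $\mathbb{I}_\cat{K}$ I must produce a (super) tensor structure on $\overline{\overline{\cat{K}}}$, as required by Definition \ref{ZXgcrossedbraided} in the Tannakian case and Definition \ref{ZXsgcrossedbraided} in the super-Tannakian case, which is compatible with the grading from Corollary \ref{ZXforgetdecomposes}, rigid, and has a simple unit.

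First I would identify the linearisation of the convolution product. By Lemma \ref{ZXforgetconvprod} the forgetful functor sends $\cattens{c}$ to $\cattens{\cat{A}}$, and since $\Phi$ is symmetric monoidal it sends $\cattens{\cat{A}}$ to $\cattens{\svec}$; as Cauchy completion is compatible with $\cattens{\svec}$, this yields a canonical equivalence $\overline{\overline{\cat{K}\cattens{c}\cat{K}}}\cong\overline{\overline{\cat{K}}}\cattens{\svec}\overline{\overline{\cat{K}}}$, where in the Tannakian case $\svec$ is replaced by $\Vect$ and $\cattens{\svec}$ by $\bx$. Applying the linearisation to $\otimes_\cat{K}$ and composing with this equivalence gives the desired tensor functor $\overline{\overline{\cat{K}}}\cattens{\svec}\overline{\overline{\cat{K}}}\rar\overline{\overline{\cat{K}}}$, while linearising $\mathbb{I}_\cat{K}\colon\cat{A}_\cat{Z}\rar\cat{K}$ gives the unit. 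The associators and unitors, together with their coherence conditions, come along by the 2-functoriality of change of basis and Cauchy completion.

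Next I would check compatibility with the $G$-grading. Viewing $\dcentcat{A}$ as $\Vect_G[G]$, the convolution product $\otimes_c$ convolves equivariant bundles and hence adds supports: a hom-object supported in degree $g$ tensored with one supported in degree $h$ is supported in degree $gh$. Since the $G$-grading on the hom-objects of $\overline{\overline{\cat{K}}}$ is precisely this support, the tensor functor above adds degrees. Thus for minimal idempotents of degrees $g$ and $h$, which are homogeneous by Lemma \ref{ZXhomidemp}, the idempotent defining their tensor product is supported in degree $gh$, and by Corollary \ref{ZXforgetdecomposes} this shows $\otimes\colon\overline{\overline{\cat{K}}}_g\bx\overline{\overline{\cat{K}}}_h\rar\overline{\overline{\cat{K}}}_{gh}$.

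It remains to establish rigidity and a simple unit. Rigidity follows because $\cat{K}$ is rigid (Definition \ref{ZXZAfusiondef}): the evaluation and coevaluation morphisms in $\cat{K}$ are transported by the linearisation to duality data in $\overline{\overline{\cat{K}}}$, so the images of objects of $\cat{K}$ are dualisable, and since $\overline{\overline{\cat{K}}}$ is idempotent complete their summands---hence all objects---are dualisable. For the unit, full faithfulness of $\mathbb{I}_\cat{K}$ (Definition \ref{ZXZAfusiondef}) implies that the endomorphism algebra of the unit object of $\overline{\overline{\cat{K}}}$ agrees with that of $\mathbb{I}_\cat{A}$ in $\overline{\overline{\cat{A}_\cat{Z}}}$, namely $\Phi(\mathbb{I}_\cat{A})$, the one-dimensional unit of $\svec$, so the unit is simple, exactly as in the remark following Definition \ref{ZXZAfusiondef}. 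The main obstacle is the first step: correctly establishing the equivalence $\overline{\overline{\cat{K}\cattens{c}\cat{K}}}\cong\overline{\overline{\cat{K}}}\cattens{\svec}\overline{\overline{\cat{K}}}$ through the Cauchy completion while tracking the super tensor product, that is, verifying that change of basis along $\Phi\circ\mathbf{Forget}$ genuinely intertwines $\cattens{c}$ with $\cattens{\svec}$, including the correct signs in the super-Tannakian case.
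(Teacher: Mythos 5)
Your proposal is correct and takes essentially the same route as the paper, which proves the lemma simply by combining Lemma \ref{ZXforgetmon} (the forgetful change of basis sends the $\dcentcat{A}$-crossed tensor structure to an $\cat{A}$-tensor structure, resting on Lemma \ref{ZXforgetconvprod}) with Proposition \ref{ZXmoncatpres} (change of basis along the symmetric monoidal fibre functor $\Phi$ preserves enriched monoidal structures); the ``obstacle'' you flag at the end is exactly what these two cited results dispose of. Your extra verifications --- compatibility with the $G$-grading, rigidity, and simplicity of the unit --- are correct, but the grading statement is the paper's separate subsequent lemma rather than part of this one, and the rigidity and simple-unit points are left implicit in the paper's two-line proof.
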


\begin{proof}
	This is a direct consequence of Proposition \ref{ZXmoncatpres} and Lemma \ref{ZXforgetmon}.
\end{proof}

This (super-)tensor structure is graded in the sense that it satisfies item \eqref{ZXgradedmon} from Definition \ref{ZXgcrossedbraided} (or item \eqref{ZXsgradedmon} from Definition \ref{ZXsgcrossedbraided}):

\begin{lem}
	The (super)-tensor structure from Lemma \ref{ZXforgettensor} maps
	$$
	\overline{ \overline{\cat{K}}}_g \boxtimes  \overline{ \overline{\cat{K}}}_h \rar  \overline{ \overline{\cat{K}}}_{gh},
	$$
	with respect to the decomposition from Corollary \ref{ZXforgetdecomposes}, and where we replace $\boxtimes$ with $\cattens{\svec}$ in the super case.
\end{lem}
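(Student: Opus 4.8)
The plan is to follow the $G$-grading through the construction of the tensor product on $\overline{\overline{\cat{K}}}$. By Lemma \ref{ZXforgetmon} and Lemma \ref{ZXforgettensor}, this tensor product is obtained from the $\dcentcat{A}$-crossed structure $\otimes_\cat{K}$ by change of basis along $\mathbf{Forget}$ and then $\Phi$, followed by Cauchy completion. The decisive point is that $\otimes_\cat{K}$ factors through the convolution product, so on hom-objects it is a morphism in $\dcentcat{A}$ of the shape
$$\cat{K}(k_1,k_2)\otimes_c\cat{K}(k_1',k_2')\rar\cat{K}(k_1\otimes k_1',k_2\otimes k_2').$$
Under $\dcentcat{A}\cong\Vect_{G}[G]$ the product $\otimes_c$ is the convolution of equivariant bundles, whose fibre over $g$ is the sum over $ab=g$ of the fibrewise products. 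I would first record that for $V,W\in\dcentcat{A}$ one has the multiplicative grading
$$\left(\Phi\mathbf{Forget}(V\otimes_c W)\right)_g\cong\bigoplus_{ab=g}\left(\Phi\mathbf{Forget}(V)\right)_a\otimes\left(\Phi\mathbf{Forget}(W)\right)_b,$$
and that, since morphisms in $\Vect_{G}[G]$ are fibrewise, the induced map on hom-objects is \emph{degree-preserving} for it; in the super case the fibrewise super tensor product reduces to the ordinary one on the even parts that matter below.

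With this in hand, let $k=(\bar k,f_k)$ and $k'=(\bar k',f_{k'})$ be simple objects of degrees $g$ and $h$, presented by homogeneous minimal idempotents $f_k,f_{k'}$ as in Lemma \ref{ZXhomidemp}. In the Cauchy completion the tensor product object is $(\bar k\otimes\bar k',\,f_k\otimes f_{k'})$, where $f_k\otimes f_{k'}$ is the image of $f_k\otimes_c f_{k'}$ under the tensor functor on hom-objects. The idempotents are even, so $f_k\otimes_c f_{k'}$ sits in convolution-degree $gh$, and by the previous paragraph its image $f_k\otimes f_{k'}$ is a homogeneous idempotent of degree $gh$.

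Finally I would deduce that $k\otimes k'$ lands in $\overline{\overline{\cat{K}}}_{gh}$. Decomposing $f_k\otimes f_{k'}$ into orthogonal minimal idempotents, each is homogeneous by Lemma \ref{ZXhomidemp}; comparing homogeneous components and using that a nonempty sum of orthogonal minimal idempotents of a single degree cannot vanish forces every summand to have degree $gh$. Hence each simple summand of $k\otimes k'$ has degree $gh$, so the tensor product maps $\overline{\overline{\cat{K}}}_g\boxtimes\overline{\overline{\cat{K}}}_h\rar\overline{\overline{\cat{K}}}_{gh}$ with respect to the decomposition of Corollary \ref{ZXforgetdecomposes} (with $\cattens{\svec}$ in place of $\boxtimes$ in the super case). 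The main obstacle is the first step: checking carefully that convolution, composed with $\mathbf{Forget}$ and $\Phi$, yields exactly this multiplicative grading and that the $\dcentcat{A}$-morphism $\otimes_\cat{K}$ preserves it; once that bookkeeping is settled the object-level statement is formal.
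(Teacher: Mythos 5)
Your proof is correct and follows essentially the same route as the paper's: the tensor product of the homogeneous minimal idempotents representing $k$ and $k'$ factors through the convolution product of $G$-graded (super) vector spaces, which is multiplicative on degrees, so the resulting idempotent is homogeneous of degree $gh$. Your closing step, decomposing $f_k\otimes f_{k'}$ into orthogonal minimal idempotents and using Lemma \ref{ZXhomidemp} to force each summand into degree $gh$, is a correct elaboration of a point the paper leaves implicit, not a different argument.
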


\begin{proof}
	The grading of the (super-)tensor product of two homogeneous objects $k\in \overline{ \overline{\cat{K}}}_g$ and $k'\in \overline{ \overline{\cat{K}}}_h$ obtained by taking the tensor product of the idempotents in $\Phi \overline{K}$. This tensor product, in turn, factors over the convolution tensor product of $G$-graded (super) vector spaces, and this sends the $g$-graded and the $h$-graded part to the $gh$-graded part, so the (super-)tensor product of the idempotents will be homogeneous of degree $gh$.
\end{proof}

\subsubsection{$G$-action}
The $G$-graded vector spaces obtained by applying the forgetful and fibre functors to the objects of $\dcentcat{A}$ carry an action of the (super)-group, that we will denote by $g\cdot$. This $G$-action conjugates the grading. This action of the (super-)group translates to an action on the idempotent completion:
\begin{lem}
	Let $g\in G$, then the assignment
	\begin{align*}
	(-)^g\colon &	 \overline{ \overline{\cat{K}}}\rar \overline{ \overline{\cat{K}}}\\
	&k \mapsto g\cdot k\\
	& \overline{ \overline{\cat{K}}}(k,k') \rar g\cdot \overline{ \overline{\cat{K}}}(g\cdot k,g\cdot k')
	\end{align*}
	defines an autofunctor of $ \overline{ \overline{\cat{K}}}$. 
\end{lem}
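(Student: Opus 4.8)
The plan is to reduce the entire statement to one structural fact: the residual $G$-action $g\cdot$ on the $G$-graded (super) vector spaces that occur as hom-objects of $\Phi\overline{\cat{K}}$ is \emph{compatible with composition}, i.e.\ it is the action on hom-objects of a family of automorphisms of the $\svec$-enriched category $\Phi\overline{\cat{K}}$ that fix objects. Concretely, I would first record that for composable morphisms $\psi,\psi'$ in $\Phi\overline{\cat{K}}$ one has $g\cdot(\psi'\circ\psi)=(g\cdot\psi')\circ(g\cdot\psi)$ and $g\cdot\id=\id$, and that $g\cdot$ conjugates the $G$-grading. This is exactly the point where \cite{Wasserman2017} is used: composition in $\Phi\overline{\cat{K}}$ factors through the fibrewise (super) tensor product (as in Lemmas \ref{ZXhomidemp} and \ref{ZXkbbhomobjlem}), and under $\dcentcat{A}\cong\Vect_G[G]$ the action $g\cdot$ is the conjugation action on equivariant bundles, which is compatible with that fibrewise product.

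Granting this, well-definedness on objects is immediate. An object of $\overline{\overline{\cat{K}}}$ is a pair $(\bar{k},f_k)$ with $f_k\in\Phi\overline{\cat{K}}(\bar{k},\bar{k})$ idempotent; compatibility with composition gives $(g\cdot f_k)^2=g\cdot(f_k^2)=g\cdot f_k$, so $g\cdot f_k$ is again idempotent and $g\cdot k:=(\bar{k},g\cdot f_k)$ is a genuine object. For morphisms, a map $k\to k'$ in the Cauchy completion is an element $\psi$ satisfying $\psi=f_{k'}\circ\psi\circ f_k$; applying $g\cdot$ and using multiplicativity yields $g\cdot\psi=(g\cdot f_{k'})\circ(g\cdot\psi)\circ(g\cdot f_k)$, so $g\cdot\psi$ is a morphism $g\cdot k\to g\cdot k'$, which is the map on hom-objects displayed in the statement.

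Functoriality then follows formally from the same identity: the identity of $(\bar{k},f_k)$ is $f_k$ and $g\cdot f_k$ is the identity of $g\cdot k$, while $(g\cdot\psi')\circ(g\cdot\psi)=g\cdot(\psi'\circ\psi)$ shows composites are preserved. Finally, since $g\cdot$ comes from a group action, $(-)^{g^{-1}}$ is a two-sided inverse (using $g^{-1}\cdot(g\cdot(-))=e\cdot(-)=\id$), so $(-)^g$ is in fact an automorphism of $\overline{\overline{\cat{K}}}$.

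The hard part is the first paragraph: making precise that the forgotten-and-fibred $G$-action is genuinely by composition-preserving automorphisms. This is not formal from monoidality of $\Phi$ alone, since composition is governed by the symmetric product $\otimes_s$ on $\dcentcat{A}$ (which becomes the fibrewise tensor product) rather than by the convolution product; the crux is therefore the identification of $\otimes_s$ with the fibrewise (super) tensor product from \cite{Wasserman2017} and the compatibility of the equivariance with it. Once that is established, every remaining verification is a one-line manipulation of $g\cdot(\psi'\circ\psi)=(g\cdot\psi')\circ(g\cdot\psi)$.
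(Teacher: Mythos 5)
Your proposal is correct and follows essentially the same route as the paper: both reduce the statement to showing that $g\cdot$ is multiplicative for composition, which holds because composition factors through the fibrewise (super) tensor product of \cite{Wasserman2017} and $g\cdot(V\otimes_f W)=(g\cdot V)\otimes_f(g\cdot W)$ by definition, then note invertibility via $(-)^{g^{-1}}$. The only difference is cosmetic: the paper proves functoriality on $\Phi\overline{\cat{K}}$ and asserts it descends to the idempotent completion, whereas you spell out that descent (idempotency of $g\cdot f_k$, the condition $\psi=f_{k'}\circ\psi\circ f_k$) explicitly.
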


\begin{proof}
	This assignment is clearly strictly invertible, with inverse given by $(-)^{g^{-1}}$, so we have to prove that it defines a functor. It is enough to show that the assignment
	\begin{align*}
	(-)^g\colon  &\Phi \overline{\cat{K}} \rar \Phi \overline{\cat{K}}\\
	&k\mapsto k\\
	&\Phi \overline{\cat{K}}(k,k')\rar g\cdot\Phi \overline{\cat{K}}(k,k')
	\end{align*}
	is a functor, this will descend to the idempotent completion $ \overline{ \overline{\cat{K}}}$ as prescribed. As the identity morphisms are given by equivariant maps from $\mathbb{I}_s=\C\times G$ to the hom-objects, $(-)^g$ preserves identities. Recall that composition maps out of the fibrewise tensor product, and is a morphism in $\dcentcat{A}$. Any morphism in $\dcentcat{A}$ is a morphisms intertwining the $G$-action on the vector bundles over $G$, so we are trying to show that the fibrewise tensor product has the property that
	$$
	g\cdot (V\otimes_f W)= (g\cdot V)\otimes_f (g\cdot W),
	$$
	but this true by definition, see \cite[Definitions \ref{STfibrewisetensorproductdef} and \ref{STfibrewisesupertensordef}]{Wasserman2017}.
\end{proof}

\begin{lem}
	The assignment $g\mapsto (-)^g$	defines a homomorphism $G\rar\tn{Aut}( \overline{ \overline{\cat{K}}})$, or $(G,\omega)\rar(\tn{Aut}( \overline{ \overline{\cat{K}}}),\Pi)$ in the super case.
\end{lem}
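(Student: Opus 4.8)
The plan is to verify the homomorphism property at the level of $\Phi\overline{\cat{K}}$, exactly as in the preceding Lemma, and then to treat the distinguished element $\omega$ in the super case separately. Since each $(-)^g$ acts as the identity on objects and is built by applying the action $g\cdot$ to the underlying $G$-graded (super) vector spaces of the hom-objects, it suffices to show that the induced maps on hom-objects compose strictly; this then descends to the idempotent completion $\overline{\overline{\cat{K}}}$ as before.

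First I would recall that the action on the conjugation-equivariant vector bundles over $G$ is a genuine strict group action, so that $g\cdot(h\cdot v)=(gh)\cdot v$ and $e\cdot v=v$ hold on the nose. The forgetful functor $\dcentcat{A}\to\cat{A}$ and the fibre functor $\Phi$ are functorial and preserve this structure, so the induced action on each hom-object $\Phi\overline{\cat{K}}(k,k')$ is again strict. Consequently $(-)^g\circ(-)^h$ and $(-)^{gh}$ agree on objects (both are the identity) and on hom-objects (by strictness of the action), while $(-)^e=\Id$. This yields the homomorphism $G\to\Aut(\Phi\overline{\cat{K}})$, which descends to $\overline{\overline{\cat{K}}}$ just as in the previous Lemma.

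For the super case it remains to check that $\omega$ is sent to the grading involution functor $\Pi$ of Definition \ref{ZXgradinginvfunct}. Here I would invoke the definition of a representation of a super-group: in any $V\in\Rep(G,\omega)$ the central element $\omega$ acts as the grading involution of $V$. Since the hom-objects of $\overline{\cat{K}}$ are objects of $\cat{A}=\Rep(G,\omega)$, after applying $\Phi$ the element $\omega$ acts on $\Phi\overline{\cat{K}}(k,k')$ as $+1$ on its even part and $-1$ on its odd part. As $(-)^\omega$ is the identity on objects, this is exactly the functor fixing even morphisms and negating odd ones, namely $\Pi$. Hence $(G,\omega)\to(\Aut(\overline{\overline{\cat{K}}}),\Pi)$ is a homomorphism of pointed groups.

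The main obstacle is essentially bookkeeping: one must ensure that the action $g\cdot$ entering the definition of $(-)^g$ is the strict conjugation-equivariance action rather than an action merely up to isomorphism, so that composition is strict and not only coherent up to natural isomorphism. Tracking this faithfully through the forgetful and fibre functors, together with the identification of the action of $\omega$ with the parity involution, is the only point needing care; both are immediate from the fibrewise (super) tensor product conventions of \cite{Wasserman2017} already employed above.
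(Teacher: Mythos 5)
Your proof is correct and takes essentially the same approach as the paper: the non-super case is handled by strictness of the genuine $G$-action on the equivariant vector bundles (which the paper dismisses with ``there is nothing to prove''), and in the super case you identify $(-)^\omega$ with $\Pi$ via the fact that $\omega$ acts as the grading involution, which is precisely the paper's observation that the $\Z_2$-grading is determined by whether $\omega$ acts by $1$ or $-1$. The additional bookkeeping you supply --- strict composition at the level of $\Phi\overline{\cat{K}}$ and descent to the idempotent completion, using that idempotents are even so $\omega$ fixes them --- is exactly what the paper leaves implicit.
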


\begin{proof}
	In the non-super case, there is nothing to prove. In the super-case we observe that the $\Z_2$-grading on the $G$-equivariant vector bundles over $G$ is exactly determined by whether $\omega$ acts by $1$ or $-1$. So, $(-)^{\omega}$ will act by $-1$ exactly on the odd morphisms, i.e. as $\Pi$ (Definition \ref{ZXgradinginvfunct}). 
\end{proof}

We observe that this action of $g\in G$ on $ \overline{ \overline{\cat{K}}}$ takes $ \overline{ \overline{\cat{K}}}_h$ to $ \overline{ \overline{\cat{K}}}_{ghg^{-1}}$. This means that this action satisfies item \eqref{ZXgaction} from Definition \ref{ZXgcrossedbraided} (or from Definition \ref{ZXsgcrossedbraided} in the super case).

\subsubsection{$G$-crossed braiding}
To prove Theorem \ref{ZXbarbariscrossedbraided}, we still need to show that $ \overline{ \overline{\cat{K}}}$ satisfies \eqref{ZXcrossedbraid} from Definition \ref{ZXgcrossedbraided} (or \eqref{ZXscrossedbraid} from Definition \ref{ZXsgcrossedbraided}). The first step for this is:

\begin{lem}\label{ZXimageofB}
	The image of the functor $B$ (see Definition \ref{ZXbraidingfunctor}) under the change of basis along the forgetful functor followed by $\Phi$ and then idempotent completion is given by:
	\begin{equation}\label{ZXimageofbraiding}
	\overline{ \overline{\cat{K}}}_g\boxtimes \overline{ \overline{\cat{K}}}\xrightarrow{\tn{Switch}}\overline{ \overline{\cat{K}}}\boxtimes \overline{ \overline{\cat{K}}}_g \xrightarrow{(-)^g\boxtimes \Id} \overline{ \overline{\cat{K}}}\boxtimes \overline{ \overline{\cat{K}}}_g,
	\end{equation}
	where in the super case, we use $\cattens{\svec}$ instead of $\boxtimes$, and the switch map uses the symmetry in super vector spaces.
\end{lem}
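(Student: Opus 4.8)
The plan is to compute the induced functor $\overline{\overline{B}}$ explicitly on objects and on hom-objects, using the description of the braiding for $\otimes_c$ coming from Tannaka duality, and then to check compatibility with passing to the Cauchy completion.

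First I would pin down the source and target. By Lemma \ref{ZXforgetconvprod} we have $\overline{\cat{K}\cattens{c}\cat{K}}=\overline{\cat{K}}\cattens{\cat{A}}\overline{\cat{K}}$, and since $\Phi$ is symmetric monoidal it carries the $\cat{A}$-enriched Cartesian product to the $\svec$-enriched one (on both sides hom-objects are tensor products of hom-objects, and $\Phi$ is monoidal). After Cauchy completion this gives $\overline{\overline{\cat{K}\cattens{c}\cat{K}}}\cong\overline{\overline{\cat{K}}}\boxtimes\overline{\overline{\cat{K}}}$, resp. $\cattens{\svec}$ in the super case. Objects on both sides are generated under the completion by symbols $k\boxtimes k'$ with $k,k'$ minimal idempotents, i.e. objects of $\overline{\overline{\cat{K}}}$.

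Next I would recall the explicit braiding. Under the identification $\dcentcat{A}\cong\Vect_{G}[G]$, the braiding for $\otimes_c$ sends a homogeneous element $v_g\otimes w$ to $(g\cdot w)\otimes v_g$, where $g\cdot$ is the conjugation action of $G$ (carrying a Koszul sign in the super case, which is trivial on even elements). After applying the forgetful functor and then $\Phi$, every hom-object becomes a $G$-graded (super) vector space, and the image of $B$ on hom-objects is exactly this braiding. Restricting the first tensor factor to $\overline{\overline{\cat{K}}}_g$ amounts to restricting to the degree-$g$ part, on which the braiding is literally $\tn{Switch}$ followed by the action of $g$ on the remaining factor; this is precisely the hom-object statement of $((-)^g\boxtimes\Id)\circ\tn{Switch}$.

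The crux is the object level, and I expect it to be the main obstacle. At the level of $\cat{K}\cattens{c}\cat{K}$ the functor $B$ merely swaps $k\boxtimes k'\mapsto k'\boxtimes k$, so no crossing is visible there; the crossing only emerges after passing to idempotents. Concretely, an object $k\boxtimes k'$ with $k\in\overline{\overline{\cat{K}}}_g$ is defined by an idempotent $e\otimes e'$ in which $e$ is homogeneous of degree $g$ (Lemma \ref{ZXhomidemp}). The induced functor on idempotent completions sends it to the object defined by $B(e\otimes e')=(g\cdot e')\otimes e$; since the conjugation action shifts grading, $g\cdot e'$ is exactly the idempotent defining $(k')^g$, whence $\overline{\overline{B}}(k\boxtimes k')=(k')^g\boxtimes k$, matching $((-)^g\boxtimes\Id)\circ\tn{Switch}$ on objects. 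Finally, because $\beta$ is a natural transformation realised by $G$-equivariant morphisms in $\dcentcat{A}$, it is automatically compatible with the defining idempotents, so the identification verified on generators and on hom-objects extends over the Cauchy completion by functoriality; the even-ness of idempotents ensures the super signs enter only through the super switch map, as claimed.
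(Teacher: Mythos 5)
Your proposal is correct and takes essentially the same route as the paper's proof: both compute the induced functor in the $\Vect_{G}[G]$ model, where on hom-objects the image of $B$ is the fibrewise braiding $V_g\otimes W_h\rar (g\cdot W_h)\otimes V_g$, and both see the crossing emerge on the (even, homogeneous) idempotents — your explicit computation $B(e\otimes e')=(g\cdot e')\otimes e$ of the object map is left implicit in the paper but is exactly right. The one step you assert rather than derive is the super-sign bookkeeping: the paper obtains your ``Koszul sign'' from Lemma \ref{ZXkbbhomobjlem}, namely that a parity-$p$ morphism between degree-$g$ simples sits in the fibre over $\omega^p g$, so the braiding acts by $\omega^p g\cdot f'=(-1)^{pp'}\,g\cdot f'$; in the super case your phrase ``restricting to the degree-$g$ part'' must be read with this fibre shift in mind, since the odd part of the hom-objects of $\overline{\overline{\cat{K}}}_g$ lives over $\omega g$ rather than $g$.
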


\begin{proof}
	The image of the functor $B$ (see Definition \ref{ZXbraidingfunctor}) under the change of basis along the forgetful functor followed by $\Phi$ is given by
	\begin{align*}
	\Phi  \overline{B}\colon & \Phi\overline{K}\boxtimes \Phi\overline{K}\rar\Phi\overline{K}\boxtimes \Phi\overline{K}\\
	& k\boxtimes k' \mapsto k'\boxtimes k,
	\end{align*}
	and the image of the braiding on hom-objects. In the model of $\dcentcat{A}$ as $\Vect_G[G]$, this braiding is given fibrewise by:
	$$
	V_g\otimes W_h \rar (g\cdot W_h)\otimes V_g.
	$$
	Without loss of generality, let $k_1,k_1'$ and $k_2,k_2'$ be simple objects of $\overline{ \overline{\cat{K}}}$ of degrees $g$ and $g'$, respectively. Then, by Lemma \ref{ZXkbbhomobjlem}, morphisms $f\colon k_1\rar k_1'$ of parity $p$ and $f'\colon k_2\rar k_2'$ of parity $p'$ come from fibres over $\omega^pg$ or $\omega^{p'} g'$, respectively. This means that the image of $B$ will take $f\otimes f'$ to $(\omega^p g\cdot f') \otimes f$, which, remembering that $\omega$ acts non-trivially only if $p'=1$, we can rewrite as $(-1)^{pp'} g\cdot f'\otimes f$. But this is exactly what the composite from Equation \eqref{ZXimageofbraiding} does.
\end{proof}

Recall that the braiding for a $\dcentcat{A}$-crossed braided category $\cat{K}$ is a natural isomorphism between $\otimes_\cat{K}\colon \cat{K}\cattens{c}\cat{K} \rar \cat{K}$ and the composite $\otimes_{\cat{K}}\circ B$. By Proposition \ref{ZXchangebasisispseudofunct}, this descends to a natural isomorphism between the images of these functors, so Lemma \ref{ZXimageofB} has the following consequence:

\begin{cor}
	The braiding for a $\dcentcat{A}$-crossed braided fusion category $\cat{K}$ descends to a natural isomorphism between $\overline{ \overline{\otimes}}\colon\overline{ \overline{\cat{K}}}_g\boxtimes \overline{ \overline{\cat{K}}}\rar\overline{ \overline{\cat{K}}}$ and
	$$
	\overline{ \overline{\cat{K}}}_g\boxtimes \overline{ \overline{\cat{K}}}\xrightarrow{\tn{Switch}}\overline{ \overline{\cat{K}}}\boxtimes \overline{ \overline{\cat{K}}}_g \xrightarrow{(-)^g\boxtimes \Id} \overline{ \overline{\cat{K}}}\boxtimes \overline{ \overline{\cat{K}}}_g \xrightarrow{\overline{ \overline{\otimes}}}\overline{ \overline{\cat{K}}}.
	$$
\end{cor}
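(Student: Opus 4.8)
The plan is to obtain the claimed natural isomorphism simply by pushing the crossed braiding of $\cat{K}$ through the composite change-of-basis functor, since all the real work has already been packaged into Lemma \ref{ZXimageofB}. First I would recall that, by Definition \ref{ZXcrossedbraidingdef}, the crossed braiding on $\cat{K}$ is a natural isomorphism $\beta_\cat{K}$ between $\otimes_\cat{K}\colon \cat{K}\cattens{c}\cat{K}\rar\cat{K}$ and the composite $\otimes_\cat{K}\circ B$. The two functors appearing in the statement are precisely the images of these under change of basis along $\mathbf{Forget}$, followed by $\Phi$, followed by idempotent completion, read off on the summand with first factor $\overline{\overline{\cat{K}}}_g$.

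Second, I would invoke the pseudofunctoriality of change of basis (Proposition \ref{ZXchangebasisispseudofunct}): a pseudofunctor sends natural isomorphisms to natural isomorphisms and respects composition of functors up to coherent isomorphism. Applying this to $\beta_\cat{K}$ therefore produces a natural isomorphism between the image of $\otimes_\cat{K}$, which is $\overline{\overline{\otimes}}$ by the very definition of $\overline{\overline{\cat{K}}}$, and the image of $\otimes_\cat{K}\circ B$, which is $\overline{\overline{\otimes}}$ precomposed with the image of $B$. Substituting the identification of the image of $B$ from Lemma \ref{ZXimageofB}, namely the composite $\tn{Switch}$ followed by $(-)^g\boxtimes\Id$ (using $\cattens{\svec}$ and the super-symmetry in the super-Tannakian case), yields exactly the composite displayed in the corollary.

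The only point demanding any attention, and it is a purely bookkeeping one, is the restriction to the graded pieces: Lemma \ref{ZXimageofB} is phrased on $\overline{\overline{\cat{K}}}_g\boxtimes\overline{\overline{\cat{K}}}$, so I would take care to read the descended isomorphism off on this summand, matching the domain $\overline{\overline{\cat{K}}}_g\boxtimes\overline{\overline{\cat{K}}}$ in the statement. No hexagon verification or coherence check is required at this stage, because the corollary asserts only the \emph{existence} of the natural isomorphism intertwining these two functors; that this descended isomorphism satisfies the hexagon equations, and hence constitutes a genuine $G$-crossed braiding in the sense of item \eqref{ZXcrossedbraid} of Definition \ref{ZXgcrossedbraided}, is a separate matter handled afterwards.
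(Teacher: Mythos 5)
Your proposal matches the paper's proof exactly: the paper likewise notes that the crossed braiding is a natural isomorphism between $\otimes_\cat{K}$ and $\otimes_\cat{K}\circ B$, applies the 2-functoriality of change of basis (Proposition \ref{ZXchangebasisispseudofunct}) to descend it, and then substitutes the identification of the image of $B$ from Lemma \ref{ZXimageofB}. Your closing remark is also consonant with the paper, which defers coherence to the sentence after the corollary (``Because this braiding satisfies coherence, so will its image'').
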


Because this braiding satisfies coherence, so will its image. This shows that $\overline{ \overline{\cat{K}}}$ satisfies item \eqref{ZXcrossedbraid} from Definition \ref{ZXgcrossedbraided} (or item \eqref{ZXscrossedbraid} from Definition \ref{ZXsgcrossedbraided}).

This completes the proof of Proposition \ref{ZXbarbariscrossedbraided}.

\subsubsection{The assignment $\overline{ \overline{(-)}}$ is a 2-functor to $\GXBT$ (or $\sGXBT$)}\label{ZXbarbarpsfunctor}
The aim of this subsection is to show that $\overline{ \overline{(-)}}$ is a 2-functor from $\ZAXBT$ to $\GXBT$ (or $\sGXBT$ in the super group case). We first show that it takes functors of $\dcentcat{A}$-crossed braided categories to functors on $G$-crossed braided fusion categories.

\begin{prop}
	Let $F\colon\cat{K}\rar \cat{K}'$ be a 1-morphism in $\ZAXBT$ (Definition \ref{ZXzaxbtdef}). Then $\overline{ \overline{F}}$ is a 1-morphism in $\GXBT$ (or $\sGXBT$ in the super group case).
\end{prop}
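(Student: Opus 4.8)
The plan is to build $\overline{\overline{F}}$ by the same two change-of-basis steps used to define $\overline{\overline{\cat{K}}}$ (Definition \ref{ZXkbarbar}): first apply $\mathbf{Forget}$ to the $\dcentcat{A}_s$-enriched functor $F$ to obtain an $\cat{A}$-enriched functor $\overline{F}$, then change basis along $\Phi$ and extend to Cauchy completions. By the pseudofunctoriality of change of basis (Proposition \ref{ZXchangebasisispseudofunct}), this produces a well-defined (super) linear functor $\overline{\overline{F}}\colon \overline{\overline{\cat{K}}}\rar \overline{\overline{\cat{K}'}}$. It then remains to verify the three conditions defining a $1$-morphism in $\GXBT$ (Definition \ref{ZXgcrossedbicat}): that $\overline{\overline{F}}$ preserves the $G$-grading, commutes with the $G$-action, and is braided monoidal.

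The grading and $G$-action conditions both reduce to a single observation. Since $F$ acts on hom-objects by morphisms in $\dcentcat{A}$, in the model $\dcentcat{A}\cong\Vect_G[G]$ these are maps of $G$-equivariant vector bundles over $G$, hence fibrewise and $G$-equivariant. After $\mathbf{Forget}$ and $\Phi$ they become maps of $G$-graded vector spaces that preserve the grading and commute with the $G$-action. Grading preservation then follows: a minimal idempotent of degree $g$ (Lemma \ref{ZXhomidemp}) is sent by $\Phi\overline{F}$ to an idempotent of degree $g$, so $\overline{\overline{F}}$ maps the summand $\overline{\overline{\cat{K}}}_g$ from Corollary \ref{ZXforgetdecomposes} into $\overline{\overline{\cat{K}'}}_g$. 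Commutation with $(-)^g$ follows immediately from equivariance, $\Phi\overline{F}(g\cdot\psi)=g\cdot\Phi\overline{F}(\psi)$, which is exactly the identity $\overline{\overline{F}}(\psi^g)=(\overline{\overline{F}}\psi)^g$ required by Definition \ref{ZXgcrossedbicat}.

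For the monoidal structure, I would transport the data $(\mu_0,\mu_1)$ of the $\dcentcat{A}$-crossed tensor functor $F$ through the construction. Using Lemma \ref{ZXforgetconvprod}, which identifies $\overline{\cat{K}\cattens{c}\cat{K}}$ with $\overline{\cat{K}}\cattens{\cat{A}}\overline{\cat{K}}$, together with the fact that the monoidal functor $\Phi$ takes the $\cat{A}$-enriched cartesian product to the Deligne product $\boxtimes$ (resp. $\cattens{\svec}$ in the super case), the image of $\mu_1$ becomes a natural isomorphism $\overline{\overline{F}}(-\otimes-)\Rar\overline{\overline{F}}(-)\otimes\overline{\overline{F}}(-)$, and $\mu_0$ supplies unit compatibility; the coherence axioms descend by pseudofunctoriality. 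For the braiding I would invoke Lemma \ref{ZXimageofB}, which identifies the image of $B$ with the $G$-crossed swap $((-)^g\boxtimes\Id)\circ\tn{Switch}$. The braided condition on $F$ (Definition \ref{ZXbraidedfunct}) is an equality of natural transformations assembled from $\mu_1$ and $B$; applying the construction and Lemma \ref{ZXimageofB} turns this equality into precisely the statement that $\overline{\overline{F}}$ intertwines the two $G$-crossed braidings.

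The hard part will be the braided monoidal step, since it requires carefully composing the identifications from Lemma \ref{ZXforgetconvprod}, the behaviour of $\Phi$ on products, and Lemma \ref{ZXimageofB}, and then checking that the resulting equality of natural transformations is exactly the braided-functor condition of Definition \ref{ZXgcrossedbicat}. In the super case one must additionally confirm that the signs introduced by the symmetry in $\svec$ --- the same ones appearing in the rewriting $(\omega^pg\cdot f')\otimes f=(-1)^{pp'}g\cdot f'\otimes f$ in the proof of Lemma \ref{ZXimageofB} --- match on both sides. By contrast, the grading and $G$-action conditions are essentially immediate once the fibrewise, equivariant nature of morphisms in $\dcentcat{A}$ is used.
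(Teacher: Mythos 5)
Your proposal is correct and follows essentially the same route as the paper: the paper likewise handles the grading and $G$-action conditions by observing that $F$ acts on hom-objects by morphisms in $\dcentcat{A}\cong\Vect_G[G]$, hence by fibrewise, $G$-equivariant bundle maps that send degree-$g$ idempotents to degree-$g$ idempotents. The only difference is one of detail, not substance: where you carefully transport $(\mu_0,\mu_1)$ and the braiding through Lemma \ref{ZXforgetconvprod} and Lemma \ref{ZXimageofB}, the paper dispatches the braided monoidal condition in a single line as immediate from Proposition \ref{ZXsymlaxinducessymspseudo} together with $F$ being braided monoidal --- your expanded version is a faithful unpacking of that appeal.
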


\begin{proof}
	The fact that $\overline{ \overline{F}}$ is a (super) linear braided monoidal functor is immediate from Proposition \ref{ZXsymlaxinducessymspseudo} and the fact that $F$ is braided monoidal. We still have to show that $\overline{ \overline{F}}$ respects the direct sum decomposition of $\overline{ \overline{\cat{K}}}$ and the $G$-action. For the former, observe that $F$ acts by morphisms of $\dcentcat{A}$ on the hom-objects. Viewing $\dcentcat{A}$ as $\Vect_{G}[G]$, these morphisms are maps of vector bundles over $G$, so will descend to $G$-grading preserving morphisms, and will in particular send idempotents of degree $g$ to idempotents of the same degree. Similarly, on hom-objects $F$ will act by $G$-equivariant maps, this implies that $\overline{ \overline{F}}$ will be $G$-equivariant.
\end{proof}

We also need that 2-morphisms in $\ZAXBT$ are sent to 2-morphisms in $\GXBT$ (or $\sGXBT$).

\begin{prop}
	Let $\kappa$ be a 2-morphism in $\ZAXBT$ between $F,G\colon\cat{K}\rar \cat{K}'$. Then $\overline{ \overline{\kappa}}$ is a 2-morphism in $\GXBT$ (or $\sGXBT$ in the super group case.)
\end{prop}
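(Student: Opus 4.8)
The statement to prove is that $\overline{\overline{\kappa}}$ satisfies the two requirements on a $2$-morphism in $\GXBT$ (or $\sGXBT$) from Definition \ref{ZXgcrossedbicat}: it must be a monoidal natural transformation between the underlying (super) linear braided monoidal functors $\overline{\overline{F}}$ and $\overline{\overline{G}}$, and it must satisfy the $G$-equivariance condition $(\overline{\overline{\kappa}}_c)^g=\overline{\overline{\kappa}}_{c^g}$ for every $g\in G$ and every object $c$. The plan is to extract the first from the $2$-functoriality of the change-of-basis construction, mirroring the proof for $1$-morphisms, and to derive the second from the fact that the components of $\kappa$ are morphisms in $\dcentcat{A}$ and hence $G$-equivariant in the model $\dcentcat{A}\cong\Vect_{G}[G]$.

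First I would note that $\overline{\overline{\kappa}}$ is a natural transformation between $\overline{\overline{F}}$ and $\overline{\overline{G}}$ at all, being the image of the $\dcentcat{A}_s$-enriched natural transformation $\kappa$ under change of basis along $\Phi\circ\mathbf{Forget}$ followed by the inclusion into the idempotent completion, both of which are $2$-functorial by Proposition \ref{ZXchangebasisispseudofunct}. For monoidality, recall that $\kappa$ is a monoidal natural transformation in the sense of Definition \ref{ZXtensorfuncts} and therefore makes the two coherence diagrams there commute. Applying the braided monoidal change-of-basis $2$-functor (Proposition \ref{ZXsymlaxinducessymspseudo}), and using that it carries $\cattens{c}$ to the Deligne product (Lemma \ref{ZXforgetconvprod} followed by $\Phi$), transports these diagrams to the corresponding ones for $\overline{\overline{\kappa}}$: the first descends to the ordinary tensor-compatibility square of a monoidal natural transformation, and the unit diagram descends to the unit compatibility. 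Hence $\overline{\overline{\kappa}}$ is a monoidal natural transformation.

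The substance of the proof is $G$-equivariance. Since $\kappa$ is $\dcentcat{A}_s$-enriched, each component is a morphism $\mathbb{I}_s\rar\cat{K}'(Fk,Gk)$ in $\dcentcat{A}$; in the model $\Vect_{G}[G]$ this is a map of $G$-equivariant vector bundles over $G$, and every such map is by definition $G$-equivariant. The autofunctor $(-)^g$ on $\overline{\overline{\cat{K}}}$ acts on hom-objects exactly by the $g\cdot$ action on these bundles, as in its construction above, and it sends the homogeneous idempotent $f$ defining $c$ to the idempotent $g\cdot f$ defining $c^g$. Computing $\overline{\overline{\kappa}}_c$ in the idempotent completion by pre- and post-composing the image of $\kappa$ with the idempotents (Lemma \ref{ZXhomidemp}), the equivariance of the underlying morphism identifies $g\cdot(\overline{\overline{\kappa}}_c)$ with the component computed from $g\cdot f$, which is precisely $\overline{\overline{\kappa}}_{c^g}$. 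The condition $\kappa_{ak}=\id_a\,\kappa_k$ from Definition \ref{ZXzalincat}, guaranteeing that $\kappa$ respects the tensoring, ensures this relabelling is compatible with the structure inherited along the change of basis. In the super case the same argument applies verbatim, using $\Pi=(-)^{\omega}$ and that morphisms of equivariant super vector bundles over $G$ are still equivariant.

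The main obstacle I anticipate is the bookkeeping at the level of the idempotent completion: one must verify that the component $\overline{\overline{\kappa}}_c$ at an object $c$ presented by an idempotent is genuinely the restriction of the image of $\kappa$ along that idempotent, and that applying $(-)^g$ commutes with this restriction. Given that $(-)^g$ is induced on hom-objects by an equivariant map and that the idempotents are homogeneous, this commutation is forced; it is the only place where more than formal $2$-functoriality of the construction is used.
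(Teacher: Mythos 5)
Your proof is correct and takes essentially the same route as the paper: the equivariance condition $(\overline{\overline{\kappa}}_k)^g=\overline{\overline{\kappa}}_{k^g}$ is deduced from the components of $\kappa$ being morphisms of $G$-equivariant bundles out of $\mathbb{I}_s=\C\times G$ (the paper phrases this as $\kappa_c$ being constant on conjugacy classes), combined with the observation that $k^g$ is presented by the conjugate-degree idempotent $g\cdot f$ on the same underlying object and that components in the idempotent completion are computed by composing with these idempotents. Your more detailed treatment of monoidality via the change-of-basis $2$-functors simply spells out what the paper dismisses as clear, so the substance of the two arguments coincides.
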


\begin{proof}
	It is clear that $\overline{ \overline{\kappa}}$ will be monoidal. To see that it satisfies $(\overline{ \overline{\kappa}}_k)^g=\overline{ \overline{\kappa}}_{k^g}$ for each $k\in\overline{ \overline{\cat{K}}}$ and $g\in G$, recall that a component $\kappa_c$ of the enriched natural transformation is a morphism
	$$
	\kappa_c\colon \mathbb{I}_s\rar \cat{K}'(F(c),G(c)).
	$$
	In $\Vect_{G}[G]$, we have $\mathbb{I}_s=\C\times G$, so $\kappa_c$ is constant on each conjugacy class of $G$. Now, for $k\in \overline{ \overline{\cat{K}}}$ homogeneous of degree $h$, the object $k^g$ is homogeneous of degree $ghg{-1}$, that is, it comes from an idempotent of conjugate degree on the same object. But as $\overline{ \overline{\kappa}}_k$ is defined by precomposing the image of $\kappa$ with these idempotents under forget and fibre, this means that $\overline{ \overline{\kappa}}$ satisfies the condition $(\overline{ \overline{\kappa}}_k)^g=\overline{ \overline{\kappa}}_{k^g}$.
\end{proof}

\subsubsection{Degreewise tensor product}
We now show that the assignment $\cat{K}\mapsto \overline{ \overline{\cat{K}}}$ takes the product $\cattens{s}$ (Definition \ref{ZXcattenssdef}) to the degreewise tensor product $\cattens{G}$.
\begin{prop}
	The 2-functor $\overline{ \overline{(-)}}$ takes the enriched Cartesian product of $\dcentcat{A}$-crossed braided fusion categories to the degreewise product of $G$-crossed braided fusion categories.	
\end{prop}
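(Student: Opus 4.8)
The plan is to compute both sides on hom-objects and then reconcile the two orders in which Cauchy completion is carried out. Recall that $\overline{ \overline{(-)}}$ is the composite of change of basis along $\Phi\circ\mathbf{Forget}$ (Definition \ref{ZXkbarbar}) with Cauchy completion, so it suffices to understand $\Phi\overline{\cat{K}\cattens{s}\cat{L}}$ on the generating objects $k\boxtimes l$. On hom-objects $\cat{K}\cattens{s}\cat{L}$ is given by the symmetric tensor product, and by \cite{Wasserman2017} this is the fibrewise (super) tensor product in the model $\dcentcat{A}\cong\Vect_G[G]$. Since $\Phi\circ\mathbf{Forget}$ sends an equivariant bundle to the $G$-graded (super) vector space obtained by remembering its fibres, the fibrewise product becomes the degreewise product of graded vector spaces: writing $(-)_g$ for the degree-$g$ homogeneous part,
$$
\Phi\overline{\cat{K}\cattens{s}\cat{L}}(k\boxtimes l,k'\boxtimes l')_g\cong \Phi\overline{\cat{K}}(k,k')_g\otimes\Phi\overline{\cat{L}}(l,l')_g.
$$
Conceptually, $\Phi\circ\mathbf{Forget}$ is strong symmetric monoidal from $(\dcentcat{A},\otimes_s)$ to $G$-graded (super) vector spaces with the degreewise tensor product, sending the unit $\mathbb{I}_s$ to the graded space that is one-dimensional in each degree.

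First I would use this to split the pre-completion product. By Lemma \ref{ZXhomidemp} composition preserves the $G$-degree (morphisms of distinct degrees compose to zero, cf.\ Lemma \ref{ZXkbbhomobjlem}), so the $\Vect$-category $\Phi\overline{\cat{K}\cattens{s}\cat{L}}$ is a finite direct sum over $g\in G$ of its degree-$g$ subcategories, and Cauchy completion commutes with this direct sum. It then remains to identify, for each $g$, the Cauchy completion of the degree-$g$ piece with $\overline{\overline{\cat{K}}}_g\boxtimes\overline{\overline{\cat{L}}}_g$ ($\boxtimes$ replaced by $\cattens{\svec}$ in the super case). The identity of $k\boxtimes l$ decomposes as $\sum_g\id_g$ with each $\id_g$ a homogeneous idempotent cutting out the degree-$g$ summand; under the isomorphism above its endomorphism algebra is $\End_{\Phi\overline{\cat{K}}}(k)_g\otimes\End_{\Phi\overline{\cat{L}}}(l)_g$, which is exactly the endomorphism algebra of $(k,\id_g)\boxtimes(l,\id_g)$ in $\overline{\overline{\cat{K}}}_g\boxtimes\overline{\overline{\cat{L}}}_g$. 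Thus $k\boxtimes l\mapsto (k,\id_g)\boxtimes(l,\id_g)$ induces a fully faithful functor that is an equivalence after Cauchy completion. The main obstacle is precisely this interchange of Cauchy completion with the degreewise decomposition: one must verify that splitting the (a priori non-homogeneous) idempotents of $k\boxtimes l$ yields the same category as first decomposing $\overline{\overline{\cat{K}}}$ and $\overline{\overline{\cat{L}}}$ into degrees and only then forming the Deligne product degreewise. This holds because, by Lemma \ref{ZXhomidemp}, every idempotent of $k\boxtimes l$ is a sum of homogeneous idempotents, each of which splits inside a single $\overline{\overline{\cat{K}}}_g\boxtimes\overline{\overline{\cat{L}}}_g$.

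Assembling the pieces yields $\overline{\overline{\cat{K}\cattens{s}\cat{L}}}\cong\bigoplus_{g\in G}\overline{\overline{\cat{K}}}_g\boxtimes\overline{\overline{\cat{L}}}_g=\overline{\overline{\cat{K}}}\cattens{G}\overline{\overline{\cat{L}}}$, and finally I would check that this equivalence respects the $G$-crossed braided fusion structure. The graded tensor product is degreewise on both sides, which follows from Lemma \ref{ZXforgettensor} together with the comparison functor $Z$ of Proposition \ref{ZXcomparisonfunctors} intertwining $\cattens{c}$ with $\cattens{s}$; the $G$-crossing is the fibrewise conjugation action in either description and hence matches; and the crossed braiding is the switch-and-conjugate map of Lemma \ref{ZXimageofB}, which is again componentwise. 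In the super case one checks that the Koszul signs produced by the fibrewise super tensor product coincide with those carried by the switch maps of $\cattens{\svec}$ defining $\cattens{G}$; this is the same sign bookkeeping already performed in Lemmas \ref{ZXkbbhomobjlem} and \ref{ZXimageofB}, so no new difficulty arises, and compatibility with the swap maps follows from $\zeta$ being compatible with the symmetry as in the proof of Proposition \ref{ZXscrossedbraided}.
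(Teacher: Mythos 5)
Your proposal is correct, and it reaches the same underlying computation as the paper -- that $\Phi\circ\mathbf{Forget}$ turns $\otimes_s$ into the fibrewise, i.e.\ degreewise, product of $G$-graded (super) vector spaces -- but it packages the argument differently. The paper does not decompose $\Phi\overline{\cat{K}\cattens{s}\cat{L}}$ intrinsically; instead it applies fibre functor and idempotent completion to the comparison pair of Lemma \ref{ZXforgetonsymprod} to obtain a section/retraction
$\overline{\overline{H}}\colon\overline{\overline{\cat{K}\cattens{s}\cat{L}}}\leftrightarrow\overline{\overline{\cat{K}}}\boxtimes\overline{\overline{\cat{L}}}\colon\overline{\overline{Z}}$ with $\overline{\overline{Z}}\,\overline{\overline{H}}=\Id$, and then identifies the essential image of $\overline{\overline{H}}$ inside the full Deligne product as $\overline{\overline{\cat{K}}}\cattens{G}\overline{\overline{\cat{L}}}$, using that $\overline{\eta}$ is exactly the inclusion of the degreewise product into the convolution product and that homogeneous idempotents go to their degreewise products. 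You avoid embedding into $\overline{\overline{\cat{K}}}\boxtimes\overline{\overline{\cat{L}}}$ altogether: you split $\id_{k\boxtimes l}=\sum_g\id_g$ into homogeneous idempotents (justified by Lemma \ref{ZXhomidemp}, whose proof shows the homogeneous components of any idempotent are idempotent), decompose the Cauchy completion as a direct sum over $G$, and match each degree-$g$ piece with $\overline{\overline{\cat{K}}}_g\boxtimes\overline{\overline{\cat{L}}}_g$ by a fully faithful, dominant functor on generators. The paper's route buys economy -- the comparison functors are already constructed and 2-functorial, so functoriality of the identification comes for free -- while yours is more self-contained and makes fully-faithfulness, essential surjectivity, and the interchange of completion with the grading explicit; you also verify compatibility with the tensor structure, crossing and braiding, which the paper leaves implicit in this proposition. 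Two small points of care in your write-up: your displayed hom-object formula is literally correct only in the Tannakian case (in the super case the degree-$g$, parity-$p$ part involves the $\omega^p$-shifted fibres, as in Lemma \ref{ZXkbbhomobjlem}), which you do acknowledge but should fold into the formula itself; and the pre-completion category is not itself a direct sum of degree subcategories -- the decomposition only exists after splitting the homogeneous idempotents -- which your later idempotent argument supplies, so the earlier phrasing should be read as a statement about the Cauchy completion.
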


This will be a consequence of:

\begin{lem}
	Let $\cat{K}$ and $\cat{L}$ be $\dcentcat{A}$-crossed fusion categories. Then
	$$
	\overline{\overline{\cat{K}\cattens{s}\cat{L}}}=\overline{\overline{\cat{K}}}\cattens{G} \overline{ \overline{\cat{L}}}.
	$$
\end{lem}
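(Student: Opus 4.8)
The plan is to compare the two Cauchy completions degree by degree, reducing everything to the single fact that $\Phi\circ\mathbf{Forget}$ carries the symmetric tensor product $\otimes_s$ to the \emph{fibrewise} (super) tensor product of $G$-graded (super) vector spaces, which is diagonal in the $G$-grading.

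First I would record the shape of the hom-objects. By Definition \ref{ZXkbarbar}, $\overline{\overline{\cat{K}\cattens{s}\cat{L}}}$ is the Cauchy completion of $\Phi\overline{\cat{K}\cattens{s}\cat{L}}$, whose objects are the symbols $k\boxtimes l$ and whose hom-objects are $\Phi\mathbf{Forget}(\cat{K}(k,k')\otimes_s\cat{L}(l,l'))$. Since $\mathbf{Forget}$ sends $\otimes_s$ to the fibrewise (super) tensor product of equivariant bundles over $G$ (\cite{Wasserman2017}) and $\Phi$ is monoidal, this hom-object is the fibrewise tensor product of $\Phi\overline{\cat{K}}(k,k')$ and $\Phi\overline{\cat{L}}(l,l')$, whose degree-$g$ component is $\Phi\overline{\cat{K}}(k,k')_g\otimes\Phi\overline{\cat{L}}(l,l')_g$, with composition acting diagonally in $g$ (this diagonality is exactly the mechanism used in the proof of Lemma \ref{ZXhomidemp}: composition factors through $\otimes_s$, hence kills cross-degree terms, and the unit $\mathbb{I}_s$ is supported in every degree). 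Writing $(\Phi\overline{\cat{K}})_g$ for the $\Vect$- (resp. $\svec$-)category with the same objects as $\Phi\overline{\cat{K}}$ and hom-objects $\Phi\overline{\cat{K}}(-,-)_g$, this identifies the degree-$g$ part of $\Phi\overline{\cat{K}\cattens{s}\cat{L}}$, \emph{before} Cauchy completion, with the enriched Cartesian product $(\Phi\overline{\cat{K}})_g\cattens{\svec}(\Phi\overline{\cat{L}})_g$ on the nose.

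Next I would assemble the degrees. By Corollary \ref{ZXforgetdecomposes} applied to the $\dcentcat{A}$-crossed category $\cat{K}\cattens{s}\cat{L}$, its linearisation splits as $\bigoplus_{g\in G}\overline{\overline{\cat{K}\cattens{s}\cat{L}}}_g$, and by Lemma \ref{ZXkbbhomobjlem} there are no morphisms between summands of distinct degrees; moreover $\overline{\overline{\cat{K}}}_g$ is by construction the Cauchy completion of $(\Phi\overline{\cat{K}})_g$, and likewise for $\cat{L}$. On the other side, the degreewise product is $\overline{\overline{\cat{K}}}\cattens{G}\overline{\overline{\cat{L}}}=\bigoplus_{g}\overline{\overline{\cat{K}}}_g\boxtimes\overline{\overline{\cat{L}}}_g$, and the Deligne product (resp. $\cattens{\svec}$) is the Cauchy completion of the enriched Cartesian product (Definitions \ref{ZXcattenssdef} and \ref{ZXaproddef}). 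Since Cauchy completion is idempotent, $\overline{\overline{\cat{K}}}_g\boxtimes\overline{\overline{\cat{L}}}_g$ is the Cauchy completion of $(\Phi\overline{\cat{K}})_g\cattens{\svec}(\Phi\overline{\cat{L}})_g$. Comparing with the previous paragraph, the degree-$g$ summands of the two sides are Cauchy completions of the \emph{same} enriched category, hence equal; summing over $g$ yields the claimed identity.

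The main obstacle I expect is the super case: one must check that the fibrewise \emph{super} tensor product produces precisely the Koszul signs of $\cattens{\svec}$ on the degree-$g$ pieces, and that the $\omega$-twist appearing in the odd part of Lemma \ref{ZXkbbhomobjlem} is compatible with the identification $(\Phi\overline{\cat{K}})_g\cattens{\svec}(\Phi\overline{\cat{L}})_g$. A secondary point requiring care is the interplay of Cauchy completion with $\boxtimes$: a minimal idempotent on $k\boxtimes l$ need not split as a tensor product of idempotents, so I would deliberately argue at the level of the \emph{uncompleted} categories (where the hom-objects agree literally) and only afterwards pass to Cauchy completions, rather than attempting to match objects directly. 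This is also why I favour the direct hom-object comparison over routing through the maps $\overline{P},\overline{I}$ of Lemma \ref{ZXforgetonsymprod}.
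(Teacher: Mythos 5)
Your proposal is correct, but it follows a genuinely different route from the paper's. The paper does not compare hom-objects degree by degree: it feeds Lemma \ref{ZXforgetonsymprod} through the fibre functor and Cauchy completion to obtain a retraction pair $\overline{\overline{H}}\colon\overline{\overline{\cat{K}\cattens{s}\cat{L}}}\leftrightarrow\overline{\overline{\cat{K}}}\boxtimes\overline{\overline{\cat{L}}}\colon\overline{\overline{Z}}$ with $\overline{\overline{Z}}\,\overline{\overline{H}}=\Id$ (these are the completed images of the $\overline{P},\overline{I}$ of that lemma, despite the $H,Z$ notation), and then identifies the essential image of $\overline{\overline{H}}$ with $\overline{\overline{\cat{K}}}\cattens{G}\overline{\overline{\cat{L}}}$ by observing that, in the picture of $\overline{\dcentcat{A}}$ as $G$-graded (super) vector spaces, the comparison map includes the (twisted) degreewise product into the convolution product, so homogeneous idempotents go to their degreewise products. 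You replace this identification-of-an-image step by an on-the-nose match of \emph{uncompleted} hom-objects in each degree followed by Cauchy completion; the mechanism (composition factors through $\otimes_s=$ fibrewise tensor, which is diagonal in $G$) is the same one the paper uses, and your decision to complete only at the very end is exactly right, since a minimal idempotent on $k\boxtimes l$ need not split as a product of idempotents --- the paper sidesteps the same issue by only ever transporting functors through the completion. What your route buys is a literal degreewise equality with no essential-image argument; what it costs is that the super bookkeeping, which the paper compresses into one sentence, must be done by hand, and here your first paragraph is literally correct only in the Tannakian case: the degree-$g$ category $(\Phi\overline{\cat{K}})_g$ must have even homs from the fibre over $g$ and \emph{odd} homs from the fibre over $\omega g$, per Lemma \ref{ZXkbbhomobjlem}, not the plain degree-$g$ component. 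The check you defer does succeed: the two computations in the proof of Lemma \ref{ZXkbbhomobjlem} pin down the rule
$$
(V\otimes_f^\omega W)_{h,p}=\bigoplus_{p_1+p_2=p}V_{\omega^{p_2}h,\,p_1}\otimes W_{\omega^{p_1}h,\,p_2},
$$
so the even fibre over $g$ is $V_{g,0}\otimes W_{g,0}\oplus V_{\omega g,1}\otimes W_{\omega g,1}$ and the odd fibre over $\omega g$ is $V_{g,0}\otimes W_{\omega g,1}\oplus V_{\omega g,1}\otimes W_{g,0}$, which is precisely the super hom of $\overline{\overline{\cat{K}}}_g\cattens{\svec}\overline{\overline{\cat{L}}}_g$ under the parity-$p$-lives-over-$\omega^p g$ convention; the Koszul signs match because $\Phi\circ\mathbf{Forget}$ sends the symmetry of $\otimes_s$ to the super swap. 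With that correction to the definition of $(\Phi\overline{\cat{K}})_g$ and this verification inserted, your argument is a complete and valid alternative proof.
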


\begin{proof}
	From Lemma \ref{ZXforgetonsymprod}, we know how to compare the forgetful image of the $\dcentcat{A}_s$-enriched Cartesian product with the $\cat{A}$-enriched Cartesian product. Applying the fibre functor and idempotent completing gives functors:
	$$
	\overline{ \overline{H}}\colon\overline{\overline{\cat{K}\cattens{s}\cat{L}}}\leftrightarrow\overline{\overline{\cat{K}}}\boxtimes \overline{ \overline{\cat{L}}}\colon\overline{ \overline{Z}},
	$$
	with $\overline{ \overline{Z}}\overline{ \overline{H}}=\Id$. We claim that the image of $\overline{ \overline{H}}$ is $\overline{\overline{\cat{K}}}\cattens{G} \overline{ \overline{\cat{L}}}$, the result will then follow. To see this, observe that, when viewing $\overline{\dcentcat{A}}$ as $G$-graded (super) vector spaces over $G$, $\overline{\eta}$ is the morphism that takes the degreewise product and includes it into the convolution product. This means that $\overline{H}$ will descend to $\overline{ \overline{H}}$ as the functor that takes homogeneous idempotents to their degreewise product, which is what we wanted to show.
\end{proof}

This completes the proof of Theorem \ref{ZXzaxbtgxbteqv}.

\subsection{From $G$-crossed braided fusion categories to $\dcentcat{A}$-crossed braided fusion categories}\label{ZXgxtozax}
In this section, we will give a construction that produces $\dcentcat{A}$-crossed braided categories from $G$-crossed braided categories, and then extend this to a symmetric monoidal bifunctor $\mathbf{Fix}$. This uses a variation of the $G$-fixed category construction (see for example \cite{Turaev2010a}). 

\subsubsection{The $G$-fixed category}
\begin{df}\label{ZXfixobjdef}
	Let $\cat{C}$ be a (super) $G$-crossed (or $(G,\omega)$-crossed) braided fusion category. Then the \emph{$G$-fixed category $\cat{C}^G$} is the $\dcentcat{A}_s$-enriched and tensored category with objects pairs $(c,\{u_g\}_{g\in G})$, where $c$ is an object of $\cat{C}$, and the $u_g\colon(c)^g\xrightarrow{\cong} c$ are (even) isomorphisms such that:
	\begin{center}
		\begin{tikzcd}
			(c)^{gh} \arrow[r,"\cong"]\arrow[d,"u_{gh}"] & ((c)^h)^g \arrow[d,"(u_h)^g"]\\
			c & (c)^{g} \arrow[l,"u_g"]
		\end{tikzcd}
	\end{center}
	commutes for all $g,h\in G$. The hom-objects $\cat{C}^G((c,u),(c',u'))\in \dcentcat{\cat{A}}$ are given by 
	$$
	\cat{C}^G((c,u),(c',u'))=(\cat{C}(c,c'),\mathfrak{b}),
	$$
	where we equip $\cat{C}(c,c')$ with the $G$-action:
	$$
	g\cdot\colon \cat{C}(c,c')\xrightarrow{(-)^g}\cat{C}((c)^g,(c')^g)\xrightarrow{(u_{g})^*\circ(u_{g^{-1}})_*} \cat{C}(c,c'),
	$$
	and $\mathfrak{b}$ is the half-braiding defined by, for every $a=(V,\rho)\in\Rep (G)$:
	$$
	\mathfrak{b}\colon\cat{C}(c,c') a = \bigoplus_{g\in G} \cat{C}(c_g,c'_g) V \xrightarrow{\mathbf{Switch}} \bigoplus_{g\in G} V  \cat{C}(c_g,c'_g)\xrightarrow{\oplus \rho(g)\otimes \id }\bigoplus_{g\in G}V \cat{C}(c_g,c'_g),
	$$
	where we have used the direct sum decomposition of $\cat{C}$ and used subscript $g$ to denote the homogeneous components, and the switch map is the switch map of (super) vector spaces. Examining the definition (see for example \cite[Definition \cite{STequivvbbraiding}]{Wasserman2017}) of the half-braiding in $\Vect_{G}[G]$, we see that this half-braiding corresponds to taking $\cat{C}^G((c,u),(c',u'))$ to be the equivariant vector bundle with fibre over $g$ given by:
	$$
	\cat{C}^G((c,u),(c',u'))_g=\cat{C}(c_g,c'_g)_0\oplus\cat{C}(c_{\omega g},c'_{\omega g})_1,
	$$
	where the subscripts $0$ and $1$ denote taking the even and odd summands respectively.	Composition is given by the composition of $\cat{C}$.
\end{df}

\begin{rmk}
	The reader might observe that this is a variation of the homotopy fixed point construction for the $G$-action.
\end{rmk}

\begin{lem}
	The $G$-fixed category is indeed a $\dcentcat{A}_s$-enriched and tensored category.
\end{lem}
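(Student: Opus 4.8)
The plan is to work throughout in the model $\dcentcat{A}\cong\Vect_G[G]$ of $G$-equivariant vector bundles over $G$, in which an object is a $G$-graded (super) vector space $V=\bigoplus_{g\in G}V_g$ equipped with a $G$-action satisfying $g\cdot V_h\subseteq V_{ghg^{-1}}$, and a morphism is a grading-preserving $G$-equivariant map. Under this identification the symmetric product $\otimes_s$ is the fibrewise (super) tensor product of \cite{Wasserman2017}, so to establish the Lemma I would check three things: (i) each prescribed hom-object is genuinely such an equivariant bundle, i.e.\ that $\mathfrak{b}$ is a half-braiding; (ii) the composition inherited from $\cat{C}$ together with the identities assemble into morphisms of $\dcentcat{A}$ satisfying the enriched associativity and unit axioms; and (iii) the category carries a compatible $\dcentcat{A}_s$-tensoring.

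For (i), the $G$-grading on the underlying space $\cat{C}(c,c')$ of $\cat{C}^G((c,u),(c',u'))$ comes from the decomposition $\cat{C}=\bigoplus_g\cat{C}_g$: since there are no morphisms between distinct graded pieces, $\cat{C}(c,c')=\bigoplus_g\cat{C}(c_g,c'_g)$, and in the super case the odd summand of $\cat{C}(c_g,c'_g)$ is placed in fibre $\omega g$, exactly reproducing the fibre formula of Definition \ref{ZXfixobjdef}. The $G$-action is the twisted one built from the crossing $(-)^g$ and conjugation by the structure isomorphisms $u_g$. Here the two substantive checks are that this is a genuine group action --- which is precisely where the coherence square in the definition of an object (the cocycle condition on the $u_g$) is used --- and that it conjugates the grading, which follows from the crossing property $(-)^g\colon\cat{C}_h\rar\cat{C}_{ghg^{-1}}$ together with, in the super case, the fact that $\omega$ acts as the grading involution $\Pi$ (item \eqref{ZXsgaction} of Definition \ref{ZXsgcrossedbraided}). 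This is equivalent to the assertion that $\mathfrak{b}$ satisfies the hexagons, so I would verify it in whichever of the two equivalent descriptions is more convenient.

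For (ii), composition in $\cat{C}$ restricts to maps $\cat{C}(c'_g,c''_g)\otimes\cat{C}(c_g,c'_g)\rar\cat{C}(c_g,c''_g)$, and these assemble into a map out of the fibrewise product $\otimes_s$ because composition is compatible with the object grading; in the non-super case this is literally diagonal in the fibre index $g$, while in the super case the matching with $\otimes_s$ involves the $\omega$-shift and the Koszul signs. That it descends to a morphism of $\dcentcat{A}$ then reduces to $G$-equivariance, which holds because $(-)^g$ is a functor (hence commutes with composition) and the twisting isomorphisms $u_g$ on the middle object cancel. The identity of $(c,u)$ is $\id_c=\bigoplus_g\id_{c_g}$; since conjugation by an isomorphism fixes identity morphisms and $(-)^g$ preserves them, $\id_c$ is $G$-invariant and thus defines an equivariant section, i.e.\ a morphism $\mathbb{I}_s=\C\times G\rar\cat{C}^G((c,u),(c,u))$. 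Associativity and unitality of the enriched composition are then inherited verbatim from $\cat{C}$, using that the coherence data for $\otimes_s$ and for the fibrewise product agree.

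For (iii), I would build the $\dcentcat{A}_s$-tensoring from the underlying $\svec$-tensoring of $\cat{C}$: given $z\in\dcentcat{A}_s$ and $(c,u)$, form $z\cdot(c,u)$ by tensoring $c$ fibrewise over $z$ with the $\svec$-action of $\cat{C}$ and twisting the family $\{u_g\}$ by the equivariant structure of $z$, and then verify the defining tensoring isomorphism $\cat{C}^G(z\cdot x,y)\cong[z,\cat{C}^G(x,y)]$; these axioms reduce to those of the $\svec$-tensoring of $\cat{C}$ combined with the naturality of $\mathfrak{b}$. The main obstacle throughout is the super-Tannakian bookkeeping: one must confirm that the even/odd splitting with the $\omega$-shift in the fibre formula is consistent with the Koszul signs of the fibrewise super tensor product under composition, and that $\omega$ acting as $\Pi$ does not spoil the group-action axiom. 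In the Tannakian case these subtleties disappear and every step is a routine consequence of the functoriality of $(-)^g$ and the enriched structure of $\cat{C}$.
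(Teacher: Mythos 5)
Your proposal is correct and takes essentially the same route as the paper's proof: work in the model $\Vect_{G}[G]$ with the fibrewise (super) tensor product, get $G$-equivariance of composition from the cancellation of the structure isomorphisms $u_g$ on the middle object, and check that composition factors over $\otimes_f$ via the degreewise/$\omega$-shift case analysis in the super case, with compatibility with the half-braiding following from the same observation. You are in fact somewhat more thorough than the paper, whose proof verifies only the enrichment and leaves the identity morphisms, the cocycle check for the twisted $G$-action, and the $\dcentcat{A}_s$-tensoring (your item (iii)) implicit.
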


\begin{proof}
	Using the main results from \cite{Wasserman2017}, we can view $\dcentcat{A}_s$ as the category $\Vect_{G}[G]$ of $G$-equivariant vector bundles over $G$, equipped with the (super) fibrewise tensor product, that we will denote by $\otimes_f$ in both cases.
	
	We need to show that the composition of $\cat{C}$ defines a morphism:
	$$
	\cat{C}^G((c',u'),(c'',u''))\otimes_f	\cat{C}^G((c,u),(c',u'))\rar \cat{C}^G((c,u),(c'',u'')),
	$$
	that is $G$-equivariant, factors over the (super) fibrewise tensor product, and is compatible with the specified braiding. For the $G$-equivariance, we simply observe that $u_g'\circ u_{g^{-1}}'=\id$. To see the composition factors over the fibrewise (super) tensor product, observe that the direct sum decomposition of $\cat{C}$ implies that any two morphisms $f\colon c_g\rar c_g'$ and $f'\colon c'_h\rar c_h''$ will compose to $0$ unless $g=h$. For the even part of the hom-objects, this immediately implies that the composition factors through the fibrewise tensor product. To examine what happens for the odd parts of the hom-objects, we will start by assuming that one of the morphisms is odd, say the one between $c'$ and $c''$. In this case the fibrewise super tensor product computes as (see \cite[Definition \ref{STfibrewisesupertensordef}]{Wasserman2017}), using the notation from the proof of Lemma \ref{ZXkbbhomobjlem}:
	$$
	\cat{C}(c'_{\omega g},c''_{\omega g})^g_1\otimes_f \cat{C}(c_{g'},c'_{g'})^{g'}_{0}=\begin{cases}
	\left(\cat{C}(c'_{\omega g},c''_{\omega g})_1\cat{C}(c_{\omega g},c'_{\omega g})_{0}\right)^{g}& \tn{ for }g'=\omega g\\ 0 &\tn{ otherwise.}
	\end{cases} 
	$$
	We see that this corresponds again to morphisms of different degrees composing to zero. The case where the other morphism is odd is similar. If both are odd, we compute:
	$$
	\cat{C}(c'_{\omega g},c''_{\omega g})^g_1\otimes_f \cat{C}(c_{\omega g'},c'_{\omega g'})^{ g'}_{1}=\begin{cases}
	\left(\cat{C}(c'_{\omega g},c''_{\omega g})_1\cat{C}(c_{\omega g},c'_{\omega g})_{1}\right)^{\omega g}& \tn{ for }g'= g\\ 0 &\tn{ otherwise,}
	\end{cases} 
	$$
	from which we again see that the composition factors over the fibrewise super tensor product.
	The specified braiding is exactly the one $\Vect_{G}[G]$, so this same observation implies that the composition morphism commutes with the braiding.
\end{proof}

This construction takes $G$-crossed braided fusion categories to $\dcentcat{A}$-crossed braided fusion categories.
\begin{prop}
	If $\cat{A}=\Rep(G)$ (or $\cat{A}=\Rep(G,\omega)$), then $\cat{C}^G$ is a $\dcentcat{A}$-crossed braided fusion category, with $\dcentcat{A}$-crossed tensor structure given by:
	\begin{align*}
	\otimes\colon 	&	\cat{C}^G\cattens{c}\cat{C}^G \rar \cat{C}^G\\
	&	(c,u) \boxtimes (c',u') \mapsto (cc',u\otimes u'),
	\end{align*}
	and on morphisms by the monoidal structure in $\cat{C}$. The $\dcentcat{A}$-crossed braiding is the natural transformation with the same components as the crossed braiding on $\cat{C}$.
\end{prop}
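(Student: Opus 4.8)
The plan is to check, one axiom at a time, that the specified data endow $\cat{C}^G$ with the structure of a $\dcentcat{A}$-crossed braided fusion category (Definitions \ref{ZXzacrossedtensordef}, \ref{ZXcrossedbraidingdef} and \ref{ZXZAfusiondef}), transporting everything from $\cat{C}$ through the identification $\dcentcat{A}\cong\Vect_G[G]$ under which $\cat{C}^G$ was defined. Since $\cat{C}^G$ has already been shown to be $\dcentcat{A}_s$-enriched and tensored, the work is threefold: (i) that $\otimes$ is a well-defined $\dcentcat{A}_s$-enriched functor out of the convolution product $\cattens{c}$, with coherent associators, unitors and unit functor; (ii) that the inherited crossed braiding satisfies Definition \ref{ZXcrossedbraidingdef}; and (iii) that the result is rigid, FD, and has fully faithful unit.

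For (i), on objects I would verify that $(cc',u\otimes u')$ is a legitimate object of $\cat{C}^G$, i.e.\ that the family $u\otimes u'$ satisfies the cocycle square of Definition \ref{ZXfixobjdef}; this follows from the monoidality of each crossing $(-)^g$ together with the cocycle conditions for $u$ and $u'$. On hom-objects, the point is that the tensor product of $\cat{C}$ induces a morphism in $\dcentcat{A}$ out of $\cat{C}^G((c,u),(d,v))\otimes_c\cat{C}^G((c',u'),(d',v'))$. The two things to check are that degrees multiply correctly --- which is exactly the $G$-graded structure $\otimes\colon\cat{C}_g\boxtimes\cat{C}_h\to\cat{C}_{gh}$ matching the grading on the convolution product of $\Vect_G[G]$ --- and that the map is $G$-equivariant, $g\cdot(f\otimes f')=(g\cdot f)\otimes(g\cdot f')$, which again follows from monoidality of the crossing and the way $u\otimes u'$ twists the action. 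Compatibility with the half-braidings is then automatic, since in $\Vect_G[G]$ the half-braiding is determined by the $G$-action. The associator, unitors and unit object $(\mathbb{1}_\cat{C},u^{\mathbb{1}})$ are the data of $\cat{C}$ read in $\cat{C}^G$; since on underlying morphisms they are literally those of $\cat{C}$, the pentagon and triangle hold, and one only checks that each constraint is a map of equivariant bundles, which is immediate from naturality of the crossing.

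The heart of the argument is (ii). By Definition \ref{ZXcrossedbraidingdef} I must produce a natural isomorphism between $\otimes$ and $\otimes\circ B$, where $B$ is the $\otimes_c$-braiding functor of Definition \ref{ZXbraidingfunctor}. The key is the computation of Lemma \ref{ZXimageofB} read in reverse: under the identification of hom-objects with equivariant bundles over $G$, the functor $B$ takes a morphism of degree $g$ in the first slot to that morphism acted on by $g\cdot(-)$ in the swapped slot, so that $B$ realises exactly the crossing-twisted swap $((-)^g\boxtimes\Id)\circ S$ appearing in Definition \ref{ZXgcrossedbraided}\eqref{ZXcrossedbraid} (resp.\ \ref{ZXsgcrossedbraided}\eqref{ZXscrossedbraid}). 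Granting this, the $G$-crossed braiding of $\cat{C}$ has components that are precisely morphisms $\otimes((c,u)\boxtimes(c',u'))\to\otimes(B((c,u)\boxtimes(c',u')))$ in $\cat{C}^G$; naturality and both hexagons then transport verbatim from $\cat{C}$, once the components are checked to be maps of equivariant bundles, i.e.\ $G$-equivariant.

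Finally, for (iii): rigidity is inherited by equipping the $\cat{C}$-dual $c^*$ with the equivariant structure induced from $\{u_g\}$ via compatibility of the crossing with duals; the FD property holds because $\cat{C}$ is fusion with finitely many simples and $G$ is finite, so $\cat{C}^G$ is semisimple with finitely many simples; and full faithfulness of the unit $\mathbb{I}\colon\cat{A}_\cat{Z}\to\cat{C}^G$ follows from simplicity of $\mathbb{1}_\cat{C}$ (as in the remark after Definition \ref{ZXZAfusiondef}) together with the identification of $\End(\mathbb{1}_{\cat{C}^G})$ with $\cat{A}=\Rep(G)$. I expect the main obstacle to be the identification of $B$ with the crossing-twisted swap in step (ii) --- more precisely the careful bookkeeping of the $G$-action twists, and in the super case the $\omega$-shifts and signs coming from Lemma \ref{ZXkbbhomobjlem} and the fibre formula of Definition \ref{ZXfixobjdef}, needed to see that the abstract $\otimes_c$-braiding agrees on the nose with the crossing-twisted swap of $\cat{C}$.
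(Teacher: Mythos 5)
Your proposal is correct and follows essentially the same route as the paper's proof: the two substantive points in the paper are exactly your two key steps, namely that $\otimes$ factors through $\cattens{c}$ because the graded decomposition $(c_1c_2)_g=\bigoplus_{g_1g_2=g}c_{1,g_1}c_{2,g_2}$ matches the convolution grading on hom-objects, and that the convolution braiding $B$ --- whose half-braiding on $\cat{C}^G((c,u),(c',u'))$ restricts to the $G$-action on the summands --- realises the crossing-twisted swap, so the $G$-crossed braiding of $\cat{C}$ supplies the $\dcentcat{A}$-crossed braiding. The only difference is thoroughness: the paper records just these two observations, while you additionally spell out the cocycle condition for $u\otimes u'$, $G$-equivariance of the tensor map, rigidity, the FD property and full faithfulness of the unit, all of which the paper leaves implicit.
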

\begin{proof}
	The first step is to show the monoidal structure on morphisms really factors over the convolution tensor product. We observe that, as the monoidal structure on $\cat{C}$ is graded, we have:
	$$ 
	(c_1c_2)_g=\bigoplus_{g_1g_2=g} c_{1,g_1}c_{2,g_2}.
	$$
	This gives a decomposition of the hom-object
	$$
	\cat{C}(c_1c_2,c_1c_1')=\bigoplus_{g\in G}\bigoplus_{g_1g_2=g}\cat{C}(c_{1,g_1}c_{2,g_2},c_{1,g_1}'c_{2,g_2}').
	$$
	From this, we see that the monoidal structure in $\cat{C}$ will indeed factor over the convolution product. To see that the crossed braiding induces a $\dcentcat{A}$-crossed braiding, we observe that the half-braiding on $\cat{C}^G(c,c')$ restricts to the $G$-action on the summands.
\end{proof}

\subsubsection{The 2-functor $\mathbf{Fix}$}
We now want to extend the $G$-fixed category construction to functors and natural transformations of (super) $G$-crossed braided categories.

\begin{defprp}\label{ZXfix1morphs}
	Let $F\colon \cat{C}\rar \cat{C}'$ be a 1-morphism in $\GXBT$ (or $\sGXBT$). Then we define the \emph{associated $G$-fixed functor $\mathbf{Fix}(F)$} as 
	$$
	F\colon  (c,u)\mapsto (F(c),F(u))
	$$ 
	on objects and by 
	$$
	F_{c,c}\colon \cat{C}(c,c')\rar \cat{C}'(Fc,Fc')
	$$
	on hom-objects. This is a 1-morphism in $\ZAXBT$.
\end{defprp}
\begin{proof}
	We need to show that this prescription indeed defines a $\dcentcat{A}_s$-enriched functor that is braided monoidal. On objects, there is nothing to show. On hom-objects, we need to show that $F$ acts by morphisms in $\dcentcat{A}$, so is compatible with the prescribed half-braiding, this follows from the $G$-equivariance of $F$. The fact that $\mathbf{Fix}(F)$ is braided monoidal is immediate from the definition of the $\dcentcat{A}$-crossed braided monoidal structures on $\cat{C}^G$ and $\cat{C}^{\prime,G}$.
\end{proof}

To extend $\mathbf{Fix}$ to 2-morphisms, we define:
\begin{df}\label{ZXfix2morphs}
	Let $\kappa$ be a 2-morphism in $\GXBT$ (or $\sGXBT$) between $F,F'\colon \cat{C}\rar \cat{C}'$. Then $\mathbf{Fix}(\kappa)$ is the $\dcentcat{A}$-enriched natural transformation with components:
	$$
	\mathbf{Fix}(\kappa)_{(c,u)}\colon  \mathbb{I}_s\rar \cat{C}^{\prime}(F(c),F'(c)),
	$$
	given fibrewise by $\kappa_{c_g}\colon \C\times\{g\} \rar \cat{C}^{\prime}(F(c_g),F'(c_g))$. 
\end{df}

\subsection{Equivalence between $\ZAXBT$ and $\GXBT$ (or $\sGXBT$)}\label{ZXequivgxzx}
We will now show that the 2-functors $\overline{ \overline{(-)}}$ and $\mathbf{Fix}$ are mutually inverse, this will complete the proof of Theorem \ref{ZXzaxbtgxbteqv}:

\begin{prop}\label{ZXfixbarbarmutinv}
	The 2-functors $\overline{ \overline{(-)}}$ and $\mathbf{Fix}$ are mutually inverse.
\end{prop}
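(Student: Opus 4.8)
The plan is to exhibit, for each object on either side, an explicit equivalence and then to check that these assemble into (symmetric monoidal) pseudonatural equivalences $\mathbf{Fix}\circ\overline{\overline{(-)}}\Rightarrow \Id_{\ZAXBT}$ and $\overline{\overline{(-)}}\circ\mathbf{Fix}\Rightarrow\Id_{\GXBT}$ (and likewise in the super case). Both equivalences rest on the same principle: applying $\mathbf{Forget}$ and then $\Phi$ forgets the half-braiding (equivalently, the equivariance) of the $\dcentcat{A}$-valued hom-objects but retains, as recoverable structure on $\overline{\overline{\cat{K}}}$, both the $G$-grading (Corollary \ref{ZXforgetdecomposes}) and the $G$-action, while the $G$-fixed construction reassembles precisely this data back into an equivariant vector bundle over $G$, i.e. an object of $\dcentcat{A}=\Vect_G[G]$.

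For the first composite, I would define a functor $\cat{K}\rar \mathbf{Fix}(\overline{\overline{\cat{K}}})$ sending an object $k$ to the pair $(k,\{\id\})$; this is legitimate because the $G$-action on $\overline{\overline{\cat{K}}}$ fixes the objects coming from $\Phi\overline{\cat{K}}$ (it acts as $g\cdot$ on idempotents, and $g\cdot\id=\id$), so each such $k$ carries the canonical equivariant structure. On hom-objects one checks that $\mathbf{Fix}(\overline{\overline{\cat{K}}})((k,\id),(k',\id))$, whose fibre over $g$ is the degree-$g$ part of $\Phi\mathbf{Forget}\,\cat{K}(k,k')$ (with the odd correction by $\omega$ in the super case), reassembles to the original bundle $\cat{K}(k,k')$; this identification is an isomorphism by construction, so the functor is fully faithful. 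For essential surjectivity I would argue that an arbitrary object $(c,u)$, with $c=(k,f)$ an idempotent-completion datum and $u$ its equivariant structure, is exactly the data of a $G$-invariant idempotent on $k$ in $\cat{K}$ itself: the cocycle condition on $\{u_g\}$ is precisely what promotes $f$ to a morphism $\mathbb{I}_s\rar \cat{K}(k,k)$ intertwining the half-braiding. Since $\cat{K}$ is idempotent complete, this splits, producing an object $k'\in\cat{K}$ with $(k',\id)\cong (c,u)$.

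For the second composite, I would produce a functor $\overline{\overline{\cat{C}^G}}\rar \cat{C}$. On hom-objects the computation reverses the above: summing the fibres of $\cat{C}^G((c,u),(c',u'))$ and recombining the even and odd summands (using that $\omega$ has order two) yields $\bigoplus_{g}\cat{C}(c_g,c'_g)=\cat{C}(c,c')$, so the functor is fully faithful. For essential surjectivity I would note that every simple $S\in\cat{C}_h$ occurs as a summand of the underlying object of an induced fixed object, for instance $\bigoplus_{g\in G}S^g$ with its permutation equivariant structure; after $\mathbf{Forget}$, $\Phi$ and idempotent completion the corresponding minimal idempotent of the endomorphism algebra splits off $S$, so the functor hits every simple and is an equivalence. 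In both directions the matching of the $G$-grading, the $G$-action, the (graded) tensor products and the crossed braidings is built into the definitions of $\mathbf{Fix}$ and $\overline{\overline{(-)}}$, so these equivalences are $\dcentcat{A}$-crossed braided (respectively $G$-crossed braided) functors.

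Finally I would verify pseudonaturality and monoidality. Pseudonaturality in $1$- and $2$-morphisms follows because both $\mathbf{Fix}$ and $\overline{\overline{(-)}}$ act on functors and transformations by post-composition with $\mathbf{Forget}$, $\Phi$, idempotent completion and the fixed-point construction, all of which are (strictly) functorial, so the naturality squares commute on the nose. For the symmetric monoidal statement I would invoke that $\overline{\overline{(-)}}$ was already shown to carry $\cattens{s}$ to $\cattens{G}$ together with the swap maps, so once it is known to be an equivalence its inverse $\mathbf{Fix}$ inherits a canonical symmetric monoidal structure and the two become mutually inverse symmetric monoidal $2$-functors. The main obstacle I anticipate is the essential-surjectivity step in the first composite, namely the precise dictionary between a $G$-equivariant idempotent in $\cat{K}$ and a pair $(f,\{u_g\})$ satisfying the cocycle condition; getting this correspondence exactly right, including its interaction with the $\omega$-twist in the super case, is where the real content lies, whereas the hom-object identifications and the monoidal bookkeeping are essentially forced.
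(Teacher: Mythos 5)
Your outline matches the paper everywhere except at the one step you yourself flag as the crux, and there your proposed dictionary is genuinely wrong. Given $(c,u)$ with $c=(\bar{k},f)$, the cocycle data $\{u_g\}$ does \emph{not} promote $f$ to a morphism $\mathbb{I}_s\rar\cat{K}(\bar{k},\bar{k})$: such a morphism is a section of the trivial bundle into the hom-bundle, i.e.\ it requires the homogeneous components of $f$ to satisfy $h\cdot f_g=f_{hgh^{-1}}$ for the \emph{native} $G$-action on $\Phi\overline{\cat{K}}(\bar{k},\bar{k})$, whereas the pair $(f,\{u_g\})$ only gives invariance for the $u$-twisted action, $u_g\circ f^g\circ u_g^{-1}=f$. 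These are different conditions, and no averaging fixes it. Concretely, take $\cat{K}=\cat{A}_\cat{Z}$ with $\cat{A}=\Rep(G)$, let $\bar{k}$ be the trivial representation $\C^n$, $f=\id$, and $u_g=\rho(g)$ for a nontrivial irreducible $\rho$. Under the equivalence of Lemma \ref{ZXHexists} the object $(f,u)$ corresponds to the representation $(\C^n,\rho)$, but every invariant idempotent on the trivial representation splits to a trivial representation in $\cat{K}$, so your recipe of splitting an idempotent on $\bar{k}$ itself can never produce $(f,u)$ up to isomorphism. Essential surjectivity fails as you have argued it.

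The paper's proof of Lemma \ref{ZXHexists} repairs exactly this by inducing before splitting: it constructs a morphism from $(f,u)$ into $(\id_k,\{\id_k\}_{g\in G})$ with $k=\C[G]^*\cdot k'$, defined fibrewise by $g\mapsto f^g u_g^{-1}$, which is equivariant precisely for the \emph{left-multiplication} action on $\C[G]\otimes_s\mathbb{I}_s$ rather than the conjugation action on $\mathbb{I}_s$ --- this is where the twist by $u$ gets absorbed. Monicity follows from the non-vanishing trick (Lemma \ref{ZXnonzerotrick}, restricting to the fibre over $e$ recovers $f\neq 0$), and then one concludes because the essential image of a fully faithful functor on an idempotent complete category is idempotent complete (Lemma \ref{ZXessimidemcomp}), so the resulting subobject of $(\id_k,\{\id_k\})$ lies in the image. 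Note that this is exactly the induction device you correctly deploy for the \emph{other} composite, where you realise each simple $S$ as a summand of $\bigoplus_{g\in G}S^g$ with its permutation equivariant structure (this matches the paper's dominance argument for $\Phi\overline{\mathbf{Fix}(\cat{C})}\rar\cat{C}$); applying the same idea symmetrically in the first composite is the missing ingredient. The remaining parts of your proposal --- full faithfulness via the fibrewise reassembly with the $\omega$-correction, strict commutation of the naturality squares, and deducing the symmetric monoidal statement from $\overline{\overline{(-)}}$ taking $\cattens{s}$ to $\cattens{G}$ --- agree with the paper's Lemmas and need no change.
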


As a first step, we will show that $\mathbf{Fix}(\overline{ \overline{\cat{K}}})$ is equivalent to $\overline{ \overline{\cat{K}}}$. To do this, we will need the following two technical lemmas:

\begin{lem}\label{ZXessimidemcomp}
	Let $F\colon \cat{C}\rar \cat{D}$ be a fully faithful functor on an idempotent complete category $\cat{C}$. Then the essential image of $F$ is idempotent complete.
\end{lem}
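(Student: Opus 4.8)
The plan is to reduce idempotent completeness of the essential image to that of $\cat{C}$ by transporting idempotents back and forth through $F$. Let $d$ be an object of the essential image, so there is an isomorphism $\phi\colon d\xrightarrow{\cong} F(c)$ for some $c\in\cat{C}$, and let $f\in\End_{\cat{D}}(d)$ be idempotent (since the essential image is full, such endomorphisms are just endomorphisms in $\cat{D}$). First I would conjugate to obtain the idempotent $\phi\circ f\circ\phi^{-1}\in\End_{\cat{D}}(F(c))$. As $F$ is full and faithful, this endomorphism equals $F(g)$ for a unique $g\in\End_{\cat{C}}(c)$, and faithfulness upgrades $F(g)^2=F(g)$ to $g^2=g$, so $g$ is idempotent in $\cat{C}$.

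Next I would split $g$ using idempotent completeness of $\cat{C}$: there is an object $c_g$ with a monic $i_g\colon c_g\inc c$ and an epi $p_g\colon c\twoheadrightarrow c_g$ satisfying $i_g\circ p_g=g$. The one point that needs care is that the definition of idempotent complete used here records only $i_g\circ p_g=g$; however, a short cancellation argument forces the other splitting identity. Indeed, from $i_g\circ(p_g\circ i_g)\circ p_g=g\circ g=g=i_g\circ p_g$, left-cancelling the monic $i_g$ and right-cancelling the epi $p_g$ yields $p_g\circ i_g=\id_{c_g}$, so that $(i_g,p_g)$ is a genuine two-sided splitting of $g$.

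The crucial observation is then that a two-sided splitting is \emph{absolute}, i.e. preserved by every functor. Applying $F$ gives $F(i_g)\circ F(p_g)=F(g)$ and $F(p_g)\circ F(i_g)=\id_{F(c_g)}$, so $F(i_g)$ is a split monic and $F(p_g)$ a split epi, exhibiting $F(c_g)$ as a splitting of $F(g)$. Conjugating back, the morphisms $\phi^{-1}\circ F(i_g)\colon F(c_g)\inc d$ and $F(p_g)\circ\phi\colon d\twoheadrightarrow F(c_g)$ satisfy $(\phi^{-1}\circ F(i_g))\circ(F(p_g)\circ\phi)=\phi^{-1}\circ F(g)\circ\phi=f$ and compose to $\id_{F(c_g)}$ in the other order, so they split $f$. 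Since $F(c_g)$ lies in the essential image and the essential image is full, all of this data lives in the essential image, which is therefore idempotent complete.

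The main obstacle — really the only subtlety — is that a fully faithful functor need not preserve arbitrary monics and epis, so one cannot naively transport the splitting maps \emph{as} a monic and an epi. The fix is to pass to the two-sided (absolute) splitting, which any functor preserves; this is exactly why I would first extract $p_g\circ i_g=\id_{c_g}$ from the paper's definition of idempotent completeness before applying $F$.
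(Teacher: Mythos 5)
Your proof is correct and takes essentially the same route as the paper's: pull the idempotent back through the fully faithful $F$ to a unique idempotent $g$ in $\cat{C}$, split it there, and push the splitting forward. You are in fact more careful than the paper on two points it glosses over --- conjugating along the isomorphism $d\cong F(c)$ to handle objects only isomorphic to something in the image, and extracting the two-sided identity $p_g\circ i_g=\id_{c_g}$ so the splitting is absolute and hence preserved by $F$ without any assumption that $F$ preserves monics or epis.
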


\begin{proof}
	Suppose that $f \in \End_{\cat{D}}(F(c))$ is an idempotent. By full faithfulness of $F$, this $f$ is the image of a unique $g\in \End_\cat{C}(c)$, which is necessarily idempotent. By idempotent completeness of $\cat{C}$, there exists an object $b\hookrightarrow c$ corresponding to $g$, which is mapped to a subobject $F(b)\hookrightarrow F(c)$ corresponding to $f$ under the functor $F$.
\end{proof}

\begin{lem}\label{ZXnonzerotrick}
	Suppose that for each object $c$ in an abelian category $\cat{C}$ we have a natural assignment $c \mapsto (i(c)\colon c \rar B(c))$, and that for every non-zero $c$ the map $i(c)$ is non-zero. Then $i(c)$ is monic for all $c$.
\end{lem}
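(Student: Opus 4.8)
The plan is to prove that $i(c)$ is monic by showing that its kernel vanishes. Write $k\colon K\rar c$ for the kernel of $i(c)$, so that $i(c)\circ k=0$, and note that $i(c)$ is monic precisely when $K\cong 0$. Applying the standing hypothesis to the object $K$ itself, we have $i(K)\neq 0$ whenever $K\neq 0$; by contraposition it therefore suffices to prove that $i(K)=0$, for then $K\cong 0$ follows formally and $i(c)$ is monic. Since $c$ is arbitrary, this would give the lemma.

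To get at $i(K)$ I would feed the morphism $k$ into the naturality of the assignment $c\mapsto i(c)$. Naturality with respect to $k\colon K\rar c$ produces a commuting square whose two composites are $i(c)\circ k$ and $B(k)\circ i(K)$, whence
$$
B(k)\circ i(K)=i(c)\circ k=0.
$$
This is the only place where functoriality of $B$ (equivalently, naturality of the assignment) is used, and it reduces the whole statement to the implication $B(k)\circ i(K)=0\Rightarrow i(K)=0$.

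I expect this last implication to be the main obstacle, because it is false for a completely arbitrary functor $B$: in general $B(k)$ need not be a monomorphism, so it could annihilate a nonzero $i(K)$. One therefore needs some feature of the ambient situation to rule this out, and I would invoke whichever matches the hypotheses at hand. If $B$ preserves monomorphisms --- automatic, for instance, when $B$ is exact, and in particular for any additive functor between semisimple categories --- then $B(k)$ is monic because $k$ is, so $B(k)\circ i(K)=0$ forces $i(K)=0$ at once. Alternatively, in the semisimple setting in which the lemma is actually applied the monomorphism $k$ splits: choosing a retraction $r\colon c\rar K$ with $r\circ k=\id_K$ and applying naturality to $r$ gives
$$
i(K)=i(K)\circ r\circ k=B(r)\circ i(c)\circ k=B(r)\circ 0=0 ,
$$
which is exactly what is required.

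Having established $i(K)=0$, the contrapositive of the hypothesis forces $K\cong 0$, so $i(c)$ is monic; as $c$ was arbitrary, $i(c)$ is monic for every object $c$, completing the argument. The only genuinely delicate point is the middle step, and I would flag explicitly which of the two justifications (exactness of $B$ or splitting of $k$) is being used, since the bare statement over an arbitrary abelian category is exactly where the care is needed.
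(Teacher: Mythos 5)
Your overall route is the same as the paper's: form the kernel $\mathfrak{k}\colon K\rar c$ of $i(c)$, apply naturality to $\mathfrak{k}$ to get $B(\mathfrak{k})\circ i(K)=i(c)\circ\mathfrak{k}=0$, deduce $i(K)=0$, and invoke the hypothesis to force $K\cong 0$. Where you diverge is exactly at the step you flag: the paper disposes of it by asserting that ``by naturality, $B(\mathfrak{k})$ is the kernel of $B(i(c))$, and therefore monic'' --- but naturality of $i$ gives no such thing for an arbitrary functor $B$, so the paper's own justification is the weak point, and your suspicion that the implication $B(\mathfrak{k})\circ i(K)=0\Rightarrow i(K)=0$ needs an extra input is correct. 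Indeed the statement as literally given is false: over $R=k[x]/(x^2)$ take $\cat{C}=R\textnormal{-modules}$, $B(M)=M/xM\oplus M$ and $i(M)=(\pi,\,x\cdot)$; this is natural, and $i(M)\neq 0$ for every $M\neq 0$ (since $xM=M$ forces $M=x^2M=0$), yet $i(R)$ has kernel $xR\neq 0$. So your two repairs are not optional caution but necessary, and both are available where the lemma is actually used (Lemma \ref{ZXHexists}): there the ambient category is semisimple, so the monic $\mathfrak{k}$ splits and your retraction computation $i(K)=i(K)\circ r\circ\mathfrak{k}=B(r)\circ i(c)\circ\mathfrak{k}=0$ goes through; alternatively, the relevant $B$ is given by tensoring with $\C[G]^*$, which is exact and so preserves monos. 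Your version, with the semisimplicity (or mono-preservation) hypothesis made explicit, is the correct form of the lemma; the paper's proof as written does not establish it in the stated generality.
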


\begin{proof}
	Suppose that $i(c)$ had some kernel $\mathfrak{k}\colon k\hookrightarrow c$. Then applying our natural assignment to $k$ gives the commutative diagram:
	\begin{center}
		\begin{tikzcd}
			k \arrow[r,"i(k)"] \arrow[d,hook,"\mathfrak{k}"] & B(k)\arrow[d,"B(\mathfrak{k})"]\\
			c \arrow[r,"i(c)"] &	B(c)	.		
		\end{tikzcd}
	\end{center}
	The composite along the bottom is zero, as $\mathfrak{k}$ is the kernel of $i(c)$. By naturality, $B(\mathfrak{k})$ is the kernel of $B(i(c))$, and therefore monic. So the bottom composite being zero implies that $i(k)$ is zero, implying that the kernel is trivial.
\end{proof}

We are now in a position to prove that $\cat{K}$ and $\mathbf{Fix}(\overline{ \overline{\cat{K}}})$ are equivalent.

\begin{lem}\label{ZXHexists}
	For each $\dcentcat{A}$-crossed braided fusion category $\cat{K}$ there is an equivalence of $\dcentcat{A}$-crossed tensor categories
	$$
	\cat{H}_\cat{K}\colon \cat{K}\rar\mathbf{Fix}(\overline{ \overline{\cat{K}}}),
	$$
	given by taking $k\in \cat{K}$ to $(\id_k, \{\id_k\}_{g\in G})$, and on hom-objects by the isomorphism
	$$
	\cat{K}(k,k')\cong (\Phi(\overline{ \cat{K}}), \rho, \mathfrak{b}),
	$$
	where $\rho$ denotes the $G$-action on $\Phi \cat{K}$ coming from the $G$-action on $\cat{K}(k,k')$, and $\mathfrak{b}$ its half-braiding.
\end{lem}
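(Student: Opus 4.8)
The plan is to exhibit the prescribed assignment as a fully faithful, essentially surjective braided $\dcentcat{A}$-crossed tensor functor; full faithfulness and monoidality are essentially forced by the constructions, while essential surjectivity is where the real work lies.

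First I would check that $\cat{H}_\cat{K}$ is well defined on objects. Since the $G$-action $(-)^g$ on $\overline{\overline{\cat{K}}}$ is the identity on objects (it only twists hom-objects), we have $(\id_k)^g=\id_k$, so the family $\{\id_k\}_{g\in G}$ trivially satisfies the cocycle condition of Definition \ref{ZXfixobjdef}, and $(\id_k,\{\id_k\}_{g\in G})$ is a genuine object of $\mathbf{Fix}(\overline{\overline{\cat{K}}})$. On hom-objects the content is the claimed isomorphism $\cat{K}(k,k')\cong(\Phi\overline{\cat{K}}(k,k'),\rho,\mathfrak{b})$ in $\dcentcat{A}$, which I would verify in the model $\dcentcat{A}\cong\Vect_G[G]$. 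Applying $\mathbf{Forget}$ and then $\Phi$ to the equivariant bundle $\cat{K}(k,k')$ returns precisely its underlying $G$-graded vector space equipped with the conjugating $G$-action $\rho$, and such a bundle is determined by these data together with its half-braiding. By Definition \ref{ZXfixobjdef}, together with the explicit fibrewise description of the $\Vect_G[G]$ half-braiding recorded there, the object $\mathbf{Fix}(\overline{\overline{\cat{K}}})(\cat{H}_\cat{K}k,\cat{H}_\cat{K}k')$ is exactly the bundle reassembled from this graded vector space, from $\rho$, and from $\mathfrak{b}$, with the grading bookkeeping (including the $\omega$-shift in the super case) supplied by Lemma \ref{ZXkbbhomobjlem}. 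Hence the identity on underlying data furnishes the isomorphism in $\dcentcat{A}$, so $\cat{H}_\cat{K}$ is a $\dcentcat{A}_s$-enriched, fully faithful functor.

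Next I would record that $\cat{H}_\cat{K}$ respects the crossed tensor and braided structure. The tensor product on $\mathbf{Fix}(\overline{\overline{\cat{K}}})$ sends $(c,u)\boxtimes(c',u')$ to $(cc',u\otimes u')$ and acts by the monoidal structure of $\cat{K}$ on hom-objects, so $\cat{H}_\cat{K}$ carries $\otimes_\cat{K}$ to this tensor product compatibly via $\id_k\otimes\id_{k'}=\id_{k\otimes_\cat{K}k'}$; moreover the crossed braiding on $\mathbf{Fix}(\overline{\overline{\cat{K}}})$ is defined to have the same components as that of $\cat{K}$. Thus $\cat{H}_\cat{K}$ is a braided $\dcentcat{A}$-crossed tensor functor, and it remains only to establish essential surjectivity.

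The hard part, which I expect to be the main obstacle, is essential surjectivity. Since $\cat{H}_\cat{K}$ is fully faithful and $\cat{K}$ is idempotent complete (being an FD $\dcentcat{A}_s$-category), Lemma \ref{ZXessimidemcomp} shows that the essential image $\cat{E}$ of $\cat{H}_\cat{K}$ is idempotent complete. I would then, for an arbitrary object $X=(c,\{u_g\})$ with $c=(k,f)$, produce a natural morphism $i(X)\colon X\rar\cat{H}_\cat{K}(k)$ into an object of $\cat{E}$, induced by the summand inclusion $f$, which the descent data $\{u_g\}$ render equivariant; this $i(X)$ is nonzero whenever $X$ is. Lemma \ref{ZXnonzerotrick} then forces $i(X)$ to be monic, and semisimplicity of the fusion category $\mathbf{Fix}(\overline{\overline{\cat{K}}})$ splits this monic, exhibiting $X$ as a direct summand of $\cat{H}_\cat{K}(k)\in\cat{E}$; idempotent completeness of $\cat{E}$ then places $X$ in $\cat{E}$. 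The delicate bookkeeping is the passage between a $G$-equivariant idempotent in $\Phi\overline{\cat{K}}$ and an honest idempotent endomorphism in $\cat{K}$, matching $\rho$ and $\mathfrak{b}$ against the equivariant-bundle structure through Lemma \ref{ZXdeenrichequivmeth} and the homogeneity of minimal idempotents from Lemma \ref{ZXhomidemp}; this is exactly what certifies that $i(X)$ is a well-defined equivariant morphism. Together, full faithfulness, monoidality and essential surjectivity yield the asserted equivalence of $\dcentcat{A}$-crossed tensor categories.
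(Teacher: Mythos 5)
There is a genuine gap in your essential surjectivity argument, and it sits exactly where the real content of the paper's proof lies. You claim that for $X=(c,\{u_g\})$ with $c=(k,f)$ the summand inclusion $f$ gives a morphism $i(X)\colon X\rar\cat{H}_\cat{K}(k)$ which ``the descent data $\{u_g\}$ render equivariant.'' This is false in general. A degree-$\mathbb{I}_s$ morphism in $\mathbf{Fix}(\overline{\overline{\cat{K}}})$ from $(f,u)$ to $(\id_k,\{\id_k\}_{g\in G})$ must be fixed by the $G$-action on the hom-object, which (as the paper computes) sends $f\mapsto f^g u_g^{-1}$; the cocycle condition on the $u_g$ does not make $f$ itself invariant, so your $i(X)$ is generally not a morphism in the fixed category at all. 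The familiar toy case makes this vivid: for $\cat{K}$ trivial this amounts to asking for a nonzero $G$-equivariant map from a representation $V$ to its underlying vector space with trivial action, which exists only when the trivial representation occurs in $V$. The paper's proof circumvents precisely this by inducing along the regular representation: the target is not $\cat{H}_\cat{K}(k)$ but $\C[G]^*\cdot(\id_{k'},\{\id_{k'}\}_{g\in G})=(\id_k,\{\id_k\}_{g\in G})$ with $k=\C[G]^*k'$, and by the tensoring adjunction a degree-$\mathbb{I}_s$ morphism into it is the same as a fibrewise map out of $\C[G]\otimes_s\mathbb{I}_s$ equivariant for left multiplication; the family $\tilde{f}|_{\{g\}\times G}=f^g u_g^{-1}$ is equivariant exactly because of the cocycle condition on $u$. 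This Frobenius-reciprocity-style induction is the key idea, and without it your step one of essential surjectivity does not get off the ground. Once $\tilde{f}$ is in hand, your remaining steps do match the paper: nonvanishing (restrict to $\{e\}\times G$ to recover $f$) plus Lemma \ref{ZXnonzerotrick} gives monicity, and Lemma \ref{ZXessimidemcomp} together with splitting of monics places $X$ in the essential image.

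Two smaller points. First, your justification that $(\id_k,\{\id_k\}_{g\in G})$ is well defined — ``the $G$-action is the identity on objects'' — is imprecise for $\overline{\overline{\cat{K}}}$: on the idempotent completion the action sends an idempotent $f$ to $f^g$, which is in general a different object; the correct reason is that $(-)^g$ is a functor and hence fixes identity morphisms, so $(\id_k)^g=\id_k$. Second, your full-faithfulness and monoidality discussion is fine and consistent with the paper, which treats these as immediate from the definitions; the lemma really is, as you anticipated, all about essential surjectivity — but the mechanism you propose for it needs the $\C[G]^*$-induction to be repaired.
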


\begin{proof}
	This functor is fully faithful by definition, so we only need to establish essential surjectivity. That is, for every $(f,u)$, we need to give an isomorphism to an object $(\id_k,\{\id_k\}_{g\in G})$. When $f$ is zero, this is trivial, so assume $f$ is non-zero. As the essential image of a fully faithful functor on an idempotent complete category is idempotent complete (Lemma \ref{ZXessimidemcomp}), it suffices to find a monic morphism
	$$
	(f,u)\rar_{\mathbb{I}_s} (\id_k,\{\id_k\}_{g\in G}),
	$$
	this will then correspond to a subobject of $(\id_k,\{\id_k\}_{g\in G})$, which is necessarily in the essential image. If $(f,u)$ has as underlying idempotent $f\in \Phi\overline{ \cat{K}}(k',k')$, we will produce a morphism to
	$$
	\C[G]^*\cdot (\id_{k'},\{\id_{k'}\}_{g\in G})=(\id_k,\{\id_k\}_{g\in G}),
	$$
	where $k= \C[G]^* k'$ and we equip $\C[G]$ with the left action of $G$. To produce this morphism, observe that $f$ defines a morphism in $\overline{ \overline{\cat{K}}}(f,\id_{k'})$, and therefore gives rise to a morphism in $\mathbf{Fix}(\overline{ \overline{\cat{K}}})((f,u),(\id_{k'},\{\id_{k'}\}_{g\in G}))$. The image under the $G$-action for $g\in G$ (see Definition \ref{ZXfixobjdef}) of $f$ is:
	$$
	f \mathop{\longmapsto}\limits^{(-)^g} f^g \mathop{\longmapsto}\limits^{-\circ(u_g)^{-1}} f^gu_g^{-1}.
	$$
	By adjunction in $\dcentcat{A}_s$, a morphism degree $\mathbf{I}_s$ to $(\id_k,\{\id_k\}_{g\in G})$ is the same as a fibrewise map:
	$$
	\tilde{f}\colon  \C[G]\otimes_s \mathbb{I}_s \rar \mathbf{Fix}(\overline{ \overline{\cat{K}}})((f,u),(\id_{k'},\{\id_{k'}\}_{g\in G}))
	$$
	that is equivariant for the $G$ action. The $G$-equivariant vector bundle $\C[G]\otimes_s \mathbb{I}_s$ is the bundle $\C[G]\times G$, where $G$ acts on the fibres by left multiplication. We define $\tilde{f}$ by $f|_{\{g\}\times G} = f^gu_g^{-1}$. To show that the morphism
	$$
	\mathbb{I}_s\rar \mathbf{Fix}(\overline{ \overline{\cat{K}}})((f,u),(\id_{k},\{\id_{k}\}_{g\in G}))
	$$
	is obtained in this way is monic, by Lemma \ref{ZXnonzerotrick} it suffices to show that $\tilde{f}$ is non-zero. Restricting $\tilde{f}$ to $\{e\}\times G$ gives $f$, which is assumed to be non-zero. Therefore, we have produced a monic morphism from $(f,u)$ to $(\id_k,\{\id_k\}_{g\in G})$, which by Lemma \ref{ZXessimidemcomp} corresponds to an subobject of the form $(\id_l,\{\id_l\}_{g\in G})$ for some $l\in\cat{K}$.
	
	It is clear from the definition of $\cat{H}_\cat{K}$ that it will be a functor of $\dcentcat{A}$-crossed braided categories.
\end{proof}

As a second step, we will show that the $\cat{H}_\cat{K}$ are natural in the sense that:

\begin{lem}\label{ZXHisnatural}
	Let $F\colon \cat{K}\rar \cat{K}'$ be a 1-morphism in $\ZAXBT$. Then $F$ and the image of $F$ under $\mathbf{Fix}\circ\overline{ \overline{(-)}}$ fit into a commutative diagram:
	\begin{center}
		\begin{tikzcd}
			\cat{K} \arrow[r,"\cat{H}_\cat{K}"] \arrow[d,"F"] & \mathbf{Fix}(\overline{ \overline{\cat{K}}}) \arrow[d,"\mathbf{Fix}(\overline{ \overline{F}})"]\\
			\cat{K}' \arrow[r,"\cat{H}_{\cat{K}'}"]& \mathbf{Fix}(\overline{ \overline{\cat{K'}}}).
		\end{tikzcd}
	\end{center}
\end{lem}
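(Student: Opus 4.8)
The plan is to verify commutativity of the square separately on objects and on hom-objects, since both $\cat{H}_\cat{K}$ and $\mathbf{Fix}(\overline{\overline{F}})$ are given by explicit formulas on each kind of data, and to observe at the end that the resulting equality upgrades the family $\{\cat{H}_\cat{K}\}$ to a $2$-natural isomorphism $\Id\Rightarrow\mathbf{Fix}\circ\overline{\overline{(-)}}$.

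On objects I would start with $k\in\cat{K}$. Along the top-right route, $\cat{H}_\cat{K}$ sends $k$ to $(\id_k,\{\id_k\}_{g\in G})$, where $\id_k$ is regarded as the identity idempotent on the image of $k$ in $\overline{\overline{\cat{K}}}$, and then $\mathbf{Fix}(\overline{\overline{F}})$ sends this to $(\overline{\overline{F}}(\id_k),\{\overline{\overline{F}}(\id_k)\}_{g\in G})$. Since $\overline{\overline{F}}$ is obtained from $F$ by the forget--fibre--idempotent-completion procedure, it carries the identity idempotent on $k$ to the identity idempotent on $F(k)$, so this equals $(\id_{F(k)},\{\id_{F(k)}\}_{g\in G})$. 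Along the bottom-left route, $F$ sends $k$ to $F(k)$ and then $\cat{H}_{\cat{K}'}$ sends $F(k)$ to $(\id_{F(k)},\{\id_{F(k)}\}_{g\in G})$; the two agree strictly.

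On hom-objects the content is that the reconstruction isomorphisms $\cat{H}_\cat{K}$ and $\cat{H}_{\cat{K}'}$ are compatible with $F$. Recall that $\cat{H}_\cat{K}$ acts on hom-objects by the canonical isomorphism $\cat{K}(k,k')\cong(\Phi\overline{\cat{K}}(k,k'),\rho,\mathfrak{b})$ that rebuilds the equivariant bundle from its fibrewise data together with the reconstructed $G$-action $\rho$ and half-braiding $\mathfrak{b}$, while $\mathbf{Fix}(\overline{\overline{F}})$ acts by $\overline{\overline{F}}$, which on hom-objects is precisely $F$'s action $\cat{K}(k,k')\to\cat{K}'(F(k),F(k'))$ pushed through $\mathbf{Forget}$ and $\Phi$. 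Along the other route, $F$ acts by that same morphism of $\dcentcat{A}_s$, followed by the analogous reconstruction $\cat{H}_{\cat{K}'}$. I would conclude that the two composites agree by noting that $F$, being $\dcentcat{A}_s$-enriched, acts on hom-objects by a morphism of $G$-equivariant bundles in $\Vect_G[G]$, hence intertwines both the $G$-actions $\rho,\rho'$ and the half-braidings $\mathfrak{b},\mathfrak{b}'$; thus forgetting the equivariant structure, applying $\Phi$, and reconstructing (which is what the two legs amount to) yields one and the same morphism.

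The main obstacle is exactly this hom-object check: one must be sure that $F$ intertwines the \emph{reconstructed} half-braidings and $G$-actions, not merely their underlying bundle maps. This reduces, however, to the defining property that a $\dcentcat{A}_s$-enriched functor acts by morphisms in $\dcentcat{A}_s$, which by definition are maps of equivariant bundles compatible with the half-braiding; so no additional coherence needs to be verified. Finally, since both legs are $\dcentcat{A}$-crossed tensor functors ($\cat{H}_\cat{K}$ and $\cat{H}_{\cat{K}'}$ by Lemma \ref{ZXHexists}, and $F$, $\mathbf{Fix}(\overline{\overline{F}})$ by construction), the commuting square is automatically one of $\dcentcat{A}$-crossed tensor functors, as required.
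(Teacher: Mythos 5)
Your proof is correct and follows essentially the same route as the paper: a direct check on objects showing both composites send $k$ to $(\id_{F(k)},\{\id_{F(k)}\}_{g\in G})$, followed by the hom-object verification. Your elaboration of the hom-object step — that $F$ acts by morphisms in $\dcentcat{A}_s$ and hence automatically intertwines the reconstructed $G$-actions and half-braidings — is exactly the reason the paper dismisses this step as ``similarly clear,'' so you have simply made the paper's implicit argument explicit.
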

\begin{proof}
	Let $k\in\cat{K}$. Under the top composite in the diagram, this object is sent to 
	$$
	(F(\id_k),F(\{\id_k\}_{g\in G})=(\id_{F(k)},\{\id_{F(k)}\}_{g\in G}),
	$$
	and the bottom composite is the same. On morphisms, it is similarly clear that the diagram commutes on hom-objects.
\end{proof}

The two Lemmas \ref{ZXHexists} and \ref{ZXHisnatural} together imply that $\mathbf{Fix}\circ\overline{ \overline{(-)}}$ is isomorphic to the identity on $\ZAXBT$. For the composite the other way around, we first prove:

\begin{lem}\label{ZXPexists}
	Let $\cat{C}$ be a (super)-G-crossed braided fusion category. Then the categories $\overline{ \overline{\mathbf{Fix}(\cat{C})}}$ and $\cat{C}$ are equivalent.
\end{lem}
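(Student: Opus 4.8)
The plan is to build an explicit comparison functor and argue that it becomes an equivalence after Cauchy completion. First I would unwind what $\overline{\overline{\mathbf{Fix}(\cat{C})}}=\overline{\overline{\cat{C}^G}}$ actually is: the operation $\overline{\overline{(-)}}$ applies $\mathbf{Forget}\colon\dcentcat{A}\rar\cat{A}$, then the fibre functor $\Phi$, then Cauchy-completes. Since $\mathbf{Forget}$ on $\Vect_{G}[G]$ is the direct sum over fibres, the fibrewise formula for the hom-objects of $\cat{C}^G$ in Definition \ref{ZXfixobjdef} gives, after summing over $g$, an even part $\bigoplus_{g}\cat{C}(c_g,c'_g)_0$ and an odd part $\bigoplus_{g}\cat{C}(c_{\omega g},c'_{\omega g})_1\cong\bigoplus_{g}\cat{C}(c_g,c'_g)_1$. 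Hence $\Phi\overline{\cat{C}^G}((c,\{u_g\}),(c',\{u'_g\}))\cong\cat{C}(c,c')$ as a (super) vector space, with parities matching because $(-)^\omega=\Pi$ acts as $-1$ exactly on the odd morphisms of $\cat{C}$. Thus $\Phi\overline{\cat{C}^G}$ has the equivariant pairs $(c,\{u_g\})$ as objects, the unmodified spaces $\cat{C}(c,c')$ as hom-objects, and composition inherited from $\cat{C}$.

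This identification produces a (super-)linear functor $P\colon\Phi\overline{\cat{C}^G}\rar\cat{C}$ that forgets the equivariant data, sending $(c,\{u_g\})\mapsto c$ and acting as the identity on hom-objects; by the computation above it is fully faithful. As $\cat{C}$ is fusion and therefore idempotent complete, $P$ extends uniquely along the Cauchy completion to a fully faithful functor $\overline{\overline{\cat{C}^G}}\rar\cat{C}$, again denoted $P$, and it remains to establish essential surjectivity.

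For this I would run an induction construction analogous to the $\C[G]^*$ construction used in Lemma \ref{ZXHexists}: given $d\in\cat{C}$, put $c=\bigoplus_{g\in G}d^g$. The $G$-crossing merely reindexes the summands, $c^h=\bigoplus_{g}d^{hg}\cong c$, yielding canonical even isomorphisms $u_h\colon c^h\xrightarrow{\cong}c$ that satisfy the cocycle condition of Definition \ref{ZXfixobjdef}, so that $(c,\{u_h\})$ is an object of $\cat{C}^G$. The inclusion-projection of the $e$-indexed summand is an idempotent lying in $\cat{C}(c,c)=\Phi\overline{\cat{C}^G}((c,\{u_h\}),(c,\{u_h\}))$, and its splitting in $\overline{\overline{\cat{C}^G}}$ is carried by $P$ to $d$. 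Hence every object of $\cat{C}$ lies in the essential image and $P$ is an equivalence. To upgrade this to an equivalence of $G$-crossed braided fusion categories I would then observe that a minimal idempotent of degree $g$ (Corollary \ref{ZXforgetdecomposes}) is an endomorphism of the homogeneous piece $c_g$, so $P$ preserves the grading; that $P$ intertwines the induced $G$-action with $(-)^g$, since the former is built from the latter; and that $P$ is braided monoidal because the tensor product and crossed braiding of $\cat{C}^G$ are defined directly by those of $\cat{C}$.

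The main obstacle is the essential surjectivity step, and inside it the super-Tannakian bookkeeping: verifying that the induced object $\bigoplus_{g}d^g$ genuinely carries a $(G,\omega)$-equivariant structure by \emph{even} isomorphisms of the correct $\omega$-parity, and that the splitting idempotent, although it is not itself $G$-equivariant, is nonetheless a bona fide endomorphism of $(c,\{u_h\})$ in $\Phi\overline{\cat{C}^G}$ whose image under $P$ returns $d$.
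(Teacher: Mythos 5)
Your proposal is correct and follows essentially the same route as the paper: the same forgetful functor $(c,u)\mapsto c$ on $\Phi\overline{\mathbf{Fix}(\cat{C})}$, fully faithful because $\Phi\circ\mathbf{Forget}$ recovers $\cat{C}(c,c')$ on hom-objects, with essential surjectivity after Cauchy completion via the induced object $\bigoplus_{g\in G}d^g$ whose $e$-summand idempotent splits to $d$. Your treatment is in fact slightly more careful than the paper's at two points it glosses over --- writing the structure maps $u_h$ as genuine reindexing isomorphisms satisfying the cocycle condition (where the paper writes $\{\id\}_{g\in G}$) and checking the super-parity bookkeeping and the compatibility of the equivalence with grading, $G$-action and braiding --- but these are refinements of the identical argument.
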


\begin{proof}
	We will define a dominant fully faithful functor
	$$
	\hat{\cat{P}}_\cat{C}\colon \Phi \overline{ \mathbf{Fix}(\cat{C})} \rar \cat{C},
	$$
	after idempotent completion this will descent to an equivalence of categories. On objects, this functor is given by $(c,u)\mapsto c$, on morphisms we use the isomorphism
	$$
	\Phi\overline{ \mathbf{Fix}}((c,u),(c',u'))\cong \cat{C}(c,c'),
	$$
	as $\Phi\overline{(-)}$ simply forgets the $G$-action and half-braiding. This functor is clearly fully faithful. To see that it is dominant, observe that for any object $c\in\cat{C}$, the object $\oplus_{g\in G} c^g$ is a fixed point for the $G$-action, and therefore $(\oplus_{g\in G} c^g, \{\id\}_{g\in G})$ defines an object in $\mathbf{Fix}(\cat{C})$, which our functor will send to $\oplus_{g\in G} c^g$. This object has $c$ as a summand, so we see our functor is indeed dominant.
\end{proof}

\begin{lem}\label{ZXPisnatural}
	Denote the equivalence from Lemma \ref{ZXPexists} by
	$$
	\cat{P}_\cat{C}\colon  \overline{ \overline{\mathbf{Fix}(\cat{C})}}\rar\cat{C}.
	$$
	Then, for any 1-morphism $F$ in $\GXBT$ (or $\sGXBT$), the diagram
	\begin{center}
		\begin{tikzcd}
			\overline{ \overline{\mathbf{Fix}(\cat{C})}}\arrow[r,"\cat{P}_\cat{C}"]\arrow[d,"\overline{ \overline{\mathbf{Fix}(F)}}"] &\cat{C} \arrow[d,"F"]\\
			\overline{ \overline{\mathbf{Fix}(\cat{C}')}}\arrow[r,"\cat{P}_{\cat{C}'}"]&\cat{C}'
		\end{tikzcd}
	\end{center}
	commmutes.
\end{lem}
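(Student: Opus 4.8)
The plan is to reduce the statement to the commutativity of the corresponding square \emph{before} idempotent completion, and then transport it across the Cauchy completion using the latter's universal property. Recall from the proof of Lemma \ref{ZXPexists} that $\cat{P}_\cat{C}$ is the extension to the idempotent completion of the fully faithful dominant functor $\hat{\cat{P}}_\cat{C}\colon \Phi\overline{\mathbf{Fix}(\cat{C})}\rar \cat{C}$, which sends $(c,u)\mapsto c$ and acts on hom-objects via the canonical isomorphism $\Phi\overline{\mathbf{Fix}}((c,u),(c',u'))\cong \cat{C}(c,c')$ that forgets the $G$-action and half-braiding. Likewise $\overline{\overline{\mathbf{Fix}(F)}}$ is, by Definition \ref{ZXkbarbar}, the idempotent completion of $\Phi\overline{\mathbf{Fix}(F)}$. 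Since $\cat{C}$ and $\cat{C}'$ are already idempotent complete, it suffices to prove that the square
\begin{center}
\begin{tikzcd}
\Phi\overline{\mathbf{Fix}(\cat{C})}\arrow[r,"\hat{\cat{P}}_\cat{C}"]\arrow[d,"\Phi\overline{\mathbf{Fix}(F)}"] &\cat{C} \arrow[d,"F"]\\
\Phi\overline{\mathbf{Fix}(\cat{C}')}\arrow[r,"\hat{\cat{P}}_{\cat{C}'}"]&\cat{C}'
\end{tikzcd}
\end{center}
commutes, as the outer square is then obtained by the (2-)functoriality of Cauchy completion (Definition \ref{ZXcauchycompl}).

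Next I would check this inner square directly. On objects, the clockwise route sends $(c,u)\mapsto c\mapsto F(c)$, while the anticlockwise route sends $(c,u)\mapsto (F(c),F(u))\mapsto F(c)$ by Definition \ref{ZXfix1morphs}; these agree. On hom-objects, both routes amount to applying the underlying linear map of $F$ under the identifications $\Phi\overline{\mathbf{Fix}}(-,-)\cong \cat{C}(-,-)$: indeed $\mathbf{Fix}(F)$ acts as $F_{c,c'}$ on hom-objects (Definition \ref{ZXfix1morphs}), and forgetting the half-braiding and applying $\Phi$ turns this into exactly the map $\cat{C}(c,c')\rar \cat{C}'(F(c),F(c'))$ induced by $F$. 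Thus the inner square commutes on the nose.

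Finally I would promote this to the completed square. A general object of $\overline{\overline{\mathbf{Fix}(\cat{C})}}$ is an idempotent $e$ on some $(c,u)$, corresponding under $\hat{\cat{P}}_\cat{C}$ to an idempotent $\bar e\in\cat{C}(c,c)$, so that $\cat{P}_\cat{C}$ sends it to the subobject $\im(\bar e)\hookrightarrow c$. The functor $F$, being a (super) linear functor between idempotent complete categories, preserves idempotent splittings, whence $F(\im(\bar e))=\im(F(\bar e))$. On the other route, $\overline{\overline{\mathbf{Fix}(F)}}$ sends $e$ to the idempotent on $(F(c),F(u))$ corresponding to $F(\bar e)\in\cat{C}'(F(c),F(c))$, which $\cat{P}_{\cat{C}'}$ then carries to $\im(F(\bar e))$; the two values agree. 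The one point requiring care is that idempotent splittings are defined only up to canonical isomorphism, so strictly speaking the square commutes up to the canonical natural isomorphism comparing the two splittings of $F(\bar e)$; by the universal property of the Cauchy completion this comparison is unique and coherent, yielding commutativity in the required $2$-categorical sense. The one genuinely substantive input, which I expect to be the main obstacle, is the compatibility in the previous paragraph of the hom-object identification $\Phi\overline{\mathbf{Fix}}(-,-)\cong\cat{C}(-,-)$ with $F$ — that forgetting the half-braiding intertwines $\mathbf{Fix}(F)=F_{c,c'}$ with the underlying linear action of $F$; everything else is the formal naturality of idempotent completion.
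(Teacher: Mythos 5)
Your proposal is correct and follows essentially the same route as the paper's proof: reduce to the square of uncompleted categories $\Phi\overline{\mathbf{Fix}(-)}$, verify commutativity directly on objects via Definition \ref{ZXfix1morphs} and on hom-objects via the identification $\Phi\overline{\mathbf{Fix}}((c,u),(c',u'))\cong\cat{C}(c,c')$, and then descend along idempotent completion. Your final paragraph spelling out how idempotent splittings are tracked (and that commutativity holds up to the canonical comparison of splittings) is more detailed than the paper's one-line appeal to descent, but it is the same argument.
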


\begin{proof}
	It suffices to show that the diagram
	\begin{center}
		\begin{tikzcd}
			\Phi \overline{\mathbf{Fix}(\cat{C})}\arrow[r,"\hat{\cat{P}}_\cat{C}"]\arrow[d,"\Phi\overline{\mathbf{Fix}(F)}"'] &\cat{C} \arrow[d,"F"]\\
			\Phi \overline{\mathbf{Fix}(\cat{C}')}\arrow[r,"\hat{\cat{P}}_{\cat{C}'}"]&\cat{C}'
		\end{tikzcd}
	\end{center}
	commutes, as it will descent to the desired diagram after idempotent completion. On an object $(c,u)\in \Phi \overline{\mathbf{Fix}(\cat{C})}$, the bottom route becomes 
	$$
	(c,u)\mapsto (Fc,Fu) \mapsto Fc,
	$$
	which agrees with the top route. A similar diagram chase shows that this diagram commutes on hom-objects.
\end{proof}

The Lemmas \ref{ZXPexists} and \ref{ZXPisnatural} together show that the composite $\overline{ \overline{\mathbf{Fix}(-)}}$ is naturally isomorphic to the identity on $\GXBT$ (or $\sGXBT$).

This finishes the proof of Proposition \ref{ZXfixbarbarmutinv}, and with that, the proof of Theorem \ref{ZXzaxbtgxbteqv}.

\appendix
\section{Appendix}
\setcounter{thm}{0}
\renewcommand{\thethm}{\thesection.\arabic{thm}}
\setcounter{equation}{0}
\renewcommand{\theequation}{\thesection.\arabic{equation}}

\subsection{Enriched and tensored categories}
Fix a spherical symmetric fusion category $\cat{A}$ with unit object $\mathbb{I}$ throughout. We assume the reader is familiar with the basic definition of a category enriched in $\cat{A}$. This section will deal with categories that are not only enriched, but also tensored over $\cat{A}$.

\subsubsection{Basics}
\begin{notation}\label{ZXhomsetnotation}
	The hom-objects in an $\cat{A}$-enriched category $\cat{C}$ between $c,c'\in\cat{C}$ will be denoted by $\underline{\cat{C}}(c,c')$.
	We will write $f\colon c\rar_a c'$ for $f\colon a\rar\underline{\cat{C}}(c,c')$. If $a=\mathbb{I}$, we will omit it from the notation. Furthermore, we will write $\cat{C}(c,c')$ for $\cat{A}(\mathbb{I},\underline{\cat{C}}(c,c'))$.
\end{notation}

We remind the reader of the following definition.

\begin{df}\label{ZXnattrafo}
	Let $F,G\colon \cat{C}\rar\cat{D}$ be functors of $\cat{A}$-enriched categories. An \emph{enriched natural transformation} from $F$ to $G$ is for each object $c\in \cat{C}$ a morphism $\eta_{c}\colon F(c)\rar_{\mathbb{I}}G(c)$, that makes the following diagram commute for any $f\colon c\rar_a c'\in\cat{C}$:
	\begin{center}
		\begin{tikzcd}
			F(c)\arrow[r,"\eta_c"]\arrow[d,"F(f)", "a"' very near end]& G(c)\arrow[d,"G(f)", "a"' very near end]\\
			F(c')\arrow[r,"\eta_{c'}"]&G(c').
		\end{tikzcd}
	\end{center}
\end{df}

\begin{df}\label{ZXtensoreddef}
	Let $\cat{C}$ be a category enriched in $\cat{A}$. Then $\cat{C}$ is called \emph{tensored over $\cat{A}$} if there exists, for every $c,c'\in \cat{C}$ and $a\in\cat{A}$ an object $a\cdot c$ together with a functorial isomorphism
	\begin{equation}\label{ZXcopowernattrafo}
	\cat{A} \left( a, \underline{\cat{C}}(c,c')\right)\cong \cat{C}( a\cdot c,c').
	\end{equation}
\end{df}

Definition \ref{ZXtensoreddef} allows us to write, denoting by $\cat{O}(\cat{A})$ a set of representatives for the isomorphism classes of simple objects in $\cat{A}$:
\begin{equation}\label{ZXehom}
\underline{\cat{C}}(c,c')\cong \bigoplus_{a\in\cat{O}(\cat{A})} \cat{C}(ac,c')a.
\end{equation}
This means we can view $f\colon c\rar_a c'$ as a morphism $\tilde{f}\colon ac\rar c'$, and the composite of $f\colon c\rar_a c'$ and $g\colon c'\rar_{a'} c''$ is given by 
$$
\tilde{g}\circ\left(\id_{a'}\cdot\tilde{f}\right)\colon a'ac\rar c''.
$$

\begin{df}\label{ZXmates}
	The images of morphisms under the isomorphism \eqref{ZXcopowernattrafo} are called \emph{mates}. For $f\colon c\rar_a c'$ we will write $\bar{f}\colon ac\rar c'$ for its mate, and the mate of $g\colon ac\rar c'$ will be denoted by $\underline{g}\colon c\rar_a c'$.
\end{df}

\begin{rmk}
	If $\cat{C}$ is enriched over $\cat{A}$, its category of enriched endofunctors $\End(\cat{C})$ is a tensor category, with the monoidal structure coming from composition. The assignment $a\mapsto a\cdot -$ extends to a functor $\cat{A}\rar \End(\cat{C})$. This functor is in fact monoidal, cf. Lemma \ref{ZXmonoidality}.
\end{rmk}

Categories enriched and tensored over $\cat{A}$ form a 2-category, where we do have to take care functors between them are compatible with the tensor structure:

\begin{df}\label{ZXalincatdef}
	The \emph{2-category of $\cat{A}$-categories} $\lcat{A}$ is the 2-category where 
	\begin{itemize}
		\item objects are categories enriched in and tensored over $\cat{A}$,
		\item morphisms $\cat{A}$-enriched functors $F\colon \cat{C}\rar \cat{C}'$ equipped with a natural isomorphism
		$$
		F(a\cdot c) \xrightarrow[\cong]{\mu_{a,c}} a \cdot F(c),
		$$
		monoidal in $a$, such that the diagrams
		\begin{center}
			\begin{tikzcd}
				\cat{C}(a\cdot c,c') \arrow[rr,"\cong"] \arrow[d,"F"] 	&	& \cat{A}(a,\cat{C}(c,c'))	\arrow[d,"F"]\\
				\cat{C}'(F(ac),F(c'))	&  \cat{C}'(a\cdot F(c),F(c')) \arrow[l,"\mu"]	& \cat{A}(a,\cat{C}'(c,c')) \arrow[l,"\cong"],
			\end{tikzcd}
		\end{center}
		commute for all $a\in\cat{A}$ and $c,c'\in \cat{C}$,
		\item and 2-morphisms enriched natural transformations $\eta\colon F\Rightarrow G$ that make the diagrams 
		\begin{center}
			\begin{tikzcd}
				F(ac) \arrow[r,"\eta_{ac}"] \arrow[d,"\cong"] 	&	G(ac)	\arrow[d,"\cong"]\\
				a\cdot F(c)	\arrow[r,"\id_{a}\cdot \eta_{c}"]	& a\cdot G(c),
			\end{tikzcd}
		\end{center}
		commute for all $a\in\cat{A}$ and $c\in\cat{C}$.
	\end{itemize}  
\end{df}

\begin{rmk}
	Definition \ref{ZXalincatdef} is the most restrictive of the possible choices for a definition of $\lcat{A}$. We could also have allowed that there is an auto-equivalence of $\cat{A}$ associated to every morphism between $\cat{A}$-enriched and tensored categories, and that each 2-morphism comes with a symmetric monoidal transformation between these auto-equivalences. However, Definition \ref{ZXalincatdef} corresponds to the kind of enriched and tensored categories one obtains from module categories over $\cat{A}$.
\end{rmk}

\begin{df}\label{ZXaselfenriched}
	The \emph{internal hom} between two objects $a,a'\in\cat{A}$ is the representing object $\underline{\cat{A}}(a,a')$ for the functor $a''\mapsto\cat{A}(a''a,a')$. These hom objects make $\cat{A}$ into a category enriched and tensored over itself, i.e. a closed monoidal category.
\end{df}

\begin{lem}\label{ZXunitinternalhom}
	There is a canonical isomorphism
	\begin{equation}
	\underline{\cat{A}}(\mathbb{I},a)\cong a.
	\end{equation}
	for all objects $a\in\cat{A}$.
\end{lem}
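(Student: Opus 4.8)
The plan is to read the result off directly from the universal property of the internal hom together with the Yoneda lemma. By Definition \ref{ZXaselfenriched}, the object $\underline{\cat{A}}(\mathbb{I},a)$ is the representing object for the functor $a''\mapsto \cat{A}(a''\mathbb{I},a)$, so that I have isomorphisms
$$
\cat{A}(a'',\underline{\cat{A}}(\mathbb{I},a))\cong \cat{A}(a''\mathbb{I},a),
$$
natural in $a''$.

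Next I would invoke the right unitor $a''\mathbb{I}\cong a''$ of the monoidal structure on $\cat{A}$. Since this is a natural isomorphism in $a''$, precomposition induces a natural isomorphism $\cat{A}(a''\mathbb{I},a)\cong \cat{A}(a'',a)$. Composing with the previous display yields
$$
\cat{A}(a'',\underline{\cat{A}}(\mathbb{I},a))\cong \cat{A}(a'',a),
$$
again natural in $a''$. This exhibits $\underline{\cat{A}}(\mathbb{I},a)$ and $a$ as representing objects for one and the same functor $\cat{A}(-,a)\colon \cat{A}^{\tn{op}}\rar\Vect$, so the Yoneda lemma supplies a unique isomorphism $\underline{\cat{A}}(\mathbb{I},a)\cong a$ intertwining the two representations; this is the asserted canonical isomorphism.

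There is no genuine obstacle here, as the statement is a direct application of Yoneda. The only point needing a moment's care is the naturality in $a''$ of the composite isomorphism, which is what licenses the appeal to Yoneda; this is immediate once one observes that the unitor is itself natural. (If one prefers, one can equivalently verify directly that the counit of the internal-hom adjunction, evaluated at the first argument $\mathbb{I}$, is an isomorphism, but the representability argument above is cleaner and avoids any computation.)
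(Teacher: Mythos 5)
Your proof is correct and follows essentially the same route as the paper: both unwind the representing-object definition of the internal hom to obtain $\cat{A}(a'',\underline{\cat{A}}(\mathbb{I},a))\cong\cat{A}(a'',a)$ naturally in $a''$ and then conclude by Yoneda. Your version merely makes explicit the unitor step and the naturality check that the paper leaves implicit.
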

\begin{proof}
	Consider that
	$$
	\cat{A}(a',\underline{\cat{A}}(\mathbb{I},a))\cong \cat{A}(a',a),
	$$
	so $\cat{A}(\mathbb{I},a)$ and $a$ are canonically isomorphic under the Yoneda embedding.
\end{proof}

The tensor structure of $\cat{C}$ over $\cat{A}$ induces an enriched natural isomorphism $\eta$ with components
\begin{equation}\label{ZXtensenriched}
\eta_{a,c,c'}\colon \underline{\cat{A}}(a,\underline{\cat{C}}(c,c'))\rar \underline{\cat{C}}(ac,c').
\end{equation}
To see this, observe that, given the natural transformation from (\ref{ZXcopowernattrafo}), the definition of the enriched hom for $\cat{A}$ gives
\begin{align}
\cat{A}(a',\underline{\cat{A}}(a,\underline{\cat{C}}(c,c')))&\cong\cat{A}(a'a,\underline{\cat{C}}(c,c'))\\
&\cong\cat{A}(a',\underline{\cat{C}}(ac,c'))
\end{align}
where the second line uses the isomorphism from (\ref{ZXcopowernattrafo}). The preimage of this isomorphism under the Yoneda embedding is the desired isomorphism.

\begin{lem}\label{ZXmonoidality}
	Suppose $\cat{C}$ is tensored over $\cat{A}$. Then the functor $\cat{A}\rar \End(\cat{C})$ taking $a$ to $a\cdot-$ is a tensor functor.
\end{lem}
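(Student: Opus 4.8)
The plan is to exhibit the structure morphisms that promote $a\mapsto a\cdot-$ to a \emph{strong} monoidal functor $\cat{A}\rar\End(\cat{C})$, and then to reduce the coherence axioms to those of $\cat{A}$ together with its internal hom. Concretely, I must produce a natural isomorphism $J_{a,a'}\colon (a\otimes a')\cdot-\Rar a\cdot(a'\cdot-)$ of $\cat{A}$-enriched endofunctors, and a unit isomorphism $J_0\colon \Id_{\cat{C}}\Rar \mathbb{I}\cdot-$, and verify the associativity pentagon and the two unit triangles. Because composition in $\End(\cat{C})$ is strictly associative and strictly unital, all the non-trivial content will be carried by the associator and unitors of $\cat{A}$ and by the closed structure of Definition \ref{ZXaselfenriched}.

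First I would construct $J_{a,a'}$ objectwise by splicing together the defining adjunction \eqref{ZXcopowernattrafo}, the enriched hom-tensor isomorphism $\eta$ of \eqref{ZXtensenriched}, and the representability defining $\underline{\cat{A}}$. For fixed $c,c'\in\cat{C}$ there is a chain of natural isomorphisms
\begin{align*}
\cat{C}(a\cdot(a'\cdot c),c') &\cong \cat{A}(a,\underline{\cat{C}}(a'\cdot c,c')) \\
&\cong \cat{A}(a,\underline{\cat{A}}(a',\underline{\cat{C}}(c,c'))) \\
&\cong \cat{A}(a\otimes a',\underline{\cat{C}}(c,c')) \\
&\cong \cat{C}((a\otimes a')\cdot c,c'),
\end{align*}
where the outer steps use \eqref{ZXcopowernattrafo}, the second uses \eqref{ZXtensenriched}, and the third is the defining adjunction of $\underline{\cat{A}}$ (Definition \ref{ZXaselfenriched}). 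Naturality in $c'$ and the Yoneda lemma then yield $(a\otimes a')\cdot c\cong a\cdot(a'\cdot c)$, natural in $c$, defining the components of $J_{a,a'}$. For the unit I would use Lemma \ref{ZXunitinternalhom} in the form $\cat{C}(\mathbb{I}\cdot c,c')\cong\cat{A}(\mathbb{I},\underline{\cat{C}}(c,c'))=\cat{C}(c,c')$, whence $\mathbb{I}\cdot c\cong c$ by Yoneda, giving $J_0$.

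Next I would check that $J_{a,a'}$ is an isomorphism of $\cat{A}$-enriched endofunctors, and that it is natural in both $a$ and $a'$. Both follow because every isomorphism in the chain is functorial in the variable concerned, this being exactly the functoriality built into Definition \ref{ZXtensoreddef} and into the representing property of $\underline{\cat{A}}$. The enriched naturality of $J$ amounts to a square relating the action of $J$ on hom-objects with $\eta$, which again unwinds to the functoriality of \eqref{ZXcopowernattrafo}, so no genuinely new computation is required here.

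The main work, and the step I expect to be the obstacle, is the coherence. For associativity one compares $((a\otimes a')\otimes a'')\cdot-$ with $a\cdot(a'\cdot(a''\cdot-))$; since composition of endofunctors is strictly associative, the monoidal-functor pentagon collapses to the assertion that the associator $\alpha_{a,a',a''}$ of $\cat{A}$ is transported correctly through the chain above. My plan is to transport the pentagon across the natural isomorphisms of that chain, where after applying Yoneda it becomes precisely the associativity coherence for the internal hom of $\cat{A}$ (equivalently, the associativity of the $\cat{A}$-action implicit in the tensoring). The two unit triangles reduce similarly to the unitors of $\cat{A}$ via Lemma \ref{ZXunitinternalhom}. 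Organising the verification this way, the only diagram chase of any length is the one establishing that $\eta$ and \eqref{ZXcopowernattrafo} intertwine the associator of $\cat{A}$, after which the coherence theorem for $\cat{A}$ finishes the argument.
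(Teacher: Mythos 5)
Your proposal is correct and takes essentially the same route as the paper: the paper's proof consists of exactly your four-step chain of isomorphisms through the tensoring adjunction \eqref{ZXcopowernattrafo}, the enriched isomorphism \eqref{ZXtensenriched} and the internal hom of Definition \ref{ZXaselfenriched}, concluded by Yoneda. The paper explicitly omits the pentagon and triangle verifications, so your sketch of the coherence reductions goes slightly beyond the printed argument but is consistent with it.
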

\begin{proof}
	We only prove that there exist isomorphisms $a\cdot(a'\cdot c)\cong aa'\cdot c$ and omit checking the triangle and pentagon equations. We observe that
	\begin{align}
	\begin{split}
	\cat{C}(a\cdot(a'\cdot c),c')&\cong \cat{A}(a,\underline{\cat{C}}(a'c,c'))\\
	& \cong \cat{A}(a,\underline{\cat{A}}(a',\underline{\cat{C}}(c,c')))\\
	&\cong \cat{A}(aa',\underline{\cat{C}}(c,c'))\\
	&\cong \cat{C}(aa'c,c'),
	\end{split}
	\end{align}
	for all $c'\in\cat{C}$.
\end{proof}

\begin{df}\label{ZXevaluation}
	We let
	\begin{equation}
	\tn{ev}\colon \underline{\cat{C}}(c,c')\cdot c \rar c'.
	\end{equation} 
	be the unit of the adjunction between $\underline{\cat{C}}(c,-)\colon \cat{C}\rar\cat{A}$ and $-\cdot c\colon \cat{A}\rar\cat{C}$ from \eqref{ZXcopowernattrafo}.
\end{df}

\begin{rmk}
	The evaluation morphism allows us to rewrite the defining diagram for an enriched natural transformation (Definition \ref{ZXnattrafo}) as follows:
	\begin{center}
		\begin{tikzcd}
			
			\underline{\cat{C}}(c,c')\cdot F(c)\arrow[r,"\eta_c"]\arrow[d,"F\cdot \tn{id}"]	&	\underline{\cat{C}}(c,c')\cdot  G(c)\arrow[d,"G\cdot \tn{id}"]	\\
			\underline{\cat{C}}(Fc,Fc')\cdot F(c)\arrow[d,"\tn{ev}"]	&	\underline{\cat{C}}(c,c')\cdot  G(c)\arrow[d,"\tn{ev}"]	\\
			F(c')\arrow[r,"\eta_c"]												&	G(c').
		\end{tikzcd}
	\end{center}
\end{rmk}

\begin{lem}\label{ZXaintohom}
	There exists a canonical isomorphism
	$$
	a  \underline{\cat{C}}(c,c')\rar \underline{\cat{C}}(c,ac').
	$$
\end{lem}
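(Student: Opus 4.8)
The plan is to prove the isomorphism by Yoneda in $\cat{A}$: since both $a\otimes\underline{\cat{C}}(c,c')$ and $\underline{\cat{C}}(c,a\cdot c')$ are objects of $\cat{A}$, it suffices to produce an isomorphism $\cat{A}(b,a\otimes\underline{\cat{C}}(c,c'))\cong\cat{A}(b,\underline{\cat{C}}(c,a\cdot c'))$ natural in $b\in\cat{A}$ (and in $a,c,c'$). First I would rewrite the right-hand side using the tensoring adjunction \eqref{ZXcopowernattrafo} of Definition \ref{ZXtensoreddef}, which gives $\cat{A}(b,\underline{\cat{C}}(c,a\cdot c'))\cong\cat{C}(b\cdot c,a\cdot c')$. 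For the left-hand side I would use rigidity of $\cat{A}$: picking a dual $a^*$ of $a$, the adjunction $a\otimes(-)\dashv a^*\otimes(-)$ yields $\cat{A}(b,a\otimes\underline{\cat{C}}(c,c'))\cong\cat{A}(a^*\otimes b,\underline{\cat{C}}(c,c'))$, and then Definition \ref{ZXtensoreddef} together with the monoidality of the tensoring (Lemma \ref{ZXmonoidality}) turns this into $\cat{C}((a^*\otimes b)\cdot c,c')\cong\cat{C}(a^*\cdot(b\cdot c),c')$.

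Writing $e=b\cdot c$, the problem is thereby reduced to a single natural isomorphism $\cat{C}(a^*\cdot e,c')\cong\cat{C}(e,a\cdot c')$, i.e.\ to the statement that tensoring by $a$ and by its dual $a^*$ are adjoint endofunctors of $\cat{C}$; this is where the real content sits. I would deduce it from the fact, recorded in Lemma \ref{ZXmonoidality}, that $a\mapsto a\cdot(-)$ is a monoidal functor $\cat{A}\rar\End(\cat{C})$: a monoidal functor carries the duality data (evaluation and coevaluation) exhibiting $a^*$ as a dual of $a$ in the rigid category $\cat{A}$ to duality data exhibiting $a^*\cdot(-)$ as a dual of $a\cdot(-)$ in $\End(\cat{C})$. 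Since $\cat{A}$ is symmetric these are two-sided duals, so $a^*\cdot(-)$ and $a\cdot(-)$ are mutually adjoint endofunctors, and in particular $\cat{C}(a^*\cdot e,c')\cong\cat{C}(e,a\cdot c')$. Concretely, the unit and counit are obtained by applying $(-)\cdot e$ and $(-)\cdot c'$ to the coevaluation $\mathbb{I}\rar a\otimes a^*$ and evaluation $a^*\otimes a\rar\mathbb{I}$ and invoking the coherence isomorphisms $(a\otimes a^*)\cdot(-)\cong a\cdot(a^*\cdot(-))$ of Lemma \ref{ZXmonoidality}; the triangle identities are then exactly the images of the zig-zag identities in $\cat{A}$.

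Chaining the four isomorphisms above gives the desired natural bijection on $\cat{A}(b,-)$, and Yoneda then produces a canonical isomorphism $a\,\underline{\cat{C}}(c,c')\cong\underline{\cat{C}}(c,a\cdot c')$, with naturality in $a$, $c$ and $c'$ following since every step is natural in these variables. Alternatively, one can write the comparison morphism down explicitly: transposing the composite $(a\otimes\underline{\cat{C}}(c,c'))\cdot c\cong a\cdot(\underline{\cat{C}}(c,c')\cdot c)\xrightarrow{a\cdot\ev}a\cdot c'$ across Definition \ref{ZXtensoreddef} (with $\ev$ the evaluation of Definition \ref{ZXevaluation}) defines a map $a\otimes\underline{\cat{C}}(c,c')\rar\underline{\cat{C}}(c,a\cdot c')$, and one builds a two-sided inverse from the coevaluation of $a^*$. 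I expect the main obstacle to be precisely the adjunction $a^*\cdot(-)\dashv a\cdot(-)$: one must check that the isomorphisms coming from rigidity of $\cat{A}$ and from Lemma \ref{ZXmonoidality} are compatible, so that the resulting correspondence is genuinely natural in $b$ rather than a mere bijection of sets, as it is naturality that Yoneda requires.
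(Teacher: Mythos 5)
Your proposal is correct and follows essentially the same route as the paper: the paper's proof is precisely the Yoneda chain $\cat{A}(a',a\underline{\cat{C}}(c,c'))\cong \cat{A}(a^*a',\underline{\cat{C}}(c,c'))\cong \cat{C}(a^*a'c,c')\cong \cat{C}(a'c,ac')\cong \cat{A}(a',\underline{\cat{C}}(c,ac'))$, combining rigidity of $\cat{A}$ with the tensoring adjunction. The only difference is that you spell out the adjunction $a^*\cdot(-)\dashv a\cdot(-)$ (via the monoidal functor $\cat{A}\rar\End(\cat{C})$ of Lemma \ref{ZXmonoidality} transporting duality data), a step the paper's third isomorphism uses without comment.
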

\begin{proof}
	We construct an isomorphism between the images of $a\underline{\cat{C}}(c,c')$ and $\underline{\cat{C}}(c,ac')$ under the Yoneda embedding:
	\begin{align}\begin{split}
	\cat{A}(a',a\underline{\cat{C}}(c,c'))&\cong \cat{A}(a^*a',\underline{\cat{C}}(c,c'))\\
	&\cong \cat{C}(a^*a'c,c')\\
	&\cong \cat{C}(a'c,ac')\\
	&\cong \cat{A}(a',\underline{\cat{C}}(c,ac')).
	\end{split}
	\end{align}
\end{proof}

\subsubsection{Abelian structure}
To understand the analogues of the notions in linear categories that we want to find here, it is helpful to first revisit the linear case. One can phrase the definition of a linear category as follows: a linear category $\cat{C}$ is a category enriched over $\Vect$ (the category of finite dimensional vector spaces), which is abelian (with respect to the abelian group structure of the vector spaces). As $\cat{C}$ is abelian, it then further makes sense to ask for semi-simplicity, ie. that all short exact sequences split. 

In the setting a category $\cat{C}$ enriched over a symmetric fusion category $\cat{A}$  we have to take a bit more care, as our morphisms are no longer elements of vectors spaces, and therefore a priori do not form an abelian group. If we assume our category is not only enriched, but also tensored, we can view a morphism $f:c\rar_a c'$ as an element of the vector space $\cat{C}(a\cdot c,c')$ (using the notation from Notation \ref{ZXhomsetnotation} and Definition \ref{ZXtensoreddef}). Given another morphism 

\subsubsection{Tensor product of enriched tensored categories}

\begin{df}\label{ZXenrichtensorinit}
	The \emph{enriched cartesian product $\cat{C}\hat{\bxc{A}}\cat{D}$} of two $\cat{A}$-enriched categories $\cat{C}$ and $\cat{D}$ is the $\cat{A}$-enriched category whose objects are symbols $c\bxc{A}d$ with $c\in \cat{C}$ and $d\in \cat{D}$, and whose hom-objects are given by:
	\begin{equation}
	\underline{\cat{C}\hat{\bxc{A}}\cat{D}}\left(c\boxtimes d,c'\boxtimes d'\right)\colon =\underline{\cat{C}}(c,c')\otimes\underline{\cat{D}}(d,d'),
	\end{equation}
	where $\otimes$ is the tensor product in $\cat{A}$. Composition is given by first applying the braiding in $\cat{A}$ and then the compositions in $\cat{C}$ and $\cat{D}$.
\end{df}

Definition \ref{ZXenrichtensorinit} has an undesirable feature: if $\cat{C}$ and $\cat{D}$ from the above definition are semi-simple and idempotent complete, $\cat{C}\hat{\bxc{A}}\cat{D}$ in general will not be. Another, more prosaic, problem is that this notion of tensor product is not compatible with direct sums, we will fix this momentarily.

\begin{df}\label{ZXcauchycompl}
	The \emph{Cauchy completion} of a $\cat{A}$-enriched category $\cat{C}$ is the category with objects $n$-tuples of objects from $\cat{C}$ together with a matrix of morphisms from $\cat{C}$ that is idempotent as morphism from the $n$-tuple to itself. Morphisms are matrices of morphisms from $\cat{C}$ that commute with the idempotents.
\end{df}

\begin{rmk}
	Considering $n$-tuples of objects ensures compatibility with direct sums, picking idempotents ensures the category is idempotent complete. Note that $\cat{C}$ includes into its Cauchy completion. Any functor of $\cat{A}$-enriched categories induces a functor between the Cauchy completions, and the Cauchy completion of any category tensored over $\cat{A}$ is also tensored over $\cat{A}$.
\end{rmk}

\begin{df}\label{ZXaproddef}
	The \emph{$\cat{A}$-product $\cat{C}\bxc{A}\cat{D}$} of two $\cat{A}$-enriched categories $\cat{C}$ and $\cat{D}$ is the Cauchy completion of $\cat{C}\hat{\bxc{A}}\cat{D}$.
\end{df}

\begin{prop}\label{ZXcarttensored}
	If $\cat{C},\cat{D}\in\lcat{A}$, then $\cat{C}\bxc{A}\cat{D}$ is tensored over $\cat{A}$ with tensoring,
	\begin{equation}\label{ZXtensorontensor}
	a\cdot (c\bx d)\cong (a\cdot c)\bx d\cong c\bx (a\cdot d),
	\end{equation}
	and we have isomorphisms:
	$$
	(a\cdot c)\bx d\cong c\bx (a\cdot d).
	$$
\end{prop}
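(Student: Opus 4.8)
The plan is to construct the tensoring first on the generating objects of the enriched cartesian product $\cat{C}\hat{\bxc{A}}\cat{D}$ and then transport it to the Cauchy completion. Concretely, I would set $a\cdot(c\bx d):=(a\cdot c)\bx d$ and verify the defining functorial isomorphism of Definition \ref{ZXtensoreddef}, namely
\begin{equation*}
\cat{A}\big(a,\underline{\cat{C}\hat{\bxc{A}}\cat{D}}(c\bx d,c'\bx d')\big)\cong (\cat{C}\hat{\bxc{A}}\cat{D})\big((a\cdot c)\bx d,c'\bx d'\big).
\end{equation*}
Using the hom-object formula from Definition \ref{ZXenrichtensorinit}, the left-hand side is $\cat{A}(a,\underline{\cat{C}}(c,c')\otimes\underline{\cat{D}}(d,d'))$, while the right-hand side is $\cat{A}(\mathbb{I},\underline{\cat{C}}(a\cdot c,c')\otimes\underline{\cat{D}}(d,d'))$. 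I would then rewrite the $\cat{C}$-factor of the right-hand side via the enriched tensoring isomorphism \eqref{ZXtensenriched}, $\underline{\cat{C}}(a\cdot c,c')\cong\underline{\cat{A}}(a,\underline{\cat{C}}(c,c'))$, and identify the internal hom of $\cat{A}$ (Definition \ref{ZXaselfenriched}) with $\underline{\cat{A}}(a,X)\cong a^*\otimes X$, using that $\cat{A}$ is rigid and that its symmetry lets me move the dual to either side. The duality adjunction $\cat{A}(\mathbb{I},a^*\otimes Z)\cong\cat{A}(a,Z)$, applied with $Z=\underline{\cat{C}}(c,c')\otimes\underline{\cat{D}}(d,d')$, then matches the two sides; since each step is natural in $c'$ and $d'$, the composite is the required functorial isomorphism.

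For the symmetric identification $(a\cdot c)\bx d\cong c\bx(a\cdot d)$, I would run the identical computation on the $\cat{D}$-factor to show that $c\bx(a\cdot d)$ represents the very same functor $y\mapsto\cat{A}\big(a,\underline{\cat{C}\hat{\bxc{A}}\cat{D}}(c\bx d,y)\big)$. Uniqueness of representing objects then yields canonical isomorphisms $(a\cdot c)\bx d\cong a\cdot(c\bx d)\cong c\bx(a\cdot d)$, establishing \eqref{ZXtensorontensor} together with the separately displayed isomorphism in one stroke. Finally, since a tensoring on an $\cat{A}$-enriched category passes to its Cauchy completion (the remark following Definition \ref{ZXcauchycompl}), the tensoring just built on $\cat{C}\hat{\bxc{A}}\cat{D}$ induces one on $\cat{C}\bxc{A}\cat{D}=\cat{C}\bxc{A}\cat{D}$, which is what the statement asserts.

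The main obstacle I anticipate is purely bookkeeping: confirming that the chain of isomorphisms is simultaneously natural in all the relevant variables, and that the two descriptions of the tensoring are compatible with the associativity constraint $a\cdot(a'\cdot x)\cong(aa')\cdot x$ coming from Lemma \ref{ZXmonoidality}, so that $\cat{A}$ really acts monoidally through either factor. No genuinely hard step is expected, since everything reduces to the self-enrichment of $\cat{A}$ together with the given tensorings of $\cat{C}$ and $\cat{D}$.
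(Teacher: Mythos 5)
Your proposal is correct and takes essentially the same route as the paper: both verify the defining isomorphism of Definition \ref{ZXtensoreddef} on the generating objects $c\bx d$ using the hom-object formula of Definition \ref{ZXenrichtensorinit} together with the enriched tensoring isomorphism \eqref{ZXtensenriched}, obtain both descriptions $(a\cdot c)\bx d$ and $c\bx (a\cdot d)$ from the symmetry of $\cat{A}$ and uniqueness of representing objects, and pass the tensoring to the Cauchy completion. The only cosmetic difference is that the paper computes at the level of internal homs via Lemma \ref{ZXaintohom}, whereas you work at the hom-set level and unpack $\underline{\cat{A}}(a,-)$ as $a^*\otimes(-)$ using rigidity; the two computations agree under the Yoneda embedding.
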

\begin{proof}
	For the first part, recall, from Definition \ref{ZXtensoreddef}, that it is enough to show that $c\bx (a\cdot d)$ satisfies:
	\begin{equation}\label{ZXproofgoal1}
	\underline{\cat{A}}(a,\underline{\cat{C}\bxc{A}\cat{D}}(c\bx d,c'\bx d'))\cong \underline{\cat{C}\bxc{A}\cat{D}}(c\bx (ad),c'\bx d').
	\end{equation}
	As $a\cdot (c\boxtimes d)$ is characterised by this equation, this will both establish existence of the tensor structure and $a\cdot (c\boxtimes d)\cong c\bx (a\cdot d)$.
	
	Substituting in the definition of the hom-objects in the $\cat{A}$-product, we see we are trying to find
	$$
	\underline{\cat{A}}(a,\underline{\cat{C}}(c,c')\otimes\underline{\cat{D}}(d,d'))\cong \underline{\cat{C}}(c,c')\otimes\underline{\cat{D}}(ad,d').
	$$
	Applying Lemma \ref{ZXaintohom} to $\cat{A}$ viewed as a category tensored over itself, we see that the left hand side reads:
	\begin{equation}\label{ZXsomestep}
	\underline{\cat{C}}(c,c')\cdot \underline{\cat{A}}(a,\underline{\cat{D}}(d,d'))\cong \underline{\cat{C}}(c,c')\otimes\underline{\cat{D}}(ad,d'),
	\end{equation}
	where the last isomorphism is (\ref{ZXtensenriched}). This gives us the desired isomorphism \eqref{ZXproofgoal1}.
	
	To establish the remaining assertion, observe that besides (\ref{ZXsomestep}) we also have, after applying the symmetry in $\cat{A}$
	\begin{align*}
	\underline{\cat{A}}(a,\underline{\cat{C}}(c,c')\otimes\underline{\cat{D}}(d,d'))&\cong \underline{\cat{D}}(d,d')\cdot \underline{\cat{A}}(a,\underline{\cat{C}}(c,c'))\\
	&\cong \underline{\cat{C}}(ac,c')\otimes\underline{\cat{D}}(d,d')\\
	&\cong \underline{\cat{C}\hat{\bxc{A}}\cat{D}}((ac)\bxc{A}d,c'\bxc{A}d'),
	\end{align*}
	where we used the symmetry in $\cat{A}$ again the penultimate line.
\end{proof}

The $\cat{A}$-product is symmetric in the sense that:

\begin{df}
	Let $\cat{C},\cat{D}\in\lcat{A}$, then the \emph{switch functor} $S\colon \cat{C}\bxc{A}\cat{D}\rar\cat{D}\bxc{A}\cat{C}$ is defined by
	$$
	c\bxc{A}d\mapsto d\bxc{A}c
	$$
	at the level of objects and
	$$
	\underline{\cat{C}}(c,c')\otimes \underline{\cat{D}}(d,d')\raru{s}\underline{\cat{D}}(d,d')\otimes\underline{\cat{C}}(c,c'),
	$$
	where $s$ is the symmetry in $\cat{A}$.
\end{df}

As the monoidal structure and the symmetry in $\cat{A}$ satisfy the appropriate coherence equations, the $\cat{A}$-product and the switch functor will strictly satisfy the coherence equations for a symmetric monoidal structure on the 2-category of categories enriched in and tensored over $\cat{A}$. That is, $(\lcat{A}, \bxc{A}, S)$ is a (strict) symmetric monoidal 2-category.

Given this $\cat{A}$-product, we can define:
\begin{df}\label{ZXatensordef}
	Let $\cat{C}$ be an $\cat{A}$-enriched category. Then a \emph{$\cat{A}$-tensor structure} is a pair of functors:
	$$
	\otimes\colon \cat{C}\bxc{A}\cat{C}\rar\cat{C},\quad\quad \mathbb{I}\colon \underline{\cat{A}}\rar \cat{C},
	$$
	equipped with associators and unitors satisfying the usual coherence conditions.
\end{df}

\begin{prop}\label{ZXunitenrcart}
	The unit for the enriched cartesian product of enriched and tensored categories is $\cat{A}$ enriched over itself, denoted by $\underline{\cat{A}}$.
\end{prop}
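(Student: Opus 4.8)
The plan is to exhibit, for every $\cat{A}$-enriched and tensored category $\cat{C}$, an equivalence $\cat{C}\simeq\underline{\cat{A}}\bxc{A}\cat{C}$ (and symmetrically on the other side via the switch functor $S$) which is coherent enough to serve as the unit constraint in $(\lcat{A},\bxc{A},S)$. The comparison functor I would use is
$$
G\colon\cat{C}\rar\underline{\cat{A}}\bxc{A}\cat{C},\qquad c\mapsto\mathbb{I}\boxtimes c,
$$
which includes $\cat{C}$ into the $\mathbb{I}$-component of the enriched cartesian product and then into its Cauchy completion. First I would check that $G$ is genuinely a morphism in $\lcat{A}$, i.e.\ that it respects the tensoring: this is immediate from Proposition \ref{ZXcarttensored}, since $a\cdot(\mathbb{I}\boxtimes c)\cong\mathbb{I}\boxtimes(a\cdot c)$ by the isomorphism \eqref{ZXtensorontensor}.

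Next I would establish full faithfulness. By Definition \ref{ZXenrichtensorinit} the relevant hom-object is
$$
\underline{\underline{\cat{A}}\hat{\bxc{A}}\cat{C}}(\mathbb{I}\boxtimes c,\mathbb{I}\boxtimes c')=\underline{\cat{A}}(\mathbb{I},\mathbb{I})\otimes\underline{\cat{C}}(c,c'),
$$
and Lemma \ref{ZXunitinternalhom} gives $\underline{\cat{A}}(\mathbb{I},\mathbb{I})\cong\mathbb{I}$, so this is canonically $\underline{\cat{C}}(c,c')$. Under this identification the composition of the cartesian product (the braiding in $\cat{A}$ followed by componentwise composition, per Definition \ref{ZXenrichtensorinit}) collapses on the $\underline{\cat{A}}(\mathbb{I},\mathbb{I})$-factor to the unit isomorphism $\mathbb{I}\otimes\mathbb{I}\cong\mathbb{I}$, so $G$ reduces to the composition of $\cat{C}$ and is therefore fully faithful.

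For essential surjectivity the key observation is that every generating object $a\boxtimes c$ of $\underline{\cat{A}}\hat{\bxc{A}}\cat{C}$ is already isomorphic to one in the image of $G$:
$$
a\boxtimes c\cong(a\cdot\mathbb{I})\boxtimes c\cong a\cdot(\mathbb{I}\boxtimes c)\cong\mathbb{I}\boxtimes(a\cdot c),
$$
using that the self-tensoring of $\underline{\cat{A}}$ is the monoidal product (so $a\cdot\mathbb{I}\cong a$) together with the tensoring isomorphism \eqref{ZXtensorontensor} of Proposition \ref{ZXcarttensored}. Thus $G$ is fully faithful and essentially surjective onto $\underline{\cat{A}}\hat{\bxc{A}}\cat{C}$, hence an equivalence there; passing to Cauchy completions (Definition \ref{ZXcauchycompl}), an equivalence of $\cat{A}$-enriched categories induces one of their Cauchy completions, and idempotents in $\underline{\cat{A}}\hat{\bxc{A}}\cat{C}$ correspond to idempotents in $\cat{C}$, so we obtain $\underline{\cat{A}}\bxc{A}\cat{C}\simeq\cat{C}$ (for Cauchy complete $\cat{C}$; in general the equivalence is with the Cauchy completion of $\cat{C}$, which is the relevant object in $\lcat{A}$).

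I expect the main obstacle to be bookkeeping rather than anything conceptual: verifying that the hom-object isomorphism is the one genuinely \emph{induced} by $G$ (so that composition really does factor through the $\underline{\cat{A}}(\mathbb{I},\mathbb{I})\cong\mathbb{I}$ identification compatibly with the unit and associativity coherences of $\hat{\bxc{A}}$), and checking that $G$ together with these equivalences satisfies the triangle coherence exhibiting $\underline{\cat{A}}$ as a genuine monoidal unit of $(\lcat{A},\bxc{A},S)$. The essential-surjectivity step, by contrast, is clean, because the tensoring isomorphism \eqref{ZXtensorontensor} lets us identify each $a\boxtimes c$ with $\mathbb{I}\boxtimes(a\cdot c)$ directly and so avoids any explicit splitting of idempotents.
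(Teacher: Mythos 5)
Your proof is correct. One thing to note at the outset: the paper states Proposition \ref{ZXunitenrcart} without any proof at all, so there is no argument of record to match; the closest thing in the paper is Lemma \ref{ZXunitconvprod}, which proves the analogous unit statement for the convolution product $\cattens{c}$. It is instructive to compare your route with that one. The paper works with the functor in the \emph{opposite} direction, $\cat{A}_\cat{Z}\cattens{c}\cat{K}\rar\cat{K}$, $a\boxtimes k\mapsto(a\otimes_c\mathbb{I}_s)\cdot k$, built directly from the tensoring and shown fully faithful ``by construction'' via a chain of hom-object isomorphisms, with essential surjectivity obtained by taking $a$ to be the unit. You instead include $\cat{C}$ into the $\mathbb{I}$-component of $\underline{\cat{A}}\hat{\bxc{A}}\cat{C}$ and use Lemma \ref{ZXunitinternalhom} plus the triviality of the braiding against $\mathbb{I}$ for full faithfulness, and the tensoring isomorphisms of Proposition \ref{ZXcarttensored} together with $a\cdot\mathbb{I}\cong a$ for essential surjectivity on generators. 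Both arguments have the same engine --- the isomorphism $a\boxtimes c\cong \mathbb{I}\boxtimes(a\cdot c)$ supplied by the tensoring --- but your direction has one extra wrinkle the paper's direction avoids: since $\bxc{A}$ is Cauchy-completed by definition (Definition \ref{ZXaproddef}), essential surjectivity of an inclusion into the completion is only automatic after noting that an equivalence $\cat{C}\simeq\underline{\cat{A}}\hat{\bxc{A}}\cat{C}$ induces an equivalence of Cauchy completions, and that the statement as given implicitly assumes the objects of $\lcat{A}$ are Cauchy complete (the appendix's Definition \ref{ZXalincatdef} does not say this, though the main text's Definition \ref{ZXzalincat} does impose idempotent completeness). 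You flag exactly this caveat, which is the honest reading of the proposition. The remaining items you defer --- that the hom-object identification is the one induced by $G$ compatibly with units and associativity, and the triangle coherence in $(\lcat{A},\bxc{A},S)$ --- are genuine but routine, and the paper itself asserts the corresponding coherences without proof in the surrounding discussion, so your level of detail already exceeds the paper's.
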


\subsubsection{Change of basis}\label{ZXchangeofbasissection}
Given monoidal categories $\cat{C}$ and $\cat{D}$, and a monoidal functor $F\colon \cat{C}\rar\cat{D}$, one gets a 2-functor $(F)_*$ from the 2-category of categories enriched over $\cat{C}$ to that of those enriched over $\cat{D}$, known as the ``change of basis'' functor. For a treatment of change of basis along monoidal functors, see \cite{Cruttwell2008}. The proofs from this reference translate straightforwardly to the lax monoidal case:

\begin{df}\label{ZXlaxmondef}
	A \emph{lax monoidal functor} from a monoidal category $\cat{C}$ to a monoidal category $\cat{D}$ is a functor $F\colon \cat{C}\rar \cat{D}$, together with a natural transformation:
	$$
	\mu\colon F(-)\otimes F(-)\Rightarrow F(-\otimes-),
	$$
	and a morphism
	$$
	\mu^0\colon \mathbb{I}_\cat{D}\rar F(\mathbb{I}_\cat{C}),
	$$
	that satisfy the compatibility conditions with the associators $\alpha_\cat{C}$ and $\alpha_\cat{D}$:
	\begin{center}
		\begin{tikzcd}
			F(c)(F(c')F(c'')) \arrow[r,"\mu"] \arrow[d,"\alpha_\cat{D}"] 	&	F(c)F(c'c'') \arrow[d,"\mu"] \\
			(F(c)F(c'))F(c'') \arrow[d,"\mu"]								&	F(c(c'c'')) \arrow[dl,"F(\alpha_\cat{C})"]\\
			F((cc')c'') &,
		\end{tikzcd}
	\end{center}
	for all $c,c',c''\in\cat{C}$, and compatibility with the unitors:
	\begin{center}
		\begin{tikzcd}
			\mathbb{I}_{\cat{D}}F(c)\arrow[r,"\mu_0"]\arrow[d,"\lambda_\cat{D}"]	&F(\mathbb{I}_\cat{C})F(c) \arrow[d,"\mu_{\mathbb{I},c}"]\\
			F(c)& F(\mathbb{I}_{\cat{C}}c)\arrow[l,"F(\lambda_\cat{C})"],			
		\end{tikzcd}
	\end{center}
	and a similar condition for the right unitors.
\end{df}

In this section, we will focus on this lax case. We will make use of the following well-known results.

\begin{prop}\label{ZXchangebasisobjects}
	Let $(F\colon \cat{C}\rar \cat{D},\mu)$ be a lax monoidal functor, and let $\cat{M}$ be a $\cat{C}$-enriched category. Then the category $F\cat{M}$ obtained from $\cat{M}$ by applying $F$ to the hom-objects is a $\cat{D}$-enriched category, with composition given by the image of the composition in $\cat{M}$ under $F$ and identity morphisms the image of the identity morphisms under $F$ precomposed with $\mu^0$.
\end{prop}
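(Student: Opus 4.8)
The plan is to first record the claimed enriched-category data explicitly and then verify the two axioms of a $\cat{D}$-enriched category—associativity of composition and left/right unitality—by diagram chasing, feeding in precisely the coherence data of $(F,\mu,\mu^0)$ together with the naturality of $\mu$ and the functoriality of $F$. Writing $M_{ij}:=\cat{M}(m_i,m_j)$ for the hom-objects of $\cat{M}$ in $\cat{C}$, the category $F\cat{M}$ has the same objects as $\cat{M}$, hom-objects $F(M_{ij})$, composition given by
\[
F(M_{12})\otimes_\cat{D} F(M_{01}) \xrightarrow{\mu} F(M_{12}\otimes_\cat{C} M_{01}) \xrightarrow{F(\circ)} F(M_{02}),
\]
and unit at $m$ given by $\mathbb{I}_\cat{D}\xrightarrow{\mu^0}F(\mathbb{I}_\cat{C})\xrightarrow{F(j_m)}F(\cat{M}(m,m))$, where $j_m$ is the identity of $m$ in $\cat{M}$. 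This is exactly the data named in the statement, so only the axioms require an argument.

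For associativity I would chase the associativity diagram for $F\cat{M}$ on the triple $F(M_{01}),F(M_{12}),F(M_{23})$ with target $F(M_{03})$. Each of the two legs consists of an associator $\alpha_\cat{D}$, two applications of $\mu$, and two applications of $F(\circ)$. The strategy is: (i) use naturality of $\mu$ to slide the outer $\mu$ past the first $F(\circ)$, so that on each leg the two $\mu$'s assemble into $F$ of a single map into a threefold $\cat{C}$-tensor; (ii) apply the associator-compatibility square from Definition \ref{ZXlaxmondef} to replace the free $\alpha_\cat{D}$ by $F(\alpha_\cat{C})$; and (iii) observe that both legs are then the image under $F$ of the two legs of the associativity diagram that already commutes in $\cat{M}$. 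Functoriality of $F$ then closes the square.

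For unitality the same method applies in a lighter form: the left and right unit triangles for $F\cat{M}$ reduce, after one use of naturality of $\mu$, to the $F$-image of the unit axioms of $\cat{M}$, now invoking the unitor-compatibility square relating $\mu^0$, $\mu$, and the unitors $\lambda,\rho$ of $\cat{C}$ and $\cat{D}$. I would do the left unit case in full and remark that the right unit case is symmetric.

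The main obstacle is the associativity chase: one has to interleave the \emph{two} naturality squares for $\mu$ with the \emph{single} associator-compatibility square in just the right order so that every morphism ends up inside the image of $F$, at which point $\cat{M}$'s own associativity can be invoked. Everything else—including the unit axioms—is routine by comparison, and indeed the convolution product (Definition \ref{ZXconvprod}) and the braiding functor are built from exactly this pattern, so the verification here is the prototype those later constructions rely on.
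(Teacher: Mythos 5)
Your proof is correct, and in fact the paper offers no proof of this proposition at all (``We will omit the proof of this statement''), so there is nothing to diverge from: your verification---composition $F(\circ)\circ\mu$, unit $F(j_m)\circ\mu^0$, associativity closed by sliding the outer $\mu$ past $F(\circ)$ via naturality and then invoking the associator-compatibility square of Definition \ref{ZXlaxmondef} to reduce both legs to the $F$-image of $\cat{M}$'s associativity, unitality likewise via the unitor square---is exactly the standard argument the paper is implicitly citing. The only cosmetic slip is the claim that \emph{each} leg of the associativity diagram contains an associator $\alpha_\cat{D}$; with the usual statement of the axiom only one leg does, which is precisely why step (ii) trades the lone $\alpha_\cat{D}$ for $F(\alpha_\cat{C})$, and your chase goes through unchanged.
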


We will omit the proof of this statement. It turns out that a change of basis is a 2-functor:
\begin{prop}\label{ZXchangebasisispseudofunct}
	Let $(F\colon \cat{C}\rar \cat{D},\mu)$ be a lax monoidal functor, then the assignment $\cat{M}\mapsto F\cat{M}$ extends to a 2-functor from the 2-category of $\cat{C}$-enriched categories to the 2-category of $\cat{D}$-enriched categories.
\end{prop}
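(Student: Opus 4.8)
The plan is to exhibit the assignment $\cat{M}\mapsto F\cat{M}$ explicitly on $1$- and $2$-cells and then verify the (strict) $2$-functor axioms by transporting the defining diagrams through $F$, inserting the lax structure morphisms $\mu$ and $\mu^0$ wherever tensor products and units occur. On $1$-cells: given a $\cat{C}$-enriched functor $G\colon \cat{M}\rar\cat{N}$, I would define $FG\colon F\cat{M}\rar F\cat{N}$ to agree with $G$ on objects and to act on hom-objects by $F(G_{m,m'})\colon F\underline{\cat{M}}(m,m')\rar F\underline{\cat{N}}(G(m),G(m'))$. On $2$-cells: given an enriched natural transformation $\eta\colon G\Rightarrow H$ with components $\eta_m\colon \mathbb{I}_\cat{C}\rar\underline{\cat{N}}(G(m),H(m))$, I would set
$$
(F\eta)_m\colon \mathbb{I}_\cat{D}\xrightarrow{\mu^0}F(\mathbb{I}_\cat{C})\xrightarrow{F(\eta_m)}F\underline{\cat{N}}(G(m),H(m)),
$$
the target being by definition the hom-object $\underline{F\cat{N}}(FG(m),FH(m))$.

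First I would check that these land in the right place. That $FG$ respects composition and units in $F\cat{M},F\cat{N}$ (whose compositions are, by Proposition \ref{ZXchangebasisobjects}, $F$ of the compositions in $\cat{M},\cat{N}$ precomposed with $\mu$, and whose units are built from $\mu^0$) follows from applying $F$ to the functoriality diagrams for $G$ and then using naturality of $\mu$ with respect to the pair $(G_{m',m''},G_{m,m'})$; the unit clause uses that $G$ preserves identities together with functoriality of $F$. That $F\eta$ is again an enriched natural transformation is obtained by applying $F$ to the naturality square of $\eta$ (Definition \ref{ZXnattrafo}) and reconciling the unit insertions via naturality of $\mu$ and the unitor axiom of Definition \ref{ZXlaxmondef}.

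Next I would verify the $2$-functor axioms. Preservation of identity $1$-cells and of composition of $1$-cells is strict and immediate, since $F(\id)=\id$ and $F$ is a functor, so $FG'\circ FG = F(G'\circ G)$ holds on the nose both on objects and on hom-objects. For $2$-cells one must check preservation of identities, of vertical composition, and of whiskering. In each of these the unit $\mathbb{I}$ enters, so the comparison passes through $\mu^0$, and the required equalities reduce to the coherence axioms of Definition \ref{ZXlaxmondef}. Concretely, for the vertical composite $\eta'\bullet\eta$ one rewrites $(F\eta')_m\otimes(F\eta)_m$ as $(F(\eta'_m)\otimes F(\eta_m))\circ(\mu^0\otimes\mu^0)$, slides $\mu$ past $F(\eta'_m)\otimes F(\eta_m)$ by naturality, and is left with the identity $\mu\circ(\mu^0\otimes\mu^0)\circ(\cong)=F(\cong)\circ\mu^0\colon\mathbb{I}_\cat{D}\rar F(\mathbb{I}_\cat{C}\otimes\mathbb{I}_\cat{C})$, where $\cong$ denotes the unit isomorphism $\mathbb{I}\cong\mathbb{I}\otimes\mathbb{I}$; this is exactly the unit-compatibility of $(\mu,\mu^0)$. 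The whiskering identities are handled the same way, using naturality of $\mu$ against the functor components.

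The main obstacle, and essentially the only real content, is this bookkeeping of the unit comparison $\mu^0$ in the $2$-cell axioms: wherever a unit object is implicitly coherently split or merged, one must invoke the unitor axiom for the lax monoidal functor rather than a mere naturality argument. Since all tensor-product coherences are supplied by naturality of $\mu$ together with the associativity axiom, and all unit coherences by $\mu^0$ together with the unitor axioms, the verification proceeds exactly as in the strong monoidal case of \cite{Cruttwell2008}, now with $\mu,\mu^0$ no longer assumed invertible. Consequently the assignment is a strict $2$-functor, as claimed.
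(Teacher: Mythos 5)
Your proof is correct and is exactly the argument the paper intends: the paper omits the verification, citing that the proofs in \cite{Cruttwell2008} for the strong monoidal case ``translate straightforwardly'' to the lax setting, and your write-up supplies precisely that translation, correctly identifying the only genuinely new point --- that the unit insertions $\mu^0$ in the $2$-cell axioms must be reconciled via the unitor axiom of Definition \ref{ZXlaxmondef} (e.g.\ the identity $\mu\circ(\mu^0\otimes\mu^0)\circ(\cong)=F(\cong)\circ\mu^0$ in your vertical-composition check) rather than by naturality of $\mu$ alone. No gaps.
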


If the monoidal categories involved are braided, we can additionally ask for the lax monoidal functor to be braided:
\begin{df}\label{ZXbraidedlaxmon}
	Let $(F\colon \cat{C}\rar \cat{D},\mu)$ be a lax monoidal functor between braided (or symmetric) monoidal categories with braidings (or symmetries) $\beta^\cat{C}$ and $\beta^\cat{D}$, respectively. Then $F$ is called \emph{braided} (or \emph{symmetric}) if the following diagram
	\begin{center}
		\begin{tikzcd}
			F(c) F(c') \arrow[r,"\beta^\cat{D}"] \arrow[d,"\mu"] 	&	 F(c')F(c) \arrow[d,"\mu"]\\
			F(cc') \arrow[r,"F(\beta^\cat{C})"] 					& F(c'c)
		\end{tikzcd}
	\end{center}
	commutes for all $c,c'\in\cat{C}$.
\end{df}

As discussed in the previous section, if the enriching category is symmetric monoidal, there is a notion of enriched Cartesian product (Definition \ref{ZXenrichtensorinit}), and hence of enriched monoidal object. Change of basis along a symmetric lax monoidal functor preserves these monoidal objects. We give a proof as we will need a slight variation of this argument in this thesis. This fact is a consequence of the following.

\begin{lem}\label{ZXchangebasisproduct}
	Let $\cat{M}$ and $\cat{N}$ be $\cat{C}$-enriched categories, where $(\cat{C},s^\cat{C})$ is a symmetric monoidal category, and let $(F\colon \cat{C}\rar \cat{D},\mu)$ be a symmetric lax monoidal functor, and denote the symmetry in $\cat{D}$ by $s^\cat{D}$. Then the assignment:
	$$
	M\colon F\cat{M}\cattens{\cat{D}}F\cat{N}\rar F(\cat{M}\cattens{\cat{C}}\cat{N}),
	$$
	which is the idenity on objects and $\mu$ on hom-objects, is a $\cat{D}$-enriched functor.
\end{lem}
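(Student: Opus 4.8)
The plan is to check the two defining conditions for a $\cat{D}$-enriched functor---compatibility with composition and with identities---since $M$ is the identity on objects and so requires nothing there. Fix composable data $m_0\boxtimes n_0,\ m_1\boxtimes n_1,\ m_2\boxtimes n_2$ and abbreviate $P=\underline{\cat{M}}(m_1,m_2)$, $R=\underline{\cat{M}}(m_0,m_1)$ and $Q=\underline{\cat{N}}(n_1,n_2)$, $S=\underline{\cat{N}}(n_0,n_1)$. Unwinding Definition \ref{ZXenrichtensorinit} together with Proposition \ref{ZXchangebasisobjects}, composition in $F\cat{M}\cattens{\cat{D}}F\cat{N}$ first applies the middle-four interchange $\tn{sh}^{\cat{D}}$ (built from associators and one instance of the symmetry $s^{\cat{D}}$) to pass from $(FP\otimes FQ)\otimes(FR\otimes FS)$ to $(FP\otimes FR)\otimes(FQ\otimes FS)$, then applies $\mu$ in each group, then the images $F(\circ_{\cat{M}}),F(\circ_{\cat{N}})$ of the two compositions, and finally one more $\mu$. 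Composition in $F(\cat{M}\cattens{\cat{C}}\cat{N})$ is $F$ applied to the analogous composite built from $\tn{sh}^{\cat{C}}$, all precomposed with $\mu\circ(\mu\otimes\mu)$.

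First I would push the maps $F(\circ_{\cat{M}})$ and $F(\circ_{\cat{N}})$ to the very end using naturality of $\mu$, which turns $\mu\circ\big(F(\circ_{\cat{M}})\otimes F(\circ_{\cat{N}})\big)$ into $F(\circ_{\cat{M}}\otimes\circ_{\cat{N}})\circ\mu$. After this both composites end in the common map $F(\circ_{\cat{M}}\otimes\circ_{\cat{N}})$, so functoriality reduces to the single identity
\begin{equation*}
\mu\circ(\mu\otimes\mu)\circ\tn{sh}^{\cat{D}}\;=\;F(\tn{sh}^{\cat{C}})\circ\mu\circ(\mu\otimes\mu),
\end{equation*}
of maps $(FP\otimes FQ)\otimes(FR\otimes FS)\rar F\big((P\otimes R)\otimes(Q\otimes S)\big)$, where on the left the inner $\mu\otimes\mu$ is $\mu_{P,R}\otimes\mu_{Q,S}$ and on the right it is $\mu_{P,Q}\otimes\mu_{R,S}$ (the interchange reconciling the two groupings).

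The heart of the matter, and the step I expect to be the main obstacle, is this interchange identity; it is exactly here that the symmetric---rather than merely lax---hypothesis on $F$ is consumed. I would prove it by writing $\tn{sh}^{\cat{C}}$ and $\tn{sh}^{\cat{D}}$ as $\alpha^{\pm 1}\circ(\id\otimes s\otimes\id)\circ\alpha^{\pm 1}$ and transporting the shuffle across the $\mu$'s one factor at a time: the associator pieces commute past the iterated $\mu$ by repeated use of the associativity-coherence diagram for a lax monoidal functor (Definition \ref{ZXlaxmondef}), while the single symmetry $s^{\cat{D}}$ is traded for $F(s^{\cat{C}})$ via the braided-compatibility square (Definition \ref{ZXbraidedlaxmon}), with naturality of $\mu$ absorbing the intervening reassociations. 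Equivalently, one may cite Mac Lane coherence for symmetric monoidal functors: both composites are legitimate ways of amalgamating $FP,FQ,FR,FS$ into $F$ of a bracketing that realises the same underlying permutation of the four factors, hence they agree.

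Finally, compatibility with identities is routine and uses only the unit part of the lax structure. The identity of $m\boxtimes n$ in $F\cat{M}\cattens{\cat{D}}F\cat{N}$ is $\big(F(\id_m)\circ\mu^0\big)\otimes\big(F(\id_n)\circ\mu^0\big)$ precomposed with the inverse left unitor of $\cat{D}$; applying $\mu$, then using naturality of $\mu$ and the unit-coherence square of Definition \ref{ZXlaxmondef}, rewrites this as $F(\id_{m\boxtimes n})\circ\mu^0$, which is precisely the identity of $m\boxtimes n$ in $F(\cat{M}\cattens{\cat{C}}\cat{N})$. This completes the plan.
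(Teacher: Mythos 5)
Your proposal is correct and follows essentially the same route as the paper's proof: reduce functoriality to the middle-four interchange square, push the composition maps through by naturality of $\mu$, split the shuffle into associators plus one symmetry, and dispatch the associator pieces by lax-associativity coherence and the symmetry by the braided-compatibility square of Definition \ref{ZXbraidedlaxmon}. The only difference is that you also verify preservation of identities via the unit coherence, a routine check the paper's proof leaves implicit.
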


\begin{proof}
	We need to check that $M$ preserves composition, this translates into checking that the outside of following diagram commutes:
	\begin{center}
		\begin{tikzcd}
			F\cat{M}F\cat{N}F\cat{M}F\cat{N}	\arrow[r,"\mu\mu"] \arrow[d,"s^\cat{D}"]		&	F(\cat{M}\cat{N})F(\cat{M}\cat{N}) \arrow[d,"\mu"]\\
			F\cat{M}F\cat{M}F\cat{N}F\cat{N}	\arrow[d,"\mu \mu"]							&	F(\cat{M}\cat{N}\cat{M}\cat{N}) \arrow[d,"F(s^\cat{D})"]\\
			F(\cat{M}\cat{M})F(\cat{N}\cat{N})	\arrow[r,"\mu"] \arrow[d,"F(\circ)F(\circ)"]&	F(\cat{M}\cat{M}\cat{N}\cat{N}) \arrow[d,"F(\circ \circ)"]\\
			F(\cat{M})F(\cat{N})				\arrow[r,"\mu"]								&	F(\cat{M}\cat{N}),
		\end{tikzcd}
	\end{center}
	where we have suppressed the objects in for example $F\cat{M}(m,m')$ from the notation for readability. The bottom square commutes by naturality of $\mu$, for the top square, we observe that the compatibility of $\mu$ with the associators (Definition \ref{ZXlaxmondef}) allows us to rewrite this as:
	\begin{center}
		\begin{tikzcd}
			F\cat{M}F\cat{N}F\cat{M}F\cat{N}	\arrow[r,"\mu"] \arrow[d,"s^\cat{D}"]	&	F\cat{M}F(\cat{N}\cat{M})F\cat{N} \arrow[d,"F(s^\cat{C})"]\arrow[r,"\mu\circ\mu"]	&	F(\cat{M}\cat{N}\cat{M}\cat{N})\arrow[d,"F(s^\cat{C})"]\\
			F\cat{M}F\cat{M}F\cat{N}F\cat{N}		\arrow[r,"\mu"]						&	F\cat{M}F(\cat{M}\cat{N})F\cat{N}	\arrow[r,"\mu\circ \mu"]						&	F(\cat{M}\cat{M}\cat{N}\cat{N}),
		\end{tikzcd}
	\end{center}
	where the rightmost square commutes by naturality of $\mu$, and the leftmost square is exactly the one from Defintion \ref{ZXbraidedlaxmon}.
\end{proof}

We observe that when $F$ is strong monoidal (so $\mu$ is an isomorphism), change of basis along $F$ takes the $\cat{C}$-enriched Cartesian product to the $\cat{D}$-enriched Cartesian product.

\begin{prop}\label{ZXmoncatpres}
	Let $F\colon \cat{C}\rar \cat{D}$ and $\cat{M}$ be as in the previous lemma. Assume further that $\cat{C}$ and $\cat{D}$ are symmetric, and that $F$ preserves the symmetry. Then, if $\cat{M}$ is $\cat{C}$-monoidal with monoidal structure $\otimes$, $F\cat{M}$ is $\cat{D}$ monoidal, with monoidal structure given by the composite
	$$
	F\cat{M}\cattens{\cat{D}}F\cat{M}\xrightarrow{M}F(\cat{M}\cattens{\cat{C}}\cat{M})\xrightarrow{F(\otimes)}F\cat{M}.
	$$
\end{prop}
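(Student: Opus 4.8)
The plan is to recognise a $\cat{C}$-tensor structure (Definition \ref{ZXatensordef}) as a pseudomonoid in the symmetric monoidal $2$-category $(\lcat{C},\cattens{\cat{C}},S)$, and to show that change of basis along $F$ is a symmetric monoidal $2$-functor, so that it carries pseudomonoids to pseudomonoids. The underlying $2$-functoriality is Proposition \ref{ZXchangebasisispseudofunct}; the monoidal comparison is the functor $M$ of Lemma \ref{ZXchangebasisproduct}, which is an isomorphism precisely because $F$ is strong (so $\mu$ is invertible), as recorded in the observation preceding the statement. Granting this, $F\cat{M}$ is a pseudomonoid whose multiplication is $F(\otimes)$ precomposed with $M$, which is exactly the stated composite $F\cat{M}\cattens{\cat{D}}F\cat{M}\xrightarrow{M}F(\cat{M}\cattens{\cat{C}}\cat{M})\xrightarrow{F(\otimes)}F\cat{M}$.

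Concretely I would proceed as follows. First, the tensor functor is a $\cat{D}$-enriched functor since it is a composite of such: $M$ is $\cat{D}$-enriched by Lemma \ref{ZXchangebasisproduct}, and $F(\otimes)$ is $\cat{D}$-enriched by Proposition \ref{ZXchangebasisispseudofunct}. For the unit I would take $F(\mathbb{I}_\cat{M})\colon F\underline{\cat{C}}\rar F\cat{M}$ and precompose with the canonical unit comparison $\underline{\cat{D}}\rar F\underline{\cat{C}}$ supplied by the strong monoidal structure of $F$ (built from $\mu^0$ and $\mu^{-1}$); the hypothesis that $F$ is strong is what makes this comparison available. The associator of $F\cat{M}$ is then obtained by applying $F$ to the associator $\alpha_\cat{M}$ of $\cat{M}$ and conjugating by the two instances of $M$ that relate $(F\cat{M}\cattens{\cat{D}}F\cat{M})\cattens{\cat{D}}F\cat{M}$ and $F\cat{M}\cattens{\cat{D}}(F\cat{M}\cattens{\cat{D}}F\cat{M})$ to the images under $F$ of the corresponding triple products in $\cat{M}$; the unitors are handled the same way using the unit comparison. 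Each of these is a $\cat{D}$-enriched natural isomorphism because it is built from $F$ applied to enriched natural isomorphisms together with the invertible comparison $M$.

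The main obstacle is the coherence bookkeeping: one must check that the pentagon and triangle for $F\cat{M}$ hold. This reduces to the compatibility of the comparison $M$ with the associators and unitors of the two Cartesian products, which is proved exactly as in Lemma \ref{ZXchangebasisproduct} from the compatibility of $\mu$ with associators (Definition \ref{ZXlaxmondef}) and the naturality of $\mu$, combined with the pentagon and triangle already holding in $\cat{M}$ and the fact that $F$ is a functor. I expect no genuine difficulty beyond this diagram chase; the symmetry hypotheses on $\cat{C}$, $\cat{D}$ and $F$ enter only through Lemma \ref{ZXchangebasisproduct}, to guarantee that $M$ is a well-defined enriched functor and hence a legitimate monoidal comparison in the first place.
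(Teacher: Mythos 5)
Your overall strategy---transporting the pseudomonoid structure along the change-of-basis 2-functor, with $M$ from Lemma \ref{ZXchangebasisproduct} as the monoidal comparison---is exactly the paper's (very terse) argument. But there is a genuine gap: you have imported a hypothesis that the proposition does not make. The statement takes $F$ ``as in the previous lemma'', i.e.\ symmetric \emph{lax} monoidal; the observation preceding the proposition (that $M$ is an isomorphism when $F$ is strong) is a side remark, not an assumption. Your proof uses invertibility in two essential places: you construct the associator of $F\cat{M}$ by ``conjugating'' $F(\alpha_\cat{M})$ by the two instances of $M$, which requires $M^{-1}$, and you build the unit comparison $\underline{\cat{D}}\rar F\underline{\cat{C}}$ from ``$\mu^0$ and $\mu^{-1}$'', which requires $\mu$ invertible. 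As written, your argument proves the proposition only for strong monoidal $F$, which is strictly weaker than what is claimed (and weaker than what the paper needs, since it applies the result along lax functors such as the forgetful functor $\dcentcat{A}\rar\cat{A}$).

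The repair is to replace conjugation by whiskering, which needs no inverses. Writing $M^{(3)}$ for the triple comparison, naturality of $\mu$ (equivalently, 2-naturality of $M$ in its two arguments) gives $\bigl(F(\otimes)\circ M\bigr)\circ\bigl((F(\otimes)\circ M)\cattens{\cat{D}}\Id\bigr)=F\bigl(\otimes\circ(\otimes\cattens{\cat{C}}\Id)\bigr)\circ M\circ(M\cattens{\cat{D}}\Id)$, and similarly for the other bracketing; the compatibility of $\mu$ with the associators (Definition \ref{ZXlaxmondef}) then shows $M\circ(M\cattens{\cat{D}}\Id)=M\circ(\Id\cattens{\cat{D}}M)=:M^{(3)}$, so both iterated tensor functors on $F\cat{M}$ factor through the \emph{same} $M^{(3)}$, and the associator is simply $F(\alpha_\cat{M})$ whiskered with $M^{(3)}$---invertible because $\alpha_\cat{M}$ is and $F$ is a functor. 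This is precisely what the paper's sentence ``as $\mu$ respects the associators for $\cat{C}$, $F$ will take the associators for $\cat{M}$ to associators for $F\cat{M}$'' encodes. For the unit, no $\mu^{-1}$ is needed either: $F\cat{M}$ is $\cat{D}$-tensored by Proposition \ref{ZXbasechtens}, so a unit functor $\underline{\cat{D}}\rar F\cat{M}$ is determined by the object $\mathbb{I}_\cat{M}(\mathbb{I}_\cat{C})$ together with the tensoring, with $\mu^0$ supplying the identity comparisons. With these two substitutions your proof goes through in the lax generality actually asserted, and the rest of your coherence bookkeeping (pentagon and triangle via Definition \ref{ZXlaxmondef} and naturality of $\mu$) is sound and matches the paper's intent.
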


\begin{proof}
	The monoidal structure is clearly functorial, as it is a composite of $\cat{D}$-enriched functors. As $\mu$ respects the associators for $\cat{C}$, $F$ will take the associators for $\cat{M}$ to associators for $F\cat{M}$, and similar for the unitors.
\end{proof}

This extends to:
\begin{prop}\label{ZXsymlaxinducessymspseudo}
	Let $F\colon \cat{C}\rar \cat{D}$ be a symmetric lax monoidal functor. Then the assignment $\cat{M}\mapsto F\cat{M}$ extends to a symmetric monoidal 2-functor from the 2-category of $\cat{C}$-enriched categories, with enriched cartesian product, to the 2-category of $\cat{D}$-enriched categories, with enriched cartesian product.
\end{prop}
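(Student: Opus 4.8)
The plan is to exhibit the change-of-basis 2-functor $(F)_*$ of Proposition \ref{ZXchangebasisispseudofunct} as the underlying 2-functor, and to equip it with the monoidal comparison data already produced in Lemma \ref{ZXchangebasisproduct}. Concretely, the binary comparison cell is the $\cat{D}$-enriched functor $M_{\cat{M},\cat{N}}\colon F\cat{M}\cattens{\cat{D}}F\cat{N}\rar F(\cat{M}\cattens{\cat{C}}\cat{N})$ that is the identity on objects and acts as $\mu$ on hom-objects, while the unit comparison is the $\cat{D}$-enriched functor from the unit $\cat{D}$-category (the one-object category with endomorphism object $\mathbb{I}_\cat{D}$) to $(F)_*$ of the unit $\cat{C}$-category (the one-object category with endomorphism object $F(\mathbb{I}_\cat{C})$), given on the single hom-object by $\mu^0\colon\mathbb{I}_\cat{D}\rar F(\mathbb{I}_\cat{C})$. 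That $M$ is a genuine enriched functor is exactly the content of Lemma \ref{ZXchangebasisproduct}; that the unit cell is one follows from the unitality half of the data $(\mu,\mu^0)$. Since the remark following Definition \ref{ZXcauchycompl} guarantees that change of basis and enriched functors pass to Cauchy completions, all of this data is well-defined at the level of the $\cat{A}$-products, and it remains to verify four coherence properties: 2-naturality of $M$, compatibility with associators, the unit axioms, and compatibility with the symmetry.

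For 2-naturality, given $\cat{C}$-enriched functors $G\colon\cat{M}\rar\cat{M}'$, $H\colon\cat{N}\rar\cat{N}'$ and enriched natural transformations between them, the required squares commute because $M$ acts by $\mu$ on hom-objects whereas $(F)_*G$ acts by applying $F$ to the action of $G$; the square then collapses to the naturality square of $\mu$ evaluated at the hom-object morphisms induced by $G$ and $H$. The associativity coherence asks that the two comparisons of $(F\cat{M}\cattens{\cat{D}}F\cat{N})\cattens{\cat{D}}F\cat{P}$ with $F((\cat{M}\cattens{\cat{C}}\cat{N})\cattens{\cat{C}}\cat{P})$ agree; on hom-objects both composites are built from two applications of $\mu$, and they are identified by the associator-compatibility diagram of Definition \ref{ZXlaxmondef}. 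Because $\cattens{\cat{C}}$, $\cattens{\cat{D}}$ and their switch functors satisfy the symmetric-monoidal coherence \emph{strictly} (as recorded after the definition of the switch functor), there are no higher structural cells to track, so each condition reduces to a single diagram read off on hom-objects.

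The unit axioms are handled identically: the relevant triangles reduce, on the single hom-object of the unit category, to the left and right unitor compatibilities of $(\mu,\mu^0)$ in Definition \ref{ZXlaxmondef}. Finally, for the symmetry one must show that $M$ intertwines the switch functors, i.e. that $M_{\cat{N},\cat{M}}\circ S=(F)_*(S)\circ M_{\cat{M},\cat{N}}$. On objects both routes send $m\boxtimes n$ to $n\boxtimes m$; on hom-objects the left route is $\mu\circ s^\cat{D}$ and the right route is $F(s^\cat{C})\circ\mu$, and these agree by the braided/symmetric condition on $F$ of Definition \ref{ZXbraidedlaxmon} — precisely the identity already exploited in the proof of Lemma \ref{ZXchangebasisproduct}. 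When $F$ is strong monoidal, $\mu$ and $\mu^0$ are isomorphisms, so $M$ and the unit cell are equivalences and the resulting 2-functor is strong; in general it is lax.

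I expect no single step to pose a serious obstacle: the heavy lifting — that $M$ is a bona fide $\cat{D}$-enriched functor — is already discharged by Lemma \ref{ZXchangebasisproduct}, and the strictness of the symmetric monoidal structure on enriched categories collapses the tower of coherence conditions for a symmetric monoidal 2-functor to the three defining diagrams of a symmetric lax monoidal functor, evaluated on hom-objects. The only genuine care required is bookkeeping: confirming that the comparison cells are 2-natural (not merely 1-natural) and that passage to Cauchy completions disturbs none of these identities, which it does not, since Cauchy completion is 2-functorial and leaves the underlying hom-object data untouched up to the splitting idempotents.
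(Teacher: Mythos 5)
Your proposal is correct and coincides with the paper's (entirely implicit) argument: the paper states this proposition without proof, as an immediate extension of Lemma \ref{ZXchangebasisproduct}, Proposition \ref{ZXchangebasisispseudofunct} and Proposition \ref{ZXmoncatpres}, and your write-up is precisely the routine verification those results are assembled to deliver, with the comparison cells given by $\mu$ and $\mu^0$ and every coherence condition collapsing, via the strictness of $(\cattens{\cat{C}},S)$ and $(\cattens{\cat{D}},S)$, to the axioms of Definitions \ref{ZXlaxmondef} and \ref{ZXbraidedlaxmon} read on hom-objects.
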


In fact for a given $\cat{C},\cat{D}$ monoidal categories and $\cat{C}$-enriched category $\cat{M}$, ``change of basis'' $(-)_*$ is itself a functor from the functor category $\mathbf{MonCat}^L(\cat{C},\cat{D})$ of lax monoidal functors from $\cat{C}$ to $\cat{D}$ and their natural transformations to the category of $\cat{D}$-enriched categories. We remind the reader of the following definition:

\begin{df}
	Let $(F,\mu)$ and $(G,\nu)$ be lax monoidal functors between $\cat{C}$ and $\cat{D}$, then a \emph{lax monoidal natural transformation} $\sigma\colon F\Rightarrow G$ is a natural transformation such that for all $c,c'\in\cat{C}$ the following diagram commutes:
	\begin{center}
		\begin{tikzcd}
			F(c)F(c') \arrow[r, "\mu_{c,c'}"] \arrow[d,"\sigma_c\sigma_{c'}"] 	&	F(cc')	\arrow[d,"\sigma_{cc'}"]\\
			G(c)G(c') \arrow[r,"\nu_{c,c'}"] 									&	G(cc').
		\end{tikzcd}
	\end{center}
	
\end{df}

\begin{prop}\label{ZXlaxnattrafotofunctor}
	Let $F,G\colon \cat{C}\rar \cat{D}$ be lax monoidal functors and $\cat{M}$ be $\cat{C}$-enriched, and let $\sigma\colon F\Rightarrow G$ be a lax monoidal natural transformation. Then, for every $\cat{C}$-enriched category $\cat{M}$, we have a $\cat{D}$-enriched functor
	$$
	\Sigma\colon  F\cat{M} \rar G\cat{M},
	$$
	given by the identity on objects and $\sigma$ on the hom-objects. Furthermore, the assignment $\sigma\mapsto\Sigma$ preserves composition of natural transformations.
\end{prop}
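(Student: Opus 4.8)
The plan is to verify directly that $\Sigma$, defined as the identity on objects and by the components $\sigma_{\underline{\cat{M}}(m,m')}$ on hom-objects, satisfies the two axioms of a $\cat{D}$-enriched functor---compatibility with composition and with identities---and then to observe that the assignment $\sigma\mapsto\Sigma$ is functorial in $\sigma$. That $\Sigma$ is well defined on objects is immediate, since $F\cat{M}$ and $G\cat{M}$ share the objects of $\cat{M}$, change of basis affecting only the hom-objects.

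For compatibility with composition, recall from Proposition \ref{ZXchangebasisobjects} that the composition in $F\cat{M}$ is $F(\circ_{\cat{M}})\circ\mu$ and that in $G\cat{M}$ is $G(\circ_{\cat{M}})\circ\nu$. Writing $X=\underline{\cat{M}}(m',m'')$ and $Y=\underline{\cat{M}}(m,m')$, the required square asks that $\sigma\circ F(\circ_{\cat{M}})\circ\mu = G(\circ_{\cat{M}})\circ\nu\circ(\sigma\otimes\sigma)$. I would obtain this by pasting two squares along the common edge $\sigma_{X\otimes Y}$. The first square, with edges $\sigma\otimes\sigma$, $\mu$, $\nu$ and $\sigma_{X\otimes Y}$, commutes by the very definition of a lax monoidal natural transformation (taking $c=X$, $c'=Y$). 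The second square, with edges $F(\circ_{\cat{M}})$, $G(\circ_{\cat{M}})$, $\sigma_{X\otimes Y}$ and $\sigma_{\underline{\cat{M}}(m,m'')}$, commutes by ordinary naturality of $\sigma$ applied to the morphism $\circ_{\cat{M}}\colon X\otimes Y\rar\underline{\cat{M}}(m,m'')$ of $\cat{C}$. The outer rectangle of the pasting is exactly the desired equality.

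For compatibility with identities, the identity of $F\cat{M}$ at $m$ is $F(\id^{\cat{M}}_m)\circ\mu^0$ and that of $G\cat{M}$ is $G(\id^{\cat{M}}_m)\circ\nu^0$, where $\id^{\cat{M}}_m\colon\mathbb{I}_\cat{C}\rar\underline{\cat{M}}(m,m)$ is the identity in $\cat{M}$. Naturality of $\sigma$ applied to $\id^{\cat{M}}_m$ gives $\sigma\circ F(\id^{\cat{M}}_m)=G(\id^{\cat{M}}_m)\circ\sigma_{\mathbb{I}_\cat{C}}$, and the unit-compatibility clause $\sigma_{\mathbb{I}_\cat{C}}\circ\mu^0=\nu^0$ of a lax monoidal natural transformation then identifies $\Sigma$ of the $F\cat{M}$-identity with the $G\cat{M}$-identity. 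Hence $\Sigma$ is a $\cat{D}$-enriched functor.

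Finally, for the functoriality statement, given a further lax monoidal natural transformation $\tau\colon G\Rightarrow H$, its vertical composite $\tau\sigma$ is again lax monoidal, so it induces a $\cat{D}$-enriched functor as well. Since $\Sigma$, the functor induced by $\tau$, and the functor induced by $\tau\sigma$ are all the identity on objects, the claim reduces on each hom-object to $\tau_{\underline{\cat{M}}(m,m')}\circ\sigma_{\underline{\cat{M}}(m,m')}=(\tau\sigma)_{\underline{\cat{M}}(m,m')}$, which is the definition of vertical composition. I expect the composition-preservation step to be the only one requiring real care, as it is where the lax-monoidality of $\sigma$ is genuinely used; the identity clause and the functoriality statement are routine.
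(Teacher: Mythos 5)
Your proof is correct, and since the paper offers no proof of this proposition at all (it is stated after the remark that the arguments of Cruttwell's thesis ``translate straightforwardly to the lax monoidal case''), your write-up is precisely the routine verification the paper leaves implicit: the pasting of the lax-monoidality square for $\sigma$ with the naturality square at $\circ_{\cat{M}}$ for composition, the analogous two-step argument at the units, and the componentwise check for vertical composites.

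One caveat worth flagging: your identity-preservation step invokes the unit clause $\sigma_{\mathbb{I}_\cat{C}}\circ\mu^0=\nu^0$, but the paper's definition of lax monoidal natural transformation, stated immediately before the proposition, records only the tensor square and omits this unit axiom. You are right to use it --- it is part of the standard definition, and without it $\Sigma$ need not preserve identities, so the proposition as stated would fail --- but strictly under the paper's literal definition this clause is an unstated hypothesis. Your proof thus also exposes a small omission in the paper's definition rather than a gap in your own argument.
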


As being tensored is a property of the enrichment, the following is automatic.

\begin{prop}\label{ZXbasechtens}
	Let $\cat{M}$ be enriched and tensored over a monoidal category $\cat{C}$, and let $F\colon \cat{C}\rar\cat{D}$ be a lax monoidal functor. Then $F\cat{M}$ is enriched and tensored over $\cat{D}$.
\end{prop}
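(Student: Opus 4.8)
The plan is to exploit the principle, already used in the discussion preceding Definition \ref{ZXtensorfuncts}, that being tensored is a \emph{property} of an enriched category rather than extra structure: by Definition \ref{ZXtensoreddef}, $\cat{M}$ being tensored over $\cat{C}$ says precisely that for every object $m$ the copower functor $-\cdot m$ is a left adjoint to the enriched representable $\underline{\cat{M}}(m,-)\colon \cat{M}\rar\cat{C}$, the adjunction being witnessed by the natural isomorphism \eqref{ZXcopowernattrafo}. Since adjoints are unique up to canonical isomorphism, this is a mere existence statement, and the strategy is to show that change of basis along $F$ transports it.

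First I would record that, by the very construction of $F\cat{M}$ in Proposition \ref{ZXchangebasisobjects}, its enriched hom-objects are $\underline{F\cat{M}}(m,m')=F\bigl(\underline{\cat{M}}(m,m')\bigr)$, so each representable of $F\cat{M}$ factors as $F\circ\underline{\cat{M}}(m,-)$. For $d$ in the image of $F$, say $d=F(c)$, I would declare the copower to be $F(c)\cdot m:=c\cdot m$ and produce the required isomorphism $\cat{D}\bigl(F(c),\underline{F\cat{M}}(m,m')\bigr)\cong F\cat{M}\bigl(F(c)\cdot m,m'\bigr)$ by feeding the $\cat{C}$-tensoring isomorphism \eqref{ZXcopowernattrafo} (and its enriched upgrade \eqref{ZXtensenriched}) through $F$ and the lax structure map $\mu$. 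Naturality in $m,m'$ and $d$ then follows from naturality of \eqref{ZXcopowernattrafo} together with the coherence of $\mu$, exactly as in the proof of Lemma \ref{ZXchangebasisproduct}; and the 2-functoriality of change of basis (Proposition \ref{ZXchangebasisispseudofunct}) guarantees that the adjunction $-\cdot m\dashv\underline{\cat{M}}(m,-)$ is carried to the corresponding adjunction for $F\cat{M}$.

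The one genuinely delicate point --- and the step I expect to be the main obstacle --- is that tensoredness over $\cat{D}$ asks for copowers $d\cdot m$ by \emph{all} objects $d\in\cat{D}$, whereas $F$ only supplies those lying in its essential image. The resolution is again that tensoredness is a property: the missing copowers are the appropriate colimits, and in every situation in which this proposition is applied the target $\cat{D}$ (for us $\Vect$ or $\svec$) is additive and cocomplete, so that a copower $d\cdot m$ is computed as a direct sum and automatically exists once the enrichment is in place. Thus the copowers inherited from $\cat{M}$ determine the remaining ones, and $F\cat{M}$ is tensored over $\cat{D}$; the verifications of the triangle- and pentagon-type coherences are routine and parallel those of Proposition \ref{ZXmoncatpres}.
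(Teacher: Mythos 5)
You were right to identify copowers by objects outside the image of $F$ as the crux --- the paper itself offers no proof here beyond the preceding remark that ``being tensored is a property of the enrichment'', so there is genuinely something to check --- but both load-bearing steps of your argument fail for a merely lax monoidal $F$, and in fact the proposition as literally stated (arbitrary monoidal $\cat{D}$, arbitrary lax monoidal $F$) is false. Take $\cat{C}$ the terminal monoidal category and $\cat{M}$ the one-object $\cat{C}$-category, which is trivially tensored with $\mathbb{I}\cdot m=m$. A lax monoidal functor $F\colon\cat{C}\rar\Vect$ is exactly an algebra $A=F(\mathbb{I})$, and $F\cat{M}$ is the one-object category with $\underline{\End}(m)=A$. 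Your prescription for objects in the image would require $\Vect(A,\underline{\End}(m))\cong F\cat{M}(\mathbb{I}\cdot m,m)=\Vect(\C,A)$, i.e.\ $\Hom(A,A)\cong A$, which fails whenever $\dim A\geq 2$; and since $m$ is the only object, no other candidate copower exists, so $F\cat{M}$ is not tensored even though $\Vect$ is cocomplete. The underlying error is that ``feeding \eqref{ZXcopowernattrafo} through $F$ and $\mu$'' only yields the comparison map $\cat{C}(c,\underline{\cat{M}}(m,m'))\rar\cat{D}(F(c),F\underline{\cat{M}}(m,m'))$ given by $F$ on hom-sets, which is not a bijection unless $F$ is fully faithful; turning $\cat{D}(F(c),F(-))$ into $\cat{D}(\mathbb{I}_\cat{D},F(\underline{\cat{A}}(c,-)))$ requires $F$ to be \emph{closed} (e.g.\ strong monoidal between rigid categories, as the fibre functor is), which lax monoidality does not provide. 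For the same reason the appeal to Proposition \ref{ZXchangebasisispseudofunct} does not transport the adjunction $-\cdot m\dashv\underline{\cat{M}}(m,-)$: change of basis sends the right adjoint to a functor into $F\underline{\cat{C}}$, not into $\underline{\cat{D}}$.

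Your resolution of the delicate point also does not go through: copowers are colimits \emph{in $F\cat{M}$} weighted in $\cat{D}$, not colimits in $\cat{D}$, so cocompleteness of the target buys nothing (the example above has cocomplete target); moreover $F\cat{M}$ as defined is not Cauchy complete --- the paper only completes afterwards, in Definition \ref{ZXkbarbar} --- so even finite direct sums of objects need not exist in it; and ``in every situation in which this proposition is applied'' proves instances, not the stated proposition. What a correct argument needs, beyond lax monoidality, is an extra hypothesis of one of the following kinds, each of which covers the paper's actual uses. Either $F$ is strong monoidal with rigid source and every object of $\cat{D}$ is isomorphic to some $F(a)$: then $F(a)\cdot m:=a\cdot m$ works, since $\cat{D}(F(a),F(x))\cong\cat{D}(\mathbb{I}_\cat{D},F(a^{*}\otimes x))\cong\cat{D}(\mathbb{I}_\cat{D},F(\underline{\cat{A}}(a,x)))$ and one concludes by \eqref{ZXtensenriched}; this is the case for the fibre functor $\Phi$. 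Or $F$ admits a left adjoint $L$ with $L(\mathbb{I}_\cat{D})\cong\mathbb{I}_\cat{C}$, equivalently $\cat{D}(\mathbb{I}_\cat{D},F(-))\cong\cat{C}(\mathbb{I}_\cat{C},-)$: then $d\cdot m:=L(d)\cdot m$ works, via $\cat{D}(d,F\underline{\cat{M}}(m,m'))\cong\cat{C}(L(d),\underline{\cat{M}}(m,m'))\cong\cat{M}(L(d)\cdot m,m')\cong\cat{D}(\mathbb{I}_\cat{D},F\underline{\cat{M}}(L(d)\cdot m,m'))$; this is the case for $\mathbf{Forget}\colon\dcentcat{A}_s\rar\cat{A}$ (the unit condition is exactly Lemma \ref{ZXdeenrichequivmeth}) and for $\dcentcat{A}(\mathbb{I}_s,-)$, whose left adjoint sends $V$ to $V\otimes\mathbb{I}_s$. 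So the conclusion is true everywhere the paper invokes it, but for reasons specific to those functors; neither your proof as written nor the paper's ``automatic'' is sufficient at the stated level of generality.
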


\end{document}